\documentclass[11pt,reqno]{amsart}
\usepackage{mathrsfs}
\usepackage{amsmath,amscd}
\usepackage{amssymb, amsmath}
\usepackage{graphicx}
\usepackage{color}

\usepackage{tikz}
\usepackage{amsaddr}

\topmargin=-0.20in
\textheight=23.6cm
\textwidth=15.6cm
\oddsidemargin=0.25in
\evensidemargin=0.25in
\allowdisplaybreaks

\newcommand{\newcom}{\newcommand}

\newcom{\al}{\alpha}
\newcom{\Del}{\Delta}
\newcom{\be}{\beta}
\newcom{\eps}{\epsilon}
\newcom{\e}{\varepsilon}
\newcom{\ga}{\gamma}
\newcom{\Ga}{\Gamma}
\newcom{\ka}{\kappa}
\newcom{\Lam}{\Lambda}
\newcom{\lam}{\lambda}
\newcom{\Om}{\Omega}
\newcom{\om}{\omega}
\newcom{\Si}{\Sigma}
\newcom{\si}{\sigma}
\newcom{\tht}{\theta}
\newcom{\dtri}{\nabla}
\newcom{\tri}{\triangle}
\newcom{\oo}{\infty}
\newcom{\vphi}{\varphi}
\newcom{\cA}{{\mathcal A}}
\newcom{\cB}{{\mathcal B}}
\newcom{\cC}{{\mathcal C}}
\newcom{\cD}{{\mathcal D}}
\newcom{\cE}{{\mathcal E}}
\newcom{\ce}{{\mathcal e}}
\newcom{\cF}{{\mathcal F}}

\newcom{\cJ}{{\mathcal J}}
\newcom{\cK}{{\mathcal K}}
\newcom{\cL}{{\mathcal L}}
\newcom{\cM}{{\mathcal M}}
\newcom{\cP}{{\mathcal P}}
\newcom{\cR}{{\mathcal R}}
\newcom{\cS}{{\mathcal S}}
\newcom{\cQ}{{\mathcal Q}}
\newcom{\cT}{{\mathcal T}}
\newcom{\cU}{{\mathcal U}}
\newcom{\cY}{{\mathcal Y}}
\newcom{\cZ}{{\mathcal Z}}
\newcom{\R}{\mathbb R}
\newcom{\T}{\mathbb T}
\newcom{\N}{\mathbb N}
\newcom{\Z}{\mathbb Z}
\newcom{\C}{\mathbb C}
\newcom{\E}{\mathbb E}


\newcom{\f}{\frac}
\newcom{\di}{\displaystyle\int}
\newcom{\ds}{\displaystyle\sum}
\newcom{\dl}{\displaystyle\lim}
\newcom{\ov}{\overline}
\newcom{\sset}{\subset}
\newcom{\wt}{\widetilde}
\newcom{\wh}{\widehat}
\newcom{\pa}{\partial}
\newcom{\p}{\partial}
\newcom\na{\nabla}
\newcom\lan{\langle}
\newcom\ran{\rangle}
\newcom{\suml}{\sum\limits}
\newcom{\supl}{\sup\limits}
\newcom{\intl}{\int\limits}
\newcom{\infl}{\inf\limits}
\newcom{\disp}{\displaystyle}
\newcom{\non}{\nonumber}
\newcom{\no}{\noindent}
\newcom{\QED}{$\square$}

\def\div{\mathop{\rm div}\nolimits}

\def\ef{\hphantom{MM}\hfill\llap{$\square$}\goodbreak}

\newtheorem{athm}{\bf \t}[section]
\newenvironment{thm} [1] {\def\t{#1}\begin{athm} \bf \rm} {\end{athm}}
\newcom{\bthm}{\begin{thm}}\newcom{\ethm}{\end{thm}}

\newtheorem{theorem}{Theorem}[section]
\newtheorem{lemma}{Lemma}[section]
\newtheorem{remark}{Remark}[section]

\newtheorem{proposition}{Proposition}[section]

\newcom{\beq}{\begin{equation}}
\newcom{\eeq}{\end{equation}}
\newcom{\ben}{\begin{eqnarray}}
\newcom{\een}{\end{eqnarray}}
\newcom{\beno}{\begin{eqnarray*}}
\newcom{\eeno}{\end{eqnarray*}}
\newcom{\bal}{\begin{aligned}}
\newcom{\eal}{\end{aligned}}

\numberwithin{equation}{section}

\begin{document}

\title[ 2D MHD equations on the half space]
{Global well-posedness and long time behavior of  2D MHD equations  with   partial dissipation in half space}

\author{Jiakun Jin,  Xiaoxia Ren*, Lei Wang}
\address{Department of Mathematics and Physics, North China Electric Power University, 102206, Beijing, P. R. China.}
\footnotetext{*Corresponding author}
\email{jinjiakun18@163.com(J.Jin), xiaoxiaren@ncepu.edu.cn(X.Ren), 50901924@ncepu.edu.cn(L.Wang)}

\maketitle

\begin{abstract}

In this paper, we  obtain the low order global well-posedness and the asymptotic behavior of solution of  2D MHD problem with  partial dissipation in half space with non-slip boundary condition. When magnetic field equal zero, the system be reduced to partial dissipation Navier-Stokes equation, so this  result also implies  the stabilizing effects of magnetic field in electrically conducting fluids. We use the resolvent estimate  method to obtain the long time behavior for the solution of  weak diffusion system, which is not necessary  to prove   global well-posedness.
\end{abstract}

{\small
{\textbf{Keywords:}  MHD equations;  Half space;   Long time behavior.}

{\textbf{AMS (2010) Subject Classifications:}} 35A01; 35Q35; 76W05 }

 \section{Introduction}
\setcounter{equation}{0}

In this paper, we investigate two dimensional  partial dissipation magneto-hydrodynamical equations
\ben\label{eq:MHD0}
\left\{
\begin{array}{l}
\p_t u-\pa_{x_2}^2 u+u\cdot\na u+\na p=B\cdot \na B,  \ \ \ \ x\in\Omega,\ t>0, \\
\p_tB-\pa_{x_1}^2 B+u\cdot\na B=B\cdot \na u,  \ \ \ \ x\in\Omega,\ t>0,\\
\div u=\div B=0,  \ \ \ \ x\in\Omega,\ t>0,\\
u(x,0)=u_0(x),\ \ B(x,0)=B_0(x),\ \ \ \ x\in \Omega,
\end{array}\right.
\een
here
 $u=(u_1, u_2)$ denotes the velocity field,  $B=(B_1,B_2)$ is the magnetic field and $p$ is the pressure.
The MHD system  is coupled from the Navier-Stokes equation and the Maxwell equation. In the past, many physical experiments and numerical analyses have investigated the effect of magnetic field changes on the behavior of the fluid in the MHD system. For the physical background of the system, we can get more details from  \cite{Cab,G}.

The aim of this study is to give the global well-posedness for the mixed partial dissipative MHD system in half space, and the asymptotic behavior obtained from the mathematical research are conducive to providing theoretical basis for the experimental data.

For the fully dissipative MHD equations, Duvaut and Lions \cite{D}  and Sermange and Temam \cite{S} have done a lot of research on it. For the  non-resistant MHD system, Lin, Xu and Zhang \cite{LXZ} gave the  global well-posedness for the Cauchy problem around the equilibrium state  $(0, {\bf e}_1)$ for a class of  admissible perturbations.
Ren, Wu, Xiang and Zhang \cite{RWXZ} established the global existence and  time decay rate of smooth solutions. Ren, Xiang and Zhang \cite{RXZ} proved the MHD equations have a unique global strong solution around the equilibrium state in the strip domain.
Jin,  Kagei, Ren, Wang and Zhai \cite{J} obtain the global well-posedness and the asymptotic behavior of solution of non-resistive 2D MHD problem on the half space.

For the mixed partial dissipation MHD system \eqref{eq:MHD0}, Cao and Wu \cite{CW} gave the global regularity of its classical solution in $\R^2$. Lin, Ji, Wu and Yan \cite{L} provided  a global stability result on perturbations near a background magnetic field to the MHD systems  in $\R^2$.

In fact, when $B=0$, The study of systems \eqref{eq:MHD0} can be regarded as the study of the following systems:
\ben\label{eq:MHD00}
\left\{
\begin{array}{l}
\p_t u-\pa_{x_2}^2 u+u\cdot\na u+\na p=0,  \ \ \ \ x\in\Omega,\ t>0, \\
\div u=0,  \ \ \ \ x\in\Omega,\ t>0,\\
u(x,0)=u_0(x),\ \ \ \ x\in \Omega,
\end{array}\right.
\een
therefore, this problem is very valuable to study, and it can also show that the magnetic field has a good effect on the system.

In this paper, we consider the global well-posedness and long time behavior for \eqref{eq:MHD0} in the half space
$$
 \Om:=\big\{x=(x_1,x_2)\ |\ x_1 \in \R,\: x_2\in \R^+ \big\},
$$
with the most common boundary condition, i.e. the velocity field satisfy the classical non-slip boundary condition
$$
u=0\ \ \ \ \mathrm{on}\ \p\Om,
$$
and the container  is perfactly conducting for the magnetic field
$$
B\cdot n=0 \ \ \ \mathrm{on}\ \p\Om,
$$
here $n$  denotes the outward normal vector of $\p\Om$.

Motivated by  \cite{LXZ},  we will investigate   small perturbation of the system \eqref{eq:MHD0} around the  equilibrium state $(0, {\bf e}_1)$. Thus,  we can set  $b=B-{\bf e}_1$ and reformulate our first problem as the following initial-boundary problem
\ben\label{eq:MHDT}
\left\{
\begin{array}{l}
\p_t u-\pa_{x_2}^2 u-\p_{x_1} b+\na p=-u\cdot\na u+b\cdot \na b,  \ \ \ \ x\in\Omega,\ t>0,\\
\p_tb-\p_{x_1}^2 b-\p_{x_1} u=-u\cdot\na b+b\cdot \na u,  \ \ \ \ x\in\Omega,\ t>0,\\
\div u=\div b=0,  \ \ \ \ x\in\Omega,\ t>0,\\
u=0, \ \ \ b_2=0,   \ \ \ \ x\in\p\Omega,\ t>0,\\
u(x,0)=u_0(x),\ \ b(x,0)=b_0(x),\ \ \ \ x\in \Om.
\end{array}\right.
\een

We have the low order global solution
\begin{theorem}\label{thm:main}
Assume that the initial data $(u_0, b_0)$ satisfies $(u_0, b_0)\in  H^2(\Om)\cap L^2_{\sigma}(\Om)$ and
\begin{align*}
\|(u_0, b_0)\|_{H^2(\Om)}\lesssim \e
\end{align*}
with  $\e$ is a small positive constants, then  the MHD system (\ref{eq:MHDT}) has a unique global solution $(u, b)$ satisfying
\beno
(u, b)\in C([0,+\infty); H^2(\Om)).
\eeno
\end{theorem}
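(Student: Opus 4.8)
The plan is to prove global existence by combining a local well-posedness result with a uniform-in-time a priori bound and a standard continuation argument. First I would establish local existence and uniqueness of a solution $(u,b)\in C([0,T];H^2(\Om))$ for some $T>0$, for instance by a Galerkin approximation or by a contraction mapping applied to the linearization of \eqref{eq:MHDT}; here the pressure is removed by applying the Leray projection onto $L^2_\sigma(\Om)$, and the conditions $u=0$, $b_2=0$ on $\p\Om$ are built into the function spaces. The global statement then reduces to showing that, on any interval of existence, the $H^2$ norm remains $\lesssim\e$. To this end I would set up an energy--dissipation pair $E(t)\simeq\|(u,b)\|_{H^2(\Om)}^2$ (augmented by a small interactive term, see below) and a dissipation functional $D(t)$, and aim for a closed inequality $\f{d}{dt}E(t)+D(t)\lesssim \sqrt{E(t)}\,D(t)$. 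Since $E(0)\lesssim\e^2$, a continuity/bootstrap argument then forces $E(t)\lesssim\e^2$ for all $t$, giving the global bound and hence the global solution.

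The central difficulty is that the dissipation is anisotropic: the parabolic terms only yield control of $\|\p_{x_2}u\|$ and $\|\p_{x_1}b\|$ together with their $H^2$ analogues. The basic $L^2$ estimate, obtained by testing the first equation of \eqref{eq:MHDT} with $u$ and the second with $b$, shows that the linear coupling cancels, $\di_\Om(\p_{x_1}b\cdot u+\p_{x_1}u\cdot b)\,dx=\di_\Om\p_{x_1}(u\cdot b)\,dx=0$, so that $\f12\f{d}{dt}(\|u\|_{L^2}^2+\|b\|_{L^2}^2)+\|\p_{x_2}u\|_{L^2}^2+\|\p_{x_1}b\|_{L^2}^2=\text{(nonlinear)}$. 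To recover the missing directional derivatives I would exploit two structural facts. The divergence-free conditions give $\p_{x_2}u_2=-\p_{x_1}u_1$ and $\p_{x_2}b_2=-\p_{x_1}b_1$, so that $\|\p_{x_1}u_1\|$ and $\|\p_{x_2}b_2\|$ are already controlled by the available dissipation. The remaining horizontal derivative $\|\p_{x_1}u\|$ is recovered from the coupling by introducing an interactive energy of the form $\di_\Om\p_{x_1}u\cdot b\,dx$ and its $H^1$/$H^2$ counterparts: differentiating in time and substituting the equations, the leading contribution reproduces $\|\p_{x_1}u\|_{L^2}^2$, the pressure drops out by $\div b=0$, and the error terms are controlled either by the parabolic dissipation or by the nonlinearity. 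This enhanced-dissipation mechanism, driven by the background field ${\bf e}_1$, is what compensates for the absence of full diffusion.

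The higher-order estimates follow the same scheme applied to $\p^\alpha$ with $|\alpha|\le 2$. Here I would treat the nonlinear terms $u\cdot\na u$, $b\cdot\na b$, $u\cdot\na b$, $b\cdot\na u$ by anisotropic Sobolev and Ladyzhenskaya-type inequalities adapted to the half space, systematically distributing derivatives so that each factor is measured either in $L^\infty$ or in a norm controlled by $E$ while the remaining one lands on a dissipative direction controlled by $D$; this is what produces the quadratic structure $\sqrt{E}\,D$ on the right-hand side. The divergence-free constraint and the pressure require a Stokes-type estimate in $\Om$: the pressure solves $-\tri p=\na\cdot(u\cdot\na u-b\cdot\na b)$ with the appropriate boundary data, and $\na p$ is estimated through the other terms after projection. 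Throughout, every integration by parts in $x_2$ generates boundary integrals on $\p\Om$ that must vanish or be absorbed; the conditions $u=0$ and $b_2=0$ kill the $u$-boundary terms and the normal component of $b$, but contributions involving the tangential component $b_1$ and its normal derivative require care.

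The step I expect to be the main obstacle is precisely this closure of the $H^2$ estimate under anisotropic dissipation in the half space: recovering the one genuinely undamped direction (morally $\p_{x_2}b_1$) solely through the coupling, while simultaneously controlling the boundary terms produced by the non-slip and perfect-conductor conditions together with the half-space pressure. In the whole-space setting these are handled by the Fourier transform in both variables; in $\Om$ the loss of Fourier analysis in $x_2$ forces reliance on the interactive functionals and on careful anisotropic interpolation, and one must verify that the constructed energy $E(t)$ is genuinely equivalent to $\|(u,b)\|_{H^2(\Om)}^2$ after absorbing the small interactive term.
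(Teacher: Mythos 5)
Your overall skeleton — local well-posedness plus an anisotropic $H^2$ a priori estimate closed by a continuity argument — is the same as the paper's, and several ingredients are correct: the $L^2$ cancellation, the use of $\div u=\div b=0$ to control $\p_1u_1$ and $\p_2b_2$, the interactive functional $\lan \p_1 b,u\ran$ producing the $\|\p_1 u\|_{L^2}^2$ dissipation (the paper's Steps 3 and 9, which also extract $\|\p_1^2 u\|_{L^2}^2$), and the anisotropic $L^\infty_{x_1}L^2_{x_2}$-type product estimates. However, there is a genuine gap precisely at the step you yourself flag as the main obstacle, and flagging it is not resolving it. Your plan to apply $\p^\alpha$ with $|\alpha|\le 2$ and "run the same scheme" fails for $\p^\alpha=\p_2^2$ acting on the velocity equation: since neither $\p_2 u$, $\p_2^2u$ nor $\na p$ vanishes on $\p\Om$, the integrations by parts generate boundary integrals that cannot be absorbed. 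The paper never performs that estimate. Instead it controls $\p_2^2u$ and $\na p$ \emph{elliptically}, via the stationary Stokes estimate applied to $-\Delta u+\na p=-\p_1^2u+\p_1 b-u_t-u\cdot\na u+b\cdot\na b$ with Dirichlet data; this forces $\|u_t\|_{L^2}^2$, $\|b_t\|_{L^2}^2$ and $\|\na p\|_{L^2}^2$ into the energy functional, together with separate dissipation estimates for $u_t$, $b_t$, $\p_2u_t$, $\p_1b_t$ (obtained by testing with time derivatives and differentiating the system in $t$) and for the projected quantities $\cP\p_2^2u$ and $\p_2\cP\p_2^2u$ — note it is $\cP\p_2^2u$, not $\p_2^2u$ itself, that the projected equation controls. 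None of these objects occur in your $E$ or $D$, so your inequality $\f{d}{dt}E+D\lesssim\sqrt{E}\,D$ cannot be closed as stated.

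Second, your description of the crux — recovering the undamped direction $\p_2 b_1$ "solely through the coupling" — misidentifies the mechanism. No interactive functional can produce dissipation of vertical derivatives of $b$; the system genuinely has none, and the paper's dissipation functional contains only $\|\p_1 b\|_{H^2}^2$ on the magnetic side. What must actually be done is to bound the \emph{energy} (not the dissipation) of $\p_2^2 b$, and the obstruction there is the single nonlinear term $\lan\p_2^2 b,\, b\cdot\na\p_2^2 u\ran$, equivalently $\lan\p_2^2 u,\,\p_2^2(b\cdot\na b)\ran$, which involves three vertical derivatives of $u$. The paper's key trick is to substitute the velocity equation, $\p_2^2u=u_t+\na p-\p_1 b+u\cdot\na u-b\cdot\na b$, into this inner product, converting it into terms controlled by the $u_t$, $\na p$ and $\p_1 b$ dissipations — exactly the quantities your energy omits. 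You also omit the propagation of the additional boundary condition $b_1=0$ on $\p\Om$ (the paper's Remark 1.1), without which the tangential boundary terms you correctly worry about cannot be killed. With these ingredients missing, the $H^2$ estimate does not close.
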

\begin{remark}
Here we need a zero boundary value on $b_1$ in order to avoid  the trouble boundary terms in {\it a-priori} estimate. This boundary condition could not be imposed. Fortunately, the boundary condition $b_1=0$ on $\p\Omega$  can be propagated by the equation \eqref{eq:MHDT}$_2$ if $b_1(0)=0$  on $\p\Omega$.

\end{remark}
\begin{remark}

We list the following the compatibility condition.

In deed, let $b_0, f \in C^\infty_0 (\Om)$ with $\div b_0=0, \div f=0$, if $\|b_0\|_{H^2}+\|f\|_{L^2}\leq \delta _0$, ($\delta _0>0$ is a constant), then there exists a $(u_0, p_0)$ with $u_0\in H^2(\Om)\cap L^2_{\sigma}(\Om)$, $\na p_0\in L^2(\Om)$ satisfies
\[
\left\{
\begin{array}{l}
u_0-\pa_{x_2}^2 u_0-\p_{x_1} b_0+\na p_0+u_0\cdot\na u_0-b_0\cdot \na b_0=f,  \ \ \ \ x\in\Omega,\ t>0,\\
\div u_0=0,  \ \ \ \ x\in\Omega,\ t>0,\\
u_0=0,   \ \ \ \ x\in\p\Omega,\ t>0,
\end{array}\right.
\]
then $(u_0, b_0)$ satisfies the compatibility condition since
\begin{align*}
-\pa_{x_2}^2 u_0+\na p_0-\p_{x_1} b_0+u_0 \cdot\na u_0-b_0\cdot\na b_0= f-u_0, \quad\cP(f-u_0)=f-u_0\in L^2_{\sigma}(\Om),
\end{align*}
where $u(0)=u_0$, $\p_t u(0)=f-u_0$.
\end{remark}

In addition, we also have asymptotic behavior 
\begin{theorem}\label{thm:main1}
Assume that the initial data  satisfies $(u_0, b_0)\in W^{1,1}(\Om)\cap H^2(\Om)$ and $(\pa_{x_1}u_0, \pa_{x_1}b_0)\in  H^2(\Om)$ and
\begin{align*}
\|(\pa_{x_1} u_0, \pa_{x_1} b_0)\|_{H^2(\Om)}\lesssim \e,
\end{align*}
with  $\e$ is a small positive constants, then  the MHD system (\ref{eq:MHDT}) has a unique global solution $(u, b)$ satisfying
\beno
(\pa_{x_1}u, \pa_{x_1}b)\in C([0,+\infty); H^2(\Om)).
\eeno
Moreover, it holds that
\begin{align}\label{eq:decay}
&\quad\|u\|_{L^2}\lesssim \lan t \ran^{-\f12},\qquad \|b_1\|_{L^2}\lesssim \lan t \ran^{-\f14},\qquad \|b_2\|_{L^2}\lesssim \lan t \ran^{-(\f12-\delta)},\quad\|\pa_{x_1} u\|_{L^2} \lesssim \lan t\ran^{-1},\non\\
&\|\p_{x_2} u_1\|_{L^2}\lesssim \lan t\ran^{-\f12}, \quad\|\p_{x_1} b_1\|_{L^2}\lesssim \lan t\ran^{-\f34}, \quad  \|\p_{x_1} b_2\|_{L^2}\lesssim \lan t\ran^{-(\f78-2\delta)},
\end{align}
for any $t\in [0,+\infty)$.
\end{theorem}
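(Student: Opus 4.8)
The plan is to establish the global propagation of the $\pa_{x_1}$-regularity and the decay rates \eqref{eq:decay} together, through a single continuity argument whose linear backbone is the resolvent analysis announced in the abstract. Because the coefficients of \eqref{eq:MHDT} are independent of $x_1$ and $\pa\Om$ is a horizontal line, the operator $\pa_{x_1}$ commutes with the equations and with the boundary conditions $u=0$, $b_2=0$; differentiating \eqref{eq:MHDT} in $x_1$ therefore produces a system of identical structure for $(\pa_{x_1}u,\pa_{x_1}b)$. The global-in-time $H^2$ bound for $(\pa_{x_1}u,\pa_{x_1}b)$ will come from \emph{a priori} energy estimates on this differentiated system, where the smallness of $\|(\pa_{x_1}u_0,\pa_{x_1}b_0)\|_{H^2}$ supplies the small parameter and the MHD cancellation in the coupling terms $\p_{x_1}b,\p_{x_1}u$ prevents the appearance of uncontrolled boundary integrals. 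The decay \eqref{eq:decay} is the genuinely new content and is extracted from the linearized semigroup.

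Let $\cL$ be the linearization of \eqref{eq:MHDT} about $(0,{\bf e}_1)$, generating the flow of
\[
\p_t u-\pa_{x_2}^2 u-\p_{x_1}b+\na p=0,\qquad \p_t b-\p_{x_1}^2 b-\p_{x_1}u=0,
\]
on $L^2_{\sigma}(\Om)$, with the Leray projection $\cP$ enforcing $\div u=\div b=0$ and with $u=0$, $b_2=0$ on $\pa\Om$. Taking the partial Fourier transform in the tangential variable $x_1$ (with dual variable $\xi\in\R$) decouples $\cL$ into a one-parameter family $\wh{\cL}(\xi)$ of ordinary differential operators in $x_2\in\R^+$. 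I would compute the resolvent $(\lam-\wh{\cL}(\xi))^{-1}$ by solving the corresponding second-order ODE system on the half line, imposing decay as $x_2\to+\oo$ and the boundary data at $x_2=0$, and then recover the semigroup via the inverse Laplace representation $e^{t\wh{\cL}(\xi)}=\f{1}{2\pi i}\int_{\Ga}e^{\lam t}\,(\lam-\wh{\cL}(\xi))^{-1}\,d\lam$. The crucial output is a low-frequency description of the spectrum: the $\p_{x_1}$-coupling gives the pair $(u,b_1)$ a damped dispersive structure whose dissipation rate scales like $\xi^2$ as $\xi\to0$, the $\pa_{x_2}^2$-dissipation furnishes extra decay for $u$ in the $x_2$ variable, and the constraints $b_2=0$, $\div b=0$ transfer tangential decay from $b_1$ to $b_2$.

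By Plancherel in $x_1$, the rates in \eqref{eq:decay} then arise from the superposition of two one-dimensional heat mechanisms activated by the $W^{1,1}(\Om)$ hypothesis. The tangential integral $\int_{|\xi|\le1}e^{-c\xi^2 t}\,d\xi\lesssim\lan t\ran^{-1/2}$ yields $\|b_1\|_{L^2}\lesssim\lan t\ran^{-1/4}$, and each tangential derivative inserts a factor $\xi^2$, hence an additional $\lan t\ran^{-1/2}$ in the $L^2$ norm (giving $\|\p_{x_1}b_1\|_{L^2}\lesssim\lan t\ran^{-3/4}$); the half-line heat semigroup in $x_2$ contributes a further gain for $u$, upgrading its rate to $\|u\|_{L^2}\lesssim\lan t\ran^{-1/2}$ and $\|\p_{x_1}u\|_{L^2}\lesssim\lan t\ran^{-1}$. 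The mildly sub-optimal exponents $\f12-\delta$ and $\f78-2\delta$ for $b_2$ and $\p_{x_1}b_2$ reflect a borderline, logarithmically divergent frequency integral arising through the $\div b=0$ transfer, which I would close with an $\e$-loss. With these linear estimates in hand, the full nonlinear decay and the global $H^2$ bound for $(\pa_{x_1}u,\pa_{x_1}b)$ follow simultaneously from the Duhamel formula
\[
(u,b)(t)=e^{t\cL}(u_0,b_0)+\int_0^t e^{(t-s)\cL}\,\cP\,\cF(u,b)(s)\,ds,
\]
with $\cF$ the quadratic terms: introducing a time-weighted norm built from the target rates \eqref{eq:decay} and estimating $\cF$ by the linear decay together with the uniform $H^2$ control of the tangential derivatives, one closes the bootstrap provided $\e$ is small.

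The principal obstacle is the resolvent analysis on the half space. In contrast to the whole-space problem, the normal variable $x_2$ cannot be Fourier-transformed, so for every $\xi$ one must solve the half-line ODE system explicitly, follow its characteristic roots as functions of $(\lam,\xi)$, and prove resolvent bounds uniform enough---in particular as $\xi\to0$ and along the contour $\Ga$---to survive the Laplace inversion. The anisotropy of the dissipation, with $\pa_{x_2}^2$ acting only on $u$ and $\p_{x_1}^2$ only on $b$, causes the spectral gap to degenerate differently for different components, and it is the careful resolution of these degeneracies---together with the treatment of the pressure and the Leray projection at the boundary---that both constitutes the technical heart of the argument and produces the distinct, partially $\delta$-shifted exponents recorded in \eqref{eq:decay}.
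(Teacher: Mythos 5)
Your architecture coincides with the paper's: the global $H^2$ bound for $(\pa_1 u,\pa_1 b)$ is obtained by repeating the energy scheme on the $x_1$-differentiated system (the paper's Appendix C, exploiting exactly your observation that $\pa_{x_1}$ commutes with the equations and the boundary conditions); the linear decay comes from a Laplace-in-$t$ / Fourier-in-$x_1$ resolvent computation on the half line with contour deformation around the branch points (Section 3); and the nonlinear decay is run through Duhamel with a time-weighted norm built from the target rates (Section 4). Two small slips in your linear heuristics: a tangential derivative inserts a factor $|\xi_1|$, not $\xi_1^2$ (one factor of $|\xi_1|$ against $e^{-c\xi_1^2 t}$ is what gives the extra $\lan t\ran^{-\f12}$ you invoke), and the sharp degeneracy in the paper is not only as $\xi_1\to 0$ but the anisotropic kernel $e^{-\f{|\xi_1|^2}{|\xi_2|^2}t-|\xi_1|^2 t}$, i.e.\ loss of decay for $b$ at \emph{large normal} frequency, since $b$ has no $x_2$-dissipation at all.

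The genuine gap is your final claim that "estimating $\cF$ by the linear decay together with the uniform $H^2$ control of the tangential derivatives, one closes the bootstrap provided $\e$ is small." With the rates actually available, this standard quadratic bootstrap does not close. The decisive term is $u_2\pa_2 b_1$ inside $g_1=-u\cdot\na b_1+b\cdot\na u_1$: the magnetic part of the semigroup has no smoothing in $x_2$ and only the weak kernel above, which pairs with $\|u_2\,\pa_2 b_1\|_{L^1_{x_1}L^2_{x_2}}$ to give at best a factor $\lan t-\tau\ran^{-\f14}$; meanwhile $\pa_2 b_1$ carries no decay in the weighted norm — only $\lan \tau\ran^{-\f18}$ by interpolating $\|b_1\|_{L^2}\lesssim\lan\tau\ran^{-\f14}$ against the bounded $\|\pa_2^2 b_1\|_{L^2}$ — and $\|u_2\|_{L^2_{x_1}L^\infty_{x_2}}\lesssim\lan\tau\ran^{-\f34}$. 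The resulting convolution $\int_0^t\lan t-\tau\ran^{-\f14}\lan\tau\ran^{-\f78}\,d\tau$ decays only like $\lan t\ran^{-\f18}$, strictly short of the target $\lan t\ran^{-\f14}$ for $b_1$, so the weighted norm is not reproduced. This is exactly why the paper singles out this term ($\cT_{12}$) as the hardest one and closes it by (i) splitting $u_2=u_{2,<\lan\tau\ran^{-s_1}}+u_{2,>\lan\tau\ran^{-s_2}}+u_{2,\sim}$ with time-dependent frequency thresholds, (ii) for the intermediate piece writing $u_{2,\sim}=(-\Delta)^{-1}\na^T\!\cdot\pa_1 u_\sim$ and substituting $\pa_1 u$ from the magnetic equation, then integrating by parts in the time variable, and (iii) for $b_2$ exploiting the structure $-u\cdot\na b_2+b\cdot\na u_2=\pa_1(b_1u_2-u_1b_2)$, together with interpolation against the dissipation integral $\int_0^t\cF_1^2$. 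These frequency-localization and equation-substitution devices — which the paper itself flags as the key to closing the nonlinear estimates — are absent from your plan, and without them the bootstrap stalls at the term above.
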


\begin{remark}

In fact, we do not need  the $L^\infty$ decay rate  to close the asymptotic behavior  $u$ and $b$, but the  $L^ \infty$ decay rate can be inferred through interpolation inequality.
\begin{align*}
\|u\|_{L^\infty}\lesssim\|u\|_{L^2}^\f14\|\pa_{x_1}u\|_{L^2}^\f14\|\pa_{x_2}u\|_{L^2}^\f14\|\pa_{x_1}\pa_{x_2}u\|_{L^2}^\f14\lesssim\lan t\ran^{-\f12},\\
\|b\|_{L^\infty}\lesssim \|b\|_{L^2}^\f14\|\pa_{x_1}b\|_{L^2}^\f14\|\pa_{x_2}b\|_{L^2}^\f14\|\pa_{x_1}\pa_{x_2}b\|_{L^2}^\f14\lesssim\lan t\ran^{-\f14}.
\end{align*}
Since we did not make the $L^\infty$ estimates, our $L^2$ attenuation is not optimal. In fact, we can also make the optimal attenuation, but because it makes the proof more tedious, for the sake of simplicity, we only need to pursue a long time behavior. 

\end{remark}

Next we introduce the main ideas and methods used in the proof.

\smallskip

{\bf Damped wave equation}

For full-space problems, the linear part of \eqref{eq:MHDT} is
\ben\label{eq:MHDdml}
\left\{
\begin{array}{l}
\p_t u-\pa_{x_2}^2 u-\pa_{x_1}b=0,\\
\p_t b-\pa_{x_1}^2 b-\pa_{x_1}u=0,
\end{array}\right.
\een
then taking the time derivative of both sides of \eqref{eq:MHDdml}, and use \eqref{eq:MHDdml}, we have the following form:
\begin{align*}
Y_{tt}-\Delta Y_{t}+\pa_{x_1}^2\pa_{x_2}^2Y-\pa_{x_1}^2Y=0.
\end{align*}

Inspired by \cite{RWXZ}, the solution has weak dissipation, and $Y$ can behave as
\begin{align*}
\wh{Y}(t,\xi)\sim a(\xi)e^{\lam_{+}(\xi)t}+b(\xi)e^{\lam_{-}(\xi)t},
\end{align*}
here
\begin{align}\label{lampm}
\lam_\pm=\left\{
\begin{array}{l}
-\f{|\xi_1|^2+|\xi_2|^2}{2}\pm\f{i \sqrt{4|\xi_1|^2-(|\xi_1|^2-|\xi_2|^2)^2}}{2}, \quad \big||\xi_1|^2-|\xi_2|^2\big|\leq 2|\xi_1|,\\
-\f{|\xi_1|^2+|\xi_2|^2}{2} \pm \f{\sqrt{(|\xi_1|^2-|\xi_2|^2)^2-4|\xi_1|^2}}{2} ,\quad \big||\xi_1|^2-|\xi_2|^2\big|> 2|\xi_1|.
\end{array}\right.\end{align}

{\bf  Global well-posedness:}

We conclude the global well-posedness in $H^2$ norm in half space with non-slip boundary condition and the main idea here is the use of Stokes estimate.

Firstly, by the non-slip  boundary conditions, the difficulty  is the energy estimate of $\pa_{x_2}^2u$ and $\pa_2^2b$.  For the energy  of $\pa_{x_2}^2u$, we use Stokes estimate to transform to $\p_t u$, and for the energy of $\pa_{x_2}^2b$, we need the dissipation estimate of $\pa_{x_2}^3u$, which can be rewritten as $\pa_{x_2}\cP\pa_{x_2}^2u$, by the appearance of pressure.  Secondly, in the analysis of nonlinear term of $\pa_{x_2}^2b$ energy estimate, due to the lack vertical magnetic dissipation, the  most trouble term is
 $$\lan \p_{x_2}^2 u, \p_{x_2}^2(b\cdot\na b)\ran,$$
we use the velocity  field equation and introduce $\na p$ dissipation. Lastly, we   also need the  cross term to provide the dissipation $\|\pa_{x_1}^2u\|_{L^2}$ to close the Stokes estimate.

{\bf Resolvent estimate:}

Inspired by \cite{J}, we will use the resolvent estimate method. The explicit solution for the linearized system  can be divided into full-space part and half-space part.

To the full-space part, because of the structure of $\lam_\pm$, we can discuss the division of $|\xi_1|,|\xi_2|$ into the following four regions: $\big||\xi_1|^2-|\xi_2|^2\big|\leq |\xi_1|$, $|\xi_1|<\big||\xi_1|^2-|\xi_2|^2\big|\leq 2|\xi_1|$, $2|\xi_1|<\big||\xi_1|^2-|\xi_2|^2\big|\leq 4|\xi_1|$, $\big||\xi_1|^2-|\xi_2|^2\big|> 4|\xi_1|$, and the  kernel $e^{-\f{|\xi_1|^2}{|\xi_2|^2}t-|\xi_1|^2t}$ get a weakest magnetic field decay rate.

 To the half-space part, by analyzing the structure of the solution, we find that the branch point is  $\lam=\lam_+^{'}$, $\lam=\lam_-^{'}$ and $\lam=-|\xi_1|^2$, so we choose the contour around the branch point and   consider each part of the contour seperately.

{\bf  Asymptotic behavior:}

With the help of linear decay rate, we consider the long time behavior of the nonlinear system. Here we did not make the $L^\infty$ estimates (for simplicity), and pursue low requirements for initial values and boundaries condition.  In fact, we utilize the idea of frequency localization and the structure of equations to close the nonlinear  decay estimates, with the aid of boundedness of $\|(u,b)\|_{H^2}$ and $\|(\pa_{x_1}u,\pa_{x_1}b)\|_{H^2}$.

\smallskip

The paper is organized as follows.  In section 2, we obtain the global well-posedness in $H^2$ without the aid of the decay rate of low order energy. In section 3, we present the solution formula  of the linearized problem and obtain the resolvent estimates for $u, b$ directly. In section 4,  we obtain the  large time behavior of the nonlinear estimate based on the linearized analysis and boundedness of global $H^2$ energy and $\|(\pa_{x_1}u,\pa_{x_1}b)\|_{H^2}$ energy in appendix,  which is a reasonable result since $\pa_{x_1}$ does not change the boundary conditions.

\smallskip

\no{\bf Notations.} Throughout this paper, for simplicity, we set  $\p_i=\f{\p}{\p x^i}$ for $i=1,2,3$,  $\p_t=\f{\p}{\p t}$. We will also use $A\lesssim B$ to denote the statement that $A \le CB$ for some absolute constant $C > 0$, which may be different on different lines. $\lan t\ran:=(1+t^2)^\f12$. $\wh{*}$ means the horizontal Fourier transform. Let $\vphi(\xi)$ be a smooth bump function adapted to $\{|\xi|\leq 2\}$ and equal to  1 on $\{|\xi|\leq 1\}$. For $j>0$, we define the Fourier multipliers
\begin{align*}
P_{\leq j} f&:=\cF^{-1}\Big(\varphi(\f{|\xi|}{j})\,\cF f(\xi)\Big), \quad P_{\geq j} f:=\cF^{-1}\Big((1-\varphi(\f{|\xi|}{j}))\,\cF f(\xi)\Big), \non\\
P_{j} f&:=\cF^{-1}\Big((\varphi(\f{2|\xi|}{j})-\varphi(\f{|\xi|}{j})\,\cF f(\xi)\Big),
\end{align*}
where j are dyadic number, that is the form of $2^{\Z}$ in general.\\

\section{Global well-posedness}
In this section, we will prove the $H^2$ a-priori estimate in half space and  obtain the Theorem \ref{thm:main} by continuous argument (the local well-posedness is  in appendix)  

We first introduce the following energy
\begin{align}\label{cE}
\cE^2(t):=\|u(t)\|^2_{H^2}+\|b(t)\|^2_{H^2}+\|\na p\|^2_{L^2}+\|(u_t,b_t)\|^2_{L^2},
\end{align}
and the dissipated energy
\begin{align}\label{cF}
\cF^2(t):&=\|\na u\|_{H^1}^2+\|\p_1 b\|_{H^2}^2 +\| u_t\|_{L^2}^2+\|b_t\|_{L^2}^2+\|\pa_1^2\pa_2 u\|_{L^2}^2\non\\
&+\|\pa_2 u_t\|_{L^2}^2+\|\pa_1 b_t\|_{L^2}^2+ \|\na p\|_{L^2}^2+\|\cP\pa_2^2u\|_{H^1}^2.
\end{align}

\begin{proposition}\label{high order}
Assume that the solution $(u,b)$ of the system \eqref{eq:MHDT} satisfies
\beno
\sup\limits_{0\le t\le T}\big(\|u(t)\|_{H^2}^2+\|b(t)\|_{H^2}^2\big)\leq  c_0^2.  \eeno
If $ c_0  $ is suitable small, then there hold that
\beno
\cE^2(t)+ \int_0^t\cF^2(s)ds\lesssim \|u_0\|^2_{H^2}+\|b_0\|^2_{H^2}
\eeno
for any  $t\in [0,T]$.
\end{proposition}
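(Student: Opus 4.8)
The plan is to run a continuity (bootstrap) argument: under the a-priori hypothesis $\sup_{[0,T]}(\|u\|_{H^2}^2+\|b\|_{H^2}^2)\le c_0^2$, I would derive a closed family of energy--dissipation identities and combine them into a single inequality of the form $\frac{d}{dt}\wt{\cE}^2+\cF^2\lesssim c_0\,\cF^2$, where $\wt{\cE}^2$ is equivalent to $\cE^2$. For $c_0$ small the right-hand side is absorbed by the dissipation on the left, and integrating in $t$ yields the claim. Throughout I would exploit the boundary data $u=0$, $b_2=0$ and, most importantly, the propagated condition $b_1=0$ on $\p\Om$ recorded in the first remark (so that effectively $u=b=0$ on $\p\Om$), together with $\div u=\div b=0$, in order to discard or control the boundary integrals generated by integrating by parts in the normal variable $x_2$.

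First I would dispose of the estimates that do not see the boundary. Testing the $u$-equation with $u$ and the $b$-equation with $b$, the pressure term vanishes by $\div u=0$ and $u|_{\p\Om}=0$; the linear coupling $-\p_{x_1}b$ and $-\p_{x_1}u$ cancel after integration by parts in the free variable $x_1$; and the four MHD nonlinearities cancel in the standard pairs by $\div u=\div b=0$ and the vanishing boundary data. This leaves $\frac{d}{dt}\|(u,b)\|_{L^2}^2+\|\pa_{x_2}u\|_{L^2}^2+\|\pa_{x_1}b\|_{L^2}^2\le 0$. Because the tangential derivatives $\pa_{x_1}$ and $\pa_{x_1}^2$ preserve every boundary condition, I would repeat the same argument after applying $\pa_{x_1}$ and $\pa_{x_1}^2$, bounding the commutator-type nonlinear terms by Sobolev embedding and the smallness of $c_0$; this produces the horizontal part of $\cF$, namely $\|\p_{x_1}b\|_{H^2}$ together with $\|\pa_{x_1}\pa_{x_2}u\|_{L^2}$ and $\|\pa_{x_1}^2\pa_{x_2}u\|_{L^2}$.

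The low-order dissipation still misses the horizontal velocity derivatives and the normal second derivatives. I would recover $\|\pa_{x_1}u\|_{L^2}$ (and, at one order higher, $\|\pa_{x_1}^2u\|_{L^2}$) from the coupling $-\p_{x_1}u$ in the magnetic equation: incorporating the interaction functional $\lan\pa_{x_1}u,b\ran$ into the energy with a small weight, its time derivative produces $+\|\pa_{x_1}u\|_{L^2}^2$ (equivalently one may substitute $\p_{x_1}u=b_t-\p_{x_1}^2b+(\text{nonlinear})$), the remaining terms being already controlled by $\|\p_{x_1}b\|$, $\|b_t\|$ and $\|\na p\|$. Next, differentiating the system in $t$ and testing with $(u_t,b_t)$ --- the initial data $u_t(0),b_t(0)$ being fixed through the compatibility discussion of the second remark --- yields $\|(u_t,b_t)\|_{L^2}$ and the dissipation $\|\pa_{x_2}u_t\|_{L^2}$, $\|\pa_{x_1}b_t\|_{L^2}$. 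Finally, rather than estimating $\pa_{x_2}^2u$ by energy, I would read the $u$-equation as a stationary Stokes system $-\Del u+\na p=-\pa_{x_1}^2u-u_t+\p_{x_1}b-u\cdot\na u+b\cdot\na b$, $\div u=0$, $u|_{\p\Om}=0$, and invoke the Stokes $H^2$-regularity $\|u\|_{H^2}+\|\na p\|_{L^2}\lesssim\|\mathrm{RHS}\|_{L^2}$ (and its $\pa_{x_1}$-differentiated version); after rewriting the offending $\pa_{x_1}^2u$ on the right through the magnetic equation as above, the right-hand side lies in $\cF$, and this delivers $\|\na u\|_{H^1}$, $\|\na p\|_{L^2}$ and $\|\cP\pa_{x_2}^2u\|_{H^1}$.

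The genuinely delicate step, which I expect to be the main obstacle, is the control of $\pa_{x_2}^2b$: there is no vertical magnetic dissipation, so this quantity must stay in the energy $\cE$ while its time derivative has to be made integrable using only the dissipation already assembled. Performing the $\pa_{x_2}^2$ estimate on the coupled pair (integration by parts in $x_2$ being legitimate since $u=b=0$ on $\p\Om$), the coupling and cross terms must be absorbed using the third-order velocity dissipation $\pa_{x_2}^3u$, which I would realize as $\pa_{x_2}\cP\pa_{x_2}^2u$ through the pressure/Stokes structure. The dangerous term is $\lan\pa_{x_2}^2u,\pa_{x_2}^2(b\cdot\na b)\ran$, in which the undissipated factor $\pa_{x_2}^2b$ appears. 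To tame it I would substitute the velocity equation, $\pa_{x_2}^2u=u_t-\p_{x_1}b+\na p+u\cdot\na u-b\cdot\na b$, trading the undissipated factor against the dissipated quantities $u_t$, $\p_{x_1}b$ and $\na p$; after integrating by parts to move derivatives onto the favourable factors and using the $H^2$-smallness, every resulting term becomes either an $O(c_0)$ multiple of $\cF^2$ or a product bounded by the dissipation in hand. Combining all the identities with suitable small weights so that $\cF^2$ dominates on the left, absorbing each nonlinear remainder (which carries a factor $c_0$ by Sobolev embedding) into $\cF^2$, and integrating in $t$, I would conclude $\cE^2(t)+\int_0^t\cF^2(s)\,ds\lesssim\|u_0\|_{H^2}^2+\|b_0\|_{H^2}^2$. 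Everything except the $\pa_{x_2}^2b$ estimate is a careful but routine assembly of boundary-respecting energy identities, the Stokes estimate, and the wave-type coupling that restores the missing horizontal velocity dissipation.
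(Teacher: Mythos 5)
Your proposal is correct and follows essentially the same route as the paper's proof: the same layered energy scheme (basic $L^2$ and tangential estimates, the interaction functionals $\lan \p_1 b,u\ran$ and $\lan \p_1^2 b,\p_1 u\ran$ for the missing horizontal velocity dissipation, time-differentiated estimates for $(u_t,b_t)$, and the stationary Stokes regularity for $\|\p_2^2 u\|_{L^2}+\|\na p\|_{L^2}$ and $\cP\p_2^2 u$), combined at the end with small weights and absorption by smallness of $c_0$. In particular, your treatment of the key obstruction --- substituting $\p_2^2 u=u_t-\p_1 b+\na p+u\cdot\na u-b\cdot\na b$ into $\lan \p_2^2 u,\p_2^2(b\cdot\na b)\ran$ and realizing the third-order dissipation as $\p_2\cP\p_2^2 u$ --- is exactly the paper's Steps 8 and 10.
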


\begin{proof} For the half space problem, we will prove the a-priori estimate step by step.

{\bf Step 1.} $L^2$ estimate of $(u,b)$.

Thanks to $u=b=0$ on $\p\Om$, we take the $L^2$ inner product of equations $(\ref{eq:MHDT})_1$ and $(\ref{eq:MHDT})_2$ with $u$ and $b$, respectively, and integrate by parts to obtain
\begin{align}\label{L2}
\f12\f d {dt}\big(\|u(t)\|^2_{L^2}+\|b(t)\|^2_{L^2}\big)+\|\pa_2 u(t)\|^2_{L^2}+\|\pa_1 b(t)\|^2_{L^2}=0
\end{align}
for any $t\in [0, T]$.

{\bf Step 2.} $\dot{H}^1$ estimate of $(u,b)$.

To obtain the $\dot{H}^1$ estimate of $u$, we introduce the  Helmholtz projection
\beno
\cP: \ L^2(\Om)\rightarrow L^2_{\sigma}(\Om), \qquad L^2_{\sigma}(\Om)=\big\{v\:|\: v\in L^2, \div v=0, v\cdot n=0 \  \mathrm{on}\ \p\Om\big\}
\eeno
 to  eliminate the pressure term.

  We take the $L^2$ product of equation $(\ref{eq:MHDT})_1$ with $\cA u:=-\cP\Del u$, apply $\na$ to equation $(\ref{eq:MHDT})_2$ and then take the $L^2$ inner product of the resulting equation with $\na b$, to have
\begin{align}\label{dotH1}
&\f12\f d {dt}\big(\|\na u\|^2_{L^2}+\|\na b\|^2_{L^2}\big)+\|\cP \pa_2^2 u\|^2_{L^2}+\|\pa_1 \pa_2 u\|^2_{L^2}+\|\pa_1^2 b\|^2_{L^2}+\|\pa_1\pa_2 b\|^2_{L^2}\non\\
&=-\lan u\cdot\na u,\cA u\ran+\lan b\cdot\na b,\cA u\ran-\lan \na(u\cdot\na b),\na b\ran
+\lan\na(b\cdot\na u),\na b\ran,
\end{align}
where we use the integration by parts and boundary condition.

Next we deal with the right term of \eqref{dotH1}, by $div u=div b=0$, we have
\begin{align*}
&\lan u\cdot\na u, \cP\pa_2^2 u\ran+\lan b\cdot\na b, \cP\pa_2^2 u\ran\non\\
\lesssim&\|u_1\|_{L^\infty_{x_1}L^2_{x_2} } \|\pa_1 u\|_{L^2_{x_1}L^\infty_{x_2}}\|\cP\pa_2^2 u\|_{L^2}+\|u_2\|_{L^2_{x_1} L^\infty_{x_2}} \|\pa_2 u\|_{L^\infty_{x_1}L^2_{x_2}}\|\cP\pa_2^2 u\|_{L^2}\\
&+\|b_1\|_{ L^\infty_{x_1}L^2_{x_2}} \|\pa_1b \|_{L^2_{x_1}L^\infty_{x_2}}\|\cP\pa_2^2 u\|_{L^2}+\|b_2\|_{ L^2_{x_1}L^\infty_{x_2}} \|\pa_2b \|_{L^\infty_{x_1}L^2_{x_2}}\|\cP\pa_2^2 u\|_{L^2}\non\\
\lesssim&\|u\|_{L^2}^\f12\|\pa_2u_2\|_{L^2}^\f12\|u\|_{H^1}^\f12\|\pa_1\pa_2u\|_{L^2}^\f12\|\cP\pa_2^2 u\|_{L^2}+\|u\|_{L^2}^\f12\|\pa_2u_2\|_{L^2}^\f12\|\pa_2u\|_{L^2}^\f12\|\pa_1\pa_2u\|_{L^2}^\f12\|\cP\pa_2^2 u\|_{L^2}\\
&+\|b\|_{L^2}^\f12\|\pa_1b_1\|_{L^2}^\f12\|\pa_1b\|_{L^2}^\f12\|\pa_1b\|_{H^1}^\f12\|\cP\pa_2^2 u\|_{L^2}+\|b\|_{L^2}^\f12\|\pa_1b_1\|_{L^2}^\f12\|\pa_2b\|_{L^2}^\f12\|b\|_{H^1}^\f12\|\pa_1b\|_{H^1}^\f12\|\cP\pa_2^2 u\|_{L^2}\\
\lesssim&\|(u,b)\|_{H^1} \Big( \|\p_1 b\|_{H^1}^2 + \|\cP\pa_2^2 u\|_{L^2}^2+\|\pa_1\pa_2 u\|_{L^2}^2+\|\pa_2 u\|_{L^2}^2\Big),
\end{align*}
and
\begin{align*}
&\lan u\cdot\na u, \pa_1^2 u\ran+\lan b\cdot\na b, \pa_1^2 u\ran\non\\
=&-\lan \pa_1u_1\pa_1 u_1, \pa_1 u_1\ran-\lan \pa_1u_2\pa_2 u_1, \pa_1 u_1\ran-\lan \pa_1b\cdot\na b, \pa_1 u\ran-\lan b\cdot\na \pa_1b, \pa_1 u\ran\non\\
\lesssim&\|\pa_1u_1\|_{L^\infty_{x_1} L^2_{x_2}} \|\pa_1 u_1\|_{L^2_{x_1}L^\infty_{x_2}}\|\pa_1 u_1\|_{L^2}+\|\pa_1u_2\|_{L^2} \|\pa_2 u_1\|_{L^\infty_{x_1} L^2_{x_2}}\|\pa_1 u_1\|_{L^2_{x_1} L^\infty_{x_2}}\\
&+\|\pa_1b\|_{ L^2} \|\na b \|_{L^\infty_{x_1}L^2_{x_2}}\|\pa_1 u\|_{L^2_{x_1}L^\infty_{x_2}}+\|b\|_{L^\infty_{x_1} L^2_{x_2}} \|\pa_1\na b \|_{L^2}\|\pa_1 u\|_{ L^2_{x_1}L^\infty_{x_2}}\non\\
\lesssim&\|u\|_{H^1}\|\pa_1\pa_2u_1\|_{L^2}^\f12(\|\pa_1\pa_2u_2\|_{L^2}^\f12+\|\pa_1\pa_2u_1\|_{L^2}^\f12)\|\pa_2u_2\|_{L^2}+\|b\|_{H^1}^\f12\|\pa_1b\|_{H^1}^\f32\|u\|_{H^1}^\f12\|\pa_1\pa_2u\|_{L^2}^\f12\\
\lesssim&\|(u,b)\|_{H^1} \Big( \|\p_1 b\|_{H^1}^2 + \|\cP\pa_2^2 u\|_{L^2}^2+\|\pa_1\pa_2 u\|_{L^2}^2+\|\pa_2 u\|_{L^2}^2\Big),
\end{align*}
where we use the fact that
\begin{align*}
\lan u\cdot\na \pa_1u, \pa_1 u\ran=0,
\end{align*}
and
\begin{align*}
\lan \pa_1u_1\pa_1 u_2, \pa_1 u_2\ran+\lan \pa_1u_2\pa_2 u_2, \pa_1 u_2\ran=0.
\end{align*}

Next we deal with the remain terms of the term on the right side of \eqref{dotH1},
\begin{align*}
&\lan \na(u\cdot\na b), \na b\ran+\lan \na( b\cdot\na u), \na b\ran\non\\
=&\lan \pa_1 u\cdot\na b, \pa_1 b\ran+\lan \pa_2 u_1 \pa_1 b, \pa_2 b\ran+\lan \pa_2 u_2 \pa_2 b, \pa_2 b\ran+\lan b\cdot\na u, -\pa_1^2 b\ran\\
&+\lan \pa_2 b_1 \pa_1 u, \pa_2 b\ran+\lan  b_1 \pa_1 \pa_2u, \pa_2 b\ran+\lan \pa_2 b_2 \pa_2 u, \pa_2 b\ran+\lan  b_2 \pa_2 \pa_2u, \pa_2 b\ran
:=\sum\limits_{k=1}^{8}\mathcal{I}_{1k}.
\end{align*}

Then we have
\begin{align*}
\mathcal{I}_{11}+\mathcal{I}_{12}+\mathcal{I}_{13}&=\lan \pa_1u\cdot\na b, \pa_1 b\ran+\lan \pa_2 u_1 \pa_1 b, \pa_2 b\ran-\lan \pa_1 u_1 \pa_2 b, \pa_2 b\ran\\
&=\lan \pa_1u\cdot\na b, \pa_1 b\ran+\lan \pa_2 u_1 \pa_1 b, \pa_2 b\ran+2\lan  u_1 \pa_1\pa_2 b, \pa_2 b\ran\\
&\lesssim\|\pa_1u\|_{L^2_{x_1}L^\infty_{x_2}} \|\na b\|_{L^\infty_{x_1}L^2_{x_2}}\|\pa_1 b\|_{L^2}+\|\pa_2u\|_{L^2} \|\pa_1 b\|_{L^2_{x_1}L^\infty_{x_2}}\|\pa_2 b\|_{L^\infty_{x_1}L^2_{x_2}}\\
&\quad+\|u_1\|_{L^2_{x_1}L^\infty_{x_2}} \|\pa_1\pa_2 b\|_{L^2}\|\pa_2 b\|_{L^\infty_{x_1}L^2_{x_2}}\\
&\lesssim\| u\|_{H^1}^\f12\|\pa_1\pa_2u\|_{L^2}^\f12\|b\|_{H^1}^\f12\|\pa_1b\|_{H^1}^\f32+\|u\|_{H^1}^\f12\|\pa_2u\|_{L^2}^\f12\|\pa_1b\|_{H^1}^\f32\|b\|_{H^1}^\f12\\
&\quad+\| u\|_{H^1}^\f12\|\pa_2u\|_{L^2}^\f12\|b\|_{H^1}^\f12\|\pa_1b\|_{H^1}^\f32\\
&\lesssim\|(u,b)\|_{H^1} \Big( \|\p_1 b\|_{H^1}^2 + \|\cP\pa_2^2 u\|_{L^2}^2+\|\pa_1\pa_2 u\|_{L^2}^2+\|\pa_2 u\|_{L^2}^2\Big),
\end{align*}
and
\begin{align*}
\mathcal{I}_{14}+\mathcal{I}_{15}&=\lan b_1\pa_1 u, -\pa_1^2 b\ran+\lan b_2\pa_2 u, -\pa_1^2 b\ran+\lan \pa_2 b_1 \pa_1 u_1, \pa_2 b_1\ran+\lan \pa_2 b_1 \pa_1 u_2, \pa_2 b_2\ran\\
&=\lan b_1\pa_1 u, -\pa_1^2 b\ran+\lan b_2\pa_2 u, -\pa_1^2 b\ran-2\lan \pa_1\pa_2 b_1  u_1, \pa_2 b_1\ran+\lan \pa_2 b_1 \pa_1 u_2, \pa_2 b_2\ran\\
&\lesssim\|b_1\|_{L^\infty_{x_1}L^2_{x_2}} \|\pa_1 u\|_{L^2_{x_1}L^\infty_{x_2}}\|\pa_1^2 b\|_{L^2}+\|b_2\|_{L^2_{x_1}L^\infty_{x_2}} \|\pa_2 u\|_{L^\infty_{x_1}L^2_{x_2}}\|\pa_1^2 b\|_{L^2}\\
&\quad+\|\pa_1\pa_2b_1\|_{L^2} \| u_1\|_{L^2_{x_1}L^\infty_{x_2}}\|\pa_2 b_1\|_{L^\infty_{x_1}L^2_{x_2}}+\|\pa_2b_1\|_{L^\infty_{x_1}L^2_{x_2}} \| \pa_1u_2\|_{L^2_{x_1}L^\infty_{x_2}}\|\pa_2 b_2\|_{L^2}\\
&\lesssim\|b\|_{H^1}^\f12 \|\pa_1 b\|_{H^1}^\f12\|u\|_{H^1}^\f12\|\pa_1\pa_2u\|_{L^2}^\f12\|\pa_1b\|_{H^1}\\
&\quad+\|\pa_1 b\|_{H^1}\|u\|_{H^1}^\f12\|\pa_2u\|_{L^2}^\f12\|b\|_{H^1}^\f12\|\pa_1b\|_{H^1}^\f12+\|b\|_{H^1}^\f12\|\pa_1 b\|_{H^1}^\f12\|u\|_{H^1}^\f12\|\pa_1\pa_2u\|_{L^2}^\f12\|\pa_1b\|_{H^1}\\
&\lesssim\|(u,b)\|_{H^1} \Big( \|\p_1 b\|_{H^1}^2 + \|\cP\pa_2^2 u\|_{L^2}^2+\|\pa_1\pa_2 u\|_{L^2}^2+\|\pa_2 u\|_{L^2}^2\Big),
\end{align*}
and
\begin{align*}
\mathcal{I}_{16}+\mathcal{I}_{17}&=\lan  b_1 \pa_1 \pa_2u_1, \pa_2 b_1\ran+\lan  b_1 \pa_1 \pa_2u_2, \pa_2 b_2\ran+\lan \pa_2 b_2 \pa_2 u, \pa_2 b\ran\\
&=-\lan  \pa_1b_1  \pa_2u_1, \pa_2 b_1\ran-\lan  b_1  \pa_2u_1, \pa_1\pa_2 b_1\ran+\lan  b_1 \pa_1 \pa_2u_2, \pa_2 b_2\ran+\lan \pa_2 b_2 \pa_2 u, \pa_2 b\ran\\
&\lesssim\|\pa_1b_1\|_{L^2_{x_1}L^\infty_{x_2}} \|\pa_2 u_1\|_{L^2}\|\pa_2 b_1\|_{L^\infty_{x_1}L^2_{x_2}}+\|b_1\|_{L^2_{x_1}L^\infty_{x_2}} \|\pa_2 u_1\|_{L^\infty_{x_1}L^2_{x_2}}\|\pa_1\pa_2 b_1\|_{L^2}\\
&\quad+\|b_1\|_{L^\infty_{x_1}L^2_{x_2}} \|\pa_1\pa_2 u_2\|_{L^2}\|\pa_2 b_2\|_{L^2_{x_1}L^\infty_{x_2}}+\|\pa_2b_2\|_{L^2_{x_1}L^\infty_{x_2}} \| \pa_2u\|_{L^2}\|\pa_2 b\|_{L^\infty_{x_1}L^2_{x_2}}\\
&\lesssim\|\pa_1b\|_{H^1}\| u\|_{H^1}^\f12 \|\pa_2 u\|_{L^2}^\f12\|b\|_{H^1}^\f12\|\pa_1b\|_{H^1}^\f12+\|b\|_{H^1}\|\pa_2 u\|_{L^2}^\f12 \|\pa_1\pa_2 u\|_{L^2}^\f12 \|\pa_1 b\|_{H^1}\\
&\quad+\| b\|_{H^1}\|\pa_1\pa_2u\|_{L^2}\|\pa_1b\|_{H^1}+\|b\|_{H^1}^\f12\|\pa_1 b\|_{H^1}^\f12\|\pa_2u\|_{L^2}\|b\|_{H^1}^\f12\|\pa_1b\|_{H^1}^\f12\\
&\lesssim\|(u,b)\|_{H^1} \Big( \|\p_1 b\|_{H^1}^2 + \|\cP\pa_2^2 u\|_{L^2}^2+\|\pa_1\pa_2 u\|_{L^2}^2+\|\pa_2 u\|_{L^2}^2\Big),
\end{align*}
and
\begin{align*}
\mathcal{I}_{18}&=\lan  b_2  \pa_2^2u, \pa_2 b\ran\\
&=\lan  b_2\pa_2 b_1,  (\cP\pa_2^2u)_1 \ran+\lan  b_2\pa_2 b_1,  \pa_1\phi \ran+\lan  b_2\pa_2 b_2,  \pa_2^2u_2   \ran\\
&=\lan  b_2\pa_2 b_1,  (\cP\pa_2^2u)_1 \ran+\lan  -\pa_2b_2 b_1,  \pa_1\phi \ran+\lan  b_2 b_1,  -\pa_1\pa_2\phi \ran+\lan  b_2\pa_2 b_2,  -\pa_1\pa_2u_1   \ran\\
&=\lan  b_2\pa_2 b_1,  (\cP\pa_2^2u)_1 \ran+\lan  \pa_1b_1 b_1,  \pa_1\phi \ran+\lan  \pa_1b_2 b_1+b_2 \pa_1b_1,  \pa_2\phi \ran+\lan  b_2\pa_2 b_2,  -\pa_1\pa_2u_1   \ran\\
&=\lan  b_2\pa_2 b_1,  (\cP\pa_2^2u)_1 \ran+\lan  \pa_1b_1 b_1,  \pa_2^2u_1-(\cP\pa_2^2u)_1 \ran+\lan  \pa_1b_2 b_1+b_2 \pa_1b_1,  \pa_2^2u_2-(\cP\pa_2^2u)_2 \ran\\
&\quad+\lan  b_2\pa_2 b_2,  -\pa_1\pa_2u_1   \ran\\
&=\lan  b_2\pa_2 b_1,  (\cP\pa_2^2u)_1 \ran-\lan  \pa_1\pa_2b_1 b_1,  \pa_2u_1 \ran-\lan  \pa_1b_1 \pa_2b_1,  \pa_2u_1 \ran-\lan  \pa_1b_1 b_1,  (\cP\pa_2^2u)_1 \ran\\
&\quad+\lan  \pa_1b_2 b_1+b_2 \pa_1b_1,  -\pa_1\pa_2u_1-(\cP\pa_2^2u)_2 \ran+\lan  b_2\pa_2 b_2,  -\pa_1\pa_2u_1   \ran\\
&=\lan  b_2\pa_2 b_1,  (\cP\pa_2^2u)_1 \ran-\lan  \pa_1\pa_2b_1 b_1,  \pa_2u_1 \ran-\lan  \pa_1b_1 \pa_2b_1,  \pa_2u_1 \ran-\lan  \pa_1b_1 b_1,  (\cP\pa_2^2u)_1 \ran\\
&\quad+\lan  \pa_1b_2 b_1,  -\pa_1\pa_2u_1 \ran+\lan  \pa_1b_2 b_1+b_2 \pa_1b_1,  -(\cP\pa_2^2u)_2 \ran\\
&\lesssim\|b_2\|_{L^2_{x_1}L^\infty_{x_2}} \|\pa_2 b_1\|_{L^\infty_{x_1}L^2_{x_2}}\|\cP(\pa_2^2u)\|_{L^2}+\|\pa_1\pa_2b_1\|_{L^2} \| b_1\|_{L^2_{x_1}L^\infty_{x_2}}\|\pa_2 u_1\|_{L^\infty_{x_1}L^2_{x_2}}\\
&\quad+\|\pa_1b_1\|_{L^2_{x_1}L^\infty_{x_2}} \|\pa_2 b_1\|_{L^\infty_{x_1}L^2_{x_2}}\|\pa_2 u_1\|_{L^2}+\|\pa_1b_1\|_{L^2_{x_1}L^\infty_{x_2}} \| b_1\|_{L^\infty_{x_1}L^2_{x_2}}\|\cP(\pa_2^2u)\|_{L^2}\\
&\quad+\|\pa_1b_2\|_{L^2_{x_1}L^\infty_{x_2}} \| b_1\|_{L^\infty_{x_1}L^2_{x_2}}\|\pa_1\pa_2 u_1\|_{L^2}+\|\pa_1b\|_{L^2_{x_1}L^\infty_{x_2}} \| b\|_{L^\infty_{x_1}L^2_{x_2}}\|\cP(\pa_2^2u)\|_{L^2}\\
&\lesssim\|b\|_{H^1}\|\pa_1b\|_{H^1}\|\cP(\pa_2^2u)\|_{L^2} +\|b\|_{H^1}\|\pa_1b\|_{H^1}\|\pa_2u\|_{L^2}^\f12\|\pa_1\pa_2u\|_{L^2}^\f12\\
&\quad+\|b\|_{H^1}\|\pa_1b\|_{H^1}\|\pa_2u\|_{L^2}+\|b\|_{H^1}\|\pa_1b\|_{H^1}\|\cP(\pa_2^2u)\|_{L^2}\\
&\quad+\|b\|_{H^1}\|\pa_1b\|_{H^1}\|\pa_1\pa_2u\|_{L^2}+\|b\|_{H^1}\|\pa_1b\|_{H^1}\|\cP(\pa_2^2u)\|_{L^2}\\
&\lesssim\|(u,b)\|_{H^1} \Big( \|\p_1 b\|_{H^1}^2 + \|\cP\pa_2^2 u\|_{L^2}^2+\|\pa_1\pa_2 u\|_{L^2}^2+\|\pa_2 u\|_{L^2}^2\Big),
\end{align*}
where we use the fact that for any $v\in L^2(\Omega)$, there exists a unique Helmholtz decomposition
\beno
 \pa_2^2u=\cP\pa_2^2u+\na\phi.
\eeno

Thus we proof the result
\begin{align}\label{H1}
&\f12\f d {dt}\big(\| \na u\|^2_{L^2}+\| \na b\|^2_{L^2}\big)+\|\cP \pa_2^2 u\|^2_{L^2}+\|\pa_1 \pa_2 u\|^2_{L^2}+\|\pa_1^2 b\|^2_{L^2}+\|\pa_1\pa_2 b\|^2_{L^2}\non\\
&\lesssim\|(u,b)\|_{H^1} \Big( \|\p_1 b\|_{H^1}^2 + \|\cP\pa_2^2 u\|_{L^2}^2+\|\pa_1\pa_2 u\|_{L^2}^2+\|\pa_2 u\|_{L^2}^2\Big).
\end{align}

{\bf Step 3.} Dissipation estimate of  $\p_1u$.

By taking the $L^2$ product of equations $(\ref{eq:MHDT})_1$ and  $(\ref{eq:MHDT})_2$ with $\p_1 b$ and $-\p_1 u$, respectively, and using the integration by parts,  we deduce that
\begin{align}\label{p1u}
&\f d {dt}\lan \p_1b, u\ran+\|\p_1 u\|_{L^2}^2-\|\p_1 b\|_{L^2}^2+\lan \pa_2 u, \p_1\p_2 b\ran+\lan \pa_1 u, \p_1^2 b\ran\nonumber\\
&=-\lan \p_1 b,u\cdot\na u\ran+\lan \p_1b,b\cdot\na b\ran+\lan\p_1 u,u\cdot\na b\ran-\lan\p_1u,b\cdot\na u\ran\non\\
&\lesssim  \|\pa_1 b\|_{L^2}(\|u_1\|_{L^\infty_{x_1}L^2_{x_2}}\|\pa_1u\|_{L^2_{x_1}L^\infty_{x_2}}+\|u_2\|_{L^2_{x_1}L^\infty_{x_2}}\|\pa_2u\|_{L^\infty_{x_1}L^2_{x_2}})\non\\
&\quad+  \|\pa_1 b\|_{L^2}(\|b_1\|_{L^\infty_{x_1}L^2_{x_2}}\|\pa_1b\|_{L^2_{x_1}L^\infty_{x_2}}+\|b_2\|_{L^2_{x_1}L^\infty_{x_2}}\|\pa_2b\|_{L^\infty_{x_1}L^2_{x_2}})\non\\
&\quad+  \|\pa_1 u\|_{L^2}(\|u_1\|_{L^\infty_{x_1}L^2_{x_2}}\|\pa_1b\|_{L^2_{x_1}L^\infty_{x_2}}+\|u_2\|_{L^2_{x_1}L^\infty_{x_2}}\|\pa_2b\|_{L^\infty_{x_1}L^2_{x_2}})\non\\
&\quad+  \|\pa_1 u\|_{L^2}(\|b_1\|_{L^\infty_{x_1}L^2_{x_2}}\|\pa_1u\|_{L^2_{x_1}L^\infty_{x_2}}+\|b_2\|_{L^2_{x_1}L^\infty_{x_2}}\|\pa_2u\|_{L^\infty_{x_1}L^2_{x_2}})\non\\
&\lesssim  \|\pa_1 b\|_{L^2}\|u\|_{H^1}\|\p_2 u\|_{L^2}^\f12\|\pa_1\p_2 u\|_{L^2}^\f12+\|\pa_1 b\|_{L^2}\|b\|_{H^1}\|\p_1 b\|_{H^1}\non\\
&\quad+  \|\pa_1 u\|_{L^2}\|u\|_{H^1}^\f12\|b\|_{H^1}^\f12\|\p_2 u\|_{L^2}^\f12\|\p_1 b\|_{H^1}^\f12+\|\pa_1 u\|_{L^2}\|u\|_{H^1}^\f12\|b\|_{H^1}^\f12\|\pa_1\p_2 u\|_{L^2}^\f12\|\p_1 b\|_{H^1}^\f12\non\\
&\lesssim \|(u, b)\|_{H^1}\Big(\|\p_1 b\|_{H^1}^2 +\|\pa_1\pa_2 u\|_{L^2}^2+\|\pa_2 u\|_{L^2}^2+\|\pa_1 u\|_{L^2}^2 \Big).
\end{align}

{\bf Step 4.} Dissipation estimate of $(u_t,b_t)$.

Taking the $L^2$ product of  equations $(\ref{eq:MHDT})_1$  and  $(\ref{eq:MHDT})_2$  with $u_t$ and $b_t$, and using the integration by parts,   we have
\begin{align}\label{ut}
&\f12\f d {dt}(\|\pa_2 u\|^2_{L^2}+\|\pa_1 b\|^2_{L^2})+\|u_t\|^2_{L^2}+\|b_t\|_{L^2}^2-\lan u_t,\p_1 b\ran-\lan b_t,\p_1 u\ran\nonumber\\
&=-\lan u_t,u\cdot\na u\ran+\lan u_t, b\cdot\na b\ran
 -\lan b_t,u\cdot\na b\ran+\lan b_t,b\cdot\na u \ran\non\\
 &\lesssim  \|u_t\|_{L^2}(\|u_1\|_{L^\infty_{x_1}L^2_{x_2}}\|\pa_1u\|_{L^2_{x_1}L^\infty_{x_2}}+\|u_2\|_{L^2_{x_1}L^\infty_{x_2}}\|\pa_2u\|_{L^\infty_{x_1}L^2_{x_2}})\non\\
&\quad+  \|u_t\|_{L^2}(\|b_1\|_{L^\infty_{x_1}L^2_{x_2}}\|\pa_1b\|_{L^2_{x_1}L^\infty_{x_2}}+\|b_2\|_{L^2_{x_1}L^\infty_{x_2}}\|\pa_2b\|_{L^\infty_{x_1}L^2_{x_2}})\non\\
&\quad+  \|b_t\|_{L^2}(\|u_1\|_{L^\infty_{x_1}L^2_{x_2}}\|\pa_1b\|_{L^2_{x_1}L^\infty_{x_2}}+\|u_2\|_{L^2_{x_1}L^\infty_{x_2}}\|\pa_2b\|_{L^\infty_{x_1}L^2_{x_2}})\non\\
&\quad+  \|b_t\|_{L^2}(\|b_1\|_{L^\infty_{x_1}L^2_{x_2}}\|\pa_1u\|_{L^2_{x_1}L^\infty_{x_2}}+\|b_2\|_{L^2_{x_1}L^\infty_{x_2}}\|\pa_2u\|_{L^\infty_{x_1}L^2_{x_2}})\non\\
&\lesssim  \|u_t\|_{L^2}\|u\|_{H^1}\|\p_2 u\|_{L^2}^\f12\|\pa_1\p_2 u\|_{L^2}^\f12+\|u_t\|_{L^2}\|b\|_{H^1}\|\p_1 b\|_{H^1}\non\\
&\quad+  \|b_t\|_{L^2}\|u\|_{H^1}^\f12\|b\|_{H^1}^\f12\|\p_2 u\|_{L^2}^\f12\|\p_1 b\|_{H^1}^\f12+\|b_t\|_{L^2}\|u\|_{H^1}^\f12\|b\|_{H^1}^\f12\|\pa_1\p_2 u\|_{L^2}^\f12\|\p_1 b\|_{H^1}^\f12\non\\
&\lesssim \|(u, b)\|_{H^1}\Big(\|u_t\|_{L^2}^2+\|b_t\|_{L^2}^2+\|\p_1 b\|_{H^1}^2 +\|\pa_1\pa_2 u\|_{L^2}^2+\|\pa_2 u\|_{L^2}^2\Big).
\end{align}

{\bf Step 5.} Energy estimate of  $\p_1^2u$ and $\p_1^2b$ .

We apply $\pa_1^2$ to equation  $(\ref{eq:MHDT})_1$, $(\ref{eq:MHDT})_2$ and take the $L^2$ inner product of the resulting equation with $\pa_1^2 u$, $\pa_1^2 b$ to obtain
\begin{align}\label{p12u,p12b}
&\f12\f d {dt}(\|\pa_1^2 u\|_{L^2}^2+\|\pa_1^2 b\|_{L^2}^2)+(\|\p_1^2 \pa_2 u\|_{L^2}^2+\|\p_1^3  b\|_{L^2}^2)\non\\
=&-\lan \p_1^2 u ,\pa_1^2(u\cdot\na u)\ran+\lan \p_1^2 u ,\pa_1^2(b\cdot\na b)\ran-\lan \p_1^2 b ,\pa_1^2(u\cdot\na b)\ran+\lan \p_1^2 b ,\pa_1^2(b\cdot\na u)\ran\non\\
=&-\lan \p_1^2 u_1 ,\pa_1^2u_1\pa_1 u_1+\pa_1^2u_2\pa_2 u_1+2\pa_1u_1\pa_1^2 u_1+2\pa_1u_2\pa_1\pa_2 u_1\ran
\non\\
&+\lan \p_1^2 u ,\pa_1^2b_1\pa_1 b+\pa_1^2b_2\pa_2 b+2\pa_1b_1\pa_1^2 b+2\pa_1b_2\pa_1\pa_2 b\ran\non\\
&-\lan \p_1^2 b ,\pa_1^2u_1\pa_1 b+\pa_1^2u_2\pa_2 b+2\pa_1u_1\pa_1^2 b+2\pa_1u_2\pa_1\pa_2 b\ran\non\\
&+\lan \p_1^2 b ,\pa_1^2b_1\pa_1 u+\pa_1^2b_2\pa_2 u+2\pa_1b_1\pa_1^2 u+2\pa_1b_2\pa_1\pa_2 u\ran\non\\
&-\lan \p_1^2 u_2 ,\pa_1^2u_1\pa_1 u_2+\pa_1^2u_2\pa_2 u_2+2\pa_1u_1\pa_1^2 u_2+2\pa_1u_2\pa_1\pa_2 u_2\ran
\non\\
\lesssim& \|\pa_1^2 u_1\|_{L^2_{x_1}L^\infty_{x_2}}\|\pa_1\p_2u\|_{L^\infty_{x_1}L^2_{x_2}}\|\pa_1u_1\|_{L^2}+\|\pa_1^2 u_1\|_{L^2}\|\pa_1^2u_2\|_{L^2_{x_1}L^\infty_{x_2}}\|\pa_2u_1\|_{L^\infty_{x_1}L^2_{x_2}}\non\\
&+ \|\pa_1^2 u\|_{L^2_{x_1}L^\infty_{x_2}}(\|\pa_1\p_2b\|_{L^2}+\|\pa_1^2b\|_{L^2})(\|\pa_1b\|_{L^\infty_{x_1}L^2_{x_2}}+\|\pa_2b\|_{L^\infty_{x_1}L^2_{x_2}})\non\\
&+ \|\pa_1^2 b\|_{L^2}\|\pa_1^2 u\|_{L^2_{x_1}L^\infty_{x_2}}(\|\pa_1 b\|_{L^\infty_{x_1}L^2_{x_2}}+\|\pa_2 b\|_{L^\infty_{x_1}L^2_{x_2}})+ \|\pa_1^2 b\|_{L^2}\|\pa_1\pa_2 u\|_{L^\infty_{x_1}L^2_{x_2}}\|\pa_1 b\|_{L^2_{x_1}L^\infty_{x_2}}\non\\
&+ \|\pa_1^2 b\|_{L^2}\|\pa_1^2 b\|_{L^\infty_{x_1}L^2_{x_2}}(\|\pa_1 u\|_{L^2_{x_1}L^\infty_{x_2}}+\|\pa_2 u\|_{L^2_{x_1}L^\infty_{x_2}})+ \|\pa_1^2 b\|_{L^2}\|\pa_1\pa_2 b\|_{L^\infty_{x_1}L^2_{x_2}}\|\pa_1 u\|_{L^2_{x_1}L^\infty_{x_2}}\non\\
&+(\| \pa_1 b_{t}\|_{L^2}+\|\pa_1 b\|_{H^2}+\|\pa_1u_1\|_{L^2}\|\pa_1b_2\|_{L^\infty}+\|\pa_1u_2\|_{L^2}\|\pa_2b_2\|_{L^\infty}+\|u\|_{L^\infty}\|\pa_1\na b\|_{L^2}+\|b\|_{L^\infty}\|\pa_1\na u\|_{L^2} )\non\\
&(\|\pa_1u_1\|_{L^\infty_{x_1}L^2_{x_2}}\|\pa_1^2 u_2\|_{L^2_{x_1}L^\infty_{x_2}}+\|\pa_1u_2\|_{L^2_{x_1}L^\infty_{x_2}}\|\pa_1\pa_2 u_2\|_{L^\infty_{x_1}L^2_{x_2}})
\non\\
\lesssim&\Big(\|(u,b)\|_{H^2}+\|(u,b)\|_{H^2}^2\Big) \Big(\|\p_1b_t\|_{L^2}+ \|\p_1 b\|_{H^2}^2 +\|\pa_1^2\pa_2 u\|_{L^2}^2+\|\pa_1\pa_2 u\|_{L^2}^2+\|\pa_2 u\|_{L^2}^2\Big),
\end{align}
where we use the magnetic equation and the fact
\begin{align*}
\lan \p_1^2 u ,u\cdot\na \pa_1^2 u\ran=0, \quad \lan \p_1^2 b ,u\cdot\na \pa_1^2 b\ran=0, \quad \lan \p_1^2 u ,b\cdot\na\pa_1^2 b\ran+\lan \p_1^2 b ,b\cdot\na \pa_1^2 u\ran=0,
\end{align*}
and
\begin{align*}
&\lan \p_1^2 u_2 ,\pa_1^2u_1\pa_1 u_2+\pa_1^2u_2\pa_2 u_2+2\pa_1u_1\pa_1^2 u_2+2\pa_1u_2\pa_1\pa_2 u_2\ran
\non\\
=&\lan \p_1^2 u_2 ,\pa_1u_1\pa_1^2 u_2+\pa_1u_2\pa_1\pa_2 u_2\ran
\non\\
=&\lan \pa_1 b_{2,t}-\pa_1^3 b_{2}+\pa_1(u\cdot\na b_2-b\cdot \na u_2) ,\pa_1u_1\pa_1^2 u_2+\pa_1u_2\pa_1\pa_2 u_2\ran
.
\end{align*}

{\bf Step 6.} Energy estimate of  $\p_1\pa_2u$ and $\p_1\pa_2b$ .

We apply $\pa_1$ to equation  $(\ref{eq:MHDT})_1$, and take the $L^2$ inner product of the resulting equation with $-\pa_1\cP\pa_2^2 u$,  to obtain
\begin{align}\label{61}
&\f12\f d {dt}\|\pa_1\pa_2 u\|_{L^2}^2+\|\p_1 \cP \pa_2^2 u\|_{L^2}^2+\lan\pa_1\pa_2^2 u  ,\pa_1^2b\ran\non\\
=&\lan \pa_1\cP\pa_2^2 u ,\pa_1(u\cdot\na u)\ran-\lan \pa_1\cP\pa_2^2 u ,\pa_1(b\cdot\na b)\ran\non\\
=&\lan \pa_1\cP\pa_2^2 u ,\pa_1u_1\pa_1 u\ran+\lan \pa_1\cP\pa_2^2 u ,\pa_1u_2\pa_2 u\ran+\lan \pa_1\cP\pa_2^2 u ,u_1\pa_1^2 u\ran+\lan \pa_1\cP\pa_2^2 u ,u_2\pa_1\pa_2 u\ran\non\\
&-\lan \pa_1\cP\pa_2^2 u ,\pa_1b_1\pa_1 b\ran-\lan \pa_1\cP\pa_2^2 u ,\pa_1b_2\pa_2 b\ran-\lan \pa_1\cP\pa_2^2 u ,b_1\pa_1^2 b\ran-\lan \pa_1\cP\pa_2^2 u ,b_2\pa_1\pa_2 b\ran\non\\
\lesssim& \|\pa_1\cP\pa_2^2 u\|_{L^2}\Big(\|\pa_1 u_1\|_{L^\infty_{x_1}L^2_{x_2}}\|\pa_1 u\|_{L^2_{x_1}L^\infty_{x_2}}+\|\pa_2 u\|_{L^\infty_{x_1}L^2_{x_2}}\|\pa_1 u_2\|_{L^2_{x_1}L^\infty_{x_2}}+\| u_1\|_{L^\infty_{x_1}L^2_{x_2}}\|\pa_1^2 u\|_{L^2_{x_1}L^\infty_{x_2}}\non\\
&+ \| u_2\|_{L^2_{x_1}L^\infty_{x_2}}\|\pa_1\pa_2 u\|_{L^\infty_{x_1}L^2_{x_2}}+\|\pa_1 b_1\|_{L^\infty_{x_1}L^2_{x_2}}\|\pa_1 b\|_{L^2_{x_1}L^\infty_{x_2}}+\|\pa_2 b\|_{L^\infty_{x_1}L^2_{x_2}}\|\pa_1 b_2\|_{L^2_{x_1}L^\infty_{x_2}}\non\\
&+\| b_1\|_{L^\infty_{x_1}L^2_{x_2}}\|\pa_1^2 b\|_{L^2_{x_1}L^\infty_{x_2}}+ \| b_2\|_{L^2_{x_1}L^\infty_{x_2}}\|\pa_1\pa_2 b\|_{L^\infty_{x_1}L^2_{x_2}}\Big)\non\\
\lesssim& \|\pa_1\cP\pa_2^2 u\|_{L^2}\|(u,b)\|_{H^2}(\|\pa_2 u\|_{L^2}+\|\pa_1\pa_2 u\|_{L^2}+\|\pa_1^2\pa_2 u\|_{L^2}+\|\pa_1 b\|_{H^2})\non\\
\lesssim&\|(u,b)\|_{H^2} \Big( \|\p_1 b\|_{H^2}^2 + \|\pa_1\cP\pa_2^2 u\|_{L^2}^2+\|\pa_1^2\pa_2 u\|_{L^2}^2+\|\pa_1\pa_2 u\|_{L^2}^2+\|\pa_2 u\|_{L^2}^2\Big).
\end{align}

We apply $\pa_1$ to equation  $(\ref{eq:MHDT})_2$, and take the $L^2$ inner product of the resulting equation with $-\pa_1\pa_2^2 b$,  to obtain
\begin{align}\label{62}
&\f12\f d {dt}\|\pa_1\pa_2 b\|_{L^2}^2+\|\p_1^2 \pa_2 b\|_{L^2}^2+\lan\pa_1\pa_2^2 b  ,\pa_1^2u\ran\non\\
=&\lan \pa_1\pa_2^2 b ,\pa_1(u\cdot\na b)\ran-\lan \pa_1\pa_2^2 b ,\pa_1(b\cdot\na u)\ran\non\\
=&\lan \pa_1\pa_2^2 b ,\pa_1u_1\pa_1 b\ran+\lan \pa_1\pa_2^2 b ,\pa_1u_2\pa_2 b\ran+\lan \pa_1\pa_2^2 b ,u_1\pa_1^2 b\ran+\lan \pa_1\pa_2^2 b ,u_2\pa_1\pa_2 b\ran\non\\
&-\lan \pa_1\pa_2^2 b ,\pa_1b_1\pa_1 u\ran-\lan \pa_1\pa_2^2 b ,\pa_1b_2\pa_2 u\ran-\lan \pa_1\pa_2^2 b ,b_1\pa_1^2 u\ran-\lan \pa_1\pa_2^2 b ,b_2\pa_1\pa_2 u\ran\non\\
\lesssim& \|\pa_1\pa_2^2 b\|_{L^2}\Big(\|\pa_1 u_1\|_{L^\infty_{x_1}L^2_{x_2}}\|\pa_1 b\|_{L^2_{x_1}L^\infty_{x_2}}+\|\pa_2 b\|_{L^\infty_{x_1}L^2_{x_2}}\|\pa_1 u_2\|_{L^2_{x_1}L^\infty_{x_2}}+\| u_1\|_{L^\infty_{x_1}L^2_{x_2}}\|\pa_1^2 b\|_{L^2_{x_1}L^\infty_{x_2}}\non\\
&+ \| u_2\|_{L^2_{x_1}L^\infty_{x_2}}\|\pa_1\pa_2 b\|_{L^\infty_{x_1}L^2_{x_2}}+\|\pa_1 b_1\|_{L^\infty_{x_1}L^2_{x_2}}\|\pa_1 u\|_{L^2_{x_1}L^\infty_{x_2}}+\|\pa_2 u\|_{L^\infty_{x_1}L^2_{x_2}}\|\pa_1 b_2\|_{L^2_{x_1}L^\infty_{x_2}}\non\\
&+\| b_1\|_{L^\infty_{x_1}L^2_{x_2}}\|\pa_1^2 u\|_{L^2_{x_1}L^\infty_{x_2}}+ \| b_2\|_{L^2_{x_1}L^\infty_{x_2}}\|\pa_1\pa_2 u\|_{L^\infty_{x_1}L^2_{x_2}}\Big)\non\\
\lesssim& \|\pa_1 b\|_{H^2}\|(u,b)\|_{H^2}(\|\pa_2 u\|_{L^2}+\|\pa_1\pa_2 u\|_{L^2}+\|\pa_1^2\pa_2 u\|_{L^2}+\|\pa_1 b\|_{H^2})\non\\
\lesssim&\|(u,b)\|_{H^2} \Big( \|\p_1 b\|_{H^2}^2 +\|\pa_1^2\pa_2 u\|_{L^2}^2+\|\pa_1\pa_2 u\|_{L^2}^2+\|\pa_2 u\|_{L^2}^2\Big).
\end{align}

 Combine \eqref{61} and \eqref{62}, we proof the result
\begin{align}\label{p1p2ub}
&\f12\f d {dt}\big(\| \pa_1\pa_2u\|^2_{L^2}+\| \pa_1\pa_2b\|^2_{L^2}\big)+\|\p_1 \cP \pa_2^2 u\|_{L^2}^2+\|\p_1^2 \pa_2 b\|_{L^2}^2\non\\
&\lesssim\|(u,b)\|_{H^2} \Big( \|\p_1 b\|_{H^2}^2 + \|\pa_1\cP\pa_2^2 u\|_{L^2}^2+\|\pa_1^2\pa_2 u\|_{L^2}^2+\|\pa_1\pa_2 u\|_{L^2}^2+\|\pa_2 u\|_{L^2}^2\Big).
\end{align}
{\bf Step 7.} Energy estimate of $u_t$.

Applying $\p_t$ to equation $(\ref{eq:MHDT})_1$ and taking the $L^2$ inner product of the resulting equation with $u_t$, and apply $\p_t$ to equation $(\ref{eq:MHDT})_2$ and taking the $L^2$ inner product of the resulting equation with $b_t$, we obtain
\begin{align}\label{H2u}
&\f12\f d {dt}(\|u_t\|_{L^2}^2+\|b_t\|_{L^2}^2)+\|\pa_2 u_t\|_{L^2}^2+\|\pa_1 b_t\|_{L^2}^2\non\\
&=-\lan u_t,\p_t(u\cdot\na u)\ran+\lan u_t,\p_t(b\cdot\na b)\ran-\lan b_t,\p_t(u\cdot\na b)\ran+\lan b_t,\p_t(b\cdot\na u)\ran\non\\
&=-\lan u_t,u_t\cdot\na u\ran+\lan u_t,b_t\cdot\na b\ran-\lan b_t,u_t\cdot\na b\ran+\lan b_t,b_t\cdot\na u\ran\non\\
&=-\lan u_{1,t}, u_{1,t} \pa_1 {u_1}\ran-\lan u_{1,t}, u_{2,t} \pa_2 {u_1}\ran-\lan u_{2,t}, u_{1,t} \pa_1 {u_2}\ran-\lan u_{2,t}, u_{2,t} \pa_2 {u_2}\ran\non\\
&\quad+\lan u_{1,t}, b_{1,t} \pa_1 {b_1}\ran+\lan u_{1,t}, b_{2,t} \pa_2 {b_1}\ran+\lan u_{2,t}, b_{1,t} \pa_1 {b_2}\ran+\lan u_{2,t}, b_{2,t} \pa_2 {b_2}\ran\non\\
&\quad-\lan b_{1,t}, u_{1,t} \pa_1 {b_1}\ran-\lan b_{1,t}, u_{2,t} \pa_2 {b_1}\ran-\lan b_{2,t}, u_{1,t} \pa_1 {b_2}\ran-\lan b_{2,t}, u_{2,t} \pa_2 {b_2}\ran\non\\
&\quad+\lan b_{1,t}, b_{1,t} \pa_1 {u_1}\ran+\lan b_{1,t}, b_{2,t} \pa_2 {u_1}\ran+\lan b_{2,t}, b_{1,t} \pa_1 {u_2}\ran+\lan b_{2,t}, b_{2,t} \pa_2 {u_2}\ran\non\\
&\lesssim \Big(\|u_t\|_{L^2}+\|b_t\|_{L^2}\Big)\Big(\| u_{1,t}\|_{L^\infty_{x_1}L^2_{x_2}}\|\pa_1u\|_{L^2_{x_1}L^\infty_{x_2}}+\| u_{2,t}\|_{L^2_{x_1}L^\infty_{x_2}}\|\pa_2u\|_{L^\infty_{x_1}L^2_{x_2}}\non\\
&\quad+\| b_{1,t}\|_{L^\infty_{x_1}L^2_{x_2}}\|\pa_1b\|_{L^2_{x_1}L^\infty_{x_2}}+\| b_{2,t}\|_{L^2_{x_1}L^\infty_{x_2}}\|\pa_2b\|_{L^\infty_{x_1}L^2_{x_2}}\Big)\non\\
&\lesssim \|(u_t,b_t)\|_{L^2}^\f32\|(\pa_2 u_t, \pa_1 b_t )\|_{L^2}^\f12\|(u,b)\|_{H^2}\non\\
&\lesssim \|(u, b)\|_{H^2}\Big(\|u_t\|_{L^2}^2+\|b_t\|_{L^2}^2+\|\pa_2u_t\|_{L^2}^2+\|\pa_1b_t\|_{L^2}^2\Big).
\end{align}

{\bf Step 8.} Dissipation estimate of  $\pa_2\cP\pa_2^2u$.

We apply $\pa_2\cP$ to equation  $(\ref{eq:MHDT})_1$, and take the $L^2$ inner product of the resulting equation with $-\pa_2\cP\pa_2^2u$  to obtain
\begin{align}\label{p2p2u}
&\|\pa_2\cP\p_2^2 u\|_{L^2}^2-\lan \pa_2\cP\p_2^2 u, \pa_2u_t\ran+\lan \pa_2\cP\p_2^2 u, \p_1\p_2 b\ran\nonumber\\
=&-\lan \p_2\cP\p_2^2 u,\pa_2(u\cdot\na u)\ran+\lan \p_2\cP\p_2^2 u,\pa_2(b\cdot\na b)\ran\non\\
\lesssim& \|\p_2\cP\p_2^2 u\|_{L^2}\Big(\|\pa_2 u_1\|_{L^\infty_{x_1}L^2_{x_2}}\|\pa_1 u\|_{L^2_{x_1}L^\infty_{x_2}}+\|\pa_2 u_2\|_{L^\infty_{x_1}L^2_{x_2}}\|\pa_2 u\|_{L^2_{x_1}L^\infty_{x_2}}+\| u_1\|_{L^2_{x_1}L^\infty_{x_2}}\|\pa_1\pa_2 u\|_{L^\infty_{x_1}L^2_{x_2}}\non\\
&+ \| u_2\|_{L^\infty}\|\pa_2^2 u\|_{L^2}+\|\pa_2 b_1\|_{L^\infty_{x_1}L^2_{x_2}}\|\pa_1 b\|_{L^2_{x_1}L^\infty_{x_2}}+\|\pa_2 b_2\|_{L^2_{x_1}L^\infty_{x_2}}\|\pa_2 b\|_{L^\infty_{x_1}L^2_{x_2}}\non\\
&+\| b_1\|_{L^\infty_{x_1}L^2_{x_2}}\|\pa_1\pa_2 b\|_{L^2_{x_1}L^\infty_{x_2}}+ \| b_2\|_{L^2_{x_1}L^\infty_{x_2}}\|\pa_2^2 b\|_{L^\infty_{x_1}L^2_{x_2}}\Big)\non\\
\lesssim& \|\p_2\cP\p_2^2 u\|_{L^2}\|(u,b)\|_{H^2}(\|\pa_1^2\pa_2u\|_{L^2}+\|\pa_2 u\|_{H^1}+\|\pa_1 b\|_{H^2})\non\\
\lesssim&\|(u,b)\|_{H^2} \Big( \|\p_1 b\|_{H^2}^2 +\|\pa_1^2\pa_2 u\|_{L^2}^2+\|\pa_2 u\|_{H^1}^2+\|\p_2\cP\p_2^2 u\|_{L^2}^2\Big).
\end{align}

{\bf Step 9.} Dissipation estimate of  $\p_1^2u$.

We apply $\pa_1$ to equation  $(\ref{eq:MHDT})_2$, and take the $L^2$ inner product of the resulting equation with $-\pa_1^2 u$, and apply $\pa_1$ to equation  $(\ref{eq:MHDT})_1$, and take the $L^2$ inner product of the resulting equation with $\pa_1^2 b$  to obtain
\begin{align}\label{p12u}
&\f d {dt}\lan \pa_1^2b,\p_1 u\ran+\|\p_1^2 u\|_{L^2}^2-\|\p_1^2 b\|_{L^2}^2+\lan \pa_1\pa_2 u, \p_1^2\pa_2 b\ran+\lan \pa_1^2 u, \p_1^3 b\ran\nonumber\\
=&-\lan \p_1^2 b,\pa_1(u\cdot\na u)\ran+\lan \p_1^2b,\pa_1(b\cdot\na b)\ran+\lan\p_1^2 u,\pa_1(u\cdot\na b)\ran-\lan\p_1^2u,\pa_1(b\cdot\na u)\ran\non\\
\lesssim& \|\pa_1^2 b\|_{L^2}\Big(\|\pa_1 u_1\|_{L^\infty_{x_1}L^2_{x_2}}\|\pa_1 u\|_{L^2_{x_1}L^\infty_{x_2}}+\|\pa_2 u\|_{L^\infty_{x_1}L^2_{x_2}}\|\pa_1 u_2\|_{L^2_{x_1}L^\infty_{x_2}}+\| u_1\|_{L^\infty_{x_1}L^2_{x_2}}\|\pa_1^2 u\|_{L^2_{x_1}L^\infty_{x_2}}\non\\
&+ \| u_2\|_{L^2_{x_1}L^\infty_{x_2}}\|\pa_1\pa_2 u\|_{L^\infty_{x_1}L^2_{x_2}}+\|\pa_1 b_1\|_{L^\infty_{x_1}L^2_{x_2}}\|\pa_1 b\|_{L^2_{x_1}L^\infty_{x_2}}+\|\pa_2 b\|_{L^\infty_{x_1}L^2_{x_2}}\|\pa_1 b_2\|_{L^2_{x_1}L^\infty_{x_2}}\non\\
&+\| b_1\|_{L^\infty_{x_1}L^2_{x_2}}\|\pa_1^2 b\|_{L^2_{x_1}L^\infty_{x_2}}+ \| b_2\|_{L^2_{x_1}L^\infty_{x_2}}\|\pa_1\pa_2 b\|_{L^\infty_{x_1}L^2_{x_2}}\Big)\non\\
+&\|\pa_1^2 u\|_{L^2}\Big(\|\pa_1 u_1\|_{L^\infty_{x_1}L^2_{x_2}}\|\pa_1 b\|_{L^2_{x_1}L^\infty_{x_2}}+\|\pa_2 b\|_{L^\infty_{x_1}L^2_{x_2}}\|\pa_1 u_2\|_{L^2_{x_1}L^\infty_{x_2}}+\| u_1\|_{L^\infty_{x_1}L^2_{x_2}}\|\pa_1^2 b\|_{L^2_{x_1}L^\infty_{x_2}}\non\\
&+ \| u_2\|_{L^2_{x_1}L^\infty_{x_2}}\|\pa_1\pa_2 b\|_{L^\infty_{x_1}L^2_{x_2}}+\|\pa_1 b_1\|_{L^\infty_{x_1}L^2_{x_2}}\|\pa_1 u\|_{L^2_{x_1}L^\infty_{x_2}}+\|\pa_2 u\|_{L^\infty_{x_1}L^2_{x_2}}\|\pa_1 b_2\|_{L^2_{x_1}L^\infty_{x_2}}\non\\
&+\| b_1\|_{L^\infty_{x_1}L^2_{x_2}}\|\pa_1^2 u\|_{L^2_{x_1}L^\infty_{x_2}}+ \| b_2\|_{L^2_{x_1}L^\infty_{x_2}}\|\pa_1\pa_2 u\|_{L^\infty_{x_1}L^2_{x_2}}\Big)\non\\
\lesssim& \Big(\|\pa_1 b\|_{H^2}+\|\pa_1^2 u\|_{L^2}\Big)\|(u,b)\|_{H^2}\Big(\|\pa_2u\|_{H^1}+\|\pa_1^2\pa_2 u\|_{L^2}+\|\pa_1 b\|_{H^2}\Big)\non\\
\lesssim&\|(u,b)\|_{H^2} \Big( \|\p_1 b\|_{H^2}^2 +\|\pa_1^2\pa_2 u\|_{L^2}^2+\|\pa_2 u\|_{H^1}^2+\|\pa_1^2 u\|_{L^2}^2\Big).
\end{align}
{\bf Step 10.} Energy estimate of   $\pa_2^2b$ and the dissipation estimate of  $\p_2u_t$ .

We apply $\pa_2^2$ to equation  $(\ref{eq:MHDT})_2$, and take the $L^2$ inner product of the resulting equation with $\pa_2^2 b$, and apply $\pa_2\cP$ to equation  $(\ref{eq:MHDT})_1$£¬and take the $L^2$ inner product of the equation  $(\ref{eq:MHDT})_1$ with $\pa_2 u_t$   to obtain
\begin{align}\label{p2b p2ut}
&\f12\f d {dt}\|\pa_2^2 b\|_{L^2}^2+\|\p_1\pa_2^2  b\|_{L^2}^2+\|\pa_2  u_t\|_{L^2}^2+\lan \pa_1\pa_2^2b,\pa_2^2u\ran-\lan \pa_1\pa_2b,\pa_2u_t\ran-\lan \pa_2\cP\pa_2^2u,\pa_2u_t\ran\non\\
=&\lan \p_2 u_t ,\pa_2\cP(-u\cdot\na u)\ran+\lan \p_2 u_t ,\p_2\cP(b\cdot\na b)\ran+\lan \p_2^2 b ,\pa_2^2(-u\cdot\na b)\ran+\lan \p_2^2 b ,\pa_2^2(b\cdot\na u)\ran\non\\
=&\lan \p_2 u_t ,\pa_2(-u\cdot\na u)-\p_2\na\cU\ran+\lan \p_2 u_t ,\p_2(b\cdot\na b)-\p_2\na\cB\ran+\lan \p_2^2 b ,\pa_2^2(-u\cdot\na b)\ran+\lan \p_2^2 b ,\pa_2^2(b\cdot\na u)\ran\non\\
=&\lan \p_2 u_t ,\pa_2(-u\cdot\na u)\ran+\lan \p_2 u_t ,\p_2(b\cdot\na b)\ran+\lan \p_2^2 b ,\pa_2^2(-u\cdot\na b)\ran+\lan \p_2^2 b ,\pa_2^2(b\cdot\na u)\ran\non\\
=&\lan \p_2 u_t ,\pa_2(-u\cdot\na u)\ran+\lan \p_2 u_t ,\p_2(b\cdot\na b)\ran-\lan \p_2^2 b ,\pa_2^2u\cdot\na b+2\pa_2u\cdot\na\pa_2 b\ran\non\\
&+\lan \p_2^2 b ,\pa_2^2b\cdot\na u+2\pa_2b\cdot\na\pa_2 u\ran+\lan \p_2^2 b ,b\cdot\na\pa_2^2u\ran\non\\
=&\lan \p_2 u_t ,\pa_2(-u\cdot\na u)\ran+\lan \p_2 u_t ,\p_2(b\cdot\na b)\ran-\lan \p_2^2 b ,\pa_2^2u\cdot\na b+2\pa_2u\cdot\na\pa_2 b\ran\non\\
&+\lan \p_2^2 b ,\pa_2^2b\cdot\na u+2\pa_2b\cdot\na\pa_2 u\ran-\lan \p_2^2 u ,b\cdot\na\pa_2^2b\ran\non\\
=&\lan \p_2 u_t ,\pa_2(-u\cdot\na u)\ran+\lan \p_2 u_t ,\p_2(b\cdot\na b)\ran-\lan \p_2^2 b ,\pa_2^2u\cdot\na b+2\pa_2u\cdot\na\pa_2 b\ran\non\\
&+\lan \p_2^2 b ,\pa_2^2b\cdot\na u+2\pa_2b\cdot\na\pa_2 u\ran-\lan \p_2^2 u ,\pa_2^2(b\cdot\na b)\ran+\lan \p_2^2 u ,\pa_2^2b\cdot\na b+2\pa_2b\cdot\na\pa_2 b\ran\non\\
=&\lan \p_2 u_t ,\pa_2(-u\cdot\na u)\ran+\lan \p_2 u_t ,\p_2(b\cdot\na b)\ran-\lan \p_2^2 b ,\pa_2^2u\cdot\na b+2\pa_2u\cdot\na\pa_2 b\ran\non\\
&+\lan \p_2^2 b ,\pa_2^2b\cdot\na u+2\pa_2b\cdot\na\pa_2 u\ran+\lan \p_2^2 u ,\pa_2^2b\cdot\na b+2\pa_2b\cdot\na\pa_2 b\ran\non\\
&-\lan u_t+\na p-\pa_1b+u\cdot\na u-b\cdot\na b  ,\pa_2^2(b\cdot\na b)\ran\non\\
=&\lan \p_2 u_t ,\pa_2(-u\cdot\na u)\ran+2\lan \p_2 u_t ,\p_2(b\cdot\na b)\ran-\lan \p_2^2 b ,\pa_2^2u\cdot\na b+2\pa_2u\cdot\na\pa_2 b\ran\non\\
&+\lan \p_2^2 b ,\pa_2^2b\cdot\na u+2\pa_2b\cdot\na\pa_2 u\ran+\lan \p_2^2 u ,\pa_2^2b\cdot\na b+2\pa_2b\cdot\na\pa_2 b\ran\non\\
&-\lan \na p-\pa_1b+u\cdot\na u-b\cdot\na b  ,\pa_2^2(b\cdot\na b)\ran\non\\
=&\mathcal{I}_{21}+\mathcal{I}_{22}+\mathcal{I}_{23}+\mathcal{I}_{24}+\mathcal{I}_{25}+\mathcal{I}_{26},
\end{align}
where we define
\begin{align*}
-u\cdot\na u=\cP(-u\cdot\na u)+\na\cU, \  -b\cdot\na b=\cP(-b\cdot\na b)+\na\cB,
\end{align*}
and use
\begin{align*}
&\lan \p_2 u_t ,\p_2\na(\cU+\cB)\ran=\lan \p_2 u_{1,t} ,\p_1\p_2(\cU+\cB)\ran+\lan \p_2 u_{2,t} ,\p_2^2(\cU+\cB)\ran=-\lan \p_2(\pa_1 u_{1,t}+\pa_2 u_{2,t}) ,\p_2(\cU+\cB)\ran=0.
\end{align*}

Next we compute the right term of the \eqref{p2b p2ut}, we have
\begin{align*}
\mathcal{I}_{21}+\mathcal{I}_{22}\lesssim &\|\p_2 u_t\|_{L^2}(\|u\|_{H^2}\|\pa_2 u\|_{H^1}+\|b\|_{H^2}\|\pa_1 b\|_{H^2})\\
\lesssim&\|(u,b)\|_{H^2} \Big( \|\pa_1 b\|_{H^2}^2+\|\pa_2 u\|_{H^1}^2+\|\p_2 u_t\|_{L^2}^2\Big),
\end{align*}
here we use
\begin{align}
\|\pa_2(u\cdot \na u)\|_{L^2}\lesssim&\|\pa_2u\|_{L^\infty_{x_1}L^2_{x_2}}(\|\pa_1u\|_{L^2_{x_1}L^\infty_{x_2}}+\|\pa_2u\|_{L^2_{x_1}L^\infty_{x_2}})+\|u\|_{L^\infty}(\|\pa_1\pa_2u\|_{L^2}+\|\pa_2^2u\|_{L^2})\non\\
\lesssim&\|u\|_{H^2}\|\pa_2u\|_{H^1},\label{u001}
\end{align}
and
\begin{align}
\|\pa_2(b\cdot \na b)\|_{L^2}\lesssim&\|\pa_2b_1\|_{L^\infty_{x_1}L^2_{x_2}}\|\pa_1b\|_{L^2_{x_1}L^\infty_{x_2}}+\|\pa_2b_2\|_{L^2_{x_1}L^\infty_{x_2}}\|\pa_2b\|_{L^\infty_{x_1}L^2_{x_2}}\non\\
&+\|b_1\|_{L^\infty}\|\pa_1\pa_2b\|_{L^2}+\|b_2\|_{L^2_{x_1}L^\infty_{x_2}}\|\pa_2^2b\|_{L^\infty_{x_1}L^2_{x_2}}\lesssim\|b\|_{H^2}\|\pa_1b\|_{H^2}.\label{b001}
\end{align}

And we have
\begin{align*}
\mathcal{I}_{23}+\mathcal{I}_{24}=&-\lan \p_2^2 b_1 ,\pa_2^2u_1\pa_1 b_1+\pa_2^2u_2\pa_2 b_1+2\pa_2u_1\pa_1\pa_2 b_1+2\pa_2u_2\pa_2^2 b_1\ran\non\\
&-\lan \p_2^2 b_2 ,\pa_2^2u_1\pa_1 b_2+\pa_2^2u_2\pa_2 b_2+2\pa_2u_1\pa_1\pa_2 b_2+2\pa_2u_2\pa_2^2 b_2\ran\non\\
&+\lan \p_2^2 b_1 ,\pa_2^2b_1\pa_1 u_1+\pa_2^2b_2\pa_2 u_1+2\pa_2b_1\pa_1\pa_2 u_1+2\pa_2b_2\pa_2^2 u_1\ran\non\\
&+\lan \p_2^2 b_2 ,\pa_2^2b_1\pa_1 u_2+\pa_2^2b_2\pa_2 u_2+2\pa_2b_1\pa_1\pa_2 u_2+2\pa_2b_2\pa_2^2 u_2\ran\non\\
=&-3\lan \p_2^2 b_1 ,\pa_2^2u_1\pa_1 b_1+\pa_2^2u_2\pa_2 b_1+\pa_2u_1\pa_1\pa_2 b_1+\pa_2u_2\pa_2^2 b_1\ran\non\\
&-\lan \p_2^2 b_2 ,\pa_2^2u_1\pa_1 b_2+2\pa_2u_1\pa_1\pa_2 b_2\ran+\lan \p_2^2 b_2 ,\pa_2^2b_1\pa_1 u_2+2\pa_2b_1\pa_1\pa_2 u_2\ran\non\\
&+\lan \p_2^2 b_2 ,\pa_2^2u_2\pa_2 b_2-\pa_2u_2\pa_2^2 b_2\ran\non\\
=&-3\lan \p_2^2 b_1 ,\pa_2^2u_1\pa_1 b_1-\pa_1\pa_2u_1\pa_2 b_1+\pa_2u_1\pa_1\pa_2 b_1+2u_1\pa_1\pa_2^2 b_1\ran\non\\
&-\lan \p_2^2 b_2 ,\pa_2^2u_1\pa_1 b_2+2\pa_2u_1\pa_1\pa_2 b_2\ran+\lan \p_2^2 b_2 ,\pa_2^2b_1\pa_1 u_2+2\pa_2b_1\pa_1\pa_2 u_2\ran\non\\
&+\lan \p_2^2 b_2 ,\pa_2^2u_2\pa_2 b_2-\pa_2u_2\pa_2^2 b_2\ran\non\\
=&-3\lan \p_2^2 b_1 ,\pa_2^2u_1\pa_1 b_1+2\pa_2u_1\pa_1\pa_2 b_1+2u_1\pa_1\pa_2^2 b_1\ran-3\lan \pa_1\p_2^2 b_1,\pa_2u_1\pa_2 b_1\ran\non\\
&-\lan \p_2^2 b_2 ,\pa_2^2u_1\pa_1 b_2+2\pa_2u_1\pa_1\pa_2 b_2\ran+\lan \p_2^2 b_2 ,\pa_2^2b_1\pa_1 u_2+2\pa_2b_1\pa_1\pa_2 u_2\ran\non\\
&+\lan \p_2^2 b_2 ,\pa_2^2u_2\pa_2 b_2-\pa_2u_2\pa_2^2 b_2\ran\non\\
\lesssim&\|\pa_2^2b_1\|_{L^\infty_{x_1}L^2_{x_2}}(\|\pa_2^2u_1\|_{L^2}\|\pa_1b_1\|_{L^2_{x_1}L^\infty_{x_2}}+\|\pa_2u_1\|_{L^2_{x_1}L^\infty_{x_2}}\|\pa_1\pa_2b_1\|_{L^2}+\|u_1\|_{L^2_{x_1}L^\infty_{x_2}}\|\pa_1\pa_2^2b_1\|_{L^2})\\
&+\|\pa_2^2b_2\|_{L^\infty_{x_1}L^2_{x_2}}(\|\pa_2^2u\|_{L^2}\|\pa_1b\|_{L^2_{x_1}L^\infty_{x_2}}+\|\pa_2u\|_{L^2_{x_1}L^\infty_{x_2}}\|\pa_1\pa_2b\|_{L^2})\\
&+\|\pa_2^2b_2\|_{L^2}\|\pa_2^2b_1\|_{L^\infty_{x_1}L^2_{x_2}}\|\pa_1u_2\|_{L^2_{x_1}L^\infty_{x_2}}+\|\pa_2^2b_2\|_{L^2_{x_1}L^\infty_{x_2}}\|\pa_2b_1\|_{L^\infty_{x_1}L^2_{x_2}}\|\pa_1\pa_2u_2\|_{L^2}\\
&+\|\pa_1\pa_2^2b_1\|_{L^2}\|\pa_2b_1\|_{L^\infty_{x_1}L^2_{x_2}}\|\pa_2u_1\|_{L^2_{x_1}L^\infty_{x_2}}\non\\
\lesssim&\|(u,b)\|_{H^2} \Big( \|\p_1 b\|_{H^2}^2 +\|\pa_2 u\|_{H^1}^2\Big),
\end{align*}
and
\begin{align*}
\mathcal{I}_{25}
\lesssim&\|\p_2^2 u\|_{L^2}(\|\pa_2^2b\|_{L^\infty_{x_1}L^2_{x_2}}\|\pa_1b\|_{L^2_{x_1}L^\infty_{x_2}}+\|\pa_1\pa_2b\|_{L^\infty_{x_1}L^2_{x_2}}\|\pa_2b\|_{L^2_{x_1}L^\infty_{x_2}})\\
\lesssim&\|\p_2^2 u\|_{L^2}\|b\|_{H^2}^\f12\|\pa_1b\|_{H^2}^\f12\|b\|_{H^2}^\f12\|\pa_1b\|_{H^2}^\f12+\|\p_2^2 u\|_{L^2}\|\pa_2b\|_{H^1}\|\pa_1\pa_2b\|_{H^1}\\
\lesssim&\|(u,b)\|_{H^2} \Big( \|\p_1 b\|_{H^2}^2 +\|\pa_2 u\|_{H^1}^2\Big),
\end{align*}
and by \eqref{u001} and \eqref{b001},
\begin{align*}
\mathcal{I}_{26}=&-\lan \na p-\pa_1b+u\cdot\na u-b\cdot\na b ,\p_2^2(b\cdot\na b)\ran\\
=&\lan  \pa_2 p,-\p_1\p_2(b\cdot\na b_1)\ran+\lan  \pa_2 p,-\p_2^2(b\cdot\na b_2)\ran\\
&+\lan  \pa_1 \pa_2^2b,b\cdot\na b\ran+\lan  \p_2(u\cdot\na u),\p_2(b\cdot\na b)\ran-\lan  \p_2(b\cdot\na b),\p_2(b\cdot\na b)\ran\\
=&-\lan  \pa_2 p,\p_1\p_2b_1\pa_1 b_1+\p_1b_1\pa_1\pa_2 b_1+\p_1\p_2b_2\pa_2 b_1+\p_1b_2\pa_2^2 b_1\ran\\
&-\lan  \pa_2 p,\p_2^2b_1\pa_1 b_2+\p_2b_1\pa_1\pa_2 b_2+\p_2^2b_2\pa_2 b_2+\p_2b_2\pa_2^2 b_2\ran\\
&-\lan  \pa_1 \pa_2b,\pa_2(b\cdot\na b)\ran+\lan  \p_2(u\cdot\na u),\p_2(b\cdot\na b)\ran-\lan  \p_2(b\cdot\na b),\p_2(b\cdot\na b)\ran\\
\lesssim&\|\na p\|_{L^2}\|\pa_1\pa_2b\|_{L^2_{x_1}L^\infty_{x_2}}\Big(\|\pa_1b\|_{L^\infty_{x_1}L^2_{x_2}}+\|\pa_2b\|_{L^\infty_{x_1}L^2_{x_2}}\Big)+\|\na p\|_{L^2}\|\pa_2^2b\|_{L^\infty_{x_1}L^2_{x_2}}\|\pa_1b\|_{L^2_{x_1}L^\infty_{x_2}}\\
&+\Big(\|\pa_1\pa_2b\|_{L^2}+\|\pa_2(u\cdot\na u)\|_{L^2}+\|\pa_2(b\cdot\na b)\|_{L^2}\Big)\|\pa_2(b\cdot\na b)\|_{L^2}\\
\lesssim&\Big(\|(u,b)\|_{H^2}+\|(u,b)\|_{H^2}^2\Big) \Big( \|\p_1 b\|_{H^2}^2 +\|\p_2u\|_{H^1}^2+ \|\na p\|_{L^2}^2\Big).
\end{align*}

Thus we proof the result
\begin{align}\label{p22b1}
&\f12\f d {dt}\|\pa_2^2 b\|_{L^2}^2+\|\p_1\pa_2^2  b\|_{L^2}^2+\|\pa_2  u_t\|_{L^2}^2+\lan \pa_1\pa_2^2b,\pa_2^2u\ran-\lan \pa_1\pa_2b,\pa_2u_t\ran-\lan \pa_2\cP\pa_2^2u,\pa_2u_t\ran\non\\
\lesssim&(\|(u,b)\|_{H^2}+\|(u,b)\|_{H^2}^2) \Big( \|\p_1 b\|_{H^2}^2 +\|\pa_1^2\pa_2 u\|_{L^2}^2+\|\pa_2 u\|_{H^1}^2+\|\pa_2 u_t\|_{L^2}^2+ \|\na p\|_{L^2}^2\Big).
\end{align}
{\bf Step 11.} Energy estimate and the dissipation estimate of $\pa_2^2u$.

 We rewrite equations  $\eqref{eq:MHDT}_{1,3,4}$  as
\ben\label{smhd1}
\left\{
\begin{array}{l}
-\Delta u+\na p=-\pa_1^2u+\p_1 b-\p_t u-u\cdot\na u+b\cdot \na b, \ \ \ \ x\in\Omega,\\
\div u=0,  \ \ \ \ x\in\Omega,\\
u=0, \ \ \ \ x\in\p\Omega.
\end{array}\right.
\een
By Stokes estimate, we have for the energy estimates,
\begin{align*}
&\|\pa_2^2u\|_{L^2}+\|\na p\|_{L^2}\non\\
\lesssim&\|\na^2u\|_{L^2}+\|\na p\|_{L^2}\non\\
\lesssim& \|\pa_1^2u\|_{L^2}+\|\pa_1b\|_{L^2}+\| u_t\|_{L^2}+\|u\|_{L^\infty}\|\na u\|_{L^2}+\|b\|_{L^\infty}\|\na b\|_{L^2}\non\\
\lesssim& \|\pa_1^2u\|_{L^2}+\|b\|_{H^1}+\| u_t\|_{L^2}+c_0\| u\|_{H^1}+c_0\| b\|_{H^1}\non\\
\lesssim & \|\pa_1^2u\|_{L^2}+\|(u, b)\|_{H^1}+\| u_t\|_{L^2},
\end{align*}
thus
\begin{align}\label{stokes1}
\|\pa_2^2u\|_{L^2}^2+\|\na p\|_{L^2}^2\lesssim  \|\pa_1^2u\|_{L^2}^2+\|(u, b)\|_{H^1}^2+\| u_t\|_{L^2}^2,
\end{align}
and for the dissipation estimates
\begin{align*}
&\|\pa_2^2u\|_{L^2}+\|\na p\|_{L^2}\non\\
\lesssim&\|\na^2u\|_{L^2}+\|\na p\|_{L^2}\non\\
\lesssim& \|\pa_1^2u\|_{L^2}+\|\pa_1b\|_{L^2}+\| u_t\|_{L^2}+\|u_1\|_{L^\infty_{x_1}L^2_{x_2}}\|\pa_1 u\|_{L^2_{x_1}L^\infty_{x_2}}\non\\
&+\|u_2\|_{L^2_{x_1}L^\infty_{x_2}}\|\pa_2 u\|_{L^\infty_{x_1}L^2_{x_2}}+\|b_1\|_{L^\infty_{x_1}L^2_{x_2}}\|\pa_1 b\|_{L^2_{x_1}L^\infty_{x_2}}\non\\
&+\|b_2\|_{L^2_{x_1}L^\infty_{x_2}}\|\pa_2 b\|_{L^\infty_{x_1}L^2_{x_2}}\non\\
\lesssim& \|\pa_1^2u\|_{L^2}+\|\pa_1b\|_{L^2}+\| u_t\|_{L^2}+c_0\| \pa_2u\|_{L^2}+c_0\| \pa_1\pa_2u\|_{L^2}+c_0\| \pa_1b\|_{H^1}\non\\
\lesssim & \|\pa_1^2u\|_{L^2}+\|\pa_1b\|_{H^1}+\|\pa_2u\|_{L^2}+\|\pa_1\pa_2u\|_{L^2}+\| u_t\|_{L^2},
\end{align*}
thus
\begin{align}\label{stokes2}
\int_0^t(\|\pa_2^2u\|_{L^2}^2+\|\na p\|_{L^2}^2)d\tau\lesssim  \int_0^t(\|\pa_1^2u\|_{L^2}^2+\|\pa_1b\|_{H^1}^2+\|\pa_1\pa_2u\|_{L^2}^2+\|\pa_2u\|_{L^2}^2+\| u_t\|_{L^2}^2)d\tau.
\end{align}
{\bf Step 12.} Closing of the {\it a priori} estimates.

Combining Step 1-11 together, for the  $0<\delta<\f14$, we calculate the following formula $\eqref{L2}+\eqref{H1}+\delta\big(\eqref{p1u}+\eqref{p12u,p12b}+\eqref{p1p2ub}+\eqref{H2u}\big)+\delta^2\big(\eqref{ut}+\eqref{p2p2u}+\eqref{p12u}\big)+
\delta^3\big(\eqref{stokes1}+\eqref{stokes2}\big)+\delta^4\eqref{p22b1}$, then we have
\begin{align*}
&\Big(\|(u,b)\|^2_{H^2}+\|\na p\|^2_{L^2}+\|(u_t,b_t)\|^2_{L^2}\Big)
+ \int_0^t  \Big( \|\na u\|_{H^1}^2+\|\p_1 b\|_{H^2}^2 +\| u_t\|_{L^2}^2+\|b_t\|_{L^2}^2+\|\pa_1^2\pa_2 u\|_{L^2}^2\non\\
&+\|\pa_2 u_t\|_{L^2}^2+\|\pa_1 b_t\|_{L^2}^2+ \|\na p\|_{L^2}^2+\|\cP\pa_2^2u\|_{H^1}^2\Big)ds\\
\lesssim& \|u_0\|^2_{H^2}+\|b_0\|^2_{H^2},
\end{align*}
for suitable $c_0$.
That is,
\begin{align*}
\cE^2(t)+\int_0^t\cF^2(s)ds
\lesssim \|u_0\|^2_{H^2}+\|b_0\|^2_{H^2} .
\end{align*}
 This completes the proof of Proposition \ref{high order}.
\end{proof}

\vspace{0.2cm}

\no{\bf Proof of  Theorem \ref{thm:main}.} We conclude Theorem \ref{thm:main} by a continuous argument.  \ef

\section{The resolvent estimate of linearized problem}
\subsection{Solution formula  of the linearized problem}
\setcounter{equation}{0}
 The linearized equation of \eqref{eq:MHDT} is
 \ben\label{eq:MHDL}
\left\{
\begin{array}{l}
\p_t u-\p_2^2 u-\p_1 b+\na p =f,  \ \ \ \ x\in\Omega,\ t>0,\\
\p_tb-\p_1^2 b-\p_1 u=g,  \ \ \ \ x\in\Omega,\ t>0,\\
\div u=\div b=0,  \ \ \ \ x\in\Omega,\ t>0,\\
u=0, \ \ \ b=0,   \ \ \ \ x\in\p\Omega,\ t>0,\\
u(x,0)=u_0(x),\ \ b(x,0)=b_0(x),\ \ \ \ x\in \Om,
\end{array}\right.
\een
where $f=-u\cdot\na u+b\cdot\na b$ and $g=-u \cdot\na b+b\cdot\na u$.

 Taking Laplace transform of $t$, we have
\begin{align}\label{eq:Lap trans}
\left\{
\begin{array}{l}
\lam u_{\lam}-\p_2^2 u_{\lam}-\p_1 b_{\lam}+\na p_{\lam}= u_0+f_{\lam}, \\
\lam b_{\lam}-\p_1^2 b_{\lam}-\p_1 u_{\lam}=b_0+g_{\lam}, \\
\div u_{\lam}=\div b_{\lam}=0,  \\
(u_{\lam}, b_{\lam})|_{x_2=0}=0,
\end{array}\right.
\end{align}
whereas we denote that
\beno
f_\lam=\cL\big(f(t)\big)=\int^{\oo}_0 e^{-\lam t}  f(t) dt.
\eeno

By taking horizontal Fourier transform $\wh{*}$ to \eqref{eq:Lap trans}, we obtain
 \ben\label{eq:MHDh}
\left\{
\begin{array}{l}
\lam \wh{u}_{1, \lam}-\pa_2^2  \wh{u}_{1, \lam}-i\xi_1 \wh{b}_{1, \lam}+i\xi_1 \wh{p}_{\lam} = \wh{u}_{1, 0}+\wh{f}_{1, \lam}, \\
\lam \wh{u}_{2, \lam}-\pa_2^2  \wh{u}_{2, \lam}-i\xi_1 \wh{b}_{2, \lam}+\p_2 \wh{p}_{\lam}= \wh{u}_{2, 0}+\wh{f}_{2, \lam}, \\
\lam \wh{b}_{\lam}+|\xi_1|^2 \wh{b}_{\lam}-i\xi_1 \wh{u}_\lam=\wh{b}_0+\wh{g}_{\lam},\\
i\xi_1 \wh{u}_{1, \lam}+\p_2 \wh{u}_{2, \lam}=i\xi_1\wh{b}_{1, \lam}+\p_2 \wh{b}_{2, \lam}=0,\\
\wh{u}_{\lam}|_{x_2=0}=\wh{b}_{\lam}|_{x_2=0}=0.
\end{array}\right.
\een

 By $\eqref{eq:MHDh}_{1, 2, 4}$, we have
\beno
(\p_2^2-|\xi_1|^2) \wh{p}_{\lam}=i\xi_1 \wh{f}_{1, \lam}+\p_2 \wh{f}_{2, \lam},
\eeno
 the solution is
\begin{align}\label{pressure}
\wh{p}_{\lam}=C(\xi_1) e^{-|\xi_1|x_2}-E_{|\xi_1|}[i\xi_1  \wh{f}_{1, \lam}+\p_2\wh{f}_{2, \lam}],
\end{align}
where
\begin{align}\label{Eom}
E_{|\xi_1|}[f]:=\f{1}{2|\xi_1|} \int^{\infty}_0 e^{-|\xi_1||x_2-y_2|} f(y_2) dy_2,
\end{align}
 then
\begin{align}\label{pressure2}
\p_2 E_{|\xi_1|}[i\xi_1  \wh{f}_{1, \lam}+\p_2\wh{f}_{2, \lam}]=&-\f12  e^{-|\xi_1|x_2}\int^{x_2}_0 e^{|\xi_1| y_2} (i\xi_1  \wh{f}_{1, \lam}+\p_2\wh{f}_{2, \lam}) dy_2\non\\
&+\f12  e^{|\xi_1|x_2}\int^\infty_{x_2} e^{-|\xi_1| y_2}( i\xi_1  \wh{f}_{1, \lam}+\p_2\wh{f}_{2, \lam} ) dy_2.\non\\
\end{align}

 By $\eqref{eq:MHDh}_{3}$, we have
 \begin{align}\label{ulamb}
\wh{b}_\lam=\f{i\xi_1\wh{u}_\lam+\wh{b}_0+\wh{g}_\lam}{\lam+|\xi_1|^2}.
\end{align}

 Taking \eqref{ulamb}  into $\eqref{eq:MHDh}_{1,2}$, we have
 \ben\label{ulam1b}
\left\{
\begin{array}{l}
(\lam+\f{|\xi_1|^2}{\lam+|\xi_1|^2}-\p_2^2) \wh{u}_{1, \lam}+i\xi_1 \wh{p}_{\lam} = (\wh{u}_{1, 0}+\wh{f}_{1, \lam})+\f{i\xi_1}{\lam+|\xi_1|^2}(\wh{b}_{1, 0}+\wh{g}_{1, \lam}), \\
(\lam+\f{|\xi_1|^2}{\lam+|\xi_1|^2}-\p_2^2) \wh{u}_{2, \lam}+\pa_2 \wh{p}_{\lam} = (\wh{u}_{2, 0}+\wh{f}_{2, \lam})+\f{i\xi_1}{\lam+|\xi_1|^2}(\wh{b}_{2, 0}+\wh{g}_{2, \lam}).
\end{array}\right.
\een

Taking \eqref{pressure}, \eqref{pressure2} into \eqref{ulam1b} and defining
\begin{align*}
\om(\lam, \xi_1)= \sqrt{\lam+\f{|\xi_1|^2}{\lam+|\xi_1|^2}},
\end{align*}
then $\wh{u}_\lam$ satisfies the following system
 \ben\label{eq:MHDh2}
\left\{
\begin{array}{l}
(\om^2-\p_2^2) \wh{u}_{1, \lam} = (\wh{u}_{1, 0}+\wh{f}_{1, \lam})+\f{i\xi_1}{\lam+|\xi_1|^2}(\wh{b}_{1, 0}+\wh{g}_{1, \lam})\\
\qquad \qquad \qquad \quad-i\xi_1C(\xi_1) e^{-|\xi_1|x_2}+i\xi_1E_{|\xi_1|}[i\xi_1  \wh{f}_{1, \lam}+\p_2\wh{f}_{2, \lam}], \\
(\om^2-\p_2^2) \wh{u}_{2, \lam} = (\wh{u}_{2, 0}+\wh{f}_{2, \lam})+\f{i\xi_1}{\lam+|\xi_1|^2}(\wh{b}_{2, 0}+\wh{g}_{2, \lam})\\
\qquad \qquad \qquad \quad+|\xi_1|C(\xi_1) e^{-|\xi_1|x_2}+\pa_2E_{|\xi_1|}[i\xi_1  \wh{f}_{1, \lam}+\p_2\wh{f}_{2, \lam}], \\
\wh{u}_{\lam}|_{x_2=0}=\p_2\wh{u}_{2, \lam}|_{x_2=0}=0,
\end{array}\right.
\een
and  the  unique solution is
$$
\left\{
\begin{array}{l}
\wh{u}_{1, \lam} = A_1(\xi_1)e^{-\om x_2}+E_\om[\wh{u}_{1, 0}+\wh{f}_{1, \lam}]+\f{i\xi_1}{\lam+|\xi_1|^2}E_\om[\wh{b}_{1, 0}+\wh{g}_{1, \lam}]\\
\qquad  \quad -i\xi_1C(\xi_1) E_\om[e^{-|\xi_1|x_2}]+i\xi_1E_\om[E_{|\xi_1|}[i\xi_1  \wh{f}_{1, \lam}+\p_2\wh{f}_{2, \lam}]], \\
\wh{u}_{2, \lam} = A_2(\xi_1)e^{-\om x_2}+E_\om[\wh{u}_{2, 0}+\wh{f}_{2, \lam}]+\f{i\xi_1}{\lam+|\xi_1|^2}E_\om[\wh{b}_{2, 0}+\wh{g}_{2, \lam}]\\
\qquad  \quad +|\xi_1|C(\xi_1) E_\om[e^{-|\xi_1|x_2}]+E_\om[\pa_2E_{|\xi_1|}[i\xi_1  \wh{f}_{1, \lam}+\p_2\wh{f}_{2, \lam}]], \\
\end{array}\right.
$$
 where $A_1(\xi_1), A_2(\xi_1)$ and $C(\xi_1)$ depend only on $\xi_1$, Re $\om\geq0$.
Using the boundary condition $(\ref{eq:MHDh2})_3$, and by \eqref{pressure2}, noticing that
\begin{align*}
&\p_2 E_{\om}[f]\Big|_{x_2=0}=\f{1}{2} \int^{\infty}_0 e^{-\om y_3} f( y_2) dy_2=\om E_{\om}[f]_0,
\end{align*}
(where we define $E_\om[f]_0:=E_\om[f]\Big|_{x_2=0}$), we have
$$\left\{
\begin{array}{l}
 A_1(\xi_1)=-\big(E_\om[\wh{u}_{1, 0}+\wh{f}_{1, \lam}]_0+\f{i\xi_1}{\lam+|\xi_1|^2}E_\om[\wh{b}_{1, 0}+\wh{g}_{1, \lam}]_0+i\xi_1E_\om[E_{|\xi_1|}[i\xi_1  \wh{f}_{1, \lam}+\p_2\wh{f}_{2, \lam}]]_0\big)\\
\qquad  \qquad -\f{i\xi_1}{|\xi_1|}\{ E_\om[\wh{u}_{2, 0}+\wh{f}_{2, \lam}]_0+\f{i\xi_1}{\lam+|\xi_1|^2}E_\om[\wh{b}_{2, 0}+\wh{g}_{2, \lam}]_0+E_\om[\pa_2E_{|\xi_1|}[i\xi_1  \wh{f}_{1, \lam}+\p_2\wh{f}_{2, \lam}]]_0\} , \\
A_2(\xi_1)=0,\\
C(\xi_1)=-\f{1}{|\xi_1|E_{\om}[e^{-|\xi_1|x_2}]_0}\{ E_\om[\wh{u}_{2, 0}+\wh{f}_{2, \lam}]_0+\f{i\xi_1}{\lam+|\xi_1|^2}E_\om[\wh{b}_{2, 0}+\wh{g}_{2, \lam}]_0+E_\om[\pa_2E_{|\xi_1|}[i\xi_1  \wh{f}_{1, \lam}+\p_2\wh{f}_{2, \lam}]]_0\}.
\end{array}\right.
$$

 Define
 \begin{align}\label{Nom}
 N_\om[f]=\f{1}{2\om}\int_0^\infty \big(e^{-\om|x_2-y_2|}-e^{-\om(x_2+y_2)}\big) f(y_2) dy_2,
 \end{align}
 noticing that
\begin{align*}
E_{\om}[e^{-|\xi_1|x_2}]&=\f{1}{2\om} \Big(\int^{x_2}_0 e^{-\om(x_2-y_2)} e^{-|\xi_1| y_2} dy_2+\int_{x_2}^\infty e^{\om(x_2-y_2)} e^{-|\xi_1| y_2} dy_2\Big)\\
&=\f{1}{2\om} \Big(e^{-\om x_2} \f{e^{(\om-|\xi_1|) x_2}-1}{\om-|\xi_1|} +e^{\om x_2} \f{e^{-(\om+|\xi_1|) x_2}}{\om+|\xi_1|} \Big)\\
&=\f{1}{2\om} \Big(\f{ e^{-|\xi_1|x_2}- e^{-\om x_2}}{\om-|\xi_1|} +\f{e^{-|\xi_1|x_2}}{\om+|\xi_1|}\Big),\\
\end{align*}
and
\begin{align*}
N_{\om}[e^{-|\xi_1|x_2}]&=E_{\om}[e^{-|\xi_1|x_2}]-\f{1}{2\om} \int^{\infty}_0 e^{-\om(x_2+y_2)} e^{-|\xi_1| y_2} dy_2\\
&=\f{1}{2\om} \Big(\f{ e^{-|\xi_1|x_2}- e^{-\om x_2}}{\om-|\xi_1|} +\f{e^{-|\xi_1|x_2}}{\om+|\xi_1|}-\f{e^{-\om x_2}}{\om+|\xi_1|}\Big)\\
&= \f{ e^{-|\xi_1|x_2}- e^{-\om x_2}}{\om^2-|\xi_1|^2},
\end{align*}
then
\begin{align}\label{0005}
\f{E_{\om}[e^{-|\xi_1|x_2}]}{E_{\om}[e^{-|\xi_1|x_2}]_0}&=\f{N_{\om}[e^{-|\xi_1|x_2}]}{E_{\om}[e^{-|\xi_1|x_2}]_0}+e^{-\om x_2},
\end{align}
and
\begin{align}\label{0006}
\f{N_{\om}[e^{-|\xi_1|x_2}]}{E_{\om}[e^{-|\xi_1|x_2}]_0}=\f{2\om }{\om-|\xi_1|}\Big(e^{-|\xi_1|x_2}- e^{-\om x_2}\Big).
\end{align}

  By \eqref{0005}, we obtain the solution of system \eqref{eq:MHDh2}
\begin{align}\label{ulam}
\left\{
\begin{array}{l}
\wh{u}_{1, \lam} =N_\om[\wh{u}_{1, 0}+\wh{f}_{1, \lam}]+\f{i\xi_1}{\lam+|\xi_1|^2}N_\om[\wh{b}_{1, 0}+\wh{g}_{1, \lam}]+i\xi_1N_\om[E_{|\xi_1|}[i\xi_1  \wh{f}_{1, \lam}+\p_2\wh{f}_{2, \lam}]]\\
\qquad  \quad +\f{i\xi_1N_\om[e^{-|\xi_1|x_2}]}{|\xi_1|E_{\om}[e^{-|\xi_1|x_2}]_0}\{ E_\om[\wh{u}_{2, 0}+\wh{f}_{2, \lam}]_0+\f{i\xi_1}{\lam+|\xi_1|^2}E_\om[\wh{b}_{2, 0}+\wh{g}_{2, \lam}]_0+E_\om[\pa_2E_{|\xi_1|}[i\xi_1  \wh{f}_{1, \lam}+\p_2\wh{f}_{2, \lam}]]_0\} , \\
\wh{u}_{2, \lam} =E_\om[\wh{u}_{2, 0}+\wh{f}_{2, \lam}]+\f{i\xi_1}{\lam+|\xi_1|^2}E_\om[\wh{b}_{2, 0}+\wh{g}_{2, \lam}]+E_\om[\pa_2E_{|\xi_1|}[i\xi_1  \wh{f}_{1, \lam}+\p_2\wh{f}_{2, \lam}]]\\
\qquad  \quad -\f{E_\om[e^{-|\xi_1|x_2}]}{E_{\om}[e^{-|\xi_1|x_2}]_0}\{ E_\om[\wh{u}_{2, 0}+\wh{f}_{2, \lam}]_0+\f{i\xi_1}{\lam+|\xi_1|^2}E_\om[\wh{b}_{2, 0}+\wh{g}_{2, \lam}]_0+E_\om[\pa_2E_{|\xi_1|}[i\xi_1  \wh{f}_{1, \lam}+\p_2\wh{f}_{2, \lam}]]_0\}\\
\qquad =N_\om[\wh{u}_{2, 0}+\wh{f}_{2, \lam}]+\f{i\xi_1}{\lam+|\xi_1|^2}N_\om[\wh{b}_{2, 0}+\wh{g}_{2, \lam}]+N_\om[\pa_2E_{|\xi_1|}[i\xi_1  \wh{f}_{1, \lam}+\p_2\wh{f}_{2, \lam}]]\\
\qquad  \quad -\f{N_\om[e^{-|\xi_1|x_2}]}{E_{\om}[e^{-|\xi_1|x_2}]_0}\{ E_\om[\wh{u}_{2, 0}+\wh{f}_{2, \lam}]_0+\f{i\xi_1}{\lam+|\xi_1|^2}E_\om[\wh{b}_{2, 0}+\wh{g}_{2, \lam}]_0+E_\om[\pa_2E_{|\xi_1|}[i\xi_1  \wh{f}_{1, \lam}+\p_2\wh{f}_{2, \lam}]]_0\}.
\end{array}\right.
\end{align}

By $\eqref{ulamb}$, we obtain
\begin{align}\label{blam}
\left\{
\begin{array}{l}
\wh{b}_{1, \lam} =\f{i\xi_1}{\lam+|\xi_1|^2}\big(N_\om[\wh{u}_{1, 0}+\wh{f}_{1, \lam}]+\f{i\xi_1}{\lam+|\xi_1|^2}N_\om[\wh{b}_{1, 0}+\wh{g}_{1, \lam}]+i\xi_1N_\om[E_{|\xi_1|}[i\xi_1  \wh{f}_{1, \lam}+\p_2\wh{f}_{2, \lam}]]\\
\qquad \quad  +\f{i\xi_1N_\om[e^{-|\xi_1|x_2}]}{|\xi_1|E_{\om}[e^{-|\xi_1|x_2}]_0}\{ E_\om[\wh{u}_{2, 0}+\wh{f}_{2, \lam}]_0+\f{i\xi_1}{\lam+|\xi_1|^2}E_\om[\wh{b}_{2, 0}+\wh{g}_{2, \lam}]_0+E_\om[\pa_2E_{|\xi_1|}[i\xi_1  \wh{f}_{1, \lam}+\p_2\wh{f}_{2, \lam}]]_0\} \big)\\
\qquad \quad+\f{\wh{b}_{1,0}+\wh{g}_{1,\lam}}{\lam+|\xi_1|^2} , \\
\wh{b}_{2, \lam} =\f{i\xi_1}{\lam+|\xi_1|^2}\big(N_\om[\wh{u}_{2, 0}+\wh{f}_{2, \lam}]+\f{i\xi_1}{\lam+|\xi_1|^2}N_\om[\wh{b}_{2, 0}+\wh{g}_{2, \lam}]+N_\om[\pa_2E_{|\xi_1|}[i\xi_1  \wh{f}_{1, \lam}+\p_2\wh{f}_{2, \lam}]]\\
\qquad \quad  -\f{N_\om[e^{-|\xi_1|x_2}]}{E_{\om}[e^{-|\xi_1|x_2}]_0}\{ E_\om[\wh{u}_{2, 0}+\wh{f}_{2, \lam}]_0+\f{i\xi_1}{\lam+|\xi_1|^2}E_\om[\wh{b}_{2, 0}+\wh{g}_{2, \lam}]_0+E_\om[\pa_2E_{|\xi_1|}[i\xi_1  \wh{f}_{1, \lam}+\p_2\wh{f}_{2, \lam}]]_0\}\big)\\
\qquad\quad+\f{\wh{b}_{2,0}+\wh{g}_{2,\lam}}{\lam+|\xi_1|^2}.
\end{array}\right.
\end{align}

Taking  inverse Laplace transform in $t$:
\beno
\cL^{-1}(F(\lam))=\f{1}{2\pi i} \int^{\beta+i\infty}_{\beta-i\infty} e^{\lam t} F(\lam) d\lam
\eeno
to \eqref{ulam} and  \eqref{blam} ($F(\lam)$ has no singularity  on the right hand side of $\beta=Re\,\lam$) , we  get the horizontal Fourier transform of the solution of \eqref{eq:MHDL}
\begin{align}\label{eq:u sol2}
\left\{
\begin{array}{l}
\wh{u}_1(\xi_1,x_2, t)=\f{1}{2\pi i}\int_{\Ga}e^{\lam t}
\Big\{ N_\om[\wh{u}_{1, 0}+\wh{f}_{1, \lam}]+\f{i\xi_1}{\lam+|\xi_1|^2}N_\om[\wh{b}_{1, 0}+\wh{g}_{1, \lam}]+i\xi_1N_\om[E_{|\xi_1|}[i\xi_1  \wh{f}_{1, \lam}+\p_2\wh{f}_{2, \lam}]]\\
\qquad \qquad \quad  \quad+\f{i\xi_1N_\om[e^{-|\xi_1|x_2}]}{|\xi_1|E_{\om}[e^{-|\xi_1|x_2}]_0}\{ E_\om[\wh{u}_{2, 0}+\wh{f}_{2, \lam}]_0+\f{i\xi_1}{\lam+|\xi_1|^2}E_\om[\wh{b}_{2, 0}+\wh{g}_{2, \lam}]_0\\
\qquad \qquad \quad \quad+E_\om[\pa_2E_{|\xi_1|}[i\xi_1  \wh{f}_{1, \lam}+\p_2\wh{f}_{2, \lam}]]_0\}\Big\}d\lam,\\
\wh{u}_2(\xi_1,x_2, t)=\f{1}{2\pi i}\int_{\Ga}e^{\lam t}
\Big\{N_\om[\wh{u}_{2, 0}+\wh{f}_{2, \lam}]+\f{i\xi_1}{\lam+|\xi_1|^2}N_\om[\wh{b}_{2, 0}+\wh{g}_{2, \lam}]+N_\om[\pa_2E_{|\xi_1|}[i\xi_1  \wh{f}_{1, \lam}+\p_2\wh{f}_{2, \lam}]]\\
\qquad \qquad \quad \quad  -\f{N_\om[e^{-|\xi_1|x_2}]}{E_{\om}[e^{-|\xi_1|x_2}]_0}\{ E_\om[\wh{u}_{2, 0}+\wh{f}_{2, \lam}]_0+\f{i\xi_1}{\lam+|\xi_1|^2}E_\om[\wh{b}_{2, 0}+\wh{g}_{2, \lam}]_0\\
\qquad \qquad \quad \quad+E_\om[\pa_2E_{|\xi_1|}[i\xi_1  \wh{f}_{1, \lam}+\p_2\wh{f}_{2, \lam}]]_0\}  \Big\}d\lam,\\
\wh{b}_1(\xi_1,x_2, t)=\f{1}{2\pi i}\int_{\Ga}e^{\lam t}
\Big\{\f{i\xi_1}{\lam+|\xi_1|^2}\big(N_\om[\wh{u}_{1, 0}+\wh{f}_{1, \lam}]+\f{i\xi_1}{\lam+|\xi_1|^2}N_\om[\wh{b}_{1, 0}+\wh{g}_{1, \lam}]+i\xi_1N_\om[E_{|\xi_1|}[i\xi_1  \wh{f}_{1, \lam}+\p_2\wh{f}_{2, \lam}]]\\
\qquad \qquad \quad \quad  +\f{i\xi_1N_\om[e^{-|\xi_1|x_2}]}{|\xi_1|E_{\om}[e^{-|\xi_1|x_2}]_0}\{ E_\om[\wh{u}_{2, 0}+\wh{f}_{2, \lam}]_0+\f{i\xi_1}{\lam+|\xi_1|^2}E_\om[\wh{b}_{2, 0}+\wh{g}_{2, \lam}]_0+E_\om[\pa_2E_{|\xi_1|}[i\xi_1  \wh{f}_{1, \lam}+\p_2\wh{f}_{2, \lam}]]_0\} \big)\\
\qquad \qquad \quad \quad+\f{\wh{b}_{1,0}+\wh{g}_{1,\lam}}{\lam+|\xi_1|^2}  \Big\}d\lam,\\
\wh{b}_2(\xi_1,x_2, t)=\f{1}{2\pi i}\int_{\Ga}e^{\lam t}
\Big\{ \f{i\xi_1}{\lam+|\xi_1|^2}\big(N_\om[\wh{u}_{2, 0}+\wh{f}_{2, \lam}]+\f{i\xi_1}{\lam+|\xi_1|^2}N_\om[\wh{b}_{2, 0}+\wh{g}_{2, \lam}]+N_\om[\pa_2E_{|\xi_1|}[i\xi_1  \wh{f}_{1, \lam}+\p_2\wh{f}_{2, \lam}]]\\
\qquad \qquad \quad  \quad -\f{N_\om[e^{-|\xi_1|x_2}]}{E_{\om}[e^{-|\xi_1|x_2}]_0}\{ E_\om[\wh{u}_{2, 0}+\wh{f}_{2, \lam}]_0+\f{i\xi_1}{\lam+|\xi_1|^2}E_\om[\wh{b}_{2, 0}+\wh{g}_{2, \lam}]_0+E_\om[\pa_2E_{|\xi_1|}[i\xi_1  \wh{f}_{1, \lam}+\p_2\wh{f}_{2, \lam}]]_0\}\big)\\
\qquad \qquad \quad \quad+\f{\wh{b}_{2,0}+\wh{g}_{2,\lam}}{\lam+|\xi_1|^2} \Big\}d\lam,
\end{array}\right.
\end{align}
here $\Ga=\{\lam=R+\eta e^{2\pi i/3}, \eta\geq 0\}\cup \{\lam=R+\eta e^{-2\pi i/3}, \eta\geq 0\}$,  $R>0$ is sufficiently large number taken in such a way that Re $  \om (\lam;\xi_1,\xi_2)>0$ for all $\lam\in \Ga$.

\subsection{The contour integration of the  linear part}
The linear part of \eqref{eq:u sol2}  is
\begin{align}\label{linear}
\left\{
\begin{array}{l}
\wh{u}_{1,L}(\xi_1,x_2, t)=\f{1}{2\pi i}\int_{\Ga}e^{\lam t}
\Big\{ N_\om[\wh{u}_{1, 0}]+\f{i\xi_1}{\lam+|\xi_1|^2}N_\om[\wh{b}_{1, 0}] +\f{i\xi_1N_\om[e^{-|\xi_1|x_2}]}{|\xi_1|E_{\om}[e^{-|\xi_1|x_2}]_0}\{ E_\om[\wh{u}_{2, 0}]_0+\f{i\xi_1}{\lam+|\xi_1|^2}E_\om[\wh{b}_{2, 0}]_0\}\Big\}d\lam,\\
\wh{u}_{2,L}(\xi_1,x_2, t)=\f{1}{2\pi i}\int_{\Ga}e^{\lam t}
\Big\{N_\om[\wh{u}_{2, 0}]+\f{i\xi_1}{\lam+|\xi_1|^2}N_\om[\wh{b}_{2, 0}]-\f{N_\om[e^{-|\xi_1|x_2}]}{E_{\om}[e^{-|\xi_1|x_2}]_0}\{ E_\om[\wh{u}_{2, 0}]_0+\f{i\xi_1}{\lam+|\xi_1|^2}E_\om[\wh{b}_{2, 0}]_0\}  \Big\}d\lam,\\
\wh{b}_{1,L}(\xi_1,x_2, t)=\f{1}{2\pi i}\int_{\Ga}e^{\lam t}
\Big\{\f{i\xi_1}{\lam+|\xi_1|^2}\big(N_\om[\wh{u}_{1, 0}]+\f{i\xi_1}{\lam+|\xi_1|^2}N_\om[\wh{b}_{1,0}]\\
\qquad \qquad \qquad \quad+\f{i\xi_1N_\om[e^{-|\xi_1|x_2}]}{|\xi_1|E_{\om}[e^{-|\xi_1|x_2}]_0}\{ E_\om[\wh{u}_{2, 0}]_0+\f{i\xi_1}{\lam+|\xi_1|^2}E_\om[\wh{b}_{2, 0}]_0\} \big) +\f{\wh{b}_{1,0}}{\lam+|\xi_1|^2}  \Big\}d\lam,\\
\wh{b}_{2,L}(\xi_1,x_2, t)=\f{1}{2\pi i}\int_{\Ga}e^{\lam t}
\Big\{ \f{i\xi_1}{\lam+|\xi_1|^2}\big(N_\om[\wh{u}_{2, 0}]+\f{i\xi_1}{\lam+|\xi_1|^2}N_\om[\wh{b}_{2, 0}]\\
\qquad \qquad \qquad \quad-\f{N_\om[e^{-|\xi_1|x_2}]}{E_{\om}[e^{-|\xi_1|x_2}]_0}\{ E_\om[\wh{u}_{2, 0}]_0+\f{i\xi_1}{\lam+|\xi_1|^2}E_\om[\wh{b}_{2, 0}]_0\}\big)+\f{\wh{b}_{2,0}}{\lam+|\xi_1|^2} \Big\}d\lam.\\
\end{array}\right.
\end{align}

By \eqref{0006}, $\wh{u}_{1,L}$ can be rewritten as
\begin{align}\label{u1}
\wh{u}_{1,L}(\xi_1,x_2, t)&=\f{1}{2\pi i}\int_{\Ga}e^{\lam t}
\Big\{ N_\om[\wh{u}_{1, 0}]+\f{i\xi_1}{\lam+|\xi_1|^2}N_\om[\wh{b}_{1, 0}]\non\\
 &\quad+\f{2i\xi_1 \om}{|\xi_1|(\om-|\xi_1|)}\{  E_\om[\wh{u}_{2, 0}]_0+\f{i\xi_1}{\lam+|\xi_1|^2}E_\om[\wh{b}_{2, 0}]_0\} \Big(e^{-|\xi_1|x_2}- e^{-\om x_2}\Big)\Big\}d\lam\non\\
 &=\f{1}{2\pi i}\int_{\Ga}e^{\lam t}
\Big\{ N_\om[\wh{u}_{1, 0}]+\f{i\xi_1}{\lam+|\xi_1|^2}N_\om[\wh{b}_{1, 0}]\Big\}d\lam\non\\
 &\quad+\f{1}{2\pi i}\int_{\Ga}e^{\lam t}
\Big\{\f{2i\xi_1 \om}{|\xi_1|(\om-|\xi_1|)}\{  E_\om[\wh{u}_{2, 0}]_0+\f{i\xi_1}{\lam+|\xi_1|^2}E_\om[\wh{b}_{2, 0}]_0\} \Big(e^{-|\xi_1|x_2}- e^{-\om x_2}\Big)\Big\}d\lam,\non\\
&:=I_1+I_2,
\end{align}
defining
\begin{align*}
\lam^{'}_\pm=\left\{
\begin{array}{l}
\f{-|\xi_1|^2\pm i\sqrt{4|\xi_1|^2-|\xi_1|^4}}{2}, \quad |\xi_1|\leq 2,\\
\f{-|\xi_1|^2\pm\sqrt{|\xi_1|^4-4|\xi_1|^2}}{2} ,\quad |\xi_1|> 2.
\end{array}\right.
\end{align*}

Notice the following results:
\begin{align*}
\lim\limits_{\om\to|\xi_1|}\f{\om(e^{-|\xi_1|x_2}- e^{-\om x_2})}{\om-|\xi_1|}=|\xi_1|x_2e^{-|\xi_1|x_2}< 1,
\end{align*}
we only need to consider the branch point $\lam=\lam_+^{'}$, $\lam=\lam_-^{'}$ and $\lam=-|\xi_1|^2$.\\

Similarly,
\begin{align}\label{u2}
\wh{u}_{2,L}(\xi_1,x_2, t)&=\f{1}{2\pi i}\int_{\Ga}e^{\lam t}
\Big\{ N_\om[\wh{u}_{2, 0}]+\f{i\xi_1}{\lam+|\xi_1|^2}N_\om[\wh{b}_{2, 0}]\non\\
 &\quad-\f{2 \om}{\om-|\xi_1|}\{  E_\om[\wh{u}_{2, 0}]_0+\f{i\xi_1}{\lam+|\xi_1|^2}E_\om[\wh{b}_{2, 0}]_0\} \Big(e^{-|\xi_1|x_2}- e^{-\om x_2}\Big)\Big\}d\lam,
\end{align}
which can be estimated as $\wh{u}_{1, L}$.

When $|\xi_1|\leq 2$, we use the branch specified by the requirement
\beno
\arg(\lam-\lam_\pm')=\mp\f{\pi}{2} \ \text{at} \ \lam=\text{Re}\  \lam_\pm' \ \text{and} \ \arg \lam=0 \ \text{at}\  \lam=0,
\eeno
and take the branch cut
\beno
\{ \lam; \ \text{Re} \ \lam \leq 0,\  \text{Im} \ \lam =0\}\cup \{\lam \in \Pi; \ \text{Re} \ \lam \leq \text{Re}\ \lam'_\pm\},
\eeno
where $\Pi$ is the circle defined by
\beno
\Pi=\{ \lam=\eta+i\sigma; \ \eta^2+\sigma^2= |\xi_1|^2\}.
\eeno

When $|\xi_1|> 2$, we use the branch specified by
\beno
\arg(\lam-\lam'_\pm)=\arg  \lam=0 \ \text{at} \ \lam=0,
\eeno
and take the branch cut
\beno
\{ \lam;\  \text{Re} \ \lam \leq \lam'_-,\  \text{Im} \ \lam =0\} \cup \{  \lam'_+ \leq \text{Re} \ \lam \leq 0,\  \text{Im} \ \lam =0\}.
\eeno

Now we consider the contour in the following two cases:

Case 1: $ |\xi_1|\leq 2$.

We deform the contour $\Ga$ into  $\cup_{m=1}^5 \Ga_m$, where $\Ga_1=\Ga_1^{(+)}\cup\Ga_1^{(-)}$  wraps around the portion $\{ \lam=-\eta-|\xi_1|^2; \ \eta:0\to |\xi_1|\}$ in the branch cut with
\begin{align*}
{\Ga^{(+)}_1}&=\Big\{ \lam=-\eta-|\xi_1|^2; \ \eta:0\to |\xi_1|\Big\},\\
{\Ga^{(-)}_1}&=\Big\{ \lam=-\eta-|\xi_1|^2; \ \eta:|\xi_1| \to 0\Big\}.
\end{align*}

$\Ga_2=\Ga_2^{(+)}\cup\Ga_2^{(-)}$  wraps around the portion $\{ \lam\in \Pi; -|\xi_1|^2-|\xi_1|\leq \text{Re} \lam \leq \text{Re} \lam'_+,\  \text{Im} \lam > 0\}$ in the branch cut with
\begin{align*}
{\Ga^{(+)}_2}=\Big\{ \lam=\lam'_+-\eta+i\big(-\text{Im} \lam'_++D(\eta, \xi_1)\big); \ \eta:0\to d_0\Big\},\\
{\Ga^{(-)}_2}=\Big\{ \lam=\lam'_+-\eta+i\big(-\text{Im} \lam'_++D(\eta, \xi_1)\big); \ \eta:d_0\to 0\Big\},
\end{align*}
where we define
 $$D(\eta, \xi_1):=\sqrt{(\text{Im} \lam'_+)^2+2(\text{Re} \lam'_++|\xi_1|^2) \eta-\eta^2}, \quad d_0= \text{Re} \lam'_++|\xi_1|+|\xi_1|^2.$$

$\Ga_3=\Ga_3^{(+)}\cup\Ga_3^{(-)}$ wraps around the portion $\{ \lam\in \Pi; -|\xi_1|^2-|\xi_1| \leq \text{Re} \lam \leq \text{Re} \lam'_+, \ \text{Im} \lam < 0\}$ in the branch cut with
\begin{align*}
{\Ga^{(+)}_3}&=\Big\{ \lam=\lam'_--\eta-i\big(-\text{Im} \lam'_++D(\eta, \xi_1)\big); \ \eta: 0\to d_0\Big\},\\
{\Ga^{(-)}_3}&=\Big\{ \lam=\lam'_--\eta-i\big(-\text{Im} \lam'_++D(\eta, \xi_1)\big); \ \eta:d_0\to 0\Big\}.
\end{align*}

$\Ga_4=\Ga_4^{(+)}\cup\Ga_4^{(-)}$  defined as
 \begin{align*}
{\Ga^{(+)}_4}&=\Big\{ \lam=-\eta-|\xi_1|^2 ; \ \eta:|\xi_1|\to \infty\Big\},\\
{\Ga^{(-)}_4}&=\Big\{ \lam=-\eta-|\xi_1|^2 ; \ \eta: \infty\to |\xi_1|\Big\},
\end{align*}
and
\beno
{\Ga_5}=\Big\{\lam=-|\xi_1|^2+\e e^{i\gamma}; \ \gamma: -\pi \to \pi\Big\}
\eeno
with $ \e\to 0$ (But not equal 0).\\

Case 2: $|\xi_1|> 2$.

We deform the contour $\Ga$   into $\widetilde{\Ga}^{(+)}_1\cup\widetilde{\Ga}^{(-)}_1$, where $\widetilde{\Ga}_1=\widetilde{\Ga}_1^{(+)}\cup \widetilde{\Ga}_1^{(-)}$ wraps around the portion $ \{ \lam=-\eta-|\xi_1|^2; \ \eta:\lam'_-\to \lam'_+\}$ in the branch cut with
\begin{align*}
{\widetilde{\Ga}_1^{(+)}}=\Big\{ \lam=-\eta-|\xi_1|^2; \ \eta:\lam'_-\to \lam'_+\Big\},\\
{\widetilde{\Ga}_1^{(-)}}=\Big\{ \lam=-\eta-|\xi_1|^2; \ \eta:\lam'_+ \to \lam'_-\Big\},
 \end{align*}
and $\widetilde{\Ga}_2=\widetilde{\Ga}_2^{(+)}\cup \widetilde{\Ga}_2^{(-)}$ wraps around the portion $\{ \lam=-\eta-|\xi_1|^2; \ \eta:0\to \infty \}$ in the branch cut with
\begin{align*}
{\widetilde{\Ga}_2^{(+)}}=\Big\{ \lam=-\eta-|\xi_1|^2; \ \eta:0\to \infty\Big\},\\
{\widetilde{\Ga}_2^{(-)}}=\Big\{ \lam=-\eta-|\xi_1|^2; \ \eta:\infty \to 0\Big\},
 \end{align*}
and
\beno
{\widetilde{\Ga}_3}=\Big\{\lam=-|\xi_1|^2+\e e^{i\gamma}; \ \gamma: -\pi \to \pi\Big\}
\eeno
with $ \e\to 0$ (But not equal 0).\\

 \subsection{The resolvent estimate of the linear part}

 We first give the  resolvent estimate of velocity field in  \eqref{linear}.
\begin{proposition}\label{linear u}
We have the following $L^2$ estimates
 \begin{align}\label{uL2}
 \|u_L\|_{L^2}&\lesssim \lan t\ran^{-\f12}\|(u_0, b_0)\|_{L^1\cap L^2 }
 \end{align}
 for the velocity  field in \eqref{linear}.
 \end{proposition}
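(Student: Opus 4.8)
The plan is to work directly from the explicit solution formula together with the contour deformation already set up in this subsection. By Plancherel in the horizontal variable $x_1$ and Minkowski's inequality, the spatial bound reduces to a frequency-by-frequency estimate,
\[
\|u_L\|_{L^2(\Om)}^2=\int_\R\|\wh u_L(\xi_1,\cdot,t)\|_{L^2_{x_2}}^2\,d\xi_1,\qquad \|\wh u_L(\xi_1,\cdot,t)\|_{L^2_{x_2}}\lesssim\int_{\Ga}|e^{\lam t}|\,\big\|(\text{kernel})(\lam,\xi_1,\cdot)\big\|_{L^2_{x_2}}\,|d\lam|,
\]
so the task becomes to bound, along each deformed contour piece, the product of $|e^{\lam t}|=e^{(\mathrm{Re}\,\lam)t}$ with the vertical $L^2_{x_2}$-norms of the building blocks $N_\om[\cdot]$, $E_\om[\cdot]$, $e^{-\om x_2}$ and the scalar coefficients $\f{i\xi_1}{\lam+|\xi_1|^2}$, $\f{\om}{\om-|\xi_1|}$. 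I would start from the rewritten forms \eqref{u1}, \eqref{u2}, in which the apparent pole at $\om=|\xi_1|$ (equivalently $\lam=-|\xi_1|^2$) has already been removed by the cancellation $e^{-|\xi_1|x_2}-e^{-\om x_2}$ recorded in \eqref{0006}, so that only the genuine branch points $\lam=\lam'_\pm$ and $\lam=-|\xi_1|^2$ remain.

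I would then estimate piece by piece, treating $|\xi_1|\le2$ via $\cup_{m=1}^5\Ga_m$ and $|\xi_1|>2$ via $\wt\Ga_1\cup\wt\Ga_2\cup\wt\Ga_3$. On the arcs $\Ga_2,\Ga_3$ wrapping $\lam'_\pm$, parametrizing by $\eta$ gives $\mathrm{Re}\,\lam=\mathrm{Re}\,\lam'_\pm-\eta=-\tfrac{|\xi_1|^2}{2}-\eta$, which produces a Gaussian-in-$\xi_1$ prefactor $e^{-|\xi_1|^2t/2}$ together with an extra time gain from $\int_0^{d_0}e^{-\eta t}(\cdots)\,d\eta$. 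On the real-axis pieces $\Ga_1,\Ga_4$ one has $\mathrm{Re}\,\lam\le-|\xi_1|^2$, which decays at least as fast, while the small circles $\Ga_5,\wt\Ga_3$ around $\lam=-|\xi_1|^2$ are controlled using the kernel decay as $\om\to\infty$ and the vanishing radius $\e\to0$. For $|\xi_1|>2$ every piece satisfies $\mathrm{Re}\,\lam\le-c<0$ uniformly, producing a factor $e^{-ct}$.

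The decisive structural feature is that the velocity formulas \eqref{u1}, \eqref{u2} contain no term of the pure form $\f{\wh b_0}{\lam+|\xi_1|^2}$ — the term responsible for the slow $\lan t\ran^{-1/4}$ decay of $b_1$; every magnetic contribution to $u$ carries either an extra factor $\f{i\xi_1}{\lam+|\xi_1|^2}$ or the half-space coefficient $\f{\om}{\om-|\xi_1|}$, and the latter \emph{vanishes} at the branch points $\lam'_\pm$ (where $\om\to0$), precisely taming the degeneration of $\|e^{-\om x_2}\|_{L^2_{x_2}}\sim(\mathrm{Re}\,\om)^{-1/2}$ and supplying the additional smallness that lifts the velocity decay above that of $b_1$. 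Carrying out the $\xi_1$-integration, the low-frequency region contributes a Gaussian integral of the form $\int_{|\xi_1|\le2}e^{-c|\xi_1|^2t}(\cdots)\,d\xi_1\lesssim\lan t\ran^{-1}\|(u_0,b_0)\|_{L^1}^2$ (using $\sup_{\xi_1}\|\wh{(u_0,b_0)}(\xi_1,\cdot)\|_{L^2_{x_2}}\lesssim\|(u_0,b_0)\|_{L^1}$), while the high-frequency region contributes $e^{-ct}\|(u_0,b_0)\|_{L^2}^2$ by Plancherel; adding the two and taking square roots yields the claimed $\lan t\ran^{-\f12}$ bound, with the range $t\le1$ absorbed into the $L^2$ part of the norm.

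The main obstacle I anticipate is the analysis in the shrinking neighborhoods of the branch points. At $\lam'_\pm$, where $\om$ vanishes, one must balance the vanishing of $\f{\om}{\om-|\xi_1|}$ against the singular vertical norm $(\mathrm{Re}\,\om)^{-1/2}$ and the Jacobian $|d\lam/d\eta|$ of the arc parametrization so that the resulting $\eta$-integral both converges and produces the correct power of $t$; at $\lam=-|\xi_1|^2$ one must control the pole of $\f{i\xi_1}{\lam+|\xi_1|^2}$ (where conversely $\om\to\infty$ and the kernels decay), ensuring the net singularity stays square-root–integrable. Throughout, the final $\xi_1$-integrand must be kept integrable near $\xi_1=0$, with no non-integrable $|\xi_1|^{-1}$ growth surviving. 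Verifying that these competing singular factors balance to give exactly $\lan t\ran^{-1}$ for $\|u_L\|_{L^2}^2$ is the technical heart of the argument.
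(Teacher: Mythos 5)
Your proposed reduction breaks down at its first step. You bound
\[
\|\wh u_L(\xi_1,\cdot,t)\|_{L^2_{x_2}}\;\lesssim\;\int_{\Ga}|e^{\lam t}|\,\big\|(\text{kernel})(\lam,\xi_1,\cdot)\big\|_{L^2_{x_2}}\,|d\lam|,
\]
i.e.\ you take absolute values pointwise in $\lam$ along the deformed contour and measure the kernels in $L^2_{x_2}$. But on every piece of the deformed contour that produces time decay, $\om$ is \emph{purely imaginary}: on $\Ga_1\cup\Ga_4$ one has $\lam=-\eta-|\xi_1|^2$ and $\om=\mp i\sqrt{\eta+|\xi_1|^2+|\xi_1|^2/\eta}$, and on $\Ga_2\cup\Ga_3$ one has $\om=\pm i\sqrt{2\eta}$. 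Hence $|e^{-\om x_2}|\equiv 1$, so $\|e^{-\om x_2}\|_{L^2_{x_2}(0,\infty)}=\infty$, and likewise $N_\om[f]$ and $E_\om[f]$ are purely oscillatory, non-decaying functions of $x_2$ whose $L^2_{x_2}$ norms are generically infinite. (Your structural remark that $\f{\om}{\om-|\xi_1|}\to 0$ at $\lam'_\pm$ ``tames'' $\|e^{-\om x_2}\|_{L^2_{x_2}}\sim(\mathrm{Re}\,\om)^{-1/2}$ is vacuous on exactly these pieces, where $\mathrm{Re}\,\om=0$.) Finiteness is recovered only \emph{after} the $\lam$-integration, because the branch-cut integrals are Fourier integrals in disguise: this is precisely how the paper proceeds, substituting $\xi_2=|\om|$ on $\Ga_1\cup\Ga_4$ (and on $\Ga_2\cup\Ga_3$) so that each contribution becomes $\cF_{x_2}^{-1}$ of an explicit multiplier acting on $\cF_{y_2}(\chi\,\wh u_{2,0})$, as in \eqref{I11-2} and \eqref{010}, and — for the full-space part $I_1$ — performing an odd extension and a full Fourier transform in $x_2$ so that the contour integral collapses by residues to the multipliers $M_1,M_2,M_3$ built from $e^{\lam_\pm t}$, which are then estimated case by case. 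Plancherel in $\xi_2$, not a pointwise bound in $\lam$, is what makes everything finite; no argument that commutes the $L^2_{x_2}$ norm past the $d\lam$ integration can succeed.

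Second, even granting convergence, your power counting does not yield the stated rate. Controlling the data only through $\sup_{\xi_1}\|\wh{(u_0,b_0)}(\xi_1,\cdot)\|_{L^2_{x_2}}$ — which, by Minkowski, is bounded by $\|(u_0,b_0)\|_{L^1_{x_1}L^2_{x_2}}$, \emph{not} by $\|(u_0,b_0)\|_{L^1(\Om)}$ as you wrote, and that mixed norm is not dominated by $\|\cdot\|_{L^1\cap L^2}$ — a Gaussian integral in the single variable $\xi_1$ gives $\int_{|\xi_1|\le 2}e^{-c|\xi_1|^2t}\,d\xi_1\lesssim t^{-1/2}$, hence only $\lan t\ran^{-1/4}$ for $\|u_L\|_{L^2}$, half the claimed rate. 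The rate $\lan t\ran^{-1/2}$ requires two-dimensional frequency decay $e^{-c(|\xi_1|^2+|\xi_2|^2)t}$ (or the anomalous kernels $\f{|\xi_1|}{|\xi_2|^2}e^{-\f{|\xi_1|^2}{|\xi_2|^2}t-|\xi_1|^2t}$, which the paper must isolate and treat separately in Case d and in \eqref{010}) together with the bound $\|\cF(u_0,b_0)\|_{L^\infty_{\xi_1,\xi_2}}\lesssim\|(u_0,b_0)\|_{L^1}$ for the full two-dimensional Fourier transform — and that transform only exists after the odd extension in $x_2$, i.e.\ after the very step your plan bypasses. So both the convergence of your estimates and the claimed decay rate hinge on the Fourier-analytic treatment of the vertical variable that is absent from your proposal.
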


 \begin{proof}  We only need to consider $I_1$ and $I_2$ in  \eqref{u1}.

Consider the  odd extension of $\wh{u}_{1, 0}+\f{i\xi_1}{\lam} \wh{b}_{1,0}$,
\begin{align}\label{u10'}
\wh{u}_{1, 0}^{o}(y_2)=\left\{
\begin{array}{l}
\wh{u}_{1, 0}(y_2), \ \  \text{for}  \quad y_2>0, \\
-\wh{u}_{1, 0}(-y_2), \  \  \text{for}  \quad y_2\leq0,
\end{array}\right.
\end{align}
\begin{align}\label{b10'}
\wh{b}_{1, 0}^{o}(y_2)=\left\{
\begin{array}{l}
\wh{b}_{1, 0}(y_2), \ \  \text{for}  \quad y_2>0, \\
-\wh{b}_{1, 0}(-y_2), \  \  \text{for}  \quad y_2\leq0,
\end{array}\right.
\end{align}
\begin{align}\label{vphi,chi}
\wh{U}_{1, 0}(y_2)=
(\wh{u}^{o}_{1, 0}+\f{i\xi_1}{\lam+|\xi_1|^2} \wh{b}^{o}_{1,0})(y_2),
\end{align}
then  $I_1$ can be rewritten as
\begin{align*}
I_1&=\f{1}{2\pi i}\int_{\Ga}e^{\lam t} \f{1}{2\om} \int^\infty_{-\infty} e^{-\om|x_2-y_2|} \wh{U}_{1, 0}(y_2) dy_2 d\lam\non\\
&=\f{1}{2\pi i}\int_{\Ga}e^{\lam t} \f{1}{2\om} e^{-\om|x_2|} * \wh{U}_{1, 0}  d\lam:=\f{1}{2\pi i}\int_{\Ga}e^{\lam t} I_{1, \lam}  d\lam.
\end{align*}

Consider the odd extension  $\wt{I}_{1, \lam}$ of $I_{1, \lam}$, then
\begin{align*}
\cF_{x_2}(\wt{I}_1)&=\f{1}{2\pi i}\int_{\Ga}e^{\lam t}\cF_{x_2} (\wt{I}_{1, \lam})d\lam\non\\
&=\f{1}{2\pi i}\int_{\Ga}e^{\lam t}\cF_{x_2}(\f{1}{2\om} e^{-\om|x_2|} ) \cF_{x_2}(\wh{U}_{1, 0}  ) d\lam\non\\
&=\f{1}{2\pi i}\int_{\Ga}e^{\lam t}\f{1}{\om^2+\xi_2^2} \cF_{x_2}(\wh{U}_{1, 0}  ) d\lam\non\\
&=\f{1}{2\pi i}\int_{\Ga}\f{(\lam+|\xi_1|^2) e^{\lam t}}{(\lam-\lam_+)(\lam-\lam_-)}\cF_{x_2}(\wh{U}_{1, 0}  ) d\lam\non\\
&=\f{1}{2\pi i}\int_{\Ga}\f{(\lam+|\xi_1|^2) e^{\lam t}}{(\lam-\lam_+)(\lam-\lam_-)}\cF_{x_2}(\wh{u}^{o}_{1, 0})+\f{i\xi_1 e^{\lam t}}{(\lam-\lam_+)(\lam-\lam_-)}\cF_{x_2}(\wh{b}^{o}_{1, 0}) d\lam\non\\
&=\Big(\f{\lam_+ e^{\lam_+ t}-\lam_- e^{\lam_- t}}{\lam_+-\lam_-}+|\xi_1|^2\f{ e^{\lam_+ t}-e^{\lam_- t}}{\lam_+-\lam_-}\Big)\cF_{x_2}(\wh{u}^{o}_{1, 0})+\Big(i\xi_1\f{ e^{\lam_+ t}-e^{\lam_- t}}{\lam_+-\lam_-}\Big)\cF_{x_2}(\wh{b}^{o}_{1, 0}),
\end{align*}
the definition of $\lam_\pm$ can be see \eqref{lampm}.

Define
\begin{align*}
M_1&:=\big|\f{\lam_+ e^{\lam_+ t}-\lam_- e^{\lam_- t}}{\lam_+-\lam_-}\big|, \\
M_2&:=|\xi_1|^2\big|\f{ e^{\lam_+ t}-e^{\lam_- t}}{\lam_+-\lam_-}\big|,\\
M_3&:=|\xi_1|\big|\f{ e^{\lam_+ t}-e^{\lam_- t}}{\lam_+-\lam_-}\big|,
\end{align*}
 we consider the following four cases:\\

\textbf{Case a}: $\big||\xi_1|^2-|\xi_2|^2\big|\leq |\xi_1|$.\\

We have
\begin{align}
&|\lam_+-\lam_-|= \sqrt{4|\xi_1|^2-(|\xi_1|^2-|\xi_2|^2)^2}\geq \sqrt{3}|\xi_1|,\non\\
&\Big|\f{ e^{\lam_+ t}-e^{\lam_- t}}{\lam_+-\lam_-}\Big|= e^{-\f{|\xi_1|^2+|\xi_2|^2}{2}t}\Big|\f{sin(\f{ \sqrt{4|\xi_1|^2-(|\xi_1|^2-|\xi_2|^2)^2}}{2}t)}{\f{ \sqrt{4|\xi_1|^2-(|\xi_1|^2-|\xi_2|^2)^2}}{2}}\Big|\lesssim \f{1}{|\xi_1|}e^{-\f{|\xi_1|^2+|\xi_2|^2}{2}t}.\label{206}
\end{align}

By $|\f{\sin \theta}{\theta}|\leq 1$, we have
\begin{align}
&M_1=\Big|\f{\lam_+ e^{\lam_+ t}-\lam_- e^{\lam_- t}}{\lam_+-\lam_-}\Big|\non\\
&\qquad =e^{-\f{|\xi_1|^2+|\xi_2|^2}{2}t}\Big|\f{|\xi_1|^2+|\xi_2|^2}{2}t\f{sin(\f{ \sqrt{4|\xi_1|^2-(|\xi_1|^2-|\xi_2|^2)^2}}{2}t)}{\f{ \sqrt{4|\xi_1|^2-(|\xi_1|^2-|\xi_2|^2)^2}}{2}t}-cos(\f{ \sqrt{4|\xi_1|^2-(|\xi_1|^2-|\xi_2|^2)^2}}{2}t)\Big|\non\\
&\qquad\lesssim e^{-\f{|\xi_1|^2+|\xi_2|^2}{2}t},\label{203}\\
&M_2=|\xi_1|^2\Big|\f{ e^{\lam_+ t}-e^{\lam_- t}}{\lam_+-\lam_-}\Big|= |\xi_1|^2te^{-\f{|\xi_1|^2+|\xi_2|^2}{2}t}\Big|\f{sin(\f{ \sqrt{4|\xi_1|^2-(|\xi_1|^2-|\xi_2|^2)^2}}{2}t)}{\f{ \sqrt{4|\xi_1|^2-(|\xi_1|^2-|\xi_2|^2)^2}}{2}t}\Big|\lesssim e^{-\f{|\xi_1|^2+|\xi_2|^2}{2}t},\non
\end{align}
by \eqref{206},
\begin{align*}
&M_3=|\xi_1|\Big|\f{ e^{\lam_+ t}-e^{\lam_- t}}{\lam_+-\lam_-}\Big|\lesssim e^{-\f{|\xi_1|^2+|\xi_2|^2}{2}t}.
\end{align*}

\textbf{Case b}: $|\xi_1|<\big||\xi_1|^2-|\xi_2|^2\big|\leq 2|\xi_1|$.\\

By the $\big||\xi_1|^2-|\xi_2|^2\big|=\pm(|\xi_1|^2-|\xi_2|^2)$, we consider the following two cases:\\

\textbf{Case b.1}: $|\xi_1|<|\xi_1|^2-|\xi_2|^2\leq 2|\xi_1|$.\\

For $ |\xi_1|<|\xi_1|^2-|\xi_2|^2\leq 2|\xi_1|$, it means $|\xi_1|> 1$, we have
\begin{align}
&\Big|\f{ e^{\lam_+ t}-e^{\lam_- t}}{\lam_+-\lam_-}\Big|= e^{-\f{|\xi_1|^2+|\xi_2|^2}{2}t}t\Big|\f{sin(\f{ \sqrt{4|\xi_1|^2-(|\xi_1|^2-|\xi_2|^2)^2}}{2}t)}{\f{ \sqrt{4|\xi_1|^2-(|\xi_1|^2-|\xi_2|^2)^2}}{2}t}\Big|\lesssim te^{-\f{|\xi_1|^2+|\xi_2|^2}{2}t}, \label{201}
\end{align}
by
\begin{align}\label{301}
te^{-\f{|\xi_1|^2+|\xi_2|^2}{2}t}\lesssim \f{1}{|\xi_1|^2+|\xi_2|^2}e^{-\f{|\xi_1|^2+|\xi_2|^2}{2}t},
\end{align}
we have
\begin{align}
\Big|\f{ e^{\lam_+ t}-e^{\lam_- t}}{\lam_+-\lam_-}\Big|\lesssim \f{1}{|\xi_1|^2+|\xi_2|^2}e^{-\f{|\xi_1|^2+|\xi_2|^2}{2}t}\lesssim \f{1}{|\xi_1|^2}e^{-\f{|\xi_1|^2+|\xi_2|^2}{2}t}\lesssim \f{1}{|\xi_1|}e^{-\f{|\xi_1|^2+|\xi_2|^2}{2}t},\label{202}
\end{align}
by \eqref{203},
\begin{align}\label{223}
&M_1=\Big|\f{\lam_+ e^{\lam_+ t}-\lam_- e^{\lam_- t}}{\lam_+-\lam_-}\Big|\lesssim e^{-\f{|\xi_1|^2+|\xi_2|^2}{2}t},
\end{align}
by \eqref{201},
\begin{align}\label{M2}
&M_2=|\xi_1|^2\Big|\f{ e^{\lam_+ t}-e^{\lam_- t}}{\lam_+-\lam_-}\Big|\lesssim |\xi_1|^2te^{-\f{|\xi_1|^2+|\xi_2|^2}{2}t}\lesssim e^{-\f{|\xi_1|^2+|\xi_2|^2}{2}t},
\end{align}
by \eqref{202},
\begin{align*}
&M_3=|\xi_1|\Big|\f{ e^{\lam_+ t}-e^{\lam_- t}}{\lam_+-\lam_-}\Big|\lesssim e^{-\f{|\xi_1|^2+|\xi_2|^2}{2}t}.
\end{align*}

\textbf{Case b.2}: $|\xi_1|<|\xi_2|^2-|\xi_1|^2\leq 2|\xi_1|$.\\

We have
\begin{align}
&\Big|\f{ e^{\lam_+ t}-e^{\lam_- t}}{\lam_+-\lam_-}\Big|= e^{-\f{|\xi_1|^2+|\xi_2|^2}{2}t}t\Big|\f{sin(\f{ \sqrt{4|\xi_1|^2-(|\xi_1|^2-|\xi_2|^2)^2}}{2}t)}{\f{ \sqrt{4|\xi_1|^2-(|\xi_1|^2-|\xi_2|^2)^2}}{2}t}\Big|\lesssim te^{-\f{|\xi_1|^2+|\xi_2|^2}{2}t},\label{204}
\end{align}
by \eqref{301},
\begin{align}
\Big|\f{ e^{\lam_+ t}-e^{\lam_- t}}{\lam_+-\lam_-}\Big|\lesssim \f{1}{|\xi_1|^2+|\xi_2|^2}e^{-\f{|\xi_1|^2+|\xi_2|^2}{2}t}\lesssim \f{1}{2|\xi_1|^2+|\xi_1|}e^{-\f{|\xi_1|^2+|\xi_2|^2}{2}t}\lesssim \f{1}{|\xi_1|}e^{-\f{|\xi_1|^2+|\xi_2|^2}{2}t},\label{205}
\end{align}
by \eqref{203},
\begin{align}\label{224}
&M_1=\Big|\f{\lam_+ e^{\lam_+ t}-\lam_- e^{\lam_- t}}{\lam_+-\lam_-}\Big| \lesssim e^{-\f{|\xi_1|^2+|\xi_2|^2}{2}t},
\end{align}
by \eqref{204},
\begin{align*}
&M_2=|\xi_1|^2\Big|\f{ e^{\lam_+ t}-e^{\lam_- t}}{\lam_+-\lam_-}\Big|\lesssim |\xi_1|^2te^{-\f{|\xi_1|^2+|\xi_2|^2}{2}t}\lesssim e^{-\f{|\xi_1|^2+|\xi_2|^2}{2}t},
\end{align*}
by \eqref{205},
\begin{align*}
&M_3=|\xi_1|\Big|\f{ e^{\lam_+ t}-e^{\lam_- t}}{\lam_+-\lam_-}\Big|\lesssim e^{-\f{|\xi_1|^2+|\xi_2|^2}{2}t}.
\end{align*}

\textbf{Case c}: $2|\xi_1|<\big||\xi_1|^2-|\xi_2|^2\big|\leq 4|\xi_1|$.\\

For $ \big||\xi_1|^2-|\xi_2|^2\big|> 2|\xi_1|$, we have $\lam_-< \lam_+<0$, we only need to consider the trouble decay item $\lam_+$.
We have
\begin{align*}
&\lam_+ =-\f{|\xi_1|^2+|\xi_2|^2}{2}+\f{\sqrt{(|\xi_1|^2-|\xi_2|^2)^2-4|\xi_1|^2}}{2}\\
 & \qquad \leq-\f{|\xi_1|^2+|\xi_2|^2}{2}+\f{\sqrt{15}}{4}\f{\big||\xi_1|^2-|\xi_2|^2\big|}{2}\leq-c_0(|\xi_1|^2+|\xi_2|^2),\\
 &\lam_+ \geq-\f{|\xi_1|^2+|\xi_2|^2}{2},
 \end{align*}
 for $c_0\in(\f12-\f{\sqrt{15}}{8},\f12)$, thus
 \begin{align}\label{209}
 |\lam_+|\lesssim |\xi_1|^2+|\xi_2|^2.
 \end{align}

 By $e^{-x}>1-x$ for $x>0$,
 \begin{align}
\Big|\f{ e^{\lam_+ t}-e^{\lam_- t}}{\lam_+-\lam_-}\Big|&=e^{\lam_+ t}\f{1-e^{-(\lam_+-\lam_-)t}}{\lam_+-\lam_-}\lesssim t e^{\lam_+ t} \lesssim \f{1}{|\xi_1|^2+|\xi_2|^2} e^{-c_0(|\xi_1|^2+|\xi_2|^2) t},\label{207}
\end{align}
by $|\xi_1|^2+|\xi_2|^2>|\xi_1|$,
\begin{align}
\Big|\f{ e^{\lam_+ t}-e^{\lam_- t}}{\lam_+-\lam_-}\Big|\lesssim \f{1}{|\xi_1|} e^{-c_0(|\xi_1|^2+|\xi_2|^2) t},\label{208}
\end{align}
by \eqref{209} and \eqref{207},
\begin{align*}
&M_1=\Big|\f{ \lam_+e^{\lam_+ t}-\lam_-e^{\lam_- t}}{\lam_+-\lam_-}\Big|=\Big|e^{\lam_- t}+\lam_+\f{ e^{\lam_+ t}-e^{\lam_- t}}{\lam_+-\lam_-}\Big|\lesssim   e^{-c_0(|\xi_1|^2+|\xi_2|^2) t},
\end{align*}
 by \eqref{207},
\begin{align*}
&M_2=|\xi_1|^2\Big|\f{ e^{\lam_+ t}-e^{\lam_- t}}{\lam_+-\lam_-}\Big| \lesssim \f{|\xi_1|^2}{|\xi_1|^2+|\xi_2|^2} e^{-c_0(|\xi_1|^2+|\xi_2|^2) t}\lesssim e^{-c_0(|\xi_1|^2+|\xi_2|^2) t},
\end{align*}
by \eqref{208},
\begin{align*}
&M_3=|\xi_1|\Big|\f{ e^{\lam_+ t}-e^{\lam_- t}}{\lam_+-\lam_-}\Big|\lesssim   e^{-c_0(|\xi_1|^2+|\xi_2|^2) t}.
\end{align*}

\textbf{Case d}: $\big||\xi_1|^2-|\xi_2|^2\big|> 4|\xi_1|$.\\

We only need to consider the trouble decay item $\lam_+$. By the $\big||\xi_1|^2-|\xi_2|^2\big|=\pm(|\xi_1|^2-|\xi_2|^2)$, we consider the following two cases:\\

\textbf{Case d.1}: $|\xi_1|^2-|\xi_2|^2> 4|\xi_1|$.\\

We have $|\xi_1|>4$, and
\begin{align*}
\lam_+ &=-\f{|\xi_1|^2+|\xi_2|^2}{2}+\f{\sqrt{(|\xi_1|^2-|\xi_2|^2)^2-4|\xi_1|^2}}{2}=\f{-|\xi_1|^2-|\xi_1|^2|\xi_2|^2}{\f{|\xi_1|^2+|\xi_2|^2}{2}+\f{\sqrt{(|\xi_1|^2-|\xi_2|^2)^2-4|\xi_1|^2}}{2}}\\
&\leq-\f{|\xi_1|^2+|\xi_1|^2|\xi_2|^2}{|\xi_1|^2}\leq-1,
\end{align*}
it has an exponential decay.\\

\textbf{Case d.2}: $|\xi_2|^2-|\xi_1|^2> 4|\xi_1|$.\\

We have
\begin{align}
&\lam_+ =-\f{|\xi_1|^2+|\xi_2|^2}{2}+\f{\sqrt{(|\xi_1|^2-|\xi_2|^2)^2-4|\xi_1|^2}}{2}=\f{-|\xi_1|^2-|\xi_1|^2|\xi_2|^2}{\f{|\xi_1|^2+|\xi_2|^2}{2}+\f{\sqrt{(|\xi_1|^2-|\xi_2|^2)^2-4|\xi_1|^2}}{2}}\non\\
&\qquad \leq\f{-|\xi_1|^2-|\xi_1|^2|\xi_2|^2}{\f{|\xi_1|^2+|\xi_2|^2}{2}+\f{|\xi_2|^2-|\xi_1|^2}{2}}=-\f{|\xi_1|^2+|\xi_1|^2|\xi_2|^2}{|\xi_2|^2},\label{210}\\
&\lam_+ =\f{-|\xi_1|^2-|\xi_1|^2|\xi_2|^2}{\f{|\xi_1|^2+|\xi_2|^2}{2}+\f{\sqrt{(|\xi_1|^2-|\xi_2|^2)^2-4|\xi_1|^2}}{2}} \geq-\f{|\xi_1|^2+|\xi_1|^2|\xi_2|^2}{\f12(|\xi_2|^2-|\xi_1|^2)}=-\f{2|\xi_1|^2+2|\xi_1|^2|\xi_2|^2}{|\xi_2|^2-|\xi_1|^2},\label{213}
\end{align}
by \eqref{213},
\begin{align}
|\lam_+| \lesssim\f{|\xi_1|^2+|\xi_1|^2|\xi_2|^2}{|\xi_2|^2-|\xi_1|^2}.\label{211}
\end{align}

And we have
\begin{align}
&\f{1}{\lam_+-\lam_-}=\f{1}{\sqrt{(|\xi_1|^2-|\xi_2|^2)^2-4|\xi_1|^2}}\lesssim\f{1}{|\xi_2|^2-|\xi_1|^2}.\label{212}
\end{align}
By \eqref{210}, \eqref{211} and \eqref{212},
\begin{align*}
M_1=&\Big|\f{ \lam_+e^{\lam_+ t}-\lam_-e^{\lam_- t}}{\lam_+-\lam_-}\Big|\\
\lesssim&   \f{|\xi_1|^2+|\xi_1|^2|\xi_2|^2}{(|\xi_2|^2-|\xi_1|^2)^2}e^{-\f{|\xi_1|^2}{|\xi_2|^2}t-|\xi_1|^2t}\\
=&\f{|\xi_1|^2+|\xi_1|^2(|\xi_2|^2-|\xi_1|^2)+|\xi_1|^4}{(|\xi_2|^2-|\xi_1|^2)^2}e^{-\f{|\xi_1|^2}{|\xi_2|^2}t-|\xi_1|^2t}\\
=&\f{|\xi_1|^2+|\xi_1|^4}{(|\xi_2|^2-|\xi_1|^2)^2}e^{-\f{|\xi_1|^2}{|\xi_2|^2}t-|\xi_1|^2t}+\f{|\xi_1|^2}{|\xi_2|^2-|\xi_1|^2}e^{-\f{|\xi_1|^2}{|\xi_2|^2}t-|\xi_1|^2t}\\
\lesssim&\f{|\xi_1|+|\xi_1|^2+|\xi_1|^3}{|\xi_2|^2-|\xi_1|^2}e^{-\f{|\xi_1|^2}{|\xi_2|^2}t-|\xi_1|^2t},
\end{align*}
and by \eqref{210} and \eqref{212},
\begin{align*}
&M_2=|\xi_1|^2\Big|\f{ e^{\lam_+ t}-e^{\lam_- t}}{\lam_+-\lam_-}\Big| \lesssim\f{|\xi_1|^2}{|\xi_2|^2-|\xi_1|^2}e^{-\f{|\xi_1|^2}{|\xi_2|^2}t-|\xi_1|^2t},\\
&M_3=|\xi_1|\Big|\f{ e^{\lam_+ t}-e^{\lam_- t}}{\lam_+-\lam_-}\Big|\lesssim   \f{|\xi_1|}{|\xi_2|^2-|\xi_1|^2}e^{-\f{|\xi_1|^2}{|\xi_2|^2}t-|\xi_1|^2t}.
\end{align*}

In Case d, taking $M_1$, $M_2$ and $M_3$ into consideration, we know the most trouble decay is the case $\f{|\xi_1|}{|\xi_2|^2-|\xi_1|^2}e^{-\f{|\xi_1|^2}{|\xi_2|^2}t-|\xi_1|^2t}$ by $|\xi_1|e^{-|\xi_1|^2t}\leq t^{-\f12}$.

For $t>1$, by $\textbf{1}_{|\xi_2|^2>|\xi_1|^2+4|\xi_1|}\f{|\xi_1|}{|\xi_2|^2-|\xi_1|^2}\lesssim\textbf{1}_{|\xi_2|^2>|\xi_1|^2+4|\xi_1|}\f{|\xi_1|+|\xi_1|^2}{|\xi_2|^2}$, and consider the most trouble decay, we have the $L^2$ estimates,
\begin{align}
&\|\textbf{1}_{|\xi_2|^2>|\xi_1|^2+4|\xi_1|}\f{|\xi_1|}{|\xi_2|^2}e^{-\f{|\xi_1|^2}{|\xi_2|^2}t-|\xi_1|^2t}\big(\cF(u_{0}),\cF(b_{0})\big)\|_{L^2_{\xi_1}L^2_{\xi_2}}\non\\
\lesssim&\|\textbf{1}_{|\xi_2|^2>|\xi_1|^2+4|\xi_1|}\f{|\xi_1|}{|\xi_2|^2}e^{-\f{|\xi_1|^2}{|\xi_2|^2}t-|\xi_1|^2t}\|_{L^2_{\xi_1}L^2_{\xi_2}}\|\big(\cF(u_{0}),\cF(b_{0})\big)\|_{L^\infty_{\xi_1}L^\infty_{\xi_2}}\non\\
\lesssim&\|\textbf{1}_{|\xi_1|\geq1}\f{1}{|\xi_1|^{\f12+\delta}}\|_{L^2_{\xi_1}}\|\textbf{1}_{|\xi_2|\geq1}\f{1}{|\xi_2|^{\f12+\delta}}\|_{L^2_{\xi_2}}\|\textbf{1}_{|\xi_2|^2>|\xi_1|^2+4|\xi_1|}\f{|\xi_1|^{\f32+\delta}}{|\xi_2|^{\f32-\delta}}e^{-\f{|\xi_1|^2}{|\xi_2|^2}t-|\xi_1|^2t}\|_{L^\infty_{\xi_1}L^\infty_{\xi_2}}\non\\ &\quad\|\big(\cF(u_{0}),\cF(b_{0})\big)\|_{L^\infty_{\xi_1}L^\infty_{\xi_2}}\non\\
&+\|\textbf{1}_{|\xi_1|<1}\f{1}{|\xi_1|^{\f12-\delta}}\|_{L^2_{\xi_1}}\|\textbf{1}_{|\xi_2|>1}\f{1}{|\xi_2|^{\f12+\delta}}\|_{L^2_{\xi_2}}\|\textbf{1}_{|\xi_2|^2>|\xi_1|^2+4|\xi_1|}\f{|\xi_1|^{\f32-\delta}}{|\xi_2|^{\f32-\delta}}e^{-\f{|\xi_1|^2}{|\xi_2|^2}t-|\xi_1|^2t}\|_{L^\infty_{\xi_1}L^\infty_{\xi_2}}\non\\
&\quad\|\big(\cF(u_{0}),\cF(b_{0})\big)\|_{L^\infty_{\xi_1}L^\infty_{\xi_2}}\non\\
&+\|\textbf{1}_{|\xi_1|<1}\f{1}{|\xi_1|^{\f12-\delta}}\|_{L^2_{\xi_1}}\|\textbf{1}_{|\xi_2|<1}\f{1}{|\xi_2|^{\f12-\delta}}\|_{L^2_{\xi_2}}\|\textbf{1}_{|\xi_2|^2>|\xi_1|^2+4|\xi_1|}\f{|\xi_1|^{\f32-3\delta}}{|\xi_2|^{\f32-3\delta}}\f{|\xi_1|^{2\delta}}{|\xi_2|^{4\delta}}e^{-\f{|\xi_1|^2}{|\xi_2|^2}t-|\xi_1|^2t}\|_{L^\infty_{\xi_1}L^\infty_{\xi_2}}\non\\
&\quad\|\big(\cF(u_{0}),\cF(b_{0})\big)\|_{L^\infty_{\xi_1}L^\infty_{\xi_2}}\non\\
\lesssim& t^{-(\f34-\f{3\delta}{2})}\|(u_{0},b_{0})\|_{L^1},\label{101}
\end{align}
and the heat kernel has the decay
\begin{align*}
&\|e^{-c_0(\xi_1^2t+\xi_2^2t)}\big(\cF(u_{0}),\cF(b_{0})\big)\|_{L^2_{\xi_1}L^2_{\xi_2}}\non\\
\lesssim&\|e^{-c_0(\xi_1^2t+\xi_2^2t)}\|_{L^2_{\xi_1}L^2_{\xi_2}}\|\big(\cF(u_{0}),\cF(b_{0})\big)\|_{L^\infty_{\xi_1}L^\infty_{\xi_2}}\non\\
\lesssim& t^{-\f12}\|(u_{0},b_{0})\|_{L^1}.
\end{align*}

 For $0<t<1$, we have the $L^2$ estimates,
 \begin{align*}
&\|\textbf{1}_{|\xi_2|^2>|\xi_1|^2+4|\xi_1|}\f{|\xi_1|}{|\xi_2|^2-|\xi_1|^2}e^{-\f{|\xi_1|^2}{|\xi_2|^2}t-|\xi_1|^2t}\cF(u_{0})\|_{L^2_{\xi_1}L^2_{\xi_2}}\lesssim\|\cF(u_{0})\|_{L^2_{\xi_1}L^2_{\xi_2}}\lesssim \|u_{0}\|_{L^2},
\end{align*}
and
 \begin{align*}
&\|e^{-c_0(\xi_1^2t+\xi_2^2t)}\cF(u_{0})\|_{L^2_{\xi_1}L^2_{\xi_2}}\lesssim\|\cF(u_{0})\|_{L^2_{\xi_1}L^2_{\xi_2}}\lesssim \|u_{0}\|_{L^2}.
\end{align*}

 $I_2$  can be rewritten as
 \begin{align*}
I_2=&\f{i\xi_1}{|\xi_1|} \f{1}{2\pi i}\int_{\Ga}e^{\lam t} \int^\infty_0  e^{-\om y_2} (\wh{u}_{2, 0}+\f{i\xi_1}{\lam+|\xi_1|^2} \wh{b}_{2, 0})(y_2) dy_2 \f{1}{\om-|\xi_1|} e^{-|\xi_1|x_2}  d\lam\\
&-\f{i\xi_1}{|\xi_1|} \f{1}{2\pi i}\int_{\Ga}e^{\lam t} \int^\infty_0  e^{-\om (x_2+y_2)} (\wh{u}_{2, 0}+\f{i\xi_1}{\lam+|\xi_1|^2} \wh{b}_{2, 0})(y_2) dy_2 \f{1}{\om-|\xi_1|} d\lam\\
:=&J_{1}+J_{2}.
\end{align*}

 {\bf Case 1.  $|\xi_1|\leq 2$}.

We first  divide $J_1$ and $J_2$ as
\beno
J_1=\f{i\xi_1}{|\xi_1|} \f{1}{2\pi i}\sum_{i=1}^{5}\int_{\Ga_i}e^{\lam t} \int^\infty_0  e^{-\om y_2} (\wh{u}_{2, 0}+\f{i\xi_1}{\lam+|\xi_1|^2} \wh{b}_{2, 0})(y_2) dy_2 \f{1}{\om-|\xi_1|} e^{-|\xi_1|x_2} d\lam:= \sum_{i=1}^{5} J_1^i.\\
J_2=-\f{i\xi_1}{|\xi_1|} \f{1}{2\pi i}\sum_{i=1}^{5}\int_{\Ga_i}e^{\lam t} \int^\infty_0  e^{-\om (x_2+y_2)} (\wh{u}_{2, 0}+\f{i\xi_1}{\lam+|\xi_1|^2} \wh{b}_{2, 0})(y_2) dy_2 \f{1}{\om-|\xi_1|}  d\lam:= \sum_{i=1}^5 J_2^i.
\eeno

Then we consider the following case:\\

 {\bf  $\bf{\Gamma_1}$ and $\bf{\Gamma_4}$.}

We first consider the combination of $J_2^1$ and $J_2^4$, for the $\wh{u}_{2,0}$ part, by the definition of $\Ga_1: \lam= -\eta-|\xi_1|^2 \ (\eta: 0 \to |\xi_1|)$ and $\Ga_4: \lam= -\eta-|\xi_1|^2 \ (\eta: |\xi_1| \to \infty)$, we have
\begin{align}\label{I11}
&-\f{1}{2\pi i} \int_{\Ga_1^{(\pm)}\cup \Ga_4 ^{(\pm)} } \f{ e^{\lam t} }{\om-|\xi_1|} \int^\infty_0 e^{-\om(x_2+y_2)}  \wh{u}_{2, 0} (y_2) dy_2 d\lam\non\\
&=-\f{1}{2\pi i} \Big(\int^\infty_0\f{ e^{(-\eta-|\xi_1|^2) t} }{-i|\om|-|\xi_1|} \int^\infty_0 e^{i|\om|(x_2+y_2)}   \wh{u}_{2, 0} (y_2)  dy_2 d\eta\non\\
&\qquad-\int^{\infty}_0\f{ e^{(-\eta-|\xi_1|^2) t} }{i|\om|-|\xi_1|} \int^\infty_0 e^{-i|\om|(x_2+y_2)}   \wh{u}_{2, 0} (y_2) dy_2 d\eta\Big),
\end{align}
with
\begin{align*}
\om=\sqrt{\lam+\f{|\xi_1|^2}{\lam+|\xi_1|^2}}=\mp i \sqrt{\eta+|\xi_1|^2+\f{|\xi_1|^2}{\eta}}=\mp i|\om|.
\end{align*}
By changing of variables $\xi_2=|\om|$, we have
\begin{align*}
& \eta+|\xi_1|^2+\f{|\xi_1|^2}{\eta} =|\xi_2|^2
\Leftrightarrow  \quad\eta^2+(|\xi_1|^2-|\xi_2|^2) \eta+|\xi_1|^2=0,  \non\\
&\Rightarrow\quad \eta_\pm=\f{|\xi_2|^2-|\xi_1|^2}{2} \pm \f{\sqrt{(|\xi_2|^2-|\xi_1|^2)^2-4|\xi_1|^2}}{2}=-\lam_\mp-|\xi_1|^2.
\end{align*}

For the term $e^{(-\eta-|\xi_1|^2) t}$, the least decaying term is the case $\eta=\eta_{-}$. Define $|\xi^{'}|^2=|\xi_2|^2-|\xi_1|^2\geq 2|\xi_1|$, we have
\begin{align*}
\eta_-&=\f{|\xi_2|^2-|\xi_1|^2}{2} - \f{\sqrt{(|\xi_2|^2-|\xi_1|^2)^2-4|\xi_1|^2}}{2}\\
&=\f{|\xi^{'}|^2}{2} (1- \f{\sqrt{|\xi^{'}|^4-4|\xi_1|^2}}{|\xi^{'}|^2})\\
&=\f{|\xi^{'}|^2}{2} (1- \sqrt{1-\f{4|\xi_1|^2}{|\xi^{'}|^4}}),\\
d\eta_-&=\xi_2\Big(1-\f{|\xi^{'}|^2}{\sqrt{|\xi^{'}|^4-4|\xi_1|^2}}\Big) d\xi_2 =2 \xi_2 \f{\lam_++|\xi_1|^2}{\lam_+-\lam_-} d\xi_2.
\end{align*}

We consider the following cases:

\begin{itemize}
\item
 $\f{4|\xi_1|^2}{|\xi^{'}|^4}\leq \f12$, since $(1+s)^\f12 =1+\f{s}{2}\int_0^1(1+\theta s)^{-\f12} d\theta$, we have
 \begin{align*}
 \f{|\xi^{'}|^2}{2}\sqrt{1-\f{4|\xi_1|^2}{|\xi^{'}|^4}}=\f{|\xi^{'}|^2}{2}\Big(1-\f{2|\xi_1|^2}{|\xi^{'}|^4}\int_0^1(1- \f{4|\xi_1|^2}{|\xi^{'}|^4}\theta)^{-\f12} d\theta\Big),
 \end{align*}
   we obtain $\f{|\xi_1|^2}{|\xi^{'}|^2} \leq \eta_{-} \leq \f{\sqrt{2}|\xi_1|^2}{|\xi^{'}|^2}$.\\
 \item
 $\f12 \leq\f{4|\xi_1|^2}{|\xi^{'}|^4}\leq1$, we have $\f{|\xi^{'}|^2}{2}\sqrt{1-\f{4|\xi_1|^2}{|\xi^{'}|^4}}\leq\f{|\xi^{'}|^2}{2\sqrt{2}}$. It then follows that
 \begin{align*}
\eta_{-}&=\f{|\xi^{'}|^2}{2}-\f{|\xi^{'}|^2}{2}\sqrt{1-\f{4|\xi_1|^2}{|\xi^{'}|^4}}\geq\f{2-\sqrt{2}}{4}|\xi^{'}|^2.\\
 -\eta_{-}-|\xi_1|^2&\leq-\f{2-\sqrt{2}}{4}|\xi^{'}|^2-|\xi_1|^2\leq-c_1(|\xi_1|^2+|\xi_2|^2), \ \text{here} \ c_1 \in (0,\f{2-\sqrt{2}}{4}).
  \end{align*}
\end{itemize}

We mention that the most trouble case is
\beno
\eta_{-}\sim\f{|\xi_1|^2}{|\xi^{'}|^2} \quad (\text{when} \quad \f{4|\xi_1|^2}{|\xi^{'}|^4}\leq \f12),
\eeno
(and the other cases can be controlled by heat kernel). For this case,
\beno
d \eta_{-}= -\f{2|\xi_1|^2}{|\xi^{'}|^4} \xi_2 d\xi_2.
\eeno

By the boundary condition $u_{2,0}|_{x_2=0}=0$, the first term on the right hand side of \eqref{I11} can be rewritten as
\begin{align}\label{I11-1}
&-\f{1}{2\pi i} \int^\infty_{\wt{d}} \f{e^{\lam_+ t} }{i\xi_2+|\xi_1|} \int^\infty_0 e^{i\xi_2(x_2+y_2)}   \wh{u}_{2, 0} (y_2)  dy_2 2 \xi_2 \f{\lam_++|\xi_1|^2}{\lam_+-\lam_-} d\xi_2\non\\
=&-\f{1}{2\pi i} \int_{\R}\vphi(\xi_2) e^{i\xi_2 x_2}\f{e^{\lam_+ t} }{i\xi_2+|\xi_1|} \int^\infty_0 e^{i\xi_2y_2}   \wh{u}_{2, 0} (y_2)  dy_2 2 \xi_2 \f{\lam_++|\xi_1|^2}{\lam_+-\lam_-} d\xi_2,
\end{align}
it can be seen as Fourier transform,
\begin{align}
&-\f{1}{2\pi i} \int^\infty_{\wt{d}} \f{e^{\lam_+ t} }{i\xi_2+|\xi_1|} \int^\infty_0 e^{i\xi_2(x_2+y_2)}   \wh{u}_{2, 0} (y_2)  dy_2 2 \xi_2 \f{\lam_++|\xi_1|^2}{\lam_+-\lam_-} d\xi_2\non\\
=&-2i\cF_{x_2}^{-1}\Big(\vphi(\xi_2) \f{\lam_++|\xi_1|^2}{\lam_+-\lam_-} e^{\lam_+ t}  \f{\xi_2 }{i\xi_2+|\xi_1|}\cF_{y_2}\big(\chi  \wh{u}_{2, 0} \big) (-\xi_2) \Big)(x_2),\label{I11-2}
\end{align}
where   $\vphi, \chi$ are   cut-off functions defined as
\begin{align}\label{vphi,chi}
\vphi(\xi_2)=\left\{
\begin{array}{l}
1, \  \text{for} \ \xi_2\geq \wt{d}, \\
0, \  \text{for} \ \xi_2<\wt{d},
\end{array}\right.
\quad \text{and}\quad
\chi(y_2)=\left\{
\begin{array}{l}
1, \  \text{for} \ y_2\geq 0, \\
0, \  \text{for} \ y_2<0,
\end{array}\right.
\end{align}
and $\wt{d}=\sqrt{2\sqrt{2}|\xi_1|+|\xi_1|^2}$.

By  Plancherel theorem, and $|\xi^{'}|^2\geq2|\xi_1|$,  we have the $L^2$ estimate
\begin{align} \label{010}
&\Big\|\textbf{1}_{|\xi_1|\leq 2}\cF_{x_2}^{-1}\Big(\vphi(\xi_2) \f{|\xi_1|^2}{|\xi^{'}|^4} e^{(-\f{|\xi_1|^2}{|\xi^{'}|^2}-|\xi_1|^2) t}  \f{\xi_2 }{i\xi_2+|\xi_1|}\cF_{y_2}\big(\chi  \wh{u}_{2, 0} \big) (-\xi_2) \Big)(x_2)\Big\|_{L^2_{\xi_1}L^2_{x_2}}\non\\
&\lesssim\Big\|\vphi(\xi_2)\f{|\xi_1|^2}{|\xi^{'}|^4} e^{(-\f{|\xi_1|^2}{|\xi^{'}|^2}-|\xi_1|^2) t} \f{\xi_2 }{i\xi_2+|\xi_1|}\cF_{y_2}\big(\chi  \wh{u}_{2, 0} \big) (-\xi_2)    \Big\|_{L^2_{\xi_1}L^2_{\xi_2}}\non\\
&\lesssim\Big\|\vphi(\xi_2)\f{|\xi_1|}{|\xi^{'}|^2}e^{(-\f{|\xi_1|^2}{|\xi^{'}|^2}-|\xi_1|^2) t} \cF_{y_2}\big(\chi  \wh{u}_{2, 0} \big) (-\xi_2)   \Big\|_{L^2_{\xi_1}L^2_{\xi_2}}\non\\
&\lesssim\Big\|\vphi(\xi_2)\f{|\xi_1|+|\xi_1|^2}{|\xi^{'}|^2+|\xi_1|^2}e^{(-\f{|\xi_1|^2}{|\xi_2|^2}-|\xi_1|^2) t} \cF_{y_2}\big(\chi  \wh{u}_{2, 0} \big) (-\xi_2)   \Big\|_{L^2_{\xi_1}L^2_{\xi_2}}\non\\
&\lesssim\|\textbf{1}_{|\xi_1|\geq1}\f{1}{|\xi_1|^{\f12+\delta}}\|_{L^2_{\xi_1}}\|\textbf{1}_{|\xi_2|\geq1}\f{1}{|\xi_2|^{\f12+\delta}}\|_{L^2_{\xi_2}}\|(1+|\xi_1|)\f{|\xi_1|^{\f32+\delta}}{|\xi_2|^{\f32-\delta}}e^{-\f{|\xi_1|^2}{|\xi_2|^2}t-|\xi_1|^2t}\cF_{y_2}\big(\chi  \wh{u}_{2, 0} \big) (-\xi_2)\|_{L^\infty_{\xi_1}L^\infty_{\xi_2}}\non\\
&\quad+\|\textbf{1}_{|\xi_1|<1}\f{1}{|\xi_1|^{\f12-\delta}}\|_{L^2_{\xi_1}}\|\textbf{1}_{|\xi_2|>1}\f{1}{|\xi_2|^{\f12+\delta}}\|_{L^2_{\xi_2}}\|(1+|\xi_1|)\f{|\xi_1|^{\f32-\delta}}{|\xi_2|^{\f32-\delta}}e^{-\f{|\xi_1|^2}{|\xi_2|^2}t-|\xi_1|^2t}\cF_{y_2}\big(\chi  \wh{u}_{2, 0} \big) (-\xi_2)\|_{L^\infty_{\xi_1}L^\infty_{\xi_2}}\non\\
&\quad+\|\textbf{1}_{|\xi_1|<1}\f{1}{|\xi_1|^{\f12-\delta}}\|_{L^2_{\xi_1}}\|\textbf{1}_{|\xi_2|<1}\f{1}{|\xi_2|^{\f12-\delta}}\|_{L^2_{\xi_2}}\|\varphi(\xi_2)(1+|\xi_1|)\f{|\xi_1|^{\f32-3\delta}}{|\xi_2|^{\f32-3\delta}}\f{|\xi_1|^{2\delta}}{|\xi_2|^{4\delta}}e^{-\f{|\xi_1|^2}{|\xi_2|^2}t-|\xi_1|^2t}\cF_{y_2}\big(\chi  \wh{u}_{2, 0} \big) (-\xi_2)\|_{L^\infty_{\xi_1}L^\infty_{\xi_2}}\non\\
&\lesssim t^{-(\f34-\f32\delta)}    \| u_{2, 0} \|_{L^1}.
\end{align}

The $\wh{b}_{2,0}$ part of $J_2^1$ and $J_2^4$ can be estimated similarly since
\beno
\f{|\xi_1|^2}{|\xi^{'}|^4} \f{|\xi_1|}{\big|\lam_++|\xi_1|^2\big|}\sim\f{|\xi_1|^2}{|\xi^{'}|^4} \f{|\xi^{'}|^2}{|\xi_1|}=\f{|\xi_1|}{|\xi^{'}|^2}.
\eeno

The only terms that $J_1^{1,4}$ differs from $J_2^{1,4}$ are $e^{-|\xi_1|x_2}$ in $J_1^{1,4}$ and $e^{-\om x_2}$ in $J_2^{1,4}$, then change \eqref{I11}   $e^{-\om x_2}$ into $e^{-|\xi_1|x_2}$, use $\|e^{-|\xi_1|x_2}\|_{L_{x_2}^2}\lesssim |\xi_1|^{-\f12}$, and similarly \eqref{I11-1}, by the boundary condition $\wh{u}_{2,0}|_{x_2=0}=0$, we have the $L^2$ estimate
\begin{align}\label{009}
&\Big\|\textbf{1}_{|\xi_1|\leq 2}\int_{\R}\vphi(\xi_2)\f{|\xi_1|^2}{|\xi^{'}|^4} e^{(-\f{|\xi_1|^2}{|\xi^{'}|^2}-|\xi_1|^2) t} \f{\xi_2 }{i\xi_2+|\xi_1|}|\xi_1|^{-\f12}\cF_{y_2}\big(\chi  \wh{u}_{2, 0} \big) (-\xi_2)d\xi_2   \Big\|_{L^2_{\xi_1}}\non\\
&=\Big\|\textbf{1}_{|\xi_1|\leq 2}\int_{\R}\vphi(\xi_2)\f{|\xi_1|^2}{|\xi^{'}|^4} e^{(-\f{|\xi_1|^2}{|\xi^{'}|^2}-|\xi_1|^2) t} \f{\xi_2 }{i\xi_2+|\xi_1|}|\xi_1|^{-\f12}\int_0^{\infty}e^{i\xi_2y_2}  \wh{u}_{2, 0} dy_2d\xi_2   \Big\|_{L^2_{\xi_1}}\non\\
&=\Big\|\textbf{1}_{|\xi_1|\leq 2}\int_{\R}\vphi(\xi_2)\f{|\xi_1|^2}{|\xi^{'}|^4} e^{(-\f{|\xi_1|^2}{|\xi^{'}|^2}-|\xi_1|^2) t} \f{\xi_2 }{i\xi_2+|\xi_1|}|\xi_1|^{-\f12}\int_0^{\infty}e^{i\xi_2y_2}  \f{1}{i\xi_2}\pa_2\wh{u}_{2, 0} dy_2d\xi_2   \Big\|_{L^2_{\xi_1}}\non\\
&=\Big\|\textbf{1}_{|\xi_1|\leq 2}\int_{\R}\vphi(\xi_2)\f{|\xi_1|^2}{|\xi^{'}|^4} e^{(-\f{|\xi_1|^2}{|\xi^{'}|^2}-|\xi_1|^2) t} \f{\xi_2 }{i\xi_2+|\xi_1|}|\xi_1|^{-\f12}\int_0^{\infty}e^{i\xi_2y_2}  \f{|\xi_1|}{i\xi_2}\wh{u}_{1, 0} dy_2d\xi_2   \Big\|_{L^2_{\xi_1}}\non\\
&\lesssim\Big\|\textbf{1}_{|\xi_1|\leq 2}\vphi(\xi_2)\f{|\xi_1|^2}{|\xi^{'}|^4} e^{(-\f{|\xi_1|^2}{|\xi^{'}|^2}-|\xi_1|^2) t} \f{|\xi_2|}{(|\xi_1|^2+|\xi_2|^2)^\f12}\f{|\xi_1|^\f12}{|\xi_2|}\cF_{y_2}\big(\chi  \wh{u}_{1, 0} \big) (-\xi_2)   \Big\|_{L^2_{\xi_1}L^1_{\xi_2}}\non\\
&\lesssim\Big\|\vphi(\xi_2)\f{|\xi_1|}{|\xi^{'}|^2}\f{|\xi_1|^\f12}{(|\xi_1|^2+|\xi_2|^2)^\f12} e^{(-\f{|\xi_1|^2}{|\xi^{'}|^2}-|\xi_1|^2) t}\cF_{y_2}\big(\chi  \wh{u}_{1, 0} \big)(-\xi_2)   \Big\|_{L^2_{\xi_1}L^1_{\xi_2}}\non\\
&\lesssim\Big\|\vphi(\xi_2)\f{|\xi_1|}{|\xi^{'}|^2}e^{(-\f{|\xi_1|^2}{|\xi^{'}|^2}-|\xi_1|^2) t}\cF_{y_2}\big(\chi  \wh{u}_{1, 0} \big)(-\xi_2)   \Big\|_{L^2_{\xi_1}L^2_{\xi_2}}\Big\|\f{|\xi_1|^\f12}{(|\xi_1|^2+|\xi_2|^2)^\f12}    \Big\|_{L^\infty_{\xi_1}L^2_{\xi_2}}\non\\
&\lesssim t^{-(\f34-\delta)}    \| u_{1, 0} \|_{L^1},
\end{align}
where we use
\begin{align}
\Big\|\f{|\xi_1|^\f12}{(|\xi_1|^2+|\xi_2|^2)^\f12}    \Big\|_{L^\infty_{\xi_1}L^2_{\xi_2}}&=\sup\limits_{\xi_1}(\int_0^\infty \f{|\xi_1|}{|\xi_1|^2+|\xi_2|^2} d\xi_2)^\f12=\sup\limits_{\xi_1}(\int_0^\infty \f{1}{1+j^2} d j )^\f12\non\\
&=\sup\limits_{\xi_1}(arctan j\big|_0^\infty)^\f12\leq \sqrt{\f{\pi}{2}}.\label{2000}
\end{align}

Consider the case  $0<t<1$, we have for the $L^2$ estimate of $J^{1,4}_2$
\begin{align*}
\Big\|\vphi(\xi_2)\f{|\xi_1|}{|\xi^{'}|^2}\f{|\xi_1|}{(|\xi_1|^2+|\xi_2|^2)^\f12} e^{(-\f{|\xi_1|^2}{|\xi^{'}|^2}-|\xi_1|^2) t} \cF_{y_2}\big(\chi  \wh{u}_{1, 0} \big) (-\xi_2)   \Big\|_{L^2_{\xi_1}L^2_{\xi_2}}
\lesssim \Big\|\f{|\xi_1|}{|\xi^{'}|^2}  \cF_{y_2}\big(\chi  \wh{u}_{1, 0} \big) (-\xi_2)   \Big\|_{L^2_{\xi_1}L^2_{\xi_2}}\lesssim \|u_{1, 0}\|_{L^2},
\end{align*}
the $L^2$ estimate of $J^{1,4}_1$
\begin{align*}
\Big\|\vphi(\xi_2)\f{|\xi_1|^\f12}{|\xi^{'}|^2}\f{|\xi_1|}{(|\xi_1|^2+|\xi_2|^2)^\f12} e^{(-\f{|\xi_1|^2}{|\xi^{'}|^2}-|\xi_1|^2) t} \cF_{y_2}\big(\chi  \wh{u}_{1, 0} \big) (-\xi_2)   \Big\|_{L^2_{\xi_1}L^1_{\xi_2}}
\lesssim \Big\|\f{|\xi_1|}{|\xi^{'}|^2}  \cF_{y_2}\big(\chi  \wh{u}_{1, 0} \big) (-\xi_2)   \Big\|_{L^2_{\xi_1}L^2_{\xi_2}}\lesssim \|u_{1, 0}\|_{L^2}.\\
\end{align*}

 {\bf   $\bf{\Gamma_2}$ and $\bf{\Gamma_3}$}.\\

For the $\wh{u}_{2,0}$ part of $J_2^2$, by the definition of $\Ga_2$: $ \lam=\lam'_+-\eta+i\big(-\text{Im} \lam'_++D(\eta, \xi_1)\big) \ (0< \eta <d_0)$, we have
\begin{align}\label{I12}
&-\f{1}{2\pi i} \int_{\Ga_2^{(+)}\cup \Ga_2 ^{(-)} } e^{\lam t}  \f{1}{ \om-|\xi_1|} \int^\infty_0 e^{-\om(x_2+y_2)}  \wh{u}_{2, 0} (y_2) dy_2 d\lam\non\\
&=-\f{1}{2\pi i} \Big(\int^{d_0}_0\f{ e^{\lam t}}{i|\om|-|\xi_1|} \int^\infty_0 e^{-i|\om|(x_2+y_2)}   \wh{u}_{2, 0} (y_2) dy_2 (-1+i \f{\text{Re} \lam'_++|\xi_1|^2-\eta}{D(\eta, \xi_1)} ) d\eta\non\\
&\quad\qquad-\int^{d_0}_0\f{ e^{\lam t}}{-i|\om|-|\xi_1|} \int^\infty_0 e^{i|\om|(x_2+y_2)}   \wh{u}_{2, 0} (y_2) dy_2 (-1+i \f{\text{Re} \lam'_++|\xi_1|^2-\eta}{D(\eta, \xi_1)})  d\eta\Big),\end{align}
with
\begin{align*}
\om&=\sqrt{\f{(\lam-\lam'_+)(\lam-\lam'_-)}{\lam+|\xi_1|^2}}=\sqrt{-2\eta}=\pm i \sqrt{2\eta}=\pm i|w|.
\end{align*}

By changing of variables $\xi_2=|\om|$, we have
\begin{align*}
\lam&= \text{Re} \lam'_+-\eta+i D(\eta, \xi_1)\\
&=-\f{|\xi_1|^2+|\xi_2|^2}{2}+i \sqrt{|\xi_1|^2-\f{|\xi_1|^4}{4}+\f{|\xi_1|^2|\xi_2|^2}{2}-\f{|\xi_2|^4}{4}}=\lam_+,
\end{align*}
and
\beno
-1+i \f{\text{Re} \lam'_++|\xi_1|^2-\eta}{D(\eta, \xi_1)} =-\f{2(\lam_++|\xi_1|^2)}{\lam_+-\lam_-}.
\eeno

Thus \eqref{I12} can be rewritten as
\begin{align}\label{J22}
&\f{1}{\pi i } \Big(\int^{\wt{d}}_0 \f{(\lam_++|\xi_1|^2) e^{\lam_+t}}{\lam_+-\lam_-}
\f{1}{i\xi_2-|\xi_1|} \int^\infty_0 e^{-i\xi_2(x_2+y_2)}   \wh{u}_{2, 0} (y_2) dy_2  \xi_2d\xi_2\non\\
&\quad\qquad+\int^{\wt{d}}_0 \f{(\lam_++|\xi_1|^2) e^{\lam_+ t}}{\lam_+-\lam_-}
\f{1}{i\xi_2+|\xi_1|} \int^\infty_0 e^{i\xi_2(x_2+y_2)}   \wh{u}_{2, 0} (y_2) dy_2  \xi_2 d\xi_2\Big).
\end{align}

 Similarly, the $\wh{u}_{2,0}$ part of $J_2^3$ can be rewritten as
\begin{align}\label{J23}
&-\f{1}{\pi i } \Big(\int^{\wt{d}}_0 \f{(\lam_-+|\xi_1|^2)e^{\lam_- t}}{\lam_+-\lam_-}
\f{1}{i\xi_2-|\xi_1|} \int^\infty_0 e^{-i\xi_2(x_2+y_2)}   \wh{u}_{2, 0} (y_2) dy_2  \xi_2 d\xi_2\non\\
&\quad\qquad+\int^{\wt{d}}_0 \f{(\lam_-+|\xi_1|^2) e^{\lam_- t}}{\lam_+-\lam_-}
\f{1}{i\xi_2+|\xi_1|} \int^\infty_0 e^{i\xi_2(x_2+y_2)}   \wh{u}_{2, 0} (y_2) dy_2  \xi_2 d\xi_2\Big).
\end{align}

By the definition of $\wt{d}$, we have $|\xi_1|\geq \f{|\xi^{'}|^2}{2}$£¬ and similar \eqref{203} and \eqref{M2},
\begin{align*}
&\Big|\f{(\lam_-+|\xi_1|^2)e^{\lam_- t}-(\lam_++|\xi_1|^2) e^{\lam_+ t}}{\lam_+-\lam_-}\Big|
\lesssim e^{-\f{|\xi_1|^2+|\xi_2|^2}{2} t}.
\end{align*}

Thus, the $\wh{u}_{2,0}$ part  of $J_2^2+J_2^3$ can be controlled by heat kernel, we have the $L^2$ estimate
\begin{align*}
&\Big\| \int^{\wt{d}}_0\f{(\lam_-+|\xi_1|^2)e^{\lam_- t}-(\lam_++|\xi_1|^2) e^{\lam_+ t}}{\lam_+-\lam_-}\f{\xi_2}{i\xi_2-|\xi_1|} \int^\infty_0 e^{-i\xi_2(x_2+y_2)}   \wh{u}_{2, 0} (y_2) dy_2  d\xi_2\Big\|_{L^2_{\xi_1}L^2_{x_2}}\\
\lesssim&\Big\| \int_{\R} \big(1-\vphi(\xi_2)\big)e^{-\f{|\xi_1|^2+|\xi_2|^2}{2} t}\f{|\xi_2|}{(|\xi_1|^2+|\xi_2|^2)^\f12}  e^{-i\xi_2x_2} \cF_{y_2}\Big(\chi(y_2)  \wh{u}_{2, 0} (y_2)  \Big)(\xi_2) d\xi_2\Big\|_{L^2_{\xi_1}L^2_{x_2}}\\
\lesssim&\Big\| \cF^{-1}_{x_2}\Big(\big(1-\vphi(\xi_2)\big)e^{-\f{|\xi_1|^2+|\xi_2|^2}{2} t}\f{|\xi_2|}{(|\xi_1|^2+|\xi_2|^2)^\f12} \cF_{y_2}\Big(\chi(y_2)  \wh{u}_{2, 0} (y_2)  \Big)(\xi_2) \Big)(-x_2) \Big\|_{L^2_{\xi_1}L^2_{x_2}}\\
\lesssim&\Big\|\big(1-\vphi(\xi_2)\big)e^{-\f{|\xi_1|^2+|\xi_2|^2}{2} t}
\f{|\xi_2|}{(|\xi_1|^2+|\xi_2|^2)^\f12}\Big\|_{L^2_{\xi_1}L^2_{\xi_2}}\Big\|\cF_{y_2}\Big(\chi(y_2)  \wh{u}_{2, 0} (y_2)  \Big)\Big\|_{L^\infty_{\xi_1}L^\infty_{\xi_2}}\\
\lesssim&\|e^{-\f{|\xi_1|^2}{2} t}e^{-\f{|\xi_2|^2}{2}t} \|_{L^2_{\xi_1}L^2_{\xi_2}}\|u_{2,0}\|_{L^1_{x_1}L^1_{x_2}}\\
\lesssim&  t^{-\f12}\|u_{2,0}\|_{L^1}.
\end{align*}

Similarly, we have the estimate for the $\wh{b}_{2,0}$ part  of $J_2^2+J_2^3$, since
\begin{align}\label{011}
 \f{|i\xi_1|}{|\lam+|\xi_1|^2|}=\f{|\xi_1|}{|\lam_\pm+|\xi_1|^2|}= 1.
\end{align}

For the $J_1^2+J_1^3$, by the case $\|e^{-|\xi_1|x_2}\|_{L^2_{x_2}}\lesssim |\xi_1|^{-\f12}$, we have the $L^2$ estimate:
\begin{align*}
&\Big\|\textbf{1}_{|\xi_1|\leq 2}\int_{\R}\vphi(\xi_2)\f{|\xi_1|^2}{|\xi^{'}|^4} e^{(-\f{|\xi_1|^2}{|\xi^{'}|^2}-|\xi_1|^2) t} \f{\xi_2 }{i\xi_2+|\xi_1|}|\xi_1|^{-\f12}\cF_{y_2}\big(\chi  \wh{u}_{2, 0} \big) (-\xi_2)d\xi_2   \Big\|_{L^2_{\xi_1}}\non\\
\lesssim&\Big\| \big(1-\vphi(\xi_2)\big)\f{(\lam_-+|\xi_1|^2)e^{\lam_- t}-(\lam_++|\xi_1|^2) e^{\lam_+ t}}{\lam_+-\lam_-}\f{-|\xi_1|}{i\xi_2-|\xi_1|}|\xi_1|^{-\f12} \cF_{y_2}\Big(\chi(y_2)  \wh{u}_{1, 0} (y_2)  \Big)(\xi_2)\Big\|_{L^2_{\xi_1}L^1_{\xi_2}}\\
\lesssim&\Big\|\big(1-\vphi(\xi_2)\big)\f{(\lam_-+|\xi_1|^2)e^{\lam_- t}-(\lam_++|\xi_1|^2) e^{\lam_+ t}}{\lam_+-\lam_-}
\f{|\xi_1|^\f12}{(|\xi_1|^2+|\xi_2|^2)^\f12}\Big\|_{L^2_{\xi_1}L^1_{\xi_2}}\Big\|\cF_{y_2}\Big(\chi(y_2)  \wh{u}_{1, 0} (y_2)  \Big)\Big\|_{L^\infty_{\xi_1}L^\infty_{\xi_2}}\\
\lesssim&\|e^{-\f{|\xi_1|^2}{2} t}e^{-\f{|\xi_2|^2}{2}t} \|_{L^2_{\xi_1}L^2_{\xi_2}}\|u_{1,0}\|_{L^1_{x_1}L^1_{x_2}}\\
\lesssim&  t^{-\f12}\|u_{1,0}\|_{L^1},
\end{align*}
here we use \eqref{2000}.\\

Consider the case $0<t<1$,
\begin{align*}
\Big|\f{(\lam_-+|\xi_1|^2)e^{\lam_- t}-(\lam_++|\xi_1|^2) e^{\lam_+ t}}{\lam_+-\lam_-}\Big|
\lesssim 1,
\end{align*}
the $L^2$ estimate for $J^2_2+J^3_2$,
\begin{align*}
&\Big\|\big(1-\vphi(\xi_2)\big)\f{(\lam_-+|\xi_1|^2)e^{\lam_- t}-(\lam_++|\xi_1|^2) e^{\lam_+ t}}{\lam_+-\lam_-}
\f{\xi_2}{i\xi_2-|\xi_1|}\Big\|_{L^\infty_{\xi_1}L^\infty_{\xi_2}}\Big\|\cF_{y_2}\Big(\chi(y_2)  \wh{u}_{2, 0} (y_2)  \Big)\Big\|_{L^2_{\xi_1}L^2_{\xi_2}}\\
\lesssim&  \|u_{2,0}\|_{L^2},
\end{align*}
the $L^2$ estimate for $J^2_1+J^3_1$,
\begin{align*}
&\Big\|\big(1-\vphi(\xi_2)\big)\f{(\lam_-+|\xi_1|^2)e^{\lam_- t}-(\lam_++|\xi_1|^2) e^{\lam_+ t}}{\lam_+-\lam_-}
\f{|\xi_1|^\f12}{(|\xi_1|^2+|\xi_2|^2)^\f12}\Big\|_{L^\infty_{\xi_1}L^2_{\xi_2}}\Big\|\cF_{y_2}\Big(\chi(y_2)  \wh{u}_{1, 0} (y_2)  \Big)\Big\|_{L^2_{\xi_1}L^2_{\xi_2}}\\
\lesssim&  \|u_{1,0}\|_{L^2}.
\end{align*}

 {   $\bf{\Gamma_5}$.}

For the  $\wh{u}_{2,0}$ part and $\wh{b}_{2,0}$ part of $I_2^5=J_1^5+J_2^5$,  define $L_1$ is the $\wh{u}_{2,0}$ part of the $I_2^5$, $L_2$ is the $\wh{b}_{2,0}$ part of the $I_2^5$, then by $\lam=-|\xi_1|^2+\e e^{i\theta}$, and the boundary condition $b_{2,0}|_{x_2=0}=0$, we have
\begin{align*}
|\textbf{1}_{|\xi_1|>\delta} L_1|
=&\Big|\textbf{1}_{|\xi_1|>\delta}\f{1}{2\pi i} \int_{\Ga_5 } e^{\lam t}  \f{e^{-|\xi_1|x_2}-e^{-\om x_2}}{ \om-|\xi_1|} \int^\infty_0 e^{-\om y_2}  \wh{u}_{2,0}(y_2)dy_2 d\lam\Big|\non\\
\lesssim& \textbf{1}_{|\xi_1|>\delta}e^{\e t}\int_{\Ga_5 } \Big|  \f{e^{-|\xi_1|x_2}-e^{-\om x_2}}{ \om-|\xi_1|} \int^\infty_0 e^{-\om y_2}  \wh{u}_{2,0}(y_2)dy_2 \Big|d\lam\non\\
\lesssim& \textbf{1}_{|\xi_1|>\delta}e^{\e t}\int_{\Ga_5 } \Big|  \f{e^{-|\xi_1|x_2}-e^{-\om x_2}}{ \om-|\xi_1|} \Big|d\lam \sup_{\lam \in \Gamma_5}\Big|\int^\infty_0 e^{-\om y_2}  \wh{u}_{2,0}dy_2 \Big|\non\\
\lesssim& \textbf{1}_{|\xi_1|>\delta}e^{\e t}\int_{\Ga_5 } \Big|  \f{e^{-|\xi_1|x_2}-e^{-\om x_2}}{ \om-|\xi_1|} \Big|d\lam\|\wh{u}_{2, 0}\|_{L^1_{x_2}},
\end{align*}
and
\begin{align}\label{J25-1}
|\textbf{1}_{|\xi_1|>\delta} L_2|
=&\Big|\textbf{1}_{|\xi_1|>\delta}\f{1}{2\pi i} \int_{\Ga_5 } e^{\lam t}  \f{e^{-|\xi_1|x_2}-e^{-\om x_2}}{ \om-|\xi_1|} \f{i\xi_1}{\lam+|\xi_1|^2}\int^\infty_0 e^{-\om y_2}  \wh{b}_{2,0}(y_2)dy_2 d\lam\Big|\non\\
=&\Big|\textbf{1}_{|\xi_1|>\delta}\f{1}{2\pi i} \int_{\Ga_5 } e^{\lam t}  \f{e^{-|\xi_1|x_2}-e^{-\om x_2}}{ \om-|\xi_1|} \f{i\xi_1}{\lam+|\xi_1|^2}\f{1}{\om}\int^\infty_0 e^{-\om y_2}  \pa_2\wh{b}_{2,0}(y_2)dy_2 d\lam\Big|\non\\
\lesssim& \textbf{1}_{|\xi_1|>\delta}e^{\e t}\int_{\Ga_5 } \Big|  \f{e^{-|\xi_1|x_2}-e^{-\om x_2}}{ \om-|\xi_1|} \f{i\xi_1}{\lam+|\xi_1|^2}\f{1}{\om}\int^\infty_0 e^{-\om y_2}  \pa_2\wh{b}_{2,0}(y_2)dy_2 \Big|d\lam\non\\
\lesssim& \textbf{1}_{|\xi_1|>\delta}e^{\e t}\int_{\Ga_5 } \Big|  \f{e^{-|\xi_1|x_2}-e^{-\om x_2}}{ \om-|\xi_1|} \Big|d\lam \sup_{\lam \in \Gamma_5}\Big|\f{i\xi_1}{\lam+|\xi_1|^2}\f{1}{\om}\int^\infty_0 e^{-\om y_2}  \pa_2\wh{b}_{2,0}dy_2 \Big|\non\\
\lesssim&\e^{-\f12} \textbf{1}_{|\xi_1|>\delta}e^{\e t}\int_{\Ga_5 } \Big|  \f{e^{-|\xi_1|x_2}-e^{-\om x_2}}{ \om-|\xi_1|} \Big|d\lam\|\pa_2\wh{b}_{2, 0}\|_{L^1_{x_2}},
\end{align}
here we use when $\e \to 0$,
\begin{align}
&\Big|\f{i\xi_1}{\lam+|\xi_1|^2}\f{1}{\om}\Big|=\f{|\xi_1|}{\e}\Big|\f{\sqrt{\lam+|\xi_1|^2}}{\sqrt{\lam^2+\lam|\xi_1|^2+|\xi_1|^2}}\Big|=\f{|\xi_1|}{\e}\f{\sqrt{\e }}{\big|\sqrt{\e e^{i\gamma}(-|\xi_1|^2+\e e^{i\gamma})+|\xi_1|^2}\big|}\lesssim\e^{-\f12}.\label{002}
\end{align}

Then
\begin{align*}
\|\textbf{1}_{|\xi_1|>\delta}L_1\|_{L^2_{\xi_1}L^2_{x_2}}
\lesssim&e^{\e t} \Big\|\textbf{1}_{|\xi_1|>\delta} \int_{\Ga_5 }  \Big| \f{1}{ \om-|\xi_1|} \Big|  \|e^{-|\xi_1|x_2}-e^{-\om x_2}\|_{L^2_{x_2}}d\lam\Big\|_{L^2_{\xi_1}}\| u_{2, 0}\|_{L^1}\non\\
\lesssim&e^{\e t} \Big\|\textbf{1}_{|\xi_1|>\delta} \int_{\Ga_5 }  \f{\e^\f12}{|\xi_1|}\Big(\f{1}{|\xi_1|^\f12}+\f{1}{(Re\om)^\f12}\Big)  d\lam\Big\|_{L^2_{\xi_1}}\| u_{2, 0}\|_{L^1}\non\\
\lesssim&e^{\e t} \Big\|\textbf{1}_{|\xi_1|>\delta}  \f{\e^\f12}{|\xi_1|}\Big(\f{\e}{|\xi_1|^\f12}+\f{\e^\f54}{|\xi_1|^\f12}\Big)  \Big\|_{L^2_{\xi_1}}\| u_{2, 0}\|_{L^1}\non\\
\lesssim&\e^\f32 e^{\e t}\|\textbf{1}_{|\xi_1|>\delta}\f{1}{|\xi_1|^\f32}\|_{L^2_{\xi_1}}\|u_{2, 0}\|_{L^1}\non\\
\lesssim&\e^\f32 e^{\e t}\|u_{2, 0}\|_{L^1} \to 0, \quad\text{for}\quad \e \to 0, \non
\end{align*}
and
\begin{align*}
\|\textbf{1}_{|\xi_1|>\delta}L_2\|_{L^2_{\xi_1}L^2_{x_2}}
\lesssim&\e^{-\f12}e^{\e t} \Big\|\textbf{1}_{|\xi_1|>\delta} \int_{\Ga_5 }  \Big| \f{1}{ \om-|\xi_1|} \Big|  \|e^{-|\xi_1|x_2}-e^{-\om x_2}\|_{L^2_{x_2}}d\lam\Big\|_{L^2_{\xi_1}}\|\pa_2 b_{2, 0}\|_{L^1}\non\\
\lesssim&\e^{-\f12}e^{\e t} \Big\|\textbf{1}_{|\xi_1|>\delta} \int_{\Ga_5 }  \f{\e^\f12}{|\xi_1|}\Big(\f{1}{|\xi_1|^\f12}+\f{1}{(Re\om)^\f12}\Big)  d\lam\Big\|_{L^2_{\xi_1}}\|\pa_2 b_{2, 0}\|_{L^1}\non\\
\lesssim&\e^{-\f12}e^{\e t} \Big\|\textbf{1}_{|\xi_1|>\delta}  \f{\e^\f12}{|\xi_1|}\Big(\f{\e}{|\xi_1|^\f12}+\f{\e^\f54}{|\xi_1|^\f12}\Big)  \Big\|_{L^2_{\xi_1}}\|\pa_2 b_{2, 0}\|_{L^1}\non\\
\lesssim&\e e^{\e t}\|\textbf{1}_{|\xi_1|>\delta}\f{1}{|\xi_1|^\f32}\|_{L^2_{\xi_1}}\|\pa_2 b_{2, 0}\|_{L^1}\non\\
\lesssim&\e e^{\e t}\|\pa_2 b_{2, 0}\|_{L^1} \to 0, \quad\text{for}\quad \e \to 0, \non
\end{align*}
here we use
\begin{align*}
&\om^2=\lam+\f{|\xi_1|^2}{\lam+|\xi_1|^2}=-|\xi_1|^2+\e e^{i\gamma}+\f{|\xi_1|^2}{\e}e^{-i\gamma},
\end{align*}
and by \eqref{lemma1}, we have
\begin{align}
 (Re\om)^2&=\f{\sqrt{|\xi_1|^4+\e^2+\f{|\xi_1|^4}{\e^2}-2|\xi_1|^2\e \cos\gamma-\f{2|\xi_1|^4}{\e}\cos\gamma+2|\xi_1|^2 \cos 2\gamma}-|\xi_1|^2+\e \cos\gamma+\f{|\xi_1|^2}{\e}\cos\gamma}{2}\non\\
 & \sim\f{|\xi_1|^2}{\e}\f{1+\cos{\gamma}}{2}=\f{|\xi_1|^2}{\e}(\cos{\f{\gamma}{2}})^2, \label{reom}
 \end{align}
and when $\e \to 0$,
 \begin{align}
&\Big|\f{1}{\om-|\xi_1|}\Big|=\Big|\f{\sqrt{\lam+|\xi_1|^2}}{ \sqrt{\lam^2+|\xi_1|^2\lam+|\xi_1|^2}-|\xi_1|\sqrt{\lam+|\xi_1|^2}}\Big|=\Big|\f{\sqrt{\e e^{i\gamma}}}{ \sqrt{(-|\xi_1|^2+\e e^{i\gamma})\e e^{i\gamma}+|\xi_1|^2}-|\xi_1|\sqrt{\e e^{i\gamma}}}\Big|\lesssim \f{\e^\f12}{|\xi_1|}. \label{L2-01}
\end{align}

As the $\textbf{1}_{|\xi_1|\leq\delta}$ part, we have
\begin{align*}
|\textbf{1}_{|\xi_1|\leq\delta} L_1|
=&\Big|\textbf{1}_{|\xi_1|\leq\delta}\f{1}{2\pi i} \int_{\Ga_5 } e^{\lam t}  \f{e^{-|\xi_1|x_2}-e^{-\om x_2}}{ \om-|\xi_1|} \int^\infty_0 e^{-\om y_2}  \wh{u}_{2,0}(y_2)dy_2 d\lam\Big|\non\\
\lesssim& \textbf{1}_{|\xi_1|\leq\delta}e^{\e t}\int_{\Ga_5 } \Big|  \f{e^{-|\xi_1|x_2}-e^{-\om x_2}}{ \om-|\xi_1|} \Big|d\lam\|\wh{u}_{2, 0}\|_{L^1_{x_2}},
\end{align*}
and
\begin{align*}
|\textbf{1}_{|\xi_1|\leq\delta} L_2|
=&\Big|\textbf{1}_{|\xi_1|\leq\delta}\f{1}{2\pi i} \int_{\Ga_5 } e^{\lam t}  \f{e^{-|\xi_1|x_2}-e^{-\om x_2}}{ \om-|\xi_1|} \f{i\xi_1}{\lam+|\xi_1|^2}\int^\infty_0 e^{-\om y_2}  \wh{b}_{2,0}(y_2)dy_2 d\lam\Big|\non\\
\lesssim&\f{|\xi_1|}{\e} \textbf{1}_{|\xi_1|\leq\delta}e^{\e t}\int_{\Ga_5 } \Big|  \f{e^{-|\xi_1|x_2}-e^{-\om x_2}}{ \om-|\xi_1|} \Big|d\lam\|\wh{b}_{2, 0}\|_{L^1_{x_2}},
\end{align*}
where we use
\begin{align}\label{ixi1}
&\Big|\f{i\xi_1}{\lam+|\xi_1|^2}\Big|=\f{|\xi_1|}{\e}.
\end{align}

Since the definition of contour, we have $\e\leq|\xi_1|$, then by \eqref{reom} and \eqref{L2-01},  we have
\begin{align*}
\|\textbf{1}_{|\xi_1|\leq\delta}L_1\|_{L^2_{\xi_1}L^2_{x_2}}
\lesssim&e^{\e t} \Big\|\textbf{1}_{|\xi_1|\leq\delta} \int_{\Ga_5 }  \Big| \f{1}{ \om-|\xi_1|} \Big|  \|e^{-|\xi_1|x_2}-e^{-\om x_2}\|_{L^2_{x_2}}d\lam\Big\|_{L^2_{\xi_1}}\| u_{2, 0}\|_{L^1}\non\\
\lesssim&e^{\e t} \Big\|\textbf{1}_{|\xi_1|\leq\delta} \int_{\Ga_5 }  \f{\e^\f12}{|\xi_1|}\Big(\f{1}{|\xi_1|^\f12}+\f{1}{(Re\om)^\f12}\Big)  d\lam\Big\|_{L^2_{\xi_1}}\| u_{2, 0}\|_{L^1}\non\\
\lesssim&\e^{\f12-\zeta} e^{\e t}\|\textbf{1}_{|\xi_1|\leq\delta}\e^{\zeta}\f{1}{|\xi_1|^\f12}\|_{L^2_{\xi_1}}\| u_{2, 0}\|_{L^1}\non\\
\lesssim&\e^{\f12-\zeta} e^{\e t}\|\textbf{1}_{|\xi_1|\leq\delta}\f{1}{|\xi_1|^{\f12-\zeta}}\|_{L^2_{\xi_1}}\| u_{2, 0}\|_{L^1}\non\\
\lesssim&\e^{\f12-\zeta} e^{\e t}\| u_{2, 0}\|_{L^1} \to 0, \quad\text{for}\quad \e \to 0, \non
\end{align*}
and
\begin{align*}
\|\textbf{1}_{|\xi_1|\leq\delta}L_2\|_{L^2_{\xi_1}L^2_{x_2}}
\lesssim&\e^{-1}e^{\e t} \Big\|\textbf{1}_{|\xi_1|\leq\delta} |\xi_1|\int_{\Ga_5 }  \Big| \f{1}{ \om-|\xi_1|} \Big|  \|e^{-|\xi_1|x_2}-e^{-\om x_2}\|_{L^2_{x_2}}d\lam\Big\|_{L^2_{\xi_1}}\| b_{2, 0}\|_{L^1}\non\\
\lesssim&\e^{-1}e^{\e t} \Big\|\textbf{1}_{|\xi_1|\leq\delta} |\xi_1|\int_{\Ga_5 }  \f{\e^\f12}{|\xi_1|}\Big(\f{1}{|\xi_1|^\f12}+\f{1}{(Re\om)^\f12}\Big)  d\lam\Big\|_{L^2_{\xi_1}}\| b_{2, 0}\|_{L^1}\non\\
\lesssim&\e^{-1}e^{\e t} \Big\|\textbf{1}_{|\xi_1|\leq\delta} |\xi_1| \f{\e^\f12}{|\xi_1|}\Big(\f{\e}{|\xi_1|^\f12}+\f{\e^\f54}{|\xi_1|^\f12}\Big)  \Big\|_{L^2_{\xi_1}}\| b_{2, 0}\|_{L^1}\non\\
\lesssim&\e^{\f12-\zeta} e^{\e t}\|\textbf{1}_{|\xi_1|\leq\delta}\e^{\zeta}\f{1}{|\xi_1|^\f12}\|_{L^2_{\xi_1}}\| b_{2, 0}\|_{L^1}\non\\
\lesssim&\e^{\f12-\zeta} e^{\e t}\|\textbf{1}_{|\xi_1|\leq\delta}\f{1}{|\xi_1|^{\f12-\zeta}}\|_{L^2_{\xi_1}}\| b_{2, 0}\|_{L^1}\non\\
\lesssim&\e^{\f12-\zeta} e^{\e t}\| b_{2, 0}\|_{L^1} \to 0, \quad\text{for}\quad \e \to 0. \non
\end{align*}

\smallskip

{\bf Case 2.  $|\xi_1|> 2$.}

We first  divide $J_1$ and $J_2$ as
\begin{align*}
J_1&=\f{i\xi_1}{|\xi_1|} \f{1}{2\pi i}\sum_{i=1}^{3}\int_{\wt{\Ga}_i}e^{\lam t} \int^\infty_0  e^{-\om y_2} (\wh{u}_{2, 0}+\f{i\xi_1}{\lam+|\xi_1|^2} \wh{b}_{2, 0})(y_2) dy_2 \f{1}{\om-|\xi_1|} e^{-|\xi_1|x_2} d\lam:= \sum_{i=1}^3 \wt{J}_1^i.\\
J_2&=-\f{i\xi_1}{|\xi_1|} \f{1}{2\pi i}\sum_{i=1}^{3}\int_{\wt{\Ga}_i}e^{\lam t} \int^\infty_0  e^{-\om (x_2+y_2)} (\wh{u}_{2, 0}+\f{i\xi_1}{\lam} \wh{b}_{2, 0})(y_2) dy_2 \f{1}{\om-|\xi_1|}  d\lam:= \sum_{i=1}^{3} \wt{J}_2^i.
\end{align*}

For the proof of the $J_i^{1,4,5}(i=1,2)$ on Case 1, since we did not use the property $|\xi_1|\leq 2$, this conclusion is also true for $\wt{J}_i^{2,3}(i=1,2)$ on Case 2, and the most trouble kernel  is
$\varphi(\xi_2)\f{|\xi_1|}{|\xi^{'}|^2}e^{-|\xi_1|^2t-\f{|\xi_1|^2}{|\xi^{'}|^2}t}$.\\

For the $\wh{u}_{2,0}$ part of $\wt{J}_2^1$, by the definition of $\wt{\Ga}_1: \lam= -\eta-|\xi_1|^2 \ (\eta: \lam'_- \to \lam'_+)$, we have
\begin{align}\label{007}
&-\f{1}{2\pi i} \int_{\wt{\Ga}_1 } \f{ e^{\lam t} }{\om-|\xi_1|} \int^\infty_0 e^{-\om(x_2+y_2)}  \wh{u}_{2, 0} (y_2) dy_2 d\lam\non\\
&=-\f{1}{2\pi i} \Big(\int^{\lam'_+}_{\lam'_-}\f{ e^{(-\eta-|\xi_1|^2) t} }{-i|\om|-|\xi_1|} \int^\infty_0 e^{i|\om|(x_2+y_2)}   \wh{u}_{2, 0} (y_2)  dy_2 d\eta\non\\
&\qquad-\int^{\lam'_+}_{\lam'_-}\f{ e^{(-\eta-|\xi_1|^2) t} }{i|\om|-|\xi_1|} \int^\infty_0 e^{-i|\om|(x_2+y_2)}   \wh{u}_{2, 0} (y_2) dy_2 d\eta\Big),
\end{align}
with
\begin{align*}
\om=\sqrt{\lam+\f{|\xi_1|^2}{\lam+|\xi_1|^2}}=\mp i \sqrt{\eta+|\xi_1|^2+\f{|\xi_1|^2}{\eta}}=\mp i|\om|.
\end{align*}

By changing of variables $\xi_2=|\om|$, we have
\begin{align}\label{lamda+}
& \eta+|\xi_1|^2+\f{|\xi_1|^2}{\eta} =|\xi_2|^2
\Rightarrow\quad \eta_\pm=\f{|\xi_2|^2-|\xi_1|^2}{2} \pm \f{\sqrt{(|\xi_1|^2-|\xi_2|^2)^2-4|\xi_1|^2}}{2}=-\lam_\mp-|\xi_1|^2,
\end{align}
 the least decaying term is the case $\eta=\eta_{-}$. Define $|\xi^{''}|^2=|\xi_1|^2-|\xi_2|^2\geq 2|\xi_1|$, we have
\begin{align*}
\eta_-
&=-\f{|\xi^{''}|^2}{2} (1+ \f{\sqrt{|\xi^{''}|^4-4|\xi_1|^2}}{|\xi^{''}|^2}),\\
d\eta_-&=\xi_2\Big(1+\f{|\xi^{''}|^2}{\sqrt{|\xi^{''}|^4-4|\xi_1|^2}}\Big) d\xi_2 =2 \xi_2 \f{\lam_++|\xi_1|^2}{\lam_+-\lam_-} d\xi_2.
\end{align*}

We consider the following cases:
\begin{itemize}
\item
 $\f{4|\xi_1|^2}{|\xi^{''}|^4}\leq \f12$,
 we obtain $  -|\xi^{''}|^2+\f{|\xi_1|^2}{|\xi^{''}|^2}\leq\eta_{-} \leq -|\xi^{''}|^2+\f{\sqrt{2}|\xi_1|^2}{|\xi^{''}|^2}$.\\
 \item
 $\f12 \leq\f{4|\xi_1|^2}{|\xi^{''}|^4}\leq1$, we have $\f{|\xi^{''}|^2}{2}\sqrt{1-\f{4|\xi_1|^2}{|\xi^{''}|^4}}\leq\f{|\xi^{''}|^2}{2\sqrt{2}}$. It then follows that
 \begin{align*}
\eta_-&=-\f{|\xi^{''}|^2}{2}-\f{|\xi^{''}|^2}{2}\sqrt{1-\f{4|\xi_1|^2}{|\xi^{''}|^4}}> -\f{(2+\sqrt{2})|\xi^{''}|^2}{4}.\\
-\eta_--|\xi_1|^2&\leq -\f{|\xi_1|^2+|\xi_2|^2}{2}+\f{\sqrt{2}(|\xi_1|^2-|\xi_2|^2)}{4}\leq -c_1(|\xi_1|^2+|\xi_2|^2).
  \end{align*}
\end{itemize}

The most trouble decay is  the case $\f{4|\xi_1|^2}{|\xi^{''}|^4}\leq \f12$,  then we have
\beno
\eta_{-}+|\xi^{''}|^2\sim \f{|\xi_1|^2}{|\xi^{''}|^2},
\eeno

 For this case,
\begin{align*}
&-\eta_--|\xi_1|^2= -|\xi_2|^2-c_2\f{|\xi_1|^2}{|\xi^{''}|^2},\\
&d \eta_{-}=2 \xi_2 \f{\lam_++|\xi_1|^2}{\lam_+-\lam_-} d\xi_2 =  -2\xi_2-2c_2\xi_2\f{|\xi_1|^2}{|\xi^{''}|^4}  d\xi_2,\\
&-2\xi_2-2c_2\xi_2\f{|\xi_1|^2}{|\xi^{''}|^4}\in\big(-2\xi_2-\f{c_2}{4}\xi_2,-2\xi_2\big),  \ \text{here}\  c_2\in(1,\sqrt{2}),
\end{align*}
 and by \eqref{lamda+}, the first term on the right hand side of \eqref{007} can be rewritten as
\begin{align*}
&-\f{1}{2\pi i} \int^{\wt{d^{'}}}_{0} \f{e^{\lam_+ t} }{i\xi_2+|\xi_1|} \int^\infty_0 e^{i\xi_2(x_2+y_2)}   \wh{u}_{2, 0} (y_2)  dy_2 2 \xi_2 \f{\lam_++|\xi_1|^2}{\lam_+-\lam_-} d\xi_2\non\\
=&-2i\cF_{x_2}^{-1}\Big(\phi(\xi_2)  \f{\lam_++|\xi_1|^2}{\lam_+-\lam_-}e^{\lam_+ t}\f{\xi_2 }{i\xi_2+|\xi_1|}\cF_{y_2}\big(\chi  \wh{u}_{2, 0} \big) (-\xi_2) \Big)(x_2),
\end{align*}
where   $\phi$ are   cut-off functions defined as
\begin{align}\label{vphi,chi}
\phi(\xi_2)=\left\{
\begin{array}{l}
1, \  \text{for} \ \xi_2\in [0,\wt{d^{'}}], \\
0, \  \text{for} \ \xi_2\notin [0,\wt{d^{'}}],
\end{array}\right.
\end{align}
and $\wt{d^{'}}=\sqrt{|\xi_1|^2-2|\xi_1|}$.

By  Plancherel theorem, we have the $L^2$ estimate
\begin{align*}
&\Big\|\textbf{1}_{|\xi_1|> 2}\cF_{x_2}^{-1}\Big(\phi(\xi_2)   e^{(-|\xi_2|^2-\f{|\xi_1|^2}{|\xi^{''}|^2}) t}\f{\xi_2 }{i\xi_2+|\xi_1|}\cF_{y_2}\big(\chi  \wh{u}_{2, 0} \big) (-\xi_2)  \Big)(x_2)\Big\|_{L^2_{\xi_1}L^2_{x_2}}\\
&\lesssim\Big\|\textbf{1}_{|\xi_1|> 2}\phi(\xi_2)  e^{(-|\xi_2|^2-\f{|\xi_1|^2}{|\xi^{''}|^2}) t}\f{\xi_2 }{i\xi_2+|\xi_1|}\cF\big(\chi \wh{u}_{2, 0} \big) (-\xi_2)   \Big\|_{L^2_{\xi_1}L^2_{\xi_2}}\non\\
&\lesssim e^{-t}    \| u_{2, 0} \|_{L^2}.
\end{align*}

Following \eqref{009}, we have the $L^2$ estimate for $\wt{J}_1^1$:
\begin{align*}
&\Big\|\textbf{1}_{|\xi_1|> 2}\int_{\mathbb{R}}\phi(\xi_2)   e^{(-|\xi_2|^2-\f{|\xi_1|^2}{|\xi^{''}|^2}) t}\f{\xi_2 }{i\xi_2+|\xi_1|}|\xi_1|^{-\f12}\cF_{y_2}\big(\chi  \wh{u}_{2, 0} \big) (-\xi_2)  d\xi_2\Big\|_{L^2_{\xi_1}}\\
&=\Big\|\textbf{1}_{|\xi_1|> 2}\int_{\mathbb{R}}\phi(\xi_2)   e^{(-|\xi_2|^2-\f{|\xi_1|^2}{|\xi^{''}|^2}) t}\f{|\xi_1|^\f12 }{i\xi_2+|\xi_1|}\cF_{y_2}\big(\chi  \wh{u}_{1, 0} \big) (-\xi_2)  d\xi_2\Big\|_{L^2_{\xi_1}}\\
&\lesssim\Big\|\textbf{1}_{|\xi_1|> 2}\phi(\xi_2)  e^{(-|\xi_2|^2-\f{|\xi_1|^2}{|\xi^{''}|^2}) t}\f{|\xi_1|^\f12 }{(|\xi_1|^2+|\xi_2|^2)^\f12}\cF\big(\chi \wh{u}_{1, 0} \big) (-\xi_2)   \Big\|_{L^2_{\xi_1}L^1_{\xi_2}}\non\\
&\lesssim e^{-t}\Big\|\f{|\xi_1|^\f12 }{(|\xi_1|^2+|\xi_2|^2)^\f12}\Big\|_{L^\infty_{\xi_1}L^2_{\xi_2}}\Big\|\cF\big(\chi \wh{u}_{1, 0} \big) (-\xi_2)   \Big\|_{L^2_{\xi_1}L^2_{\xi_2}}\non\\
&\lesssim e^{-t}    \| u_{1, 0} \|_{L^2}.
\end{align*}

The $\wh{b}_{2,0}$ part can be estimated similarly since
\begin{align}\label{013}
 \f{|\xi_1|}{\big|\lam_++|\xi_1|^2\big|}=\f{|\xi_1|}{|\xi^{''}|^2-\f{|\xi_1|^2}{|\xi^{''}|^2}} \lesssim 1.
\end{align}

Thus we complete the proof of the Proposition 2.1.
\end{proof}

\smallskip

Now we  give the  resolvent estimate of magnetic  field in  \eqref{linear}.

\begin{proposition}\label{linear b}
We have the following $L^2$ estimates
 \begin{align*}
&\|b_{1,L}\|_{L^2}\lesssim \lan t \ran ^{-\f14}\|(u_{0},b_{0})\|_{L^1\cap L^2\cap L^1_{x_1}L^2_{x_2}},\\
&\|b_{2,L}\|_{L^2}\lesssim \lan t \ran ^{-\f12}\|(u_{0},b_{0})\|_{L^1\cap L^2\cap L^1_{x_1}L^2_{x_2}},
 \end{align*}
 for the magnetic  field in \eqref{linear}.
\end{proposition}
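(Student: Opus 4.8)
The plan is to read the magnetic field off the velocity field already analyzed in Proposition \ref{linear u}. From \eqref{blam} (equivalently the linear formula \eqref{linear}) each component splits as
$$\wh{b}_{j,L}=\f{1}{2\pi i}\int_{\Ga}e^{\lam t}\f{i\xi_1}{\lam+|\xi_1|^2}\,\big[\text{velocity integrand of }\wh{u}_{j,L}\big]\,d\lam+\f{1}{2\pi i}\int_{\Ga}e^{\lam t}\f{\wh{b}_{j,0}}{\lam+|\xi_1|^2}\,d\lam,\qquad j=1,2.$$
So I would treat the first (\emph{velocity-multiplied}) piece by recycling the entire contour analysis of Proposition \ref{linear u} — the deformation of $\Ga$ into $\cup\Ga_i$ (resp. $\cup\wt{\Ga}_i$) and the multiplier bounds $M_1,M_2,M_3$ over Cases a--d — and handle the second (\emph{direct}) piece, which is genuinely new, separately.

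For the velocity-multiplied piece the key observation is that the extra factor is harmless on the relevant loci: exactly as in \eqref{011} and \eqref{013}, $\big|\tfrac{i\xi_1}{\lam+|\xi_1|^2}\big|=\tfrac{|\xi_1|}{|\lam_\pm+|\xi_1|^2|}\lesssim 1$ on the poles $\lam_\pm$ and on the deformed contours. Consequently, for the full-space ($I_1$-type) part the multiplier attached to $\wh{u}_{j,0}$ becomes precisely $i\xi_1\tfrac{e^{\lam_+t}-e^{\lam_-t}}{\lam_+-\lam_-}$, i.e. the $M_3$-multiplier, while the one attached to $\wh{b}_{j,0}$ acquires the third pole $\lam=-|\xi_1|^2$ and is controlled by $M_3$ together with the heat kernel $e^{-|\xi_1|^2t}$; the half-space ($J$-type) terms carry over verbatim from Proposition \ref{linear u} since their estimates never used the cancellation of $\lam+|\xi_1|^2$. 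All of these contributions therefore inherit the velocity decay and are $\lesssim\lan t\ran^{-\f12}$.

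The bottleneck is the direct piece. Since $\lam=-|\xi_1|^2$ is the only singularity of $\tfrac{\wh{b}_{j,0}}{\lam+|\xi_1|^2}$, the residue gives $e^{-|\xi_1|^2t}\wh{b}_{j,0}$, a kernel that decays in $\xi_1$ only. For $b_1$ I would measure in $L^1_{x_1}L^2_{x_2}$, so that $\|e^{-|\xi_1|^2t}\wh{b}_{1,0}\|_{L^2_{\xi_1}L^2_{\xi_2}}\lesssim\|e^{-|\xi_1|^2t}\|_{L^2_{\xi_1}}\|\wh{b}_{1,0}\|_{L^\infty_{\xi_1}L^2_{\xi_2}}\lesssim\lan t\ran^{-\f14}\|b_{1,0}\|_{L^1_{x_1}L^2_{x_2}}$; this single term forces the slower rate $\lan t\ran^{-\f14}$ for $b_1$ and explains the appearance of the $L^1_{x_1}L^2_{x_2}$ norm in the statement. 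For $b_2$ the same naive bound would again give only $\lan t\ran^{-\f14}$, so the improvement to $\lan t\ran^{-\f12}$ must come from structure: mirroring the integration by parts in $y_2$ used in \eqref{009}, I would use $\div b_0=0$ together with the boundary value $b_{2,0}|_{x_2=0}=0$ so that after the $x_2$-extension $\cF_{x_2}(\wh{b}^{o}_{2,0})=-\tfrac{\xi_1}{\xi_2}\cF_{x_2}(\wh{b}^{e}_{1,0})$, trading the vertical frequency for a horizontal one and producing the missing half power of decay.

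The main obstacle I anticipate is precisely this $b_2$ upgrade — making the exchange $\xi_2\mapsto\xi_1$ legitimate near $\xi_2=0$ while keeping the frequency weights square-integrable — together with the bookkeeping of the new triple-pole ($\lam_\pm$ and $-|\xi_1|^2$) contribution in the $\wh{b}_{j,0}$ part of the velocity-multiplied piece, where one must check that the residue at $\lam=-|\xi_1|^2$ does not spoil the $M_3$-type decay. Finally the short-time range $0<t<1$ is routine, all the frequency weights being $O(1)$ there, so that Plancherel reduces everything to the $L^2$ bounds, and concatenating with the long-time estimates yields the stated decay.
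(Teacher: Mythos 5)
Your skeleton is the right one and matches the paper's in outline: split $\wh{b}_{j,L}$ into the velocity-multiplied piece plus the direct piece $\f{\wh{b}_{j,0}}{\lam+|\xi_1|^2}$, recycle the contour analysis of Proposition \ref{linear u}, read the $\lan t\ran^{-\f14}$ rate and the $L^1_{x_1}L^2_{x_2}$ norm off the heat-kernel-in-$\xi_1$-only term, and upgrade $b_2$ by the divergence-free/boundary trade $\cF_{x_2}(\wh{b}^{o}_{2,0})=-\f{\xi_1}{\xi_2}\cF_{x_2}(\wh{b}^{e}_{1,0})$ (this is exactly how the paper gets $t^{-\f34}$ for that piece). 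The genuine gap is the step you declare harmless. The bound $\big|\f{i\xi_1}{\lam+|\xi_1|^2}\big|\lesssim 1$ does \emph{not} hold on all the deformed contours: \eqref{011} is an identity only in the oscillatory regime (on $\Ga_2,\Ga_3$, where $|\lam_\pm+|\xi_1|^2|=|\xi_1|$ exactly), and \eqref{013} only applies on $\wt{\Ga}_1$ for $|\xi_1|>2$. On $\Ga_1\cup\Ga_4$ one has $\lam+|\xi_1|^2=-\eta$, and the change of variables localizes the worst contribution at $\eta_-\sim\f{|\xi_1|^2}{|\xi^{'}|^2}$, so that $\big|\f{i\xi_1}{\lam+|\xi_1|^2}\big|\sim\f{|\xi^{'}|^2}{|\xi_1|}$, which is unbounded precisely in the slow-decay region $|\xi_2|^2\gtrsim|\xi_1|$; on $\Ga_5$ the factor equals $\f{|\xi_1|}{\e}$ (see \eqref{ixi1}), which diverges as $\e\to 0$, and it enters \emph{squared} in the $\wh{b}_{2,0}$ boundary term. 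Your full-space piece is saved by an algebraic cancellation — the symbol of $N_\om$ is $\f{\lam+|\xi_1|^2}{(\lam-\lam_+)(\lam-\lam_-)}$, so the numerator absorbs the denominator of the multiplier — but no such cancellation exists in the boundary terms $E_\om[\,\cdot\,]_0\,\f{e^{-|\xi_1|x_2}-e^{-\om x_2}}{\om-|\xi_1|}$, so the half-space terms emphatically do not carry over verbatim.

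Closing these two holes is where most of the paper's proof lives. On $\Ga_1\cup\Ga_4$ the paper must estimate $\big(\f{i\xi_1}{\lam_++|\xi_1|^2}\big)^2\wh{b}_{2,0}$, carrying the factor $\f{|\xi^{'}|^4}{|\xi_1|^2}$; this is rescued only by applying the same divergence-free trade $\wh{b}_{2,0}\mapsto\f{|\xi_1|}{|\xi_2|}\wh{b}_{1,0}$ \emph{inside} the half-space kernels (cf. \eqref{b1l1l2}), and it is this step — not the direct piece alone — that forces the $L^1_{x_1}L^2_{x_2}$ norm into the half-space contributions as well. On $\Ga_5$ the paper needs a separate limiting argument (the $L_3$ estimates): the $\e^{-2}$ blow-up is compensated by integrating by parts in $y_2$ (legitimate because $b_{2,0}|_{x_2=0}=0$), which gains $\f{1}{\om}$, and by the weights in \eqref{002} and \eqref{reom}, yielding a contribution of size $\e^{\f14}e^{\e t}\to 0$; nothing in a verbatim recycling of Proposition \ref{linear u} provides this. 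A secondary, fixable point: even for the full-space part, the $\wh{b}_{1,0}$ contribution is not "$M_3$ plus heat kernel" — keeping the residues at $\lam_\pm$ separate fails near the double root $\lam_+=\lam_-$ (the boundary between Cases b and c), so one must bound the divided-difference quantity $\f{(\lam_-+|\xi_1|^2)e^{\lam_+t}-(\lam_++|\xi_1|^2)e^{\lam_-t}}{\lam_+-\lam_-}$, i.e.\ the paper's new multiplier $M_4$, whose Case d.2 behavior $e^{-\f{|\xi_1|^2}{|\xi_2|^2}t-|\xi_1|^2t}$ (with no $\f{|\xi_1|}{|\xi_2|^2}$ prefactor gain) is what actually produces $t^{-\f14}$; your separate treatment of the third pole is otherwise viable, since the exact cancellation the paper exploits between the direct piece and the residue at $\lam=-|\xi_1|^2$ is a convenience rather than a necessity.
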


 \begin{proof}by \eqref{0006} and \eqref{linear}, $\wh{b}_{1,L}$ can be rewritten as
 \begin{align}
\wh{b}_{1,L}=&K_1+K_2\non\\
=&\f{1}{2\pi i}\int_{\Ga}e^{\lam t}
\Big\{ \f{i\xi_1}{\lam+|\xi_1|^2}\big(N_\om[\wh{u}_{1, 0}]+\f{i\xi_1}{\lam+|\xi_1|^2}N_\om[\wh{b}_{1, 0}]\big)+\f{\wh{b}_{1,0}}{\lam+|\xi_1|^2} \Big\}d\lam\non\\
&+\f{1}{2\pi i}\int_{\Ga}e^{\lam t}
\Big\{\f{i\xi_1}{\lam+|\xi_1|^2}\f{i\xi_1}{|\xi_1|}\f{N_\om[e^{-|\xi_1|x_2}]}{E_{\om}[e^{-|\xi_1|x_2}]_0}\{ E_\om[\wh{u}_{2, 0}]_0+\f{i\xi_1}{\lam+|\xi_1|^2}E_\om[\wh{b}_{2, 0}]_0\}\big) \Big\}d\lam\non\\
=&\f{1}{2\pi i}\int_{\Ga}e^{\lam t}
\Big\{ \f{i\xi_1}{\lam+|\xi_1|^2}\big(N_\om[\wh{u}_{1, 0}]+\f{i\xi_1}{\lam+|\xi_1|^2}N_\om[\wh{b}_{1, 0}]\big)+\f{\wh{b}_{1,0}}{\lam+|\xi_1|^2} \Big\}d\lam\non\\
&+\f{1}{2\pi i}\int_{\Ga}e^{\lam t}
\Big\{\f{i\xi_1}{\lam+|\xi_1|^2}\f{2i\xi_1 \om}{|\xi_1|(\om-|\xi_1|)}\{  E_\om[\wh{u}_{2, 0}]_0+\f{i\xi_1}{\lam+|\xi_1|^2}E_\om[\wh{b}_{2, 0}]_0\} \Big(e^{-|\xi_1|x_2}- e^{-\om x_2}\Big)\Big\}d\lam.\label{k2}
\end{align}
$\wh{b}_{2,L}$ can be solved as $\wh{b}_{1,L}$, so we only need to consider $\wh{b}_{1,L}$.\\

For the $K_1$ part, consider the odd extension, $K_1$ can be rewritten as
\begin{align*}
K_1=\f{1}{2\pi i}\int_{\Ga}\f{i\xi_1e^{\lam t}}{\lam+|\xi_1|^2} \f{1}{2\om} e^{-\om|x_2|} * \wh{U}_{1, 0}d\lam+\f{1}{2\pi i}\int_{\Ga}\f{e^{\lam t}\wh{b}_{1,0}}{\lam+|\xi_1|^2}  d\lam:=\f{1}{2\pi i}\int_{\Ga}e^{\lam t} K_{1, \lam}d\lam +e^{-|\xi_1|^2t}\wh{b}_{1,0}.
\end{align*}
Make the odd extension of the $ K_{1, \lam}$ and $\wh{b}_{1,0}$, then
\begin{align*}
\cF_{x_2}(\wt{K}_1)&=\f{1}{2\pi i}\int_{\Ga}e^{\lam t}\cF_{x_2} (\wt{K}_{1, \lam})d\lam+e^{-|\xi_1|^2t}\cF_{x_2}(\wh{b}^{o}_{1,0})\non\\
&=\f{1}{2\pi i}\int_{\Ga}\f{i\xi_1e^{\lam t}}{(\lam-\lam_+)(\lam-\lam_-)}\cF_{x_2}(\wh{U}_{1, 0}) d\lam+e^{-|\xi_1|^2t}\cF_{x_2}(\wh{b}^{o}_{1,0})\non\\
&=\f{1}{2\pi i}\int_{\Ga}\f{i\xi_1e^{\lam t}}{(\lam-\lam_+)(\lam-\lam_-)}\cF_{x_2}(\wh{u}^{o}_{1, 0})+\f{(i\xi_1)^2e^{\lam t}}{(\lam-\lam_+)(\lam-\lam_-)(\lam+|\xi_1|^2)}\cF_{x_2}(\wh{b}^{o}_{1, 0})d\lam\\
&\quad+e^{-|\xi_1|^2t}\cF_{x_2}(\wh{b}^{o}_{1,0}) \non\\
&=i\xi_1\f{ e^{\lam_+t} -e^{\lam_- t}}{\lam_+-\lam_-}\cF_{x_2}(\wh{u}^{o}_{1, 0})-\f{(\lam_-+|\xi_1|^2) e^{\lam_+t} -(\lam_++|\xi_1|^2)e^{\lam_- t}}{\lam_+-\lam_-}\cF_{x_2}(\wh{b}^{o}_{1, 0})\\
&\quad+(i\xi_1)^2\f{ e^{-|\xi_1|^2t}}{(|\xi_1|^2+\lam_+)(|\xi_1|^2+\lam_-)}\cF_{x_2}(\wh{b}^{o}_{1, 0})+e^{-|\xi_1|^2 t}\cF_{x_2}(\wh{b}^{o}_{1, 0})\\
&=i\xi_1\f{ e^{\lam_+t} -e^{\lam_- t}}{\lam_+-\lam_-}\cF_{x_2}(\wh{u}^{o}_{1, 0})-\f{(\lam_-+|\xi_1|^2) e^{\lam_+t} -(\lam_++|\xi_1|^2)e^{\lam_- t}}{\lam_+-\lam_-}\cF_{x_2}(\wh{b}^{o}_{1, 0}).
\end{align*}

In fact, we have discussed   $i\xi_1\f{ e^{\lam_+t} -e^{\lam_- t}}{\lam_+-\lam_-}$ and $|\xi_1|^2\f{ e^{\lam_+t} -e^{\lam_- t}}{\lam_+-\lam_-}$ thoroughly in $M_2$ and $M_3$ of $u_L$, thus we only need to consider
\begin{align*}
M_4:=\Big|\f{\lam_- e^{\lam_+t} -\lam_+e^{\lam_- t}}{\lam_+-\lam_-}\Big|.
\end{align*}

\textbf{Case a}: $\big||\xi_1|^2-|\xi_2|^2\big|\leq |\xi_1|$.\\

We have
\begin{align}\label{M4a}
M_4&=\Big|\f{\lam_- e^{\lam_+ t}-\lam_+ e^{\lam_- t}}{\lam_+-\lam_-}\Big|\non\\
&\quad=e^{-\f{|\xi_1|^2+|\xi_2|^2}{2}t}\Big|\f{|\xi_1|^2+|\xi_2|^2}{2}t\f{sin(\f{ \sqrt{4|\xi_1|^2-(|\xi_1|^2-|\xi_2|^2)^2}}{2}t)}{\f{ \sqrt{4|\xi_1|^2-(|\xi_1|^2-|\xi_2|^2)^2}}{2}t}+cos(\f{ \sqrt{4|\xi_1|^2-(|\xi_1|^2-|\xi_2|^2)^2}}{2}t)\Big|\non\\
&\lesssim e^{-\f{|\xi_1|^2+|\xi_2|^2}{2}t}.
\end{align}

\textbf{Case b}: $|\xi_1|<\big||\xi_1|^2-|\xi_2|^2\big|\leq 2|\xi_1|$.\\

We can use the same way in \eqref{M4a} to deal with $M_4$:
\begin{align*}
M_4&=\Big|\f{\lam_- e^{\lam_+ t}-\lam_+ e^{\lam_- t}}{\lam_+-\lam_-}\Big|\lesssim e^{-\f{|\xi_1|^2+|\xi_2|^2}{2}t}.
\end{align*}

\textbf{Case c}: $2|\xi_1|<\big||\xi_1|^2-|\xi_2|^2\big|\leq 4|\xi_1|$.\\

For $ \big||\xi_1|^2-|\xi_2|^2\big|> 2|\xi_1|$, we have $\lam_-< \lam_+<0$, we only need to consider the trouble decay item $\lam_+$.
We have
\begin{align}\label{bc}
&\lam_+ =-\f{|\xi_1|^2+|\xi_2|^2}{2}+\f{\sqrt{(|\xi_1|^2-|\xi_2|^2)^2-4|\xi_1|^2}}{2}\non\\
 & \qquad \leq-\f{|\xi_1|^2+|\xi_2|^2}{2}+\f{\sqrt{15}}{4}\f{\big||\xi_1|^2-|\xi_2|^2\big|}{2}\leq-c_0(|\xi_1|^2+|\xi_2|^2),\non\\
 &\lam_+=-\f{|\xi_1|^2+|\xi_2|^2}{2}+\f{\sqrt{(|\xi_1|^2-|\xi_2|^2)^2-4|\xi_1|^2}}{2}\geq -\f{|\xi_1|^2+|\xi_2|^2}{2}.
 \end{align}
 By \eqref{207} and \eqref{bc}, we have
 \begin{align*}
&\Big|\f{ e^{\lam_+ t}-e^{\lam_- t}}{\lam_+-\lam_-}\Big|\lesssim t e^{\lam_+ t} \lesssim \f{1}{|\xi_1|^2+|\xi_2|^2} e^{-c_0(|\xi_1|^2+|\xi_2|^2) t},\\
&M_4=\Big|\f{ \lam_-e^{\lam_+ t}-\lam_+e^{\lam_- t}}{\lam_+-\lam_-}\Big|=\Big|-e^{\lam_+ t}+\lam_+\f{ e^{\lam_+ t}-e^{\lam_- t}}{\lam_+-\lam_-}\Big|\lesssim   e^{-c_0(|\xi_1|^2+|\xi_2|^2) t}.
\end{align*}

\textbf{Case d}: $\big||\xi_1|^2-|\xi_2|^2\big|> 4|\xi_1|$.\\

 We only need to consider the trouble decay item $\lam_+$, and consider the following two cases:\\

\textbf{Case d.1}: $|\xi_1|^2-|\xi_2|^2> 4|\xi_1|$.\\

For $ |\xi_1|^2-|\xi_2|^2> 4|\xi_1|$, we have $|\xi_1|>4$, and
\begin{align*}
\lam_+ &=-\f{|\xi_1|^2+|\xi_2|^2}{2}+\f{\sqrt{(|\xi_1|^2-|\xi_2|^2)^2-4|\xi_1|^2}}{2}=\f{-|\xi_1|^2-|\xi_1|^2|\xi_2|^2}{\f{|\xi_1|^2+|\xi_2|^2}{2}+\f{\sqrt{(|\xi_1|^2-|\xi_2|^2)^2-4|\xi_1|^2}}{2}}\\
&\leq-\f{|\xi_1|^2+|\xi_1|^2|\xi_2|^2}{|\xi_1|^2}\leq-1,
\end{align*}
it has an exponential decay.\\

\textbf{Case d.2}: $|\xi_2|^2-|\xi_1|^2> 4|\xi_1|$.\\

For $ |\xi_2|^2-|\xi_1|^2> 4|\xi_1|$, we have
\begin{align*}
&\lam_+ =-\f{|\xi_1|^2+|\xi_2|^2}{2}+\f{\sqrt{(|\xi_1|^2-|\xi_2|^2)^2-4|\xi_1|^2}}{2}=\f{-|\xi_1|^2-|\xi_1|^2|\xi_2|^2}{\f{|\xi_1|^2+|\xi_2|^2}{2}+\f{\sqrt{(|\xi_1|^2-|\xi_2|^2)^2-4|\xi_1|^2}}{2}}\\
&\qquad \leq-\f{|\xi_1|^2+|\xi_1|^2|\xi_2|^2}{|\xi_2|^2},\\
&M_4=\Big|\f{ \lam_-e^{\lam_+ t}-\lam_+e^{\lam_- t}}{\lam_+-\lam_-}\Big|\lesssim  |e^{\lam_+t}|\lesssim e^{-\f{|\xi_1|^2}{|\xi_2|^2}t-|\xi_1|^2t}.
\end{align*}

Then the $M_4$ means the most trouble kernel $\textbf{1}_{|\xi_2|^2>|\xi_1|^2+4|\xi_1|}e^{-\f{|\xi_1|^2}{|\xi_2|^2}t-|\xi_1|^2t}$.\\

For $t>1$, we have the $L^2$ estimates,

\begin{align*}
&\|e^{-\f{|\xi_1|^2}{|\xi_2|^2}t-|\xi_1|^2t}\cF(b_{1,0})\|_{L^2_{\xi_1}L^2_{\xi_2}}
\lesssim\|e^{-|\xi_1|^2t}\cF(b_{1,0})\|_{L^2_{\xi_1}L^2_{\xi_2}}
\lesssim t^{-\f14}\|b_{1,0}\|_{L^1_{x_1}L^2_{x_2}},\\
&\|e^{-\f{|\xi_1|^2}{|\xi_2|^2}t-|\xi_1|^2t}\cF(b_{2,0})\|_{L^2_{\xi_1}L^2_{\xi_2}}
\lesssim\|\f{|\xi_1|}{|\xi_2|}e^{-\f{|\xi_1|^2}{|\xi_2|^2}t}e^{-|\xi_1|^2t}\cF(b_{1,0})\|_{L^2_{\xi_1}L^2_{\xi_2}}
\lesssim t^{-\f34}\|b_{1,0}\|_{L^1_{x_1}L^2_{x_2}}.
\end{align*}

 For $0<t<1$, we have the $L^2$ estimates,
 \begin{align*}
&\|e^{-\f{|\xi_1|^2}{|\xi_2|^2}t-|\xi_1|^2t}\cF(b_{0})\|_{L^2_{\xi_1}L^2_{\xi_2}}\lesssim\|\cF(b_{0})\|_{L^2_{\xi_1}L^2_{\xi_2}}\lesssim \|b_{0}\|_{L^2}.
\end{align*}

Therefore, combined with the conclusions we have obtained about $M_2$ and $M_3$, we prove the following results for $K_1$,
\begin{align*}
&\|b_{1,L}\|_{L^2}\lesssim \lan t \ran ^{-\f12}\|(u_{1,0},b_{1,0})\|_{L^1\cap L^2}+\lan t \ran ^{-\f14}\|b_{1,0}\|_{L^1_{x_1}L^2_{x_2}},\\
&\|b_{2,L}\|_{L^2}\lesssim \lan t \ran ^{-\f12}\|(u_{2,0},b_{2,0})\|_{L^1\cap L^2}+\lan t \ran ^{-\f34}\|b_{1,0}\|_{L^1_{x_1}L^2_{x_2}}.
\end{align*}

Given the fact that $K_2$ in \eqref{k2} and $I_2$ in \eqref{u1} are only different by $\f{i\xi_1}{\lam+|\xi_1|^2}$, we only need to analyze the effect of $\f{i\xi_1}{\lam+|\xi_1|^2}$. Through the analysis of $\wh{u}_L$, we know $\lam=\lam_+$ is the least decay item, so we will only consider the case of $\lam=\lam_+$ in the following paragraphs. \\

 First we consider   $|\xi_1|\leq 2$.  For $\Gamma_1$ and $\Gamma_4$, we have $|\f{i\xi_1}{\lam_+ +|\xi_1|^2}|=\f{|\xi^{'}|^2}{|\xi_1|}$. In fact, we only need to consider the most trouble item $\big(\f{i\xi_1}{\lam_+ +|\xi_1|^2}\big)^2\wh{b}_{2,0}$. By \eqref{010}, \eqref{009} and \eqref{2000}, we have the $L^2$ estimates

\begin{align*}
&\Big\|\textbf{1}_{|\xi_1|\leq 2}\cF_{x_2}^{-1}\Big(\vphi(\xi_2) \f{|\xi_1|^2}{|\xi^{'}|^4} e^{(-\f{|\xi_1|^2}{|\xi^{'}|^2}-|\xi_1|^2) t}  \f{\xi_2 }{i\xi_2+|\xi_1|}\f{|\xi^{'}|^4}{|\xi_1|^2}\cF_{y_2}\big(\chi  \wh{b}_{2, 0} \big) (-\xi_2) \Big)(x_2)\Big\|_{L^2_{\xi_1}L^2_{x_2}}\non\\
=&\Big\|\textbf{1}_{|\xi_1|\leq 2}\cF_{x_2}^{-1}\Big(\vphi(\xi_2) \f{|\xi_1|^2}{|\xi^{'}|^4} e^{(-\f{|\xi_1|^2}{|\xi^{'}|^2}-|\xi_1|^2) t}  \f{\xi_2 }{i\xi_2+|\xi_1|}\f{|\xi^{'}|^4}{|\xi_1|^2}\f{|\xi_1|}{|\xi_2|}\cF_{y_2}\big(\chi  \wh{b}_{1, 0} \big) (-\xi_2) \Big)(x_2)\Big\|_{L^2_{\xi_1}L^2_{x_2}}\non\\
\lesssim&\Big\|\vphi(\xi_2) \f{|\xi_1|}{|\xi_2|} e^{(-\f{|\xi_1|^2}{|\xi^{'}|^2}-|\xi_1|^2) t}  \cF_{y_2}\big(\chi  \wh{b}_{1, 0} \big) (-\xi_2) \Big\|_{L^2_{\xi_1}L^2_{\xi_2}}\non\\
\lesssim&\Big\| \f{|\xi_1|}{|\xi_2|} e^{(-\f{|\xi_1|^2}{|\xi_2|^2}-|\xi_1|^2) t}  \cF_{y_2}\big(\chi  \wh{b}_{1, 0} \big) (-\xi_2) \Big\|_{L^2_{\xi_1}L^2_{\xi_2}}\non\\
\lesssim&t^{-\f12}\Big\|e^{-|\xi_1|^2 t}    \Big\|_{L^2_{\xi_1}}\|\chi b_{1, 0} \|_{L^1_{x_1}L^2_{x_2}}\non\\
\lesssim& t^{-\f34}    \| b_{1, 0} \|_{L^1_{x_1}L^2_{x_2}},
\end{align*}
and
\begin{align}\label{b1l1l2}
&\Big\|\textbf{1}_{|\xi_1|\leq 2}\int_{\R}\vphi(\xi_2)\f{|\xi_1|^2}{|\xi^{'}|^4} e^{(-\f{|\xi_1|^2}{|\xi^{'}|^2}-|\xi_1|^2) t} \f{\xi_2 }{i\xi_2+|\xi_1|}|\xi_1|^{-\f12}\f{|\xi^{'}|^4}{|\xi_1|^2}\cF_{y_2}\big(\chi  \wh{b}_{2, 0} \big) (-\xi_2)d\xi_2   \Big\|_{L^2_{\xi_1}}\non\\
=&\Big\|\textbf{1}_{|\xi_1|\leq 2}\int_{\R}\vphi(\xi_2)\f{|\xi_1|^2}{|\xi^{'}|^4} e^{(-\f{|\xi_1|^2}{|\xi^{'}|^2}-|\xi_1|^2) t} \f{\xi_2 }{i\xi_2+|\xi_1|}|\xi_1|^{-\f12}\f{|\xi^{'}|^4}{|\xi_1|^2}\f{|\xi_1|}{|\xi_2|}\cF_{y_2}\big(\chi  \wh{b}_{1, 0} \big) (-\xi_2)d\xi_2   \Big\|_{L^2_{\xi_1}}\non\\
\lesssim&\Big\|\vphi(\xi_2) e^{(-\f{|\xi_1|^2}{|\xi^{'}|^2}-|\xi_1|^2) t} \f{|\xi_2|}{(|\xi_1|^2+|\xi_2|^2)^\f12}\f{|\xi_1|^\f12}{|\xi_2|}\cF_{y_2}\big(\chi  \wh{b}_{1, 0} \big) (-\xi_2)   \Big\|_{L^2_{\xi_1}L^1_{\xi_2}}\non\\
\lesssim&\Big\|\vphi(\xi_2) e^{(-\f{|\xi_1|^2}{|\xi_2|^2}-|\xi_1|^2) t} \f{|\xi_1|^\f12}{(|\xi_1|^2+|\xi_2|^2)^\f12}\cF_{y_2}\big(\chi  \wh{b}_{1, 0} \big) (-\xi_2)   \Big\|_{L^2_{\xi_1}L^1_{\xi_2}}\non\\
\lesssim&\Big\|\textbf{1}_{|\xi_2|\geq1}\f{1}{|\xi_2|^{\f12+\delta}}\Big\|_{L^2_{\xi_2}}\Big\|\f{|\xi_1|^{\f12}|\xi_2|^{\f12+\delta}}{(|\xi_1|^2+|\xi_2|^2)^\f12}e^{-\f{|\xi_1|^2}{|\xi_2|^2}t-|\xi_1|^2t}\cF_{y_2}\big(\chi  \wh{b}_{1, 0} \big) (-\xi_2)\Big\|_{L^2_{\xi_1}L^2_{\xi_2}}\non\\
&+\Big\|\textbf{1}_{|\xi_2|<1}\varphi(\xi_2)\f{1}{|\xi_2|^{\f12-\delta}}\Big\|_{L^2_{\xi_2}}\Big\|\f{|\xi_1|^{\f12}|\xi_2|^{\f12-\delta}}{(|\xi_1|^2+|\xi_2|^2)^\f12}e^{-\f{|\xi_1|^2}{|\xi_2|^2}t-|\xi_1|^2t}\cF_{y_2}\big(\chi  \wh{b}_{1, 0} \big) (-\xi_2)\Big\|_{L^2_{\xi_1}L^2_{\xi_2}}\non\\
\lesssim&\Big\|\f{|\xi_1|^{\f12}}{|\xi_2|^{\f12-\delta}}e^{-\f{|\xi_1|^2}{|\xi_2|^2}t-|\xi_1|^2t}\cF_{y_2}\big(\chi  \wh{b}_{1, 0} \big) (-\xi_2)\Big\|_{L^2_{\xi_1}L^2_{\xi_2}}+\Big\|\f{|\xi_1|^{\f12}}{|\xi_2|^{\f12+\delta}}e^{-\f{|\xi_1|^2}{|\xi_2|^2}t-|\xi_1|^2t}\cF_{y_2}\big(\chi  \wh{b}_{1, 0} \big) (-\xi_2)\Big\|_{L^2_{\xi_1}L^2_{\xi_2}}\non\\
\lesssim&t^{-(\f14-\f{\delta}{2})}\Big\||\xi_1|^\delta e^{-|\xi_1|^2 t}    \Big\|_{L^2_{\xi_1}}\Big\|\cF_{y_2}\big(\chi  \wh{b}_{1, 0} \big) (-\xi_2)\Big\|_{L^\infty_{\xi_1}L^2_{\xi_2}}+t^{-(\f14+\f{\delta}{2})}\Big\| |\xi_1|^{-\delta}e^{-|\xi_1|^2 t}    \Big\|_{L^2_{\xi_1}}\Big\|\cF_{y_2}\big(\chi  \wh{b}_{1, 0} \big) (-\xi_2)\Big\|_{L^\infty_{\xi_1}L^2_{\xi_2}}\non\\
\lesssim& t^{-\f12}    \| b_{1, 0} \|_{L^1_{x_1}L^2_{x_2}},
\end{align}
here we use
\begin{align*}
\Big\| |\xi_1|^{-\delta}e^{-|\xi_1|^2 t}    \Big\|_{L^2_{\xi_1}}=&\Big(\int_0^{\infty}|\xi_1|^{-2\delta}e^{-|\xi_1|^2 t}d\xi_1\Big)^\f12=t^{-\f14+\f{\delta}{2}}\Big(\int_0^{\infty}s^{-2\delta}e^{-s^2}ds\Big)^{\f12}\\
&=t^{-\f14+\f{\delta}{2}}\Big(\f{\Gamma({\f12-\delta})}{2}\Big)^{\f12}\lesssim t^{-\f14+\f{\delta}{2}}, \ (s=|\xi_1|t^\f12)
\end{align*}
and the boundedness of the gamma function $\Gamma(x)$ when $0<x<1$.

When $0<t<1$, we have
\begin{align*}
 \Big\|\vphi(\xi_2)\f{|\xi_1|}{|\xi_2|}  \cF_{y_2}\big(\chi  \wh{b}_{1, 0} \big) (-\xi_2)   \Big\|_{L^2_{\xi_1}L^2_{\xi_2}}\lesssim \|b_{1, 0}\|_{L^2},
\end{align*}
and
\begin{align*}
 \Big\|\vphi(\xi_2)\f{|\xi_1|^\f12}{|\xi_2|}  \cF_{y_2}\big(\chi  \wh{b}_{1, 0} \big) (-\xi_2)   \Big\|_{L^2_{\xi_1}L^2_{\xi_2}}\lesssim \|b_{1, 0}\|_{L^2}.
\end{align*}

For $\Gamma_2$, $\Gamma_3$, by \eqref{011}, we have solved it.\\

For $\Gamma_5$, since the $\wh{u}_0$ part of $\wh{b}_L$ is the same as the $\wh{b}_0$ part of $\wh{u}_L$, we only need to consider the $\wh{b}_0$ part of $\wh{b}_L$, and define it as $L_3$.

As the $\textbf{1}_{|\xi_1|>\delta}$ part, by \eqref{002}, \eqref{reom}, \eqref{ixi1}, we have
\begin{align*}
|\textbf{1}_{|\xi_1|>\delta} L_3|
=&\Big|\textbf{1}_{|\xi_1|>\delta}\f{1}{2\pi i} \int_{\Ga_5 } e^{\lam t}  \f{e^{-|\xi_1|x_2}-e^{-\om x_2}}{ \om-|\xi_1|} (\f{i\xi_1}{\lam+|\xi_1|^2})^2\int^\infty_0 e^{-\om y_2}  \wh{b}_{2,0}(y_2)dy_2 d\lam\Big|\non\\
=&\Big|\textbf{1}_{|\xi_1|>\delta}\f{1}{2\pi i} \int_{\Ga_5 } e^{\lam t}  \f{e^{-|\xi_1|x_2}-e^{-\om x_2}}{ \om-|\xi_1|} (\f{i\xi_1}{\lam+|\xi_1|^2})^2\f{1}{\om}\int^\infty_0 e^{-\om y_2}  \pa_2\wh{b}_{2,0}(y_2)dy_2 d\lam\Big|\non\\
\lesssim& \textbf{1}_{|\xi_1|>\delta}e^{\e t}\int_{\Ga_5 } \Big|  \f{e^{-|\xi_1|x_2}-e^{-\om x_2}}{ \om-|\xi_1|} (\f{i\xi_1}{\lam+|\xi_1|^2})^2\f{1}{\om}\int^\infty_0 e^{-\om y_2}  \pa_2\wh{b}_{2,0}(y_2)dy_2 \Big|d\lam\non\\
\lesssim& \textbf{1}_{|\xi_1|>\delta}e^{\e t}\int_{\Ga_5 } \Big|  \f{e^{-|\xi_1|x_2}-e^{-\om x_2}}{ \om-|\xi_1|} \Big|d\lam \sup_{\lam \in \Gamma_5}\Big|(\f{i\xi_1}{\lam+|\xi_1|^2})^2\f{1}{\om}\int^\infty_0 e^{-\om y_2}  \pa_2\wh{b}_{2,0}dy_2 \Big|\non\\
\lesssim& \e^{-2}|\xi_1|^2\textbf{1}_{|\xi_1|>\delta}e^{\e t}\int_{\Ga_5 } \Big|  \f{e^{-|\xi_1|x_2}-e^{-\om x_2}}{ \om-|\xi_1|} \Big|d\lam \f{1}{|\om|}\f{1}{|Re \om|^\f12}\|\pa_2\wh{b}_{2,0}\|_{L^2_{x_2}}\non\\
\lesssim&\e^{-\f54}|\xi_1|^\f12 \textbf{1}_{|\xi_1|>\delta}e^{\e t}\int_{\Ga_5 } \Big|  \f{e^{-|\xi_1|x_2}-e^{-\om x_2}}{ \om-|\xi_1|} \Big|d\lam\|\pa_2\wh{b}_{2, 0}\|_{L^2_{x_2}},
\end{align*}
then by \eqref{reom} and \eqref{L2-01},
\begin{align*}
\|\textbf{1}_{|\xi_1|>\delta}L_3\|_{L^2_{\xi_1}L^2_{x_2}}
\lesssim&\e^{-\f54}e^{\e t} \Big\|\textbf{1}_{|\xi_1|>\delta} |\xi_1|^\f12\int_{\Ga_5 }  \Big| \f{1}{ \om-|\xi_1|} \Big|  \|e^{-|\xi_1|x_2}-e^{-\om x_2}\|_{L^2_{x_2}}d\lam\Big\|_{L^2_{\xi_1}}\|\pa_2 b_{2, 0}\|_{L^1_{x_1}L^2_{x_2}}\non\\
\lesssim&\e^{-\f54}e^{\e t} \Big\|\textbf{1}_{|\xi_1|>\delta} \int_{\Ga_5 }|\xi_1|^\f12  \f{\e^\f12}{|\xi_1|}\Big(\f{1}{|\xi_1|^\f12}+\f{1}{(Re\om)^\f12}\Big)  d\lam\Big\|_{L^2_{\xi_1}}\|\pa_2 b_{2, 0}\|_{L^1_{x_1}L^2_{x_2}}\non\\
\lesssim&\e^{-\f54}e^{\e t} \Big\|\textbf{1}_{|\xi_1|>\delta} |\xi_1|^\f12 \f{\e^\f12}{|\xi_1|}\Big(\f{\e}{|\xi_1|^\f12}+\f{\e^\f54}{|\xi_1|^\f12}\Big)  \Big\|_{L^2_{\xi_1}}\|\pa_2 b_{2, 0}\|_{L^1_{x_1}L^2_{x_2}}\non\\
\lesssim&\e^\f14 e^{\e t}\|\textbf{1}_{|\xi_1|>\delta}\f{1}{|\xi_1|}\|_{L^2_{\xi_1}}\|\pa_2 b_{2, 0}\|_{L^1_{x_1}L^2_{x_2}}\non\\
\lesssim&\e^\f14 e^{\e t}\|\pa_2 b_{2, 0}\|_{L^1_{x_1}L^2_{x_2}} \to 0, \quad\text{for}\quad \e \to 0. \non
\end{align*}

As the $\textbf{1}_{|\xi_1|\leq\delta}$ part, by \eqref{002},  \eqref{ixi1}, we have
\begin{align*}
|\textbf{1}_{|\xi_1|\leq\delta} L_3|
=&\Big|\textbf{1}_{|\xi_1|\leq\delta}\f{1}{2\pi i} \int_{\Ga_5 } e^{\lam t}  \f{e^{-|\xi_1|x_2}-e^{-\om x_2}}{ \om-|\xi_1|} (\f{i\xi_1}{\lam+|\xi_1|^2})^2\int^\infty_0 e^{-\om y_2}  \wh{b}_{2,0}(y_2)dy_2 d\lam\Big|\non\\
=&\Big|\textbf{1}_{|\xi_1|\leq\delta}\f{1}{2\pi i} \int_{\Ga_5 } e^{\lam t}  \f{e^{-|\xi_1|x_2}-e^{-\om x_2}}{ \om-|\xi_1|} (\f{i\xi_1}{\lam+|\xi_1|^2})^2\f{1}{\om}\int^\infty_0 e^{-\om y_2}  \pa_2\wh{b}_{2,0}(y_2)dy_2 d\lam\Big|\non\\
=&\Big|\textbf{1}_{|\xi_1|\leq\delta}\f{1}{2\pi i} \int_{\Ga_5 } e^{\lam t}  \f{e^{-|\xi_1|x_2}-e^{-\om x_2}}{ \om-|\xi_1|} (\f{i\xi_1}{\lam+|\xi_1|^2})^2\f{|\xi_1|}{\om}\int^\infty_0 e^{-\om y_2}  \wh{b}_{1,0}(y_2)dy_2 d\lam\Big|\non\\
\lesssim& \textbf{1}_{|\xi_1|\leq\delta}e^{\e t}\int_{\Ga_5 } \Big|  \f{e^{-|\xi_1|x_2}-e^{-\om x_2}}{ \om-|\xi_1|} (\f{i\xi_1}{\lam+|\xi_1|^2})^2\f{|\xi_1|}{\om}\int^\infty_0 e^{-\om y_2}  \wh{b}_{1,0}(y_2)dy_2 \Big|d\lam\non\\
\lesssim& \textbf{1}_{|\xi_1|\leq\delta}e^{\e t}\int_{\Ga_5 } \Big|  \f{e^{-|\xi_1|x_2}-e^{-\om x_2}}{ \om-|\xi_1|} \Big|d\lam \sup_{\lam \in \Gamma_5}\Big|(\f{i\xi_1}{\lam+|\xi_1|^2})^2\f{|\xi_1|}{\om}\int^\infty_0 e^{-\om y_2}  \wh{b}_{1,0}dy_2 \Big|\non\\
\lesssim&\e^{-\f54}|\xi_1|^\f32 \textbf{1}_{|\xi_1|>\delta}e^{\e t}\int_{\Ga_5 } \Big|  \f{e^{-|\xi_1|x_2}-e^{-\om x_2}}{ \om-|\xi_1|} \Big|d\lam\|\wh{b}_{1, 0}\|_{L^2_{x_2}},
\end{align*}
by  \eqref{reom} and \eqref{L2-01},  we have
\begin{align*}
\|\textbf{1}_{|\xi_1|\leq\delta}L_3\|_{L^2_{\xi_1}L^2_{x_2}}
\lesssim&\e^{-\f54}e^{\e t} \Big\|\textbf{1}_{|\xi_1|\leq\delta} |\xi_1|^\f32\int_{\Ga_5 }  \Big| \f{1}{ \om-|\xi_1|} \Big|  \|e^{-|\xi_1|x_2}-e^{-\om x_2}\|_{L^2_{x_2}}d\lam\Big\|_{L^2_{\xi_1}}\| b_{1, 0}\|_{L^1_{x_1}L^2_{x_2}}\non\\
\lesssim&\e^{-\f54}e^{\e t} \Big\|\textbf{1}_{|\xi_1|\leq\delta} |\xi_1|^\f32\int_{\Ga_5 }  \f{\e^\f12}{|\xi_1|}\Big(\f{1}{|\xi_1|^\f12}+\f{1}{(Re\om)^\f12}\Big)  d\lam\Big\|_{L^2_{\xi_1}}\| b_{1, 0}\|_{L^1_{x_1}L^2_{x_2}}\non\\
\lesssim&\e^{-\f54}e^{\e t} \Big\|\textbf{1}_{|\xi_1|\leq\delta} |\xi_1|^\f32 \f{\e^\f12}{|\xi_1|}\Big(\f{\e}{|\xi_1|^\f12}+\f{\e^\f54}{|\xi_1|^\f12}\Big)  \Big\|_{L^2_{\xi_1}}\| b_{1, 0}\|_{L^1_{x_1}L^2_{x_2}}\non\\
\lesssim&\e^{\f14} e^{\e t}\|\textbf{1}_{|\xi_1|\leq\delta}\|_{L^2_{\xi_1}}\| b_{1, 0}\|_{L^1_{x_1}L^2_{x_2}}\non\\
\lesssim&\e^{\f14} e^{\e t}\| b_{1, 0}\|_{L^1_{x_1}L^2_{x_2}} \to 0, \quad\text{for}\quad \e \to 0. \non
\end{align*}

Secondly we consider $|\xi_1|> 2$, and we only need to consider the case in $\wt{\Gamma}_1$, by \eqref{013}, we have solved it.

Thus we complete the proof of the Proposition 2.2.
\end{proof}

\smallskip

Now we  give the  resolvent estimate of derivative.

\begin{proposition}\label{linear  nau}
We have the following $L^2$ estimates
 \begin{align*}
 \|\pa_1 u_L\|_{L^2}&\lesssim \lan t\ran^{-1}(\|(u_0, b_0)\|_{L^1\cap L^2  }+\|(\pa_1u_0, \pa_1b_0)\|_{ L^2 }),\\
  \|\pa_2 u_{1,L}\|_{L^2}&\lesssim \lan t\ran^{-\f12}\|(\pa_2u_0, \pa_2b_0)\|_{L^1\cap L^2},\\
  \|\pa_1 b_{1,L}\|_{L^2}&\lesssim \lan t \ran ^{-\f34}(\|(u_{0},b_{0})\|_{L^1\cap L^2\cap L^1_{x_1}L^2_{x_2}}+\|(\pa_1u_0, \pa_1b_0)\|_{ L^2  }),\\
   \|\pa_1 b_{2,L}\|_{L^2}&\lesssim \lan t \ran ^{-1}(\|(u_{0},b_{0})\|_{L^1\cap L^2\cap L^1_{x_1}L^2_{x_2}}+\|(\pa_1u_0, \pa_1b_0)\|_{ L^2    }),
     \end{align*}
 for the linearized problem \eqref{eq:MHDL}  .
\end{proposition}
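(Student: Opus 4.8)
The plan is to reduce Proposition \ref{linear  nau} to the zeroth-order resolvent estimates already established in Propositions \ref{linear u} and \ref{linear b}, using two elementary features of the solution formula \eqref{linear}. First, the horizontal Fourier transform turns $\pa_{x_1}$ into multiplication by $i\xi_1$, so the symbol of $\pa_1 u_L$, $\pa_1 b_{1,L}$ and $\pa_1 b_{2,L}$ is exactly that of $u_L$, $b_{1,L}$, $b_{2,L}$ carrying an extra factor $i\xi_1$. Second, a vertical derivative $\pa_{x_2}$ can be moved onto the integration variable inside the kernels $N_\om$, $E_\om$ and transferred to the data by integrating by parts, since the non-slip boundary conditions annihilate the resulting boundary terms.

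For the three $\pa_1$-estimates I would rerun the contour analysis of Propositions \ref{linear u} and \ref{linear b} verbatim, with every kernel now carrying the factor $|\xi_1|$. In the long-time, low-frequency regime this extra factor is absorbed into the Gaussian part of the symbol through the bound $\||\xi_1|\,e^{-c|\xi_1|^2 t}\|_{L^2_{\xi_1}}\lesssim \lan t\ran^{-\f34}$, which upgrades the $\lan t\ran^{-\f14}$ produced by the $\xi_1$-integration in the earlier proofs to $\lan t\ran^{-\f34}$, hence gaining an extra half power of decay. This turns the $\lan t\ran^{-\f12}$ rate of $u_L$ and $b_{2,L}$ into $\lan t\ran^{-1}$, and the $\lan t\ran^{-\f14}$ rate of $b_{1,L}$ into $\lan t\ran^{-\f34}$, matching the statement. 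The one place where the extra $|\xi_1|$ cannot be absorbed is the short-time, high-frequency part (the $0<t<1$ bounds and the large-$|\xi_1|$ tails of Case d and of the $\wt{\Ga}$-contours), and there I would instead leave $i\xi_1$ acting on the initial data; this is precisely what yields the $\|(\pa_1 u_0,\pa_1 b_0)\|_{L^2}$ contribution on the right-hand side.

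For $\pa_2 u_{1,L}$ the mechanism is different. Here I would use $\pa_{x_2}e^{-\om|x_2-y_2|}=-\pa_{y_2}e^{-\om|x_2-y_2|}$ and $\pa_{x_2}e^{-\om(x_2+y_2)}=\pa_{y_2}e^{-\om(x_2+y_2)}$ to integrate by parts in $y_2$ inside $N_\om[\wh{u}_{1,0}]$ and $N_\om[\wh{b}_{1,0}]$, exactly as the derivative was transferred to the data in \eqref{009}. The boundary contributions at $y_2=0$ vanish because $u_{1,0},b_{1,0}$ (and $u_{2,0}$) vanish on $\pa\Om$, so the net effect is to replace the data by its $\pa_2$-derivative while leaving the kernel essentially unchanged. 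Consequently the estimate for $\pa_2 u_{1,L}$ becomes identical to that of $u_{1,L}$ in Proposition \ref{linear u}, but with $(u_0,b_0)$ replaced by $(\pa_2 u_0,\pa_2 b_0)$, giving the claimed $\lan t\ran^{-\f12}\|(\pa_2 u_0,\pa_2 b_0)\|_{L^1\cap L^2}$.

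I expect the main difficulty to lie in the uniform bookkeeping of the extra $|\xi_1|$ across the four frequency regions and the deformed contours, in particular in Case d, where the governing kernel is $\f{|\xi_1|}{|\xi_2|^2-|\xi_1|^2}e^{-\f{|\xi_1|^2}{|\xi_2|^2}t-|\xi_1|^2t}$ and the extra factor must be split between the $\xi_1$- and $\xi_2$-integrations (as in \eqref{101} and \eqref{010}), so as not to destroy integrability near $\xi_1=0$; and, for $\pa_2 u_{1,L}$, in verifying that the integration by parts is legitimate and that the boundary terms genuinely cancel. Once these points are settled, the stated decay rates follow from the same $L^2_{\xi_1}L^2_{\xi_2}$ computations already performed in the previous two propositions.
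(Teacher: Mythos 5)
Your proposal is correct and follows essentially the same route as the paper: the $\pa_1$ bounds come from absorbing the extra factor $|\xi_1|$ into the common factor $e^{-|\xi_1|^2 t}$ shared by all kernels (gaining $\lan t\ran^{-\f12}$ for $t>1$), with the short-time/high-frequency part handled by leaving the derivative on the data, which is exactly where the $\|(\pa_1 u_0,\pa_1 b_0)\|_{L^2}$ term originates in the paper as well. For $\pa_2 u_{1,L}$ the paper merely asserts that one may replace $u_{1,L}$ by $\pa_2 u_{1,L}$ in \eqref{uL2} with data $(\pa_2 u_0,\pa_2 b_0)$; your integration-by-parts mechanism (transferring $\pa_{y_2}$ onto the data, with boundary terms vanishing by $u_0=b_0=0$ on $\p\Om$) is precisely the justification the paper leaves implicit, so the two arguments coincide in substance.
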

 \begin{proof}

 Recalling the structure of  $\wh{u}_{L}$ and $\wh{b}_L$, we only need to consider the typical term.\\

For the $\pa_1u_L$ and $\pa_1b_L$, by the horizontal Fourier transform, define $c=\max\{c_0,c_1\}$, we know that $\wh{\pa_1 u_L}=|\xi_1|\wh{ u_L}$, $\wh{\pa_1 b_L}=|\xi_1|\wh{ b_L}$, when $t>1$, the kernel $e^{-c(|\xi_1|^2+|\xi_2|^2)t}$, $\f{|\xi_1|}{|\xi_2|^2}e^{-(|\xi_1|^2+\f{|\xi_1|^2}{|\xi_2|^2})t}$ and $e^{-(|\xi_1|^2+\f{|\xi_1|^2}{|\xi_2|^2})t}$   all have the same part $e^{-|\xi_1|^2 t}$, by $|\xi_1|e^{-|\xi_1|^2 t}\lesssim t^{-\f12}$, it is just multiplying the original term by $t^{-\f12}$.\\

The rest of the terms are $\pa_1u_L$ and $\pa_1b_L$ when $0<t<1$, and $\pa_2 u_{1,L}$ for all $t$. For $\pa_1 u_L$ and $\pa_1b_L$ when $0<t<1$, in the proof of the proposition 2.1 and proposition 2.2, we get the conclusion $\|(u_L,b_L)\|_{L^2}\leq\|(u_0,b_0)\|_{L^2}$ for $0<t<1$, then change $(u_L,b_L)$ into $(\pa_1u_L,\pa_1b_L)$, we get the conclusion $\|(\pa_1u_L,\pa_1b_L)\|_{L^2}\leq\|(\pa_1u_0,\pa_1b_0)\|_{L^2}$ for $0<t<1$. For $\pa_2u_{1,L}$, we change $u_{1,L}$ into $\pa_2 u_{1,L}$ in \eqref{uL2}, then we get the conclusion $\|\pa_2 u_{1,L}\|_{L^2}\lesssim \lan t\ran^{-\f12}\|(\pa_2 u_0, \pa_2 b_0)\|_{L^1\cap L^2 }$.\\

Thus we complete the proof of the Proposition 2.3.
\end{proof}
\smallskip

\section{The  decay  rate of nonlinear system}
Now we consider the nonlinear part of \eqref{eq:u sol2}

\begin{align}\label{nonlinear}
\left\{
\begin{array}{l}
\wh{u}_1(\xi_1,x_2, t)=\f{1}{2\pi i}\int_{\Ga}e^{\lam t}
\Big\{ N_\om[\wh{f}_{1, \lam}]+\f{i\xi_1}{\lam+|\xi_1|^2}N_\om[\wh{g}_{1, \lam}]+i\xi_1N_\om[E_{|\xi_1|}[i\xi_1  \wh{f}_{1, \lam}+\p_2\wh{f}_{2, \lam}]]\\
\qquad   +\f{i\xi_1N_\om[e^{-|\xi_1|x_2}]}{|\xi_1|E_{\om}[e^{-|\xi_1|x_2}]_0}\{ E_\om[\wh{f}_{2, \lam}]_0+\f{i\xi_1}{\lam+|\xi_1|^2}E_\om[\wh{g}_{2, \lam}]_0+E_\om[\pa_2E_{|\xi_1|}[i\xi_1  \wh{f}_{1, \lam}+\p_2\wh{f}_{2, \lam}]]_0\}\Big\}d\lam,\\
\wh{u}_2(\xi_1,x_2, t)=\f{1}{2\pi i}\int_{\Ga}e^{\lam t}
\Big\{N_\om[\wh{f}_{2, \lam}]+\f{i\xi_1}{\lam+|\xi_1|^2}N_\om[\wh{g}_{2, \lam}]+N_\om[\pa_2E_{|\xi_1|}[i\xi_1  \wh{f}_{1, \lam}+\p_2\wh{f}_{2, \lam}]]\\
\qquad   -\f{N_\om[e^{-|\xi_1|x_2}]}{E_{\om}[e^{-|\xi_1|x_2}]_0}\{ E_\om[\wh{f}_{2, \lam}]_0+\f{i\xi_1}{\lam+|\xi_1|^2}E_\om[\wh{g}_{2, \lam}]_0+E_\om[\pa_2E_{|\xi_1|}[i\xi_1  \wh{f}_{1, \lam}+\p_2\wh{f}_{2, \lam}]]_0\}  \Big\}d\lam,\\
\wh{b}_1(\xi_1,x_2, t)=\f{1}{2\pi i}\int_{\Ga}e^{\lam t}
\Big\{\f{i\xi_1}{\lam+|\xi_1|^2}\big(N_\om[\wh{f}_{1, \lam}]+\f{i\xi_1}{\lam+|\xi_1|^2}N_\om[\wh{g}_{1, \lam}]+i\xi_1N_\om[E_{|\xi_1|}[i\xi_1  \wh{f}_{1, \lam}+\p_2\wh{f}_{2, \lam}]]\\
\qquad   +\f{i\xi_1N_\om[e^{-|\xi_1|x_2}]}{|\xi_1|E_{\om}[e^{-|\xi_1|x_2}]_0}\{ E_\om[\wh{f}_{2, \lam}]_0+\f{i\xi_1}{\lam+|\xi_1|^2}E_\om[\wh{g}_{2, \lam}]_0+E_\om[\pa_2E_{|\xi_1|}[i\xi_1  \wh{f}_{1, \lam}+\p_2\wh{f}_{2, \lam}]]_0\} \big)+\f{\wh{g}_{1,\lam}}{\lam+|\xi_1|^2}  \Big\}d\lam,\\
\wh{b}_2(\xi_1,x_2, t)=\f{1}{2\pi i}\int_{\Ga}e^{\lam t}
\Big\{ \f{i\xi_1}{\lam+|\xi_1|^2}\big(N_\om[\wh{f}_{2, \lam}]+\f{i\xi_1}{\lam+|\xi_1|^2}N_\om[\wh{g}_{2, \lam}]+N_\om[\pa_2E_{|\xi_1|}[i\xi_1  \wh{f}_{1, \lam}+\p_2\wh{f}_{2, \lam}]]\\
\qquad   -\f{N_\om[e^{-|\xi_1|x_2}]}{E_{\om}[e^{-|\xi_1|x_2}]_0}\{ E_\om[\wh{f}_{2, \lam}]_0+\f{i\xi_1}{\lam+|\xi_1|^2}E_\om[\wh{g}_{2, \lam}]_0+E_\om[\pa_2E_{|\xi_1|}[i\xi_1  \wh{f}_{1, \lam}+\p_2\wh{f}_{2, \lam}]]_0\}\big)+\f{\wh{g}_{2,\lam}}{\lam+|\xi_1|^2} \Big\}d\lam.\\
\end{array}\right.
\end{align}
 $\wh{u}_{1, N}$ can be rewritten as
\begin{align*}
&\wh{u}_{1, N}(\xi_1,x_2, t)\non\\
=&\f{1}{2\pi i}\int^t_0\int_{\Ga}e^{\lam (t-\tau)} \f{1}{2\om} \int^\infty_0  (e^{-\om|x_2-y_2|}-e^{-\om(x_2+y_2)}) ( \wh{f}_{1}+\f{i\xi_1}{\lam+|\xi_1|^2} \wh{g}_{1})(\tau, y_2) dy_2 d\lam d\tau\non\\
&+\f{i\xi_1}{|\xi_1|} \f{1}{2\pi i}\int^t_0\int_{\Ga}e^{\lam (t-\tau)} \int^\infty_0  e^{-\om y_2} ( \wh{f}_{2}+\f{i\xi_1}{\lam+|\xi_1|^2} \wh{g}_{2})(\tau, y_2) dy_2  \Big( \f{e^{-|\xi_1|x_2}-e^{-\om x_2}}{\om-|\xi_1|}\Big)  d\lam d\tau\\
&+\f{1}{2\pi i}\int^t_0\int_{\Ga}e^{\lam (t-\tau)} \f{i\xi_1}{2\om} \int^\infty_0 (e^{-\om|x_2-y_2|}-e^{-\om(x_2+y_2)}) E_{|\xi_1|} [i\xi_1 \wh{f}_{1}+\p_2 \wh{f}_{2}] (\tau, y_2) dy_2 d\lam d\tau\non\\
&+\f{i\xi_1}{|\xi_1|} \f{1}{2\pi i}\int^t_0\int_{\Ga}e^{\lam (t-\tau)} \int^\infty_0  e^{-\om y_2} \p_{2} E_{|\xi_1|}[i\xi_1\wh{f}_{1}+\p_2 \wh{f}_{2}] (\tau, y_2) dy_2 \Big( \f{e^{-|\xi_1|x_2}-e^{-\om x_2}}{\om-|\xi_1|}\Big)  d\lam d\tau\\
:=&\sum_{i=1}^4 N_i.
\end{align*}
 $\wh{b}_{1, N}$ can be rewritten as
\begin{align*}
&\wh{b}_{1, N}(\xi_1,x_2, t)\non\\
=&\f{1}{2\pi i}\int^t_0\int_{\Ga}e^{\lam (t-\tau)} (\f{i\xi_1}{\lam+|\xi_1|^2}\f{1}{2\om} \int^\infty_0  (e^{-\om|x_2-y_2|}-e^{-\om(x_2+y_2)}) ( \wh{f}_{1}+\f{i\xi_1}{\lam+|\xi_1|^2} \wh{g}_{1})(\tau, y_2) dy_2+\f{\wh{g}_1}{\lam+|\xi_1|^2}) d\lam d\tau\non\\
&+\f{i\xi_1}{|\xi_1|} \f{1}{2\pi i}\int^t_0\int_{\Ga}e^{\lam (t-\tau)}\f{i\xi_1}{\lam+|\xi_1|^2} \int^\infty_0  e^{-\om y_2} ( \wh{f}_{2}+\f{i\xi_1}{\lam+|\xi_1|^2} \wh{g}_{2})(\tau, y_2) dy_2  \Big( \f{e^{-|\xi_1|x_2}-e^{-\om x_2}}{\om-|\xi_1|}\Big)  d\lam d\tau\\
&+\f{1}{2\pi i}\int^t_0\int_{\Ga}e^{\lam (t-\tau)} \f{i\xi_1}{\lam+|\xi_1|^2}\f{i\xi_1}{2\om} \int^\infty_0 (e^{-\om|x_2-y_2|}-e^{-\om(x_2+y_2)}) E_{|\xi_1|} [i\xi_1 \wh{f}_{1}+\p_2 \wh{f}_{2}] (\tau, y_2) dy_2 d\lam d\tau\non\\
&+\f{i\xi_1}{|\xi_1|} \f{1}{2\pi i}\int^t_0\int_{\Ga}e^{\lam (t-\tau)}\f{i\xi_1}{\lam+|\xi_1|^2} \int^\infty_0  e^{-\om y_2} \p_{2} E_{|\xi_1|}[i\xi_1\wh{f}_{1}+\p_2 \wh{f}_{2}] (\tau, y_2) dy_2 \Big( \f{e^{-|\xi_1|x_2}-e^{-\om x_2}}{\om-|\xi_1|}\Big)  d\lam d\tau\\
:=&\sum_{i=1}^4 O_i,
\end{align*}
where
\begin{align*}
\left\{
\begin{array}{l}
\wh{f}=\cF_{x_1} (-u\cdot\na u+b\cdot\na b)=i\xi_1 \cF_{x_1}(- u_1u+b_1 b)+\cF_{x_1}(\p_2(- u_2 u+b_2 b)),\\
\wh{g}= \cF_{x_1} (-u\cdot\na b+b\cdot\na u)=i\xi_1\cF_{x_1}(- u_1b+b_1 u)+\cF_{x_1}(\p_2(- u_2 b+b_2 u)).
\end{array}\right.
\end{align*}

\begin{proposition}\label{nonlinear}
We have
 \begin{align*}
E(t)\leq E_0+\wt{E}^2(t)+\cE_1^2(t)+\int_0^{t}\cF_1^2(\tau)d\tau,
 \end{align*}
 where
\begin{align*}
 E_0&= \|(u_0, b_0)\|_{L^1\cap L^2\cap L^1_{x_1}L^2_{x_2}}+\|\pa_2(u_0, b_0)\|_{L^1\cap L^2\cap L^1_{x_1}L^2_{x_2}},\non\\
 E(t)&= \lan t\ran^\f14 \|b_1\|_{L^2} +\lan t\ran^{\f12-\delta}\|b_2\|_{L^2}+\lan t\ran^\f12\Big( \|u\|_{L^2}+\|\pa_2u_1\|_{L^2} \Big) \\
 &\quad+\lan t\ran^\f34 \|\p_1 b_1\|_{L^2}+\lan t \ran^{\f78-2\delta}\|\p_1 b_2\|_{L^2}+\lan t \ran\|\pa_1u\|_{L^2}\\
 \wt{E}(t)&=E(t)+\cE(t),
\end{align*}
and $\cE(t)$, $\cE_1(t)$, $\cF_1(t)$  are defined in \eqref{cE}, \eqref{cE2} and \eqref{cF2}.
\end{proposition}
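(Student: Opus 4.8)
The plan is to represent the solution through the nonlinear Duhamel formula \eqref{nonlinear}, organize each component into the pieces $N_i$, $O_i$ (and their analogues for $\wh{u}_2$, $\wh{b}_2$), and then transfer the linear resolvent decay of Propositions \ref{linear u}, \ref{linear b} and \ref{linear nau} onto each Duhamel integrand, treating the quadratic forces $f=-u\cdot\na u+b\cdot\na b$ and $g=-u\cdot\na b+b\cdot\na u$ as external data frozen at each time $\tau$. Concretely, for each target norm appearing in $E(t)$ — namely $\|u\|_{L^2}$, $\|b_1\|_{L^2}$, $\|b_2\|_{L^2}$, $\|\pa_1u\|_{L^2}$, $\|\pa_2u_1\|_{L^2}$, $\|\pa_1b_1\|_{L^2}$ and $\|\pa_1b_2\|_{L^2}$ — I would bound the corresponding nonlinear integral by $\int_0^t (\text{linear kernel in } t-\tau)\,\|(f,g)(\tau)\|_X\,d\tau$, where $X$ ranges over the mixed norms $L^1\cap L^2\cap L^1_{x_1}L^2_{x_2}$ that the linear propositions require on the data. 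Since Propositions \ref{linear u}--\ref{linear nau} were established for arbitrary forcing, they apply verbatim to each time slice, and the resulting $t-\tau$ weight is exactly the linear decay rate recorded in those statements.

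The next step is to estimate the forcing norms $\|(f,g)(\tau)\|_X$. Because $f$ and $g$ are quadratic, I would split each product into a low-order factor, whose decay is supplied by $E(\tau)$, and a factor controlled by the uniformly bounded high-order energies $\cE(\tau)$ and $\cE_1(\tau)$ from Proposition \ref{high order} and the appendix. For the $L^1$ and $L^1_{x_1}L^2_{x_2}$ norms I would use Hölder, e.g. $\|u\cdot\na u\|_{L^1}\lesssim\|u\|_{L^2}\|\na u\|_{L^2}$ and $\|b\cdot\na b\|_{L^1_{x_1}L^2_{x_2}}\lesssim\|b\|_{L^2_{x_1}L^\infty_{x_2}}\|\na b\|_{L^2}$, extracting the decaying pieces $\|u\|_{L^2}$, $\|b_1\|_{L^2}$, $\|\pa_1 b\|_{L^2}$ from $E(\tau)$; for the $L^2$ norms I would invoke the anisotropic Ladyzhenskaya-type interpolations already used throughout Proposition \ref{high order}, paying $\cE$ or $\cE_1$ for the top-order factor. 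This produces bounds of the schematic form $\|(f,g)(\tau)\|_X\lesssim \lan\tau\ran^{-\mu_X}\big(\wt E(\tau)+\cE_1(\tau)\big)^2$ with $\mu_X$ large (close to or exceeding $1$), since it is the sum of two factor-decay rates drawn from $E$.

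The last step is the time-convolution bookkeeping. Splitting $\int_0^t=\int_0^{t/2}+\int_{t/2}^t$ and using the elementary bound $\int_0^t\lan t-\tau\ran^{-a}\lan\tau\ran^{-b}\,d\tau\lesssim\lan t\ran^{-a}$ whenever $b>1$ (with the borderline logarithmic refinements when $b=1$), I would verify that each nonlinear contribution decays at least as fast as the linear weight prescribed in $E(t)$. Multiplying by the weight $\lan t\ran^{\text{rate}}$ and taking the supremum in $t$ then controls the entire nonlinear part by $\wt E^2(t)+\cE_1^2(t)+\int_0^t\cF_1^2(\tau)\,d\tau$, the last term absorbing those factors that are only square-integrable rather than pointwise decaying in time, while the linear part contributes precisely $E_0$.

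The \emph{hard part} is the slowly decaying, anisotropic magnetic terms. Because there is no vertical magnetic dissipation, the worst kernel $e^{-\f{|\xi_1|^2}{|\xi_2|^2}t-|\xi_1|^2t}$ yields only $\lan t\ran^{-\f14}$ for $b_1$ and forces the $\delta$-losses in the weights of $b_2$ and $\pa_1b_2$ (the exponents $\f12-\delta$ and $\f78-2\delta$). I must check that the quadratic forcing for exactly these components still satisfies $\mu_X>$ the corresponding weight, so the convolution does not degrade below the claimed rate. Since the factor $\f{i\xi_1}{\lam+|\xi_1|^2}$ amplifies the magnetic part, controlling the $b$-forcing requires placing the extra $\pa_1$ on the velocity factor and exploiting the fast $\lan t\ran^{-1}$ decay of $\|\pa_1u\|_{L^2}$. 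Maintaining consistency of the mixed norms $L^1_{x_1}L^2_{x_2}$ together with the pressure and boundary-correction contributions $\f{N_\om[e^{-|\xi_1|x_2}]}{E_\om[e^{-|\xi_1|x_2}]_0}$ across all seven components — rather than the arithmetic of the convolutions themselves — is where the real effort lies.
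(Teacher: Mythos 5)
Your skeleton (Duhamel formula, linear kernel decay applied slice-by-slice in $\tau$, product estimates on the quadratic forcing, convolution bookkeeping) is the same as the paper's, but the plan has a genuine gap at exactly the term that makes this proposition hard: the contribution $\cT_{12}=\int_0^{t-1}\big\|e^{-\f{|\xi_1|^2}{|\xi_2|^2}(t-\tau)-|\xi_1|^2(t-\tau)}\cF(u_2\,\pa_2 b_1)\big\|_{L^2}\,d\tau$ in the $b_1$ estimate. Your scheme assumes the forcing decay exponent $\mu_X$ is ``close to or exceeding $1$,'' but for $u_2\,\pa_2 b_1$ this is false: $\pa_2 b_1$ is not one of the quantities in $E(t)$ and, absent vertical magnetic dissipation, its only decay comes from interpolation, $\|\pa_2 b_1\|_{L^2}\lesssim\|b_1\|_{L^2}^{1/2}\|\pa_2^2 b_1\|_{L^2}^{1/2}\lesssim\lan\tau\ran^{-1/8}$, so the best naive bound is $\|u_2\,\pa_2 b_1\|_{L^1_{x_1}L^2_{x_2}}\lesssim\|u_2\|_{L^2_{x_1}L^\infty_{x_2}}\|\pa_2 b_1\|_{L^2}\lesssim\lan\tau\ran^{-7/8}$. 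Convolving $\lan t-\tau\ran^{-1/4}$ (all the weak kernel gives in this norm) against $\lan\tau\ran^{-7/8}$ yields only $\lan t\ran^{-1/8}$, which does not close the $\lan t\ran^{-1/4}$ weight on $\|b_1\|_{L^2}$. Your proposed remedy—``placing the extra $\pa_1$ on the velocity factor''—does not apply here either, since $u_2\,\pa_2 b_1$ carries no explicit $\pa_1$ to move.

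What is actually needed (and what the paper does) is a two-part mechanism you never invoke: (i) a time-dependent frequency decomposition $u_2=u_{2,<\lan\tau\ran^{-s_1}}+u_{2,>\lan\tau\ran^{-s_2}}+u_{2,\sim}$, with Bernstein-type gains killing the low and high pieces; and (ii) for the intermediate piece, the Biot--Savart identity $u_{2,\sim}=(-\Delta)^{-1}\na^T\cdot\pa_1 u_{\sim}$ to expose a hidden $\pa_1$, followed by substitution of the magnetic equation $\pa_1 u=\pa_\tau b-\pa_1^2 b+u\cdot\na b-b\cdot\na u$ and an integration by parts in time to handle the $\pa_\tau b$ term (this is where \eqref{9000} and the frequency-localized bound \eqref{na-1} enter, and it is also one of the places generating the $\cE_1^2(t)+\int_0^t\cF_1^2\,d\tau$ terms on the right-hand side). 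A secondary overstatement: the linear Propositions 3.1--3.3 are proved for initial data, not arbitrary forcing, so they do not apply ``verbatim'' to the Duhamel integrand; the nonlinear formula contains the extra pressure kernels $E_{|\xi_1|}[i\xi_1\wh f_1+\pa_2\wh f_2]$ and boundary traces $E_\om[\cdot]_0$, which the paper must first reduce to the linear kernel structure by the explicit Fourier computations (with the sign function $\Xi$) at the start of its proof. Without (i)--(ii) the argument fails for $b_1$ (and, by the same mechanism, for $\pa_1 b_1$ and the $\delta$-loss terms), so the proposal as written does not prove the proposition.
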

\begin{proof}
With the help of linear decay rate of $u_L, b_L$ in Section 2, and notice that
\begin{align*}
&\Big|\cF_{y_2}\Big(\chi E_{|\xi_1|}[i\xi_1 \wh{f}_1+\p_2 \wh{f}_2]\Big)\Big|\non\\
=&\f{1}{2|\xi_1|} \Big|\cF_{y_2} \Big(e^{-|\xi_1||y_2|}*\big(\chi ( i\xi_1 \wh{f}_1+\p_2 \wh{f}_2)\big)\Big)\Big|\non\\
=&\f{1}{2|\xi_1|} \Big|\cF_{y_2}\Big (e^{-|\xi_1||y_2|}\Big)\cF_{y_2} \Big(\chi ( i\xi_1 \wh{f}_1+\p_2 \wh{f}_2)\Big)\Big|\non\\
=&\f{1}{2|\xi_1|} \Big|\f{1}{|\xi_1|-i\xi_2}+\f{1}{|\xi_1|+i\xi_2}\Big|\Big|\cF_{y_2}\Big(\chi ( i\xi_1 \wh{f}_1+\p_2 \wh{f}_2)\Big)\Big|\non\\
\lesssim& \f{1}{|\xi_1|}(\f{|\xi_1|}{(|\xi_1|^2+|\xi_2|^2)^\f12}|\cF(\chi f_1)|+\f{|\xi_2|}{(|\xi_1|^2+|\xi_2|^2)^\f12}|\cF( \chi f_2)|)\non\\
\lesssim& \f{1}{|\xi_1|}| \cF(\chi f)|,
\end{align*}
and for $N_4$, we replace the $\wh{u}_{L} $ of the linear part with $\p_{2} E_{|\xi_1|}[i\xi_1\wh{f}_{1}+\p_2 \wh{f}_{2}] $, and we get the same result, here we used the following result:
\begin{align*}
&\|\p_{2} E_{|\xi_1|}[i\xi_1\wh{f}_{1}+\p_2 \wh{f}_{2}]\|^2_{L^2_{x_2}}\non\\
=&\int_0^\infty\Big| -\f12  \int^{x_2}_0 e^{-|\xi_1| (x_2-y_2)} (i\xi_1  \wh{f}_{1, \lam}+\p_2\wh{f}_{2, \lam}) dy_2+\f12 \int^\infty_{x_2} e^{-|\xi_1| (y_2-x_2)}( i\xi_1  \wh{f}_{1}+\p_2\wh{f}_{2} ) dy_2\Big|^2d x_2\non\\
=&\int_0^\infty\Big| \f12  \int^{\infty}_0 e^{-|\xi_1| |x_2-y_2|}\Xi(x_2-y_2) (i\xi_1  \wh{f}_{1}+\p_2\wh{f}_{2}) dy_2\Big|^2d x_2\non\\
\lesssim& \int_{-\infty}^\infty\Big|   \int^{\infty}_0 e^{-|\xi_1| |x_2-y_2|}\Xi(x_2-y_2) (i\xi_1  \wh{f}_{1}+\p_2\wh{f}_{2}) dy_2\Big|^2d x_2\non\\
=& \int_{-\infty}^\infty\Big|  \int^{\infty}_{-\infty} e^{-|\xi_1| |x_2-y_2|}\Xi(x_2-y_2) \chi(y_2)(i\xi_1  \wh{f}_{1}+\p_2\wh{f}_{2}) dy_2\Big|^2d x_2\non\\
=& \int_{-\infty}^\infty\Big|  \big( \Xi e^{-|\xi_1| |x_2|}\big)* \big(\chi(i\xi_1  \wh{f}_{1}+\p_2\wh{f}_{2}) \big)\Big|^2d x_2,\non
\end{align*}
and
\begin{align*}
& \|  \cF_{y_2}\Big( \big( \Xi e^{-|\xi_1| |x_2|}\big)* \big(\chi(i\xi_1  \wh{f}_{1}+\p_2\wh{f}_{2}) \big)\Big)\|_{L^2_{x_2}}\non\\
= &\|  \cF_{y_2}\big(  \Xi e^{-|\xi_1| |x_2|}\big)\cF_{y_2} \big(\chi(i\xi_1  \wh{f}_{1}+\p_2\wh{f}_{2}) \big)\|_{L^2_{x_2}}\non\\
= &\|  (-\f{1}{|\xi_1|-i\xi_2}+\f{1}{|\xi_1|+i\xi_2})\cF_{y_2} \big(\chi(i\xi_1  \wh{f}_{1}+\p_2\wh{f}_{2}) \big)\|_{L^2_{x_2}}\non\\
= &\|  \f{2i\xi_2}{|\xi_1|^2+|\xi_2|^2}\cF_{y_2} \big(\chi(i\xi_1  \wh{f}_{1}+\p_2\wh{f}_{2})\big)\|_{L^2_{x_2}}\non\\
\lesssim& \|   \cF (f)\|_{L^2_{x_2}},
\end{align*}
here we  define
\begin{align*}
\Xi=\left\{
\begin{array}{c}
1, \qquad \text{when} \qquad x_2\geq 0 \\
-1,\qquad \text{when} \qquad x_2<0.
\end{array}\right.
\end{align*}

$N_i, O_i \ (i=1,3)$ can be controlled similar as the whole space part, and $N_i, O_i \ (i=2,4)$ is the different part comes from the half space domain.

{\bf Step 1.} The  decay rate of $u_N.$

In the linear terms, we can see the  kernel of $u_L$ is $e^{-c(|\xi_1|^2+|\xi_2|^2)t}$ and $\f{|\xi_1|}{|\xi_2|^2}e^{-\f{|\xi_1|^2}{|\xi_2|^2}t-|\xi_1|^2t}$, and the second kernel only happens when $|\xi_2|^2\gtrsim|\xi_1|$. 

We have the $L^2$ estimate
\begin{align*}
& \int^t_0 \Big\|e^{-c(|\xi_1|^2+|\xi_2|^2)(t-\tau)}\xi_i\cF(u_iu+u_ib+b_iu+b_ib)\Big\|_{L^2}d\tau\\
&+\int^t_0 \Big\|\textbf{1}_{|\xi_2|^2\gtrsim|\xi_1|}\f{|\xi_1|}{|\xi_2|^2}e^{-\f{|\xi_1|^2}{|\xi_2|^2}(t-\tau)-|\xi_1|^2(t-\tau)}
\xi_i\cF(u_iu+u_ib+b_iu+b_ib)\Big\|_{L^2}d\tau\\
\lesssim& \int^{t-1}_0 \Big\|e^{-c(|\xi_1|^2+|\xi_2|^2)(t-\tau)}\xi_i\cF(u_iu+u_ib+b_iu+b_ib)\Big\|_{L^2}d\tau\\
&+\int^{t-1}_0 \Big\|\f{|\xi_1|}{|\xi_2|^2}e^{-\f{|\xi_1|^2}{|\xi_2|^2}(t-\tau)-|\xi_1|^2(t-\tau)}
\xi_i\cF(u_iu+u_ib+b_iu+b_ib)\Big\|_{L^2}d\tau\\
&+\int^{t}_{t-1} \Big\|\cF(u_i\p_iu+u_i\p_ib+b_i\p_iu+b_i\p_ib)\Big\|_{L^2}d\tau\\
=& \int^{t-1}_0 \Big\|e^{-c(|\xi_1|^2+|\xi_2|^2)(t-\tau)}\xi_i\cF(u_iu+u_ib+b_iu+b_ib)\Big\|_{L^2}d\tau\\
&+\int^{t-1}_0 \Big\|\f{|\xi_1|^2}{|\xi_2|^2}e^{-\f{|\xi_1|^2}{|\xi_2|^2}(t-\tau)-|\xi_1|^2(t-\tau)}
\cF(u_1u+u_1b+b_1u+b_1b)\Big\|_{L^2}d\tau\\
&+\int^{t-1}_0 \Big\|\f{|\xi_1|}{|\xi_2|}e^{-\f{|\xi_1|^2}{|\xi_2|^2}(t-\tau)-|\xi_1|^2(t-\tau)}
\cF(u_2u+u_2b+b_2u+b_2b)\Big\|_{L^2}d\tau\\
&+\int^{t}_{t-1} \Big\|\cF(u_i\p_iu+u_i\p_ib+b_i\p_iu+b_i\p_ib)\Big\|_{L^2}d\tau\\
\lesssim& \int^{t-1}_0 \lan t-\tau\ran^{-1}\|u_iu+u_ib+b_iu+b_2b\|_{L^1} d\tau+\int^{t-1}_0 \lan t-\tau\ran^{-\f34}\Big(\|b_1b_1\|_{L^2_{x_1}L^1_{x_2}}+\|b_1\pa_1b_1\|_{L^2} \Big) d\tau\non\\
&+\int^{t-1}_0  \lan t-\tau\ran^{-1}\|u_1u+u_1b+b_1u+b_1b\|_{L^2}d\tau+\int^{t-1}_0  \lan t-\tau\ran^{-\f34}\|u_2u+u_2b+b_2u+b_2b\|_{L^1_{x_1}L^2_{x_2}}d\tau\non\\
&+\int^{t}_{t-1} \|u_i\p_iu+u_i\p_ib+b_i\p_iu+b_i\p_ib\|_{L^2}d\tau\\
\lesssim& \int^{t-1}_0 \lan t-\tau\ran^{-1}\Big(\|u\|_{L^2}+\|b_2\|_{L^2}\Big)\Big(\|u\|_{L^2}+\|b\|_{L^2}\Big) d\tau\\
&+\int^{t-1}_0 \lan t-\tau\ran^{-\f34}\Big(\|b_1\|_{L^2}\|b_1\|_{L^\infty_{x_1}L^2_{x_2}}+\|b_1\|_{L^\infty}\|\pa_1b_1\|_{L^2} \Big) d\tau\non\\
&+\int^{t-1}_0  \lan t-\tau\ran^{-1}\Big(\|u_1\|_{L^\infty_{x_1}L^2_{x_2}}+\|b_1\|_{L^\infty_{x_1}L^2_{x_2}}\Big)\Big(\|u\|_{L^2_{x_1}L^\infty_{x_2}}+\|b\|_{L^2_{x_1}L^\infty_{x_2}}\Big)d\tau\non\\
&+\int^{t-1}_0  \lan t-\tau\ran^{-\f34}\Big(\|u_2\|_{L^2_{x_1}L^\infty_{x_2}}+\|b_2\|_{L^2_{x_1}L^\infty_{x_2}}\Big)\Big(\|u\|_{L^2}+\|b\|_{L^2}\Big)d\tau\non\\
&+\sup_t \Big(\|u_1\|_{L^\infty}+\|b_1\|_{L^\infty}\Big)\Big(\|\p_1u\|_{L^2}+\|\p_1b\|_{L^2}\Big)\\
&+\sup_t\Big(\|u_2\|_{L^\infty_{x_1}L^2_{x_2}}+\|b_2\|_{L^\infty_{x_1}L^2_{x_2}}\Big)\Big(\|\p_2u\|_{L^2_{x_1}L^\infty_{x_2}}+\|\p_2b\|_{L^2_{x_1}L^\infty_{x_2}}\Big)\\
\lesssim& \int^t_0 \lan t-\tau\ran^{-1}\lan \tau\ran^{-(\f34-\delta)}+\lan t-\tau\ran^{-\f34}\lan \tau\ran^{-\f34}  d\tau  \,\wt{E}^2(t) \non\\
\lesssim& \lan t\ran^{-\f12} \wt{E}^2(t),
\end{align*}
here we use
\begin{align*}
& \Big\|e^{-c(|\xi_1|^2+|\xi_2|^2)t}\xi_1\cF(b_1b_1)\Big\|_{L^2}\\
\lesssim& \Big\||\xi_1|e^{-c(|\xi_1|^2+|\xi_2|^2)t}\Big\|_{L^\infty_{\xi_1}L^2_{\xi_2}}\Big\|\cF(b_1b_1)\Big\|_{L^2_{x_1}L^1_{x_2}}\\
\lesssim& t^{-\f34}\|b_1\|_{L^2}\|b_1\|_{L^\infty_{x_1}L^2_{x_2}},
\end{align*}
 and
\begin{align*}
& \Big\|\f{|\xi_1|}{|\xi_2|}e^{-\f{|\xi_1|^2}{|\xi_2|^2}t-|\xi_1|^2t}
\cF(u_2u+u_2b+b_2u+b_2b)\Big\|_{L^2}\\
\lesssim& t^{-\f12}\Big\|e^{-|\xi_1|^2t}\Big\|_{L^2_{\xi_1}}\Big\|\cF(u_2u+u_2b+b_2u+b_2b)\Big\|_{L^1_{x_1}L^2_{x_2}}\\
\lesssim& t^{-\f34}\|u_2u+u_2b+b_2u+b_2b\|_{L^1_{x_1}L^2_{x_2}},
\end{align*}
and
\begin{align*}
& \|u\|_{L^\infty_{x_1}L^2_{x_2}}\lesssim\|u\|_{L^2}^\f12\|\pa_1u\|_{L^2}^\f12,\\
& \|u\|_{L^2_{x_1}L^\infty_{x_2}}\lesssim\|u\|_{L^2}^\f12\|\pa_2u\|_{L^2}^\f12,
\end{align*}
and
\begin{align}
& \|\pa_2b_1\|_{L^2}\lesssim\|b_1\|_{L^2}^\f12\|\pa_2^2b_1\|_{L^2}^\f12\lesssim \lan t\ran^{-\f18} \wt{E}^2(t),\label{p2b1ls}\\
& \|b_2\|_{L^\infty_{x_1}L^2_{x_2}}\|\pa_2b_1\|_{L^2_{x_1}L^\infty_{x_2}}\lesssim\|b_2\|_{L^2}^\f12\|\pa_1b_2\|_{L^2}^\f12\|\pa_2b_1\|_{L^2}^\f12\|\pa_2^2b_1\|_{L^2}^\f12\lesssim \lan t\ran^{-(\f{13}{16}-\f32\delta)} \wt{E}^2(t).\non
\end{align}

{\bf Step 2.} The decay rate of  $ b_N$.

In the linear terms, we can see the  kernel of $b_L$ is $e^{-c(|\xi_1|^2+|\xi_2|^2)t}$, $\f{|\xi_1|}{|\xi_2|^2}e^{-\f{|\xi_1|^2}{|\xi_2|^2}t-|\xi_1|^2t}$ and $e^{-\f{|\xi_1|^2}{|\xi_2|^2}t-|\xi_1|^2t}$, and the last two kernel is defined in $|\xi_2|^2\gtrsim|\xi_1|$. Besides, $b_N$ also has the kernel $|\xi_1|^{-\f12}\f{|\xi_2|}{(|\xi_1|^2+|\xi_2|^2)^\f12}e^{-\f{|\xi_1|^2}{|\xi_2|^2}t-|\xi_1|^2t}$ for the $-u\cdot\na b_2+b\cdot \na u_2=\pa_1(-u_1b_2+b_1u_2)$ in the half place part by \eqref{b1l1l2}. 

We have the $L^2$ estimate,
\begin{align*}
\|b_{1,N}\|_{L^2}
\lesssim& \int^t_0 \Big\|e^{-c(|\xi_1|^2+|\xi_2|^2)(t-\tau)}\xi_i\cF(u_iu+u_ib+b_iu+b_ib)\Big\|_{L^2}d\tau\\
&+\int^t_0 \Big\|\textbf{1}_{|\xi_2|^2\gtrsim|\xi_1|}\f{|\xi_1|}{|\xi_2|^2}e^{-\f{|\xi_1|^2}{|\xi_2|^2}(t-\tau)-|\xi_1|^2(t-\tau)}
\xi_i\cF(u_iu+u_ib+b_iu+b_ib)\Big\|_{L^2}d\tau\\
&+\int^t_0 \Big\|e^{-\f{|\xi_1|^2}{|\xi_2|^2}(t-\tau)-|\xi_1|^2(t-\tau)}
\cF(u_i\p_ib+b_i\p_iu)\Big\|_{L^2}d\tau\\
&+\int^t_0 \Big\|e^{-\f{|\xi_1|^2}{|\xi_2|^2}(t-\tau)-|\xi_1|^2(t-\tau)}
|\xi_1|^{-\f12}\f{|\xi_2|}{(|\xi_1|^2+|\xi_2|^2)^\f12}|\xi_1|\cF(b_1u_2+u_1b_2)\Big\|_{L^2_{\xi_1}L^1_{\xi_2}}d\tau\\
\lesssim& \int^{t-1}_0 \Big\|e^{-c(|\xi_1|^2+|\xi_2|^2)(t-\tau)}\xi_i\cF(u_iu+u_ib+b_iu+b_ib)\Big\|_{L^2}d\tau\\
&+\int^{t-1}_0 \Big\|\f{|\xi_1|}{|\xi_2|^2}e^{-\f{|\xi_1|^2}{|\xi_2|^2}(t-\tau)-|\xi_1|^2(t-\tau)}
\xi_i\cF(u_iu+u_ib+b_iu+b_ib)\Big\|_{L^2}d\tau\\
&+\int^{t-1}_0 \Big\|e^{-\f{|\xi_1|^2}{|\xi_2|^2}(t-\tau)-|\xi_1|^2(t-\tau)}
\cF(u_i\p_ib+b_i\p_iu)\Big\|_{L^2}d\tau\\
&+\int^{t-1}_0 \Big\|e^{-\f{|\xi_1|^2}{|\xi_2|^2}(t-\tau)-|\xi_1|^2(t-\tau)}
|\xi_1|^{-\f12}\f{|\xi_2|}{(|\xi_1|^2+|\xi_2|^2)^\f12}|\xi_1|\cF(b_1u_2+u_1b_2)\Big\|_{L^2_{\xi_1}L^1_{\xi_2}}d\tau\\
&+\int^{t}_{t-1} \|u_i\p_iu+u_i\p_ib+b_i\p_iu+b_i\p_ib\|_{L^2}d\tau\\
\lesssim& \lan t\ran^{-\f12}\wt{E}^2(t)+\int^{t-1}_0 \Big\|e^{-\f{|\xi_1|^2}{|\xi_2|^2}(t-\tau)-|\xi_1|^2(t-\tau)}
\cF(u_i\p_ib+b_i\p_iu)\Big\|_{L^2}d\tau\\
&+\int^{t-1}_0 \Big\|e^{-\f{|\xi_1|^2}{|\xi_2|^2}(t-\tau)-|\xi_1|^2(t-\tau)}
|\xi_1|^{-\f12}\f{|\xi_2|}{(|\xi_1|^2+|\xi_2|^2)^\f12}|\xi_1|\cF(b_1u_2+u_1b_2)\Big\|_{L^2_{\xi_1}L^1_{\xi_2}}d\tau\\
=& \lan t\ran^{-\f12}\wt{E}^2(t)+\cT_1+\cT_2,
\end{align*}
here we use \eqref{2000} and
\begin{align*}
&\int^{t}_{t-1} \Big\|e^{-\f{|\xi_1|^2}{|\xi_2|^2}(t-\tau)-|\xi_1|^2(t-\tau)}
|\xi_1|^{-\f12}\f{|\xi_2|}{(|\xi_1|^2+|\xi_2|^2)^\f12}|\xi_1|\cF(b_1u_2+u_1b_2)\Big\|_{L^2_{\xi_1}L^1_{\xi_2}}d\tau\non\\
\lesssim&\int^{t}_{t-1} \Big\|
\f{|\xi_1|^\f12}{(|\xi_1|^2+|\xi_2|^2)^\f12}\cF\Big(\pa_2(b_1u_2+u_1b_2)\Big)\Big\|_{L^2_{\xi_1}L^1_{\xi_2}}d\tau\non\\
\lesssim&\int^{t}_{t-1} \Big\|
\f{|\xi_1|^\f12}{(|\xi_1|^2+|\xi_2|^2)^\f12}\Big\|_{L^\infty_{\xi_1}L^2_{\xi_2}}\Big\|\cF\Big(\pa_2(b_1u_2+u_1b_2)\Big)\Big\|_{L^2}d\tau\non\\
\lesssim&\int^{t}_{t-1} \Big\|\cF\Big(\pa_2(b_1u_2+u_1b_2)\Big)\Big\|_{L^2}d\tau\non\\
\lesssim&\int^{t}_{t-1} \|\pa_2(b_1u_2+u_1b_2)\|_{L^2}d\tau\non\\
\lesssim&\int^{t}_{t-1} \|u_i\p_iu+u_i\p_ib+b_i\p_iu+b_i\p_ib\|_{L^2}d\tau.
\end{align*}

Next we  consider $\cT_1$,
\begin{align*}
\cT_1=&\int^{t-1}_0 \Big\|e^{-\f{|\xi_1|^2}{|\xi_2|^2}(t-\tau)-|\xi_1|^2(t-\tau)}
\cF(u_1\p_1b)\Big\|_{L^2}d\tau\\
&+\int^{t-1}_0 \Big\|e^{-\f{|\xi_1|^2}{|\xi_2|^2}(t-\tau)-|\xi_1|^2(t-\tau)}
\cF(u_2\p_2b_1)\Big\|_{L^2}d\tau\\
&+\int^{t-1}_0 \Big\|e^{-\f{|\xi_1|^2}{|\xi_2|^2}(t-\tau)-|\xi_1|^2(t-\tau)}
\cF(u_2\p_2b_2)\Big\|_{L^2}d\tau\\
&+\int^{t-1}_0 \Big\|e^{-\f{|\xi_1|^2}{|\xi_2|^2}(t-\tau)-|\xi_1|^2(t-\tau)}
\cF(b_1\p_1u)\Big\|_{L^2}d\tau\\
&+\int^{t-1}_0 \Big\|e^{-\f{|\xi_1|^2}{|\xi_2|^2}(t-\tau)-|\xi_1|^2(t-\tau)}
\cF(b_2\p_2u)\Big\|_{L^2}d\tau\\
=&\cT_{11}+\cT_{12}+\cT_{13}+\cT_{14}+\cT_{15},
\end{align*}
and
\begin{align*}
&\cT_{11}+\cT_{13}+\cT_{14}+\cT_{15}\\
\lesssim&\int^{t-1}_0 \lan t-\tau\ran^{-\f14}\Big(
\|u_1\p_1b\|_{L^1_{x_1}L^2_{x_2}}+\|u_2\p_2b_2\|_{L^1_{x_1}L^2_{x_2}}+\|b_1\p_1u\|_{L^1_{x_1}L^2_{x_2}}+\|b_2\p_2u\|_{L^1_{x_1}L^2_{x_2}}\Big)d\tau\\
\lesssim&\int^{t-1}_0 \lan t-\tau\ran^{-\f14}\Big(
\|u_1\|_{L^2_{x_1}L^\infty_{x_2}}\|\p_1b\|_{L^2}+\|u_2\|_{L^2_{x_1}L^\infty_{x_2}}\|\p_2b_2\|_{L^2}+\|b_1\|_{L^2_{x_1}L^\infty_{x_2}}\|\p_1u\|_{L^2}+\|b_2\|_{L^2_{x_1}L^\infty_{x_2}}\|\p_2u\|_{L^2}\Big)d\tau\\
\lesssim&\int^{t-1}_0 \lan t-\tau\ran^{-\f14}\lan \tau \ran^{-\f98}d\tau\wt{E}^2(t)\\
\lesssim& \lan t \ran^{-\f14}\wt{E}^2(t).
\end{align*}

The most difficult item is $\cT_{12}$, we divide $u_2=u_{2, <\lan \tau\ran^{-s_1}}+u_{2, >\lan \tau\ran^{-s_2}} +u_{2, \sim} $ corresponding to $\cT_{12}=\cT_{121}+\cT_{122}+\cT_{123}$, here $s_1>1, s_2=\f14-\sigma$. For the case $u_{2, <\lan \tau\ran^{-s_1}}$ and $u_{2, >\lan \tau\ran^{-s_2}}$, we have
\begin{align*}
\cT_{121}+\cT_{122}=&\int^{t-1}_0 \|e^{-\f{|\xi_1|^2}{|\xi_2|^2}(t-\tau)-|\xi_1|^2(t-\tau)}
\cF(u_{2,<\lan \tau\ran^{-s_1}}\p_2b_1+u_{2, >\lan \tau\ran^{-s_2}}\p_2b_1)\|_{L^2}d\tau\\
\lesssim& \int^t_0 \lan t-\tau\ran^{-\f14}\Big(\|u_{2,<\lan \tau\ran^{-s_1}}\|_{L^2}^\f12\|\p_2u_{2,<\lan \tau\ran^{-s_1}}\|_{L^2}^\f12\|\p_2b_1\|_{L^2}\\
&+\|u_{2,>\lan \tau\ran^{-s_2}}\|_{L^2}^\f12\|\p_2u_{2,>\lan \tau\ran^{-s_2}}\|_{L^2}^\f12\|\p_2b_{1,<\lan \tau\ran^{\f18}}+\p_2b_{1,>\lan \tau\ran^{\f18}}\|_{L^2} \Big) d\tau\non\\
\lesssim& \int^t_0 \lan t-\tau\ran^{-\f14}\Big(\lan \tau\ran^{-\f{s_1}{2}}\|u_{2,<\lan \tau\ran^{-s_1}}\|_{L^2}\|\p_2b_1\|_{L^2}\\
&+\lan \tau\ran^{\f{s_2}{2}}\|\p_2u_{2,>\lan \tau\ran^{-s_2}}\|_{L^2}\big(\lan \tau\ran^{\f18}\|b_{1,<\lan \tau\ran^{\f18}}\|_{L^2}+\lan \tau\ran^{-\f18}\|\p_2^2b_{1,>\lan \tau\ran^{\f18}}\|_{L^2}\big) \Big) d\tau\non\\
\lesssim& \int^t_0 \lan t-\tau\ran^{-\f14}\lan \tau\ran^{-(\f{s_1}{2}+\f12)}+\lan t-\tau\ran^{-\f14}\lan \tau\ran^{-(1-\f{s_2}{2}+\f18)}  d\tau  \,\wt{E}^2(t) \non\\
\lesssim& \lan t\ran^{-\f14}\wt{E}^2(t) .
\end{align*}

For the $\cT_{123}$, we use magnetic equation
\begin{align*}
\cT_{123}=&\int^{t-1}_0 \||\xi_2|e^{-\f{|\xi_1|^2}{|\xi_2|^2}(t-\tau)-|\xi_1|^2(t-\tau)}
\cF(u_{2,\sim}b_1)\|_{L^2}d\tau\\
=&\int^{t-1} _0\Big\| |\xi_2| e^{-\f{|\xi_1|^2}{|\xi_2|^2}(t-\tau)-|\xi_1|^2(t-\tau)} \cF\Big( (-\Delta)^{-1} \na^T\cdot \p_1 u_\sim  \,b_1\Big) \Big\|_{L^2} d\tau\non\\
=&\int^{t-1} _0\Big\| |\xi_2| e^{-\f{|\xi_1|^2}{|\xi_2|^2}(t-\tau)-|\xi_1|^2(t-\tau)} \cF\Big( (-\Delta)^{-1} \na^T\cdot (\p_\tau b-\p_1^2b+u\cdot\na b-b\cdot\na u)_{\sim}  \,b_1\Big) \Big\|_{L^2} d\tau\\
\lesssim&\int^{t-1} _0\Big\| |\xi_2| e^{-\f{|\xi_1|^2}{|\xi_2|^2}(t-\tau)-|\xi_1|^2(t-\tau)} \cF\Big( (-\Delta)^{-1} \na^T\cdot (\p_\tau b+u\cdot\na b-b\cdot\na u)_{\sim}  \,b_1\Big) \Big\|_{L^2} d\tau\\
&+\int^{t-1} _0\Big\| |\xi_2| e^{-\f{|\xi_1|^2}{|\xi_2|^2}(t-\tau)-|\xi_1|^2(t-\tau)} \cF\Big( \p_1 b_{2,\sim}  \,b_1\Big) \Big\|_{L^2} d\tau,
\end{align*}
since the nonlinear term will bring more decay, here we only estimate
\begin{align}\label{9000}
&\int^{t-1} _0\Big\| |\xi_2| e^{-\f{|\xi_1|^2}{|\xi_2|^2}(t-\tau)-|\xi_1|^2(t-\tau)} \cF\Big( (-\Delta)^{-1} \na^T\cdot \p_\tau b_{\sim}  \,b_1\Big) \Big\|_{L^2} d\tau\non\\
&\lesssim\Big\| |\xi_2|  \cF\Big( (-\Delta)^{-1} \na^T\cdot b_{\sim}  \,b_{1}\Big) \Big\|_{L^2}+\Big\| |\xi_2| e^{-\f{|\xi_1|^2}{|\xi_2|^2}t-|\xi_1|^2t} \cF\Big( (-\Delta)^{-1} \na^T\cdot b_{\sim, 0}  \,b_{1, 0}\Big) \Big\|_{L^2} \non\\
&\quad +\int^{t-1} _0\Big\||\xi_2| (\f{|\xi_1|^2}{|\xi_2|^2}+|\xi_1|^2)e^{-\f{|\xi_1|^2}{|\xi_2|^2}(t-\tau)-|\xi_1|^2(t-\tau)} \cF\Big( (-\Delta)^{-1} \na^T\cdot  b_{\sim}  \,b_1\Big) \Big\|_{L^2} d\tau\non\\
&\quad +\int^{t-1} _0\Big\| |\xi_2| e^{-\f{|\xi_1|^2}{|\xi_2|^2}(t-\tau)-|\xi_1|^2(t-\tau)} \cF\Big( (-\Delta)^{-1} \na^T\cdot  b_{\sim}  \,\p_\tau b_1\Big) \Big\|_{L^2} d\tau\non\\
&\lesssim  \Big\| \na\Big( (-\Delta)^{-1} \na^T\cdot b_{\sim}  \,b_{1}\Big) \Big\|_{L^2} +\lan t\ran^{-\f14} \Big\| \na  \Big( (-\Delta)^{-1} \na^T\cdot b_{\sim, 0}  \,b_{1, 0}\Big) \Big\|_{L^1_{x_1}L^2_{x_2}}\non\\
&\quad +\int^{t-1}_0 \lan t-\tau\ran^{-1} \Big\| \na\Big( (-\Delta)^{-1} \na^T\cdot b_{\sim}  \,b_{1}\Big) \Big\|_{L^2} d\tau\non\\
&\quad +\int^{t-1} _0\Big\| |\xi_2|e^{-\f{|\xi_1|^2}{|\xi_2|^2}(t-\tau)-|\xi_1|^2(t-\tau)} \cF\Big( (-\Delta)^{-1} \na^T\cdot  b_{\sim}  \,(\p_1^2b_1+\p_1 u_1-u\cdot\na b+b\cdot\na u\Big) \Big\|_{L^2} d\tau.
\end{align}

We have
\begin{align}\label{10000}
\Big \|\na\Big( (-\Delta)^{-1} \na^{T} \cdot b_{\sim}b_1\Big)\Big\|_{L^2}
&\lesssim \| b_{ \sim}\|_{L^2}\| b_1\|_{L^\infty}+\|(-\Delta)^{-1} \na^{T} \cdot b_{\sim}\|_{L^\infty}\| \na b_1\|_{L^2}\non\\
&\lesssim \| b_{ \sim}\|_{L^2}\| b_1\|_{L^\infty}+\Big \|\cF\Big( (-\Delta)^{-1} \na^{T} \cdot b_{\sim}\Big)\Big\|_{L^1_{\xi_1}L^1_{\xi_2}}\| \na b_1\|_{L^2}\non\\
&\lesssim \lan t \ran^{-\f12-\delta} \wt{E}^2(t),
\end{align}
here we use
\begin{align}\label{na-1}
\Big \|\cF\Big( (-\Delta)^{-1} \na^{T} \cdot b_{\sim}\Big)\Big\|_{L^1_{\xi_1}L^1_{\xi_2}}
&\lesssim\Big \|\cF\Big(|\na |^{-1} b_{2, \sim}\Big)\Big\|_{L^1_{\xi_1}L^1_{\xi_2}}\non\\
&\lesssim \sum_{\lan t \ran^{-s_1}\leq j\leq \lan t \ran^{-s_2} }\Big \|\cF\Big( P_j (|\na |^{-1} b_{2}) \Big)\Big\|_{L^1_{\xi_1}L^1_{\xi_2}}\non\\
&\lesssim \sum_{\lan t \ran^{-s_1}\leq j\leq \lan t \ran^{-s_2} } j^{-1} \Big \|\cF\Big( P_j  b_{2} \Big)\Big\|_{L^1_{\xi_1}L^1_{\xi_2}}\non\\
&\lesssim  \|b_2\|_{L^2} \sum_{\lan t \ran^{-s_1}\leq j\leq \lan t\ran^{-s_2} } j^{\f{\delta}{2s_1}}j^{-\f{\delta}{2s_1}} \non\\
&\lesssim \lan t \ran^{-\f12+\f32\delta} \wt{E}(t).
\end{align}

And we have
\begin{align*}
&\int^{t-1} _0\Big\| |\xi_2|e^{-\f{|\xi_1|^2}{|\xi_2|^2}(t-\tau)-|\xi_1|^2(t-\tau)} \cF\Big( (-\Delta)^{-1} \na^T\cdot  b_{\sim}  \,\p_1^2b_1\Big) \Big\|_{L^2} d\tau \\
\lesssim &\int^{t-1} _0(\lan t-\tau \ran^{-\f12}+\lan \tau \ran^{-s_2})\Big\| |\xi_2|e^{-\f{|\xi_1|^2}{|\xi_2|^2}(t-\tau)-|\xi_1|^2(t-\tau)} \cF\Big( (-\Delta)^{-1} \na^T\cdot  b_{\sim}  \,\p_1b_1\Big) \Big\|_{L^2} d\tau\\
\lesssim &\int^{t-1} _0(\lan t-\tau \ran^{-\f12}+\lan \tau \ran^{-s_2})\Big(\lan t-\tau \ran^{-\f12}\Big \|\na\Big( (-\Delta)^{-1} \na^{T} \cdot b_{\sim}b_1\Big)\Big\|_{L^2}+\lan t-\tau \ran^{-\f14}\big \| b_{2,\sim}b_1\big\|_{L^1_{x_1}L^2_{x_2}} \Big)d\tau\\
\lesssim &\int^{t-1} _0(\lan t-\tau \ran^{-\f12}+\lan \tau \ran^{-s_2})\Big(\lan t-\tau \ran^{-\f12} \big(\|b_{\sim}\|_{L^\infty}\|b_1\|_{L^2}+ \| (-\Delta)^{-1} \na^{T} \cdot b_{\sim}\|_{L^\infty}\|b_1\|_{L^2}\big)\\
&+\lan t-\tau \ran^{-\f14} \| b_{2,\sim}\|_{L^2_{x_1}L^\infty_{x_2}}\|b_1\|_{L^2} \Big)d\tau\\
\lesssim &\lan t \ran^{-\f14} \wt{E}^2(t),
\end{align*}
and
\begin{align*}
&\int^{t-1} _0\Big\| |\xi_2|e^{-\f{|\xi_1|^2}{|\xi_2|^2}(t-\tau)-|\xi_1|^2(t-\tau)} \cF\Big( (-\Delta)^{-1} \na^T\cdot  b_{\sim}  \,\p_1u_1\Big) \Big\|_{L^2} d\tau \\
\lesssim &\int^{t-1} _0\Big(\lan t-\tau \ran^{-\f12}\Big \|\na\Big( (-\Delta)^{-1} \na^{T} \cdot b_{\sim}u_1\Big)\Big\|_{L^2}+\lan t-\tau \ran^{-\f14}\big \| b_{2,\sim}u_1\big\|_{L^1_{x_1}L^2_{x_2}} \Big)d\tau\\
\lesssim &\int^{t-1} _0\Big(\lan t-\tau \ran^{-\f12} \big(\|b_{\sim}\|_{L^\infty}\|u_1\|_{L^2}+ \| (-\Delta)^{-1} \na^{T} \cdot b_{\sim}\|_{L^\infty}\|\na u_1\|_{L^2}\big)+\lan t-\tau \ran^{-\f14} \| b_{2,\sim}\|_{L^2}\|u_1\|_{L^2_{x_1}L^\infty_{x_2}} \Big)d\tau\\
\lesssim &\lan t \ran^{-\f14} \wt{E}^2(t),
\end{align*}
here we use \eqref{na-1}.

Next we consider $\cT_2$, by \eqref{2000}, we have
\begin{align*}
\cT_2\lesssim&\int^{t-1}_0 \Big\|e^{-\f{|\xi_1|^2}{|\xi_2|^2}(t-\tau)-|\xi_1|^2(t-\tau)}
\f{|\xi_1|^{\f12}|\xi_2|}{(|\xi_1|^2+|\xi_2|^2)^\f12}\cF\Big(b_1u_2+u_1b_2\Big)\Big\|_{L^2_{\xi_1}L^1_{\xi_2}}d\tau\\
\lesssim&\int^{t-1}_0 \Big\|\textbf{1}_{|\xi_2|<1}e^{-\f{|\xi_1|^2}{|\xi_2|^2}(t-\tau)-|\xi_1|^2(t-\tau)}
\f{|\xi_1|^{\f12}|\xi_2|}{(|\xi_1|^2+|\xi_2|^2)^\f12}\cF\Big(b_1u_2+u_1b_2\Big)\Big\|_{L^2_{\xi_1}L^1_{\xi_2}}d\tau\\
&+\int^{t-1}_0 \Big\|\textbf{1}_{|\xi_2|\geq1}e^{-\f{|\xi_1|^2}{|\xi_2|^2}(t-\tau)-|\xi_1|^2(t-\tau)}
\f{|\xi_1|^{\f12}|\xi_2|}{(|\xi_1|^2+|\xi_2|^2)^\f12}\cF\Big(b_1u_2+u_1b_2\Big)\Big\|_{L^2_{\xi_1}L^1_{\xi_2}}d\tau\\
\lesssim&\int^{t-1}_0 \Big\|\textbf{1}_{|\xi_2|<1}e^{-\f{|\xi_1|^2}{|\xi_2|^2}(t-\tau)-|\xi_1|^2(t-\tau)}
|\xi_1|^{\f12}\cF\Big(b_1u_2+u_1b_2\Big)\Big\|_{L^2_{\xi_1}L^2_{\xi_2}}\Big\|\textbf{1}_{|\xi_2|<1}\Big\|_{L^2_{\xi_2}}d\tau\\
&+\int^{t-1}_0 \Big\|\textbf{1}_{|\xi_2|\geq1}e^{-\f{|\xi_1|^2}{|\xi_2|^2}(t-\tau)-|\xi_1|^2(t-\tau)}
|\xi_1|^{\f12}\cF\Big(\pa_2(b_1u_2+u_1b_2)\Big)\Big\|_{L^2_{\xi_1}L^2_{\xi_2}}\Big\|\textbf{1}_{|\xi_2|\geq1}\f{1}{|\xi_2|}\Big\|_{L^2_{\xi_2}}d\tau\\
\lesssim&\int^{t-1}_0 \Big\|\textbf{1}_{|\xi_2|<1}e^{-\f{|\xi_1|^2}{|\xi_2|^2}(t-\tau)-|\xi_1|^2(t-\tau)}
|\xi_1|^{\f12}\cF\Big(b_1u_2+u_1b_2\Big)\Big\|_{L^2_{\xi_1}L^2_{\xi_2}}d\tau\\
&+\int^{t-1}_0 \Big\|\textbf{1}_{|\xi_2|\geq1}e^{-\f{|\xi_1|^2}{|\xi_2|^2}(t-\tau)-|\xi_1|^2(t-\tau)}
|\xi_1|^{\f12}\cF\Big(\pa_2(b_1u_2+u_1b_2)\Big)\Big\|_{L^2_{\xi_1}L^2_{\xi_2}}d\tau\\
=&\cT_{21}+\cT_{22},
\end{align*}
and
\begin{align*}
\cT_{21}\lesssim&\int^{t-1}_0 \Big\||\xi_1|^\f12e^{-|\xi_1|^2(t-\tau)}\Big\|_{L^2_{\xi_1}}\Big\|
b_1u_2+u_1b_2\Big\|_{L^1_{x_1}L^2_{x_2}}d\tau\\
\lesssim&\int^{t-1}_0\lan t-\tau\ran^{-\f12}\Big(\|b_1\|_{L^2}\|u_2\|_{L^2_{x_1}L^\infty_{x_2}}+\|u_1\|_{L^2}\|b_2\|_{L^2_{x_1}L^\infty_{x_2}}\Big)
d\tau\\
\lesssim&\int^{t-1}_0\lan t-\tau\ran^{-\f12}\lan\tau\ran^{-1}
d\tau \wt{E}^2(t)\\
\lesssim&\lan t\ran^{-(\f12-\delta)}
\wt{E}^2(t).
\end{align*}

Next we consider $\cT_{22}$. $\cT_{22}$ can be seen as $|\xi_1|^\f12\cT_{1}  $, and by $|\xi_1|^\f12e^{-|\xi_1|^2(t-\tau)}\lesssim\lan  t-\tau \ran^{-\f12}$ when $t-\tau>1$, we only need to give the calculation of the difference items in \eqref{9000}.
\begin{align*}
&\int^{t-1} _0\Big\||\xi_1|^{\f12} |\xi_2| e^{-\f{|\xi_1|^2}{|\xi_2|^2}(t-\tau)-|\xi_1|^2(t-\tau)} \cF\Big( (-\Delta)^{-1} \na^T\cdot \p_\tau b_{\sim}  \,b_1\Big) \Big\|_{L^2} d\tau\non\\
&\lesssim  \Big\| \na\Big( (-\Delta)^{-1} \na^T\cdot b_{\sim}  \,b_{1}\Big) \Big\|_{L^2} +\lan t\ran^{-\f12} \Big\| \na  \Big( (-\Delta)^{-1} \na^T\cdot b_{\sim, 0}  \,b_{1, 0}\Big) \Big\|_{L^1_{x_1}L^2_{x_2}}\non\\
&\quad +\int^{t-1}_0 \lan t-\tau\ran^{-\f54} \Big\| \na\Big( (-\Delta)^{-1} \na^T\cdot b_{\sim}  \,b_{1}\Big) \Big\|_{L^2} d\tau\non\\
&\quad +\int^{t-1} _0\lan t-\tau\ran^{-\f14}\Big\| |\xi_2|e^{-\f{|\xi_1|^2}{|\xi_2|^2}(t-\tau)-|\xi_1|^2(t-\tau)} \cF\Big( (-\Delta)^{-1} \na^T\cdot  b_{\sim}  \,(\p_1^2b_1+\p_1 u_1-u\cdot\na b+b\cdot\na u)\Big) \Big\|_{L^2} d\tau\\
\lesssim &\lan t \ran^{-\f12} \wt{E}^2(t),
\end{align*}
here we use \eqref{10000}.

Thus we have
\begin{align*}
\cT_{22}\lesssim\lan t \ran^{-\f12} \wt{E}^2(t),
\end{align*}
and then we proof the result
\begin{align*}
\|b_{1,N}\|_{L^2}\lesssim \lan t \ran^{-\f14}\wt{E}^2(t).
\end{align*}

Next we consider $b_{2,N}$, since $-u\cdot \na b_2+b\cdot\na u_2=\pa_1(b_1u_2-u_1b_2)$, we have
\begin{align*}
\|b_{2,N}\|_{L^2}
\lesssim& \int^{t-1}_0 \Big\|e^{-c(|\xi_1|^2+|\xi_2|^2)(t-\tau)}\xi_i\cF(u_iu+u_ib+b_iu+b_ib)\Big\|_{L^2}d\tau\\
&+\int^{t-1}_0 \Big\|\f{|\xi_1|}{|\xi_2|^2}e^{-\f{|\xi_1|^2}{|\xi_2|^2}(t-\tau)-|\xi_1|^2(t-\tau)}
\xi_i\cF(u_iu+u_ib+b_iu+b_ib)\Big\|_{L^2}d\tau\\
&+\int^{t-1}_0 \|e^{-\f{|\xi_1|^2}{|\xi_2|^2}(t-\tau)-|\xi_1|^2(t-\tau)}
\xi_1\cF(b_1u_2+u_1b_2)\|_{L^2}d\tau\\
&+\int^{t-1}_0 \Big\|e^{-\f{|\xi_1|^2}{|\xi_2|^2}(t-\tau)-|\xi_1|^2(t-\tau)}
|\xi_1|^{-\f12}\f{|\xi_2|}{(|\xi_1|^2+|\xi_2|^2)^\f12}|\xi_1|\cF(b_1u_2+u_1b_2)\Big\|_{L^2_{\xi_1}L^1_{\xi_2}}d\tau\\
&+\int^{t}_{t-1} \|u_i\p_iu+u_i\p_ib+b_i\p_iu+b_i\p_ib\|_{L^2}d\tau\\
\lesssim& \lan t\ran^{-(\f12-\delta)} \wt{E}^2(t)+\int^{t-1}_0 \|e^{-\f{|\xi_1|^2}{|\xi_2|^2}(t-\tau)-|\xi_1|^2(t-\tau)}
\xi_1\cF(b_1u_2+u_1b_2)\|_{L^2}d\tau\\
=& \lan t\ran^{-(\f12-\delta)} \wt{E}^2(t)+\cT',
\end{align*}
here we use the fact that the $\wh{b}_2$ and $\wh{b}_1$ are only different in $\cT'$, and we have proved that $\cT_2$ have the $\lan t \ran^{-(\f12-\delta)}$ decay rate. For $\cT'$, we have
\begin{align*}
\cT'=&\int^{t-1}_0 \|e^{-\f{|\xi_1|^2}{|\xi_2|^2}(t-\tau)-|\xi_1|^2(t-\tau)}
\xi_1\cF(b_1u_2+u_1b_2)\|_{L^2}d\tau\\
\lesssim&\int^{t-1}_0 \lan t-\tau\ran^{-\f12}
\|b_1u_2+u_1b_2\|_{L^2}d\tau\\
\lesssim&\int^{t-1}_0 \lan t-\tau\ran^{-\f12}
\Big(\|b_1\|_{L^\infty_{x_1}L^2_{x_2}}\|u_2\|_{L^2_{x_1}L^\infty_{x_2}}+\|u_1\|_{L^\infty_{x_1}L^2_{x_2}}\|b_2\|_{L^2_{x_1}L^\infty_{x_2}}\Big)d\tau\\
\lesssim&\int^{t-1}_0 \lan t-\tau\ran^{-\f12}
\lan \tau\ran^{-\f54}d\tau\\
\lesssim& \lan t\ran^{-\f12} \wt{E}^2(t),
\end{align*}
and  others cases we have solved in $\wh{b}_{1,N}$.

{\bf Step 3.} The  decay rate of $\|\pa_1 u_N\|_{L^2}$.

We have the following estimate,
\begin{align*}
&\|\pa_1u_N\|_{L^2}\\
=&\int^t_0 \Big\|e^{-c(|\xi_1|^2+|\xi_2|^2)(t-\tau)}\xi_i\xi_1\cF(u_iu+u_ib+b_iu+b_ib)\Big\|_{L^2}d\tau\\
&+\int^t_0 \Big\|\textbf{1}_{|\xi_2|^2\gtrsim|\xi_1|}\f{|\xi_1|}{|\xi_2|^2}e^{-\f{|\xi_1|^2}{|\xi_2|^2}(t-\tau)-|\xi_1|^2(t-\tau)}
\xi_i\xi_1\cF(u_iu+u_ib+b_iu+b_ib)\Big\|_{L^2}d\tau\\
\lesssim& \int^{t-1}_0 \lan t-\tau\ran^{-1}\|\pa_1(u_iu+u_ib+b_iu+b_ib_2)\|_{L^1} d\tau\non\\
&+ \int^{\f{t}{2}}_0 \lan t-\tau\ran^{-\f54}\|b_1b_1\|_{L^2_{x_1}L^1_{x_2}}d\tau + \int^{t-1}_{\f{t}{2}} \lan t-\tau\ran^{-\f34}\|b_1\pa_1b_1\|_{L^2_{x_1}L^1_{x_2}}d\tau \non\\
&+\int^{t-1}_0  \lan t-\tau\ran^{-1}\|\pa_1(u_1u+u_1b+b_1u+b_1b)\|_{L^2}d\tau+\int^{t-1}_0  \lan t-\tau\ran^{-1}\|u_2u+u_2b+b_2u+b_2b\|_{L^2}d\tau\non\\
&+ \int^{t}_{t-1} \|\pa_1(u_i\pa_iu+u_i\pa_ib+b_i\pa_iu+b_i\pa_ib)\|_{L^2}  d\tau\non\\
\lesssim& \int^{t-1}_0 \lan t-\tau\ran^{-1}\Big(\|\pa_1u\|_{L^2}(\|u\|_{L^2}+\|b\|_{L^2})+\|\pa_1b\|_{L^2}\|u\|_{L^2}+\|\pa_1b_2\|_{L^2}\|b\|_{L^2}+\|\pa_1b\|_{L^2}\|b_2\|_{L^2} \Big) d\tau\non\\
&+ \int^{\f{t}{2}}_0 \lan t-\tau\ran^{-\f54}\|b_1\|_{L^2}\|b_1\|_{L^\infty_{x_1}L^2_{x_2}}d\tau + \int^{t-1}_{\f{t}{2}} \lan t-\tau\ran^{-\f34}\|\pa_1b_1\|_{L^2}\|b_1\|_{L^\infty_{x_1}L^2_{x_2}}d\tau \non\\
&+\int^{t-1}_0  \lan t-\tau\ran^{-1}\Big((\|\pa_1u_1\|_{L^\infty_{x_1}L^2_{x_2}}+\|\pa_1b_1\|_{L^\infty_{x_1}L^2_{x_2}})(\|u\|_{L^2_{x_1}L^\infty_{x_2}}+\|b\|_{L^2_{x_1}L^\infty_{x_2}})\\
&\quad+(\|\pa_1u\|_{L^\infty_{x_1}L^2_{x_2}}+\|\pa_1b\|_{L^\infty_{x_1}L^2_{x_2}})(\|u_1\|_{L^2_{x_1}L^\infty_{x_2}}+\|b_1\|_{L^2_{x_1}L^\infty_{x_2}}) \Big)d\tau\non\\
&+\int^{t-1}_0  \lan t-\tau\ran^{-1}\Big((\|u_2\|_{L^2_{x_1}L^\infty_{x_2}}+\|b_2\|_{L^2_{x_1}L^\infty_{x_2}})(\|u\|_{L^\infty_{x_1}L^2_{x_2}}+\|b\|_{L^\infty_{x_1}L^2_{x_2}}) \Big)d\tau\non\\
&+ \int^{t}_{t-1} \|\pa_1(u_i\pa_iu+u_i\pa_ib+b_i\pa_iu+b_i\pa_ib)\|_{L^2}  d\tau\non\\
\lesssim& \lan t\ran^{-1} \Big(E^2(t)+\wt{E}^2(t)+\int_0^t\cF_1^2(\tau)d\tau\Big),
\end{align*}
here we use
\begin{align}\label{p1p2b2}
\|\pa_1\pa_2b_2\|_{L^2}&\lesssim\|\pa_1b_2\|_{L^2}^\f12\|\pa_1\pa_2^2b_2\|_{L^2}^\f12=\|\pa_1b_2\|_{L^2}^\f12\|\pa_1^2\pa_2b_1\|_{L^2}^\f12\non\\
&\lesssim\|\pa_1b_2\|_{L^2}^\f12\|\pa_1^2b_1\|_{L^2}^\f14\|\pa_1^2\pa_2^2b_1\|_{L^2}^\f14=\|\pa_1b_2\|_{L^2}^\f12\|\pa_1\pa_2b_2\|_{L^2}^\f14\|\pa_1^2\pa_2^2b_1\|_{L^2}^\f14,
\end{align}
which imply
$\|\pa_1\pa_2b_2\|_{L^2}\lesssim\|\pa_1b_2\|_{L^2}^\f23\|\pa_1^2\pa_2^2b_1\|_{L^2}^\f13$, and
\begin{align*}
\int_{t-1}^t\|b_2\pa_1\pa_2b_1\|_{L^2}d\tau&\lesssim\int_{t-1}^t\|b_2\|_{L^\infty}\|\pa_1\pa_2b_1\|_{L^2}d\tau\\
&\lesssim\int_{t-1}^t\|b_2\|_{L^2}^\f14\|\pa_1b_2\|_{L^2}^\f14\|\pa_2b_2\|_{L^2}^\f14\|\pa_1\pa_2b_2\|_{L^2}^\f14\|\pa_1b_1\|_{L^2}^\f12\|\pa_1\pa_2^2b_1\|_{L^2}^\f12d\tau\\
&\lesssim\int_{t-1}^t\|b_2\|_{L^2}^\f14\|\pa_1b_2\|_{L^2}^{\f{5}{12}}\|\pa_2b_2\|_{L^2}^\f34\|\pa_1\pa_2^2b_1\|_{L^2}^\f12\|\pa_1^2\pa_2^2b_1\|_{L^2}^\f{1}{12}d\tau\\
&\lesssim\int_{t-1}^t \lan \tau\ran^{-(\f{101}{96}-\f{13}{12}\delta)}\wt{E}^{\f{17}{12}}(\tau)\cE_1^{\f12}(\tau)\cF_1^{\f{1}{12}}(\tau)d\tau\\
&\lesssim\lan t-1\ran^{-(\f{101}{96}-\f{13}{12}\delta)}\int_{t-1}^t \wt{E}^{\f{17}{12}}(\tau)\cE_1^{\f12}(\tau)\cF_1^{\f{1}{12}}(\tau)d\tau\\
&\lesssim\lan t\ran^{-(\f{101}{96}-\f{13}{12}\delta)}\int_{t-1}^t \wt{E}^2(\tau)+\cE_1^{2}(\tau)+\cF_1^{2}(\tau)d\tau\\
&\lesssim\lan t\ran^{-(\f{101}{96}-\f{13}{12}\delta)} \Big(\wt{E}^2(t)+\cE^2_1(t)+\int_0^t\cF_1^2(\tau)d\tau\Big).
\end{align*}
And by  Plancherel theorem,
\begin{align*}
\int_{t-1}^t\|\cF(b_1\pa_1^2b_1)\|_{L^2_{\xi}}d\tau\lesssim\int_{t-1}^t\|\textbf{1}_{|\xi_1|\leq1}\cF(b_1\pa_1^2b_1)\|_{L^2}d\tau+\int_{t-1}^t\|\textbf{1}_{|\xi_1|>1}\cF(b_1\pa_1^2b_1)\|_{L^2}d\tau=\cK_1+\cK_2,
\end{align*}
with
\begin{align*}
\cK_1&\lesssim\int_{t-1}^t\|\textbf{1}_{|\xi_1|\leq1}|\xi_1|\cF(b_1\pa_1b_1)\|_{L^2}+\int_{t-1}^t\|\textbf{1}_{|\xi_1|\leq1}\cF(\pa_1b_1\pa_1b_1)\|_{L^2}\\
&\lesssim\|b_1\pa_1b_1\|_{L^2}+\|\pa_1b_1\pa_1b_1\|_{L^2}\\
&\lesssim\|b_1\|_{L^\infty}\|\pa_1b_1\|_{L^2}+\|\pa_1b_1\|_{L^\infty}\|\pa_1b_1\|_{L^2}\\
&\lesssim\|b_1\|_{L^2}^\f14\|\pa_1b_1\|_{L^2}^\f54\|\p_2b_1\|_{L^2}^\f14\|\pa_1\pa_2b_1\|_{L^2}^\f14+\|\pa_1b_1\|_{L^2}^\f54\|\pa_1^2b_1\|_{L^2}^\f14\|\pa_1\pa_2b_1\|_{L^2}^\f14\|\pa_1^2\pa_2b_1\|_{L^2}^\f14\\
&\lesssim\|b_1\|_{L^2}^\f38\|\pa_1b_1\|_{L^2}^\f54\|\p_2^2b_1\|_{L^2}^\f18\|\pa_1\pa_2b_1\|_{L^2}^\f14+\|\pa_1b_1\|_{L^2}^\f{11}{8}\|\pa_1^2b_1\|_{L^2}^\f14\|\pa_1\pa_2^2b_1\|_{L^2}^\f18\|\pa_1^2\pa_2b_1\|_{L^2}^\f14\\
&\lesssim\lan t\ran^{-\f{33}{32}} \Big(\wt{E}^2(t)+\cE^2_1(t)\Big),
\end{align*}
and
\begin{align*}
\cK_2&\lesssim\int_{t-1}^t\|\textbf{1}_{|\xi_1|>1}|\xi_1|\cF(b_1\pa_1^2b_1)\|_{L^2}d\tau\\
&\lesssim\int_{t-1}^t\|\cF(\pa_1b_1\pa_1^2b_1)\|_{L^2}d\tau+\int_{t-1}^t\|\cF(b_1\pa_1^3b_1)\|_{L^2}d\tau\\
&=\cK_{21}+\cK_{22}.
\end{align*}
By \eqref{p1p2b2},
\begin{align*}
\cK_{21}&\lesssim\int_{t-1}^t\|\pa_1b_1\pa_1^2b_1\|_{L^2}d\tau\lesssim\int_{t-1}^t\|\pa_1b_1\|_{L^\infty}\|\pa_1^2b_1\|_{L^2}d\tau\\
&\lesssim\int_{t-1}^t\|\pa_1b_1\|_{L^2}^\f14\|\pa_1^2b_1\|_{L^2}^\f54\|\pa_1\pa_2b_1\|_{L^2}^\f14\|\pa_1^2\pa_2b_1\|_{L^2}^\f14d\tau\\
&\lesssim\int_{t-1}^t\|\pa_1b_1\|_{L^2}^\f38\|\pa_1b_2\|_{L^2}^\f{5}{6}\|\pa_1\pa_2^2b_1\|_{L^2}^\f18\|\pa_1^2\pa_2b_1\|_{L^2}^\f14\|\pa_1^2\pa_2^2b_1\|_{L^2}^\f{5}{12}d\tau\\
&\lesssim\lan t\ran^{-(\f{97}{96}-\f53\delta)} \Big(\wt{E}^2(t)+\cE^2_1(t)+\int_0^t\cF_1^2(\tau)d\tau\Big),
\end{align*}
and
\begin{align*}
\cK_{22}&=\int_{t-1}^t\|\cF\Big(b_1\pa_1(b_{1,t}-\pa_1u_1+u_i\pa_ib-b_i\pa_iu)\Big)\|_{L^2}d\tau,
\end{align*}
since the nonlinear term will bring more decay, here we only estimate
\begin{align*}
&\int_{t-1}^t\|\cF(b_1\pa_1b_{1,t})\|_{L^2}d\tau\\
\lesssim&\|b_1\pa_1b_1\|_{L^2}+\int_{t-1}^t\|\cF(b_{1,t}\pa_1b_1)\|_{L^2}d\tau\\
=&\|b_1\pa_1b_1\|_{L^2}+\int_{t-1}^t\Big\|\cF\Big((\pa_1^2b_1+\pa_1u-u_i\pa_ib+b_i\pa_iu)\pa_1b_1\Big)\Big\|_{L^2}d\tau,
\end{align*}
and
\begin{align*}
\|b_1\pa_1u_1\|_{L^2}\lesssim\|b_1\|_{L^\infty}\|\pa_1u_1\|_{L^2}\lesssim\lan t\ran^{-(1+\delta)} \Big(\wt{E}^2(t)+\cE^2_1(t)\Big)\\
\|b_1\pa_1b_1\|_{L^2}\lesssim\|b_1\|_{L^\infty}\|\pa_1b_1\|_{L^2}\lesssim\lan t\ran^{-(1+\delta)} \Big(\wt{E}^2(t)+\cE^2_1(t)\Big)\\
\|\pa_1u\pa_1b_1\|_{L^2}\lesssim\|\pa_1u\|_{L^2}\|\pa_1b_1\|_{L^\infty}\lesssim\lan t\ran^{-(1+\delta)} \Big(\wt{E}^2(t)+\cE^2_1(t)\Big).
\end{align*}
Thus we complete the proof of the the  decay rate of $\|\pa_1 u_{N}\|_{L^2}$.

{\bf Step 4.} The  decay rate of $\|\pa_2 u_{1,N}\|_{L^2}$.

We have the following estimate,
\begin{align*}
&\|\pa_2u_{1,N}\|_{L^2}\\
=&\int^t_0 \Big\|e^{-c(|\xi_1|^2+|\xi_2|^2)(t-\tau)}\xi_i\xi_2\cF(u_iu+u_ib+b_iu+b_ib)\Big\|_{L^2}d\tau\\
&+\int^t_0 \Big\|\textbf{1}_{|\xi_2|^2\gtrsim|\xi_1|}\f{|\xi_1|}{|\xi_2|^2}e^{-\f{|\xi_1|^2}{|\xi_2|^2}(t-\tau)-|\xi_1|^2(t-\tau)}
\xi_i\xi_2\cF(u_iu+u_ib+b_iu+b_ib)\Big\|_{L^2}d\tau\\
\lesssim& \int^{t-1}_0 \lan t-\tau\ran^{-1}\|u_iu+u_ib+b_iu+b_ib\|_{L^2} d\tau+\int^{t-1}_0  \lan t-\tau\ran^{-1}\|u_1u+u_1b+b_1u+b_1b\|_{L^2}d\tau\\
&+\int^{t-1}_0  \lan t-\tau\ran^{-\f34}\|u_2u+u_2b+b_2u+b_2b\|_{L^1_{x_1}L^2_{x_2}}d\tau+ \int^{t}_{t-1} \|\pa_2(u_i\pa_iu+u_i\pa_ib+b_i\pa_iu+b_i\pa_ib)\|_{L^2}  d\tau\non\\
\lesssim& \int^{t-1}_0 \lan t-\tau\ran^{-1}\Big(\|u\|_{L^2_{x_1}L^\infty_{x_2}}(\|u\|_{L^\infty_{x_1}L^2_{x_2}}+\|b\|_{L^\infty_{x_1}L^2_{x_2}})+\|b\|_{L^2}\|b\|_{L^\infty} \Big) d\tau\non\\
&+\int^{t-1}_0  \lan t-\tau\ran^{-\f34}\Big((\|u_2\|_{L^2_{x_1}L^\infty_{x_2}}+\|b_2\|_{L^2_{x_1}L^\infty_{x_2}})(\|u\|_{L^2}+\|b\|_{L^2}) \Big)d\tau\non\\
&+\int^t_{t-1} \|\pa_2(u_i\pa_iu+u_i\pa_ib+b_i\pa_iu+b_i\pa_ib)\|_{L^2} d\tau\non\\
\lesssim& \lan t\ran^{-\f12}  \Big(\wt{E}^2(t)+\cE^2_1(t)\Big),
\end{align*}
here we use
\begin{align*}
\|b_1\pa_1\pa_2b_1\|_{L^2}&\lesssim\|b_1\|_{L^\infty}\|\pa_1\pa_2b_1\|_{L^2}\lesssim\|b_1\|_{L^2}^\f14\|\pa_1b_1\|_{L^2}^\f14\|\pa_2b_1\|_{L^2}^\f14\|\pa_1\pa_2b_1\|_{L^2}^\f54\\
&\lesssim\|b_1\|_{L^2}^\f38\|\pa_1b_1\|_{L^2}^\f78\|\pa_2^2b_1\|_{L^2}^\f18\|\pa_1\pa_2^2b_1\|_{L^2}^\f58\lesssim\lan t\ran^{-\f{3}{4}} \Big(\wt{E}^2(t)+\cE^2_1(t)\Big),
\end{align*}
and
\begin{align*}
\|b_2\pa_2^2b_1\|_{L^2}&\lesssim\|b_2\|_{L^\infty}\|\pa_2^2b_1\|_{L^2}\lesssim\|b_2\|_{L^2}^\f14\|\pa_1b_2\|_{L^2}^\f14\|\pa_2b_2\|_{L^2}^\f14\|\pa_1\pa_2b_2\|_{L^2}^\f14\|\pa_2^2b_1\|_{L^2}\\
&\lesssim\|b_2\|_{L^2}^\f14\|\pa_1b_2\|_{L^2}^\f{5}{12}\|\pa_2b_2\|_{L^2}^\f14\|\pa_1\pa_2^2b_2\|_{L^2}^\f{1}{12}\|\pa_2^2b_1\|_{L^2}\lesssim \lan t\ran^{-(\f{65}{96}-\f{13}{12}\delta)} \Big(\wt{E}^2(t)+\cE^2_1(t)\Big).
\end{align*}

{\bf Step 5.} The decay rate of  $ \pa_1b_N$.\\

Similarly Step 2, we have the $L^2$ estimate,
\begin{align*}
\|\pa_1b_{1,N}\|_{L^2}
\lesssim& \int^{t-1}_0 \Big\|e^{-c(|\xi_1|^2+|\xi_2|^2)(t-\tau)}|\xi_i||\xi_1|\cF(u_iu+u_ib+b_iu+b_ib)\Big\|_{L^2}d\tau\\
&+\int^{t-1}_0 \Big\|\f{|\xi_1|}{|\xi_2|^2}e^{-\f{|\xi_1|^2}{|\xi_2|^2}(t-\tau)-|\xi_1|^2(t-\tau)}
|\xi_i||\xi_1|\cF(u_iu+u_ib+b_iu+b_ib)\Big\|_{L^2}d\tau\\
&+\int^{t-1}_0 \Big\|e^{-\f{|\xi_1|^2}{|\xi_2|^2}(t-\tau)-|\xi_1|^2(t-\tau)}
|\xi_1|\cF(u_i\p_ib+b_i\p_iu)\Big\|_{L^2}d\tau\\
&+\int^{t-1}_0 \Big\|e^{-\f{|\xi_1|^2}{|\xi_2|^2}(t-\tau)-|\xi_1|^2(t-\tau)}
|\xi_1|^{-\f12}\f{|\xi_2|}{(|\xi_1|^2+|\xi_2|^2)^\f12}|\xi_1|^2\cF(b_1u_2+u_1b_2)\Big\|_{L^2_{\xi_1}L^1_{\xi_2}}d\tau\\
&+\int^{t}_{t-1} \|\pa_1(u_i\p_iu+u_i\p_ib+b_i\p_iu+b_i\p_ib)\|_{L^2}d\tau\\
\lesssim& \lan t\ran^{-\f34} \Big(\wt{E}^2(t)+\cE^2_1(t)+\int_0^t\cF_1^2(\tau)d\tau\Big),
\end{align*}
and
\begin{align*}
\|\pa_1b_{2,N}\|_{L^2}
\lesssim& \int^{t-1}_0 \Big\|e^{-c(|\xi_1|^2+|\xi_2|^2)(t-\tau)}|\xi_i||\xi_1|\cF(u_iu+u_ib+b_iu+b_ib)\Big\|_{L^2}d\tau\\
&+\int^{t-1}_0 \Big\|\f{|\xi_1|}{|\xi_2|^2}e^{-\f{|\xi_1|^2}{|\xi_2|^2}(t-\tau)-|\xi_1|^2(t-\tau)}
|\xi_i||\xi_1|\cF(u_iu+u_ib+b_iu+b_ib)\Big\|_{L^2}d\tau\\
&+\int^{t-1}_0 \|e^{-\f{|\xi_1|^2}{|\xi_2|^2}(t-\tau)-|\xi_1|^2(t-\tau)}
|\xi_1|^2\cF(b_1u_2+u_1b_2)\|_{L^2}d\tau\\
&+\int^{t-1}_0 \Big\|e^{-\f{|\xi_1|^2}{|\xi_2|^2}(t-\tau)-|\xi_1|^2(t-\tau)}
|\xi_1|^{-\f12}\f{|\xi_2|}{(|\xi_1|^2+|\xi_2|^2)^\f12}|\xi_1|^2\cF(b_1u_2+u_1b_2)\Big\|_{L^2_{\xi_1}L^1_{\xi_2}}d\tau\\
&+\int^{t}_{t-1} \|\pa_1(u_i\p_iu+u_i\p_ib+b_i\p_iu+b_i\p_ib)\|_{L^2}d\tau\\
\lesssim& \lan t\ran^{-(\f78-2\delta)}  \Big(\wt{E}^2(t)+\cE^2_1(t)+\int_0^t\cF_1^2(\tau)d\tau\Big),
\end{align*}
here we use when $t-\tau>1$, it has the same structure as $b_{N}$, and when $t-\tau\leq1$,
 \begin{align*}
\int_{t-1}^t\|\pa_1(u_i\p_iu+u_i\p_ib+b_i\p_iu+b_i\p_ib)\|_{L^2}d\tau\lesssim\lan t\ran^{-1} \Big(\wt{E}^2(t)+\cE^2_1(t)+\int_0^t\cF_1^2(\tau)d\tau\Big)
 \end{align*}
 we have solved in Step 3, and we give the calculation of the different items of $b_{N}$ following \eqref{9000},

\begin{align*}
&\int^{t-1} _0\Big\| |\xi_1||\xi_2| e^{-\f{|\xi_1|^2}{|\xi_2|^2}(t-\tau)-|\xi_1|^2(t-\tau)} \cF\Big( (-\Delta)^{-1} \na^T\cdot \p_\tau b_{\sim}  \,b_1\Big) \Big\|_{L^2} d\tau\non\\
&\lesssim  \Big\| \pa_1\na\Big( (-\Delta)^{-1} \na^T\cdot b_{\sim}  \,b_{1}\Big) \Big\|_{L^2} +\lan t\ran^{-\f34} \Big\| \na  \Big( (-\Delta)^{-1} \na^T\cdot b_{\sim, 0}  \,b_{1, 0}\Big) \Big\|_{L^1_{x_1}L^2_{x_2}}\non\\
&\quad +\int^{t-1}_0 \lan t-\tau\ran^{-1} \Big\| \pa_1\na\Big( (-\Delta)^{-1} \na^T\cdot b_{\sim}  \,b_{1}\Big) \Big\|_{L^2} d\tau\non\\
&\quad +\int^{t-1} _0\lan t-\tau\ran^{-\f12}\Big\| |\xi_2|e^{-\f{|\xi_1|^2}{|\xi_2|^2}(t-\tau)-|\xi_1|^2(t-\tau)} \cF\Big( (-\Delta)^{-1} \na^T\cdot  b_{\sim}  \,(\p_1^2b_1+\p_1 u_1-u\cdot\na b+b\cdot\na u\Big) \Big\|_{L^2} d\tau\\
\lesssim &\lan t \ran^{-\f34} (\wt{E}^2(t)+\cE_1^2(t)),
\end{align*}
and
\begin{align*}
&\int^{t-1} _0\Big\||\xi_1|^{\f32} |\xi_2| e^{-\f{|\xi_1|^2}{|\xi_2|^2}(t-\tau)-|\xi_1|^2(t-\tau)} \cF\Big( (-\Delta)^{-1} \na^T\cdot \p_\tau b_{\sim}  \,b_1\Big) \Big\|_{L^2} d\tau\non\\
&\lesssim  \Big\| \pa_1\na\Big( (-\Delta)^{-1} \na^T\cdot b_{\sim}  \,b_{1}\Big) \Big\|_{L^2} +\lan t\ran^{-1} \Big\| \na  \Big( (-\Delta)^{-1} \na^T\cdot b_{\sim, 0}  \,b_{1, 0}\Big) \Big\|_{L^1_{x_1}L^2_{x_2}}\non\\
&\quad +\int^{t-1}_0 \lan t-\tau\ran^{-\f54} \Big\| \pa_1\na\Big( (-\Delta)^{-1} \na^T\cdot b_{\sim}  \,b_{1}\Big) \Big\|_{L^2} d\tau\non\\
&\quad +\int^{t-1} _0\lan t-\tau\ran^{-\f34}\Big\| |\xi_2|e^{-\f{|\xi_1|^2}{|\xi_2|^2}(t-\tau)-|\xi_1|^2(t-\tau)} \cF\Big( (-\Delta)^{-1} \na^T\cdot  b_{\sim}  \,(\p_1^2b_1+\p_1 u_1-u\cdot\na b+b\cdot\na u\Big) \Big\|_{L^2} d\tau\\
\lesssim &\lan t \ran^{-(\f78-2\delta)} (\wt{E}^2(t)+\cE_1^2(t)),
\end{align*}
here we use
\begin{align*}
\Big \|\pa_1\na\Big( (-\Delta)^{-1} \na^{T} \cdot b_{\sim}b_1\Big)\Big\|_{L^2}
&\lesssim \| \pa_1b_{ \sim}\|_{L^2}\| b_1\|_{L^\infty}+ \| b_{ \sim}\|_{L^\infty}\| \pa_1b_1\|_{L^2}+\| b_{2,\sim}\|_{L^2_{x_1}L^\infty_{x_2}}\| \na b_1\|_{L^\infty_{x_1}L^2_{x_2}}\\
&\quad+\|(-\Delta)^{-1} \na^{T} \cdot b_{\sim}\|_{L^\infty}\| \pa_1\na b_1\|_{L^2}\non\\
&\lesssim \| \pa_1b_{ \sim}\|_{L^2}\| b_1\|_{L^\infty}+ \| b_{ \sim}\|_{L^\infty}\| \pa_1b_1\|_{L^2}+\| b_{2,\sim}\|_{L^2_{x_1}L^\infty_{x_2}}\| \na b_1\|_{L^\infty_{x_1}L^2_{x_2}}\\
&\quad+\|(-\Delta)^{-1} \na^{T} \cdot b_{\sim}\|_{L^\infty}\| \pa_1 b_1\|_{L^2}^\f12\| \pa_1 \na^2b_1\|_{L^2}^\f12\non\\
&\lesssim \lan t \ran^{-(\f78-\f32\delta)} (\wt{E}^2(t)+\cE_1^2(t)).
\end{align*}

This completes the proof of Proposition 3.1.
\end{proof}

\smallskip

\section{acknowledgement}

Xiaoxia Ren was supported by NSF of China under Grants  (No. 12001195) and the Fundamental Research Funds for the Central Universities (No. 2023MS078).

\section{conflict of interest statement}
The authors declared that they have no conflicts of interest to this work.

\newpage

\appendix
\begin{appendix}
\section{Local well-posedness}

We give the local well-posedness of systems \eqref{eq:MHDT} in half space $\Omega$  without proof for completeness.
\begin{theorem}\label{Local}
	Assume that the initial data $(u_0,b_0)\in H^2(\Omega)$, $u_0\in L^2_0(\Om)$, and $\div{u_0}=0$ in $\Omega$, $b_{1, 0}=0$ on $\p\Om$, $\mathcal{P}(\pa_2^2 u_0 -u_0 \cdot \na u_0 +b_0 \cdot \na b_0) \in L_0^2(\Om)$. Then there exists a $T>0$ such that system  \eqref{eq:MHDT} admits a unique solution $(u,b)$ on $[0,T]$ satisfying
	\begin{align*}
		\displaystyle
		(u,b) \in C([0,T];H^2(\Omega)),
	\end{align*}
\end{theorem}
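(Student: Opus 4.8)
The plan is to construct the solution by a linear iteration (Picard-type) scheme combined with uniform energy estimates, exploiting the same dissipative structure as in Proposition \ref{high order} but carried out on a short time interval where no smallness of the data is required. I would freeze the quadratic terms at the previous iterate: given $(u^n,b^n)$ with $(u^0,b^0)=(u_0,b_0)$, define $(u^{n+1},b^{n+1},p^{n+1})$ as the solution of the \emph{linear} problem
\begin{align*}
&\p_t u^{n+1}-\pa_{x_2}^2 u^{n+1}-\p_{x_1} b^{n+1}+\na p^{n+1}=-u^n\cdot\na u^n+b^n\cdot\na b^n,\\
&\p_t b^{n+1}-\p_{x_1}^2 b^{n+1}-\p_{x_1} u^{n+1}=-u^n\cdot\na b^n+b^n\cdot\na u^n,
\end{align*}
together with $\div u^{n+1}=\div b^{n+1}=0$, the boundary conditions $u^{n+1}=0$, $b^{n+1}_2=0$ on $\p\Om$, and the initial data $(u_0,b_0)$. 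Solvability of each linear step can be obtained either from the resolvent solution formula of Section 3 (which is exactly the semigroup generated by the coupled damped--wave linear operator) or by a Galerkin approximation in divergence-free spaces adapted to the boundary conditions; the Leray projection $\cP$ removes the pressure, and $\na p^{n+1}$ is recovered afterward from the elliptic problem $\Delta p^{n+1}=\div(\cdots)$.

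The core of the argument is a \textbf{uniform $H^2$ bound} for the iterates on a common interval $[0,T]$. Here I would reproduce, step by step, the energy identities of Proposition \ref{high order}: the $L^2$ and $\dot H^1$ estimates, the dissipation estimates for $\p_{x_1}u$, $(u_t,b_t)$, and $\p_1^2u$, $\p_1^2b$, and crucially the Stokes estimate of Step 11, which recovers $\|\pa_{x_2}^2 u^{n+1}\|_{L^2}$ and $\|\na p^{n+1}\|_{L^2}$ from $\|u^{n+1}_t\|_{L^2}$, $\|\pa_1^2 u^{n+1}\|_{L^2}$ and lower-order terms, together with Step 10, which controls $\|\pa_{x_2}^2 b^{n+1}\|_{L^2}$ (the direction with no dissipation) by differentiating in time and using the velocity equation. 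The compatibility hypothesis $\cP(\pa_2^2 u_0-u_0\cdot\na u_0+b_0\cdot\na b_0)\in L^2_0(\Om)$ guarantees $u^{n+1}_t(0)\in L^2_0(\Om)$, which makes the $\p_t$--based estimates legitimate and yields continuity up to $t=0$. Because the estimates on a short interval tolerate the quadratic right-hand side without smallness, a standard continuity/bootstrap argument produces $T>0$ and a constant $M$, depending only on $\|(u_0,b_0)\|_{H^2}$, such that $\sup_{[0,T]}\|(u^n,b^n)\|_{H^2}\le M$ for all $n$.

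Next I would show the scheme is contractive in a lower norm. Setting $\delta u^{n+1}=u^{n+1}-u^n$, $\delta b^{n+1}=b^{n+1}-b^n$, the differences solve the same linear system with right-hand sides bilinear in the iterates and their first derivatives. A plain $L^2$ (plus available dissipation) energy estimate on $(\delta u^{n+1},\delta b^{n+1})$, using the uniform bound $M$ to control the coefficients, gives $\sup_{[0,T]}\|(\delta u^{n+1},\delta b^{n+1})\|_{L^2}\le C(M)\,T^{\theta}\,\sup_{[0,T]}\|(\delta u^{n},\delta b^{n})\|_{L^2}$ for some $\theta>0$; shrinking $T$ makes the map a contraction in $C([0,T];L^2_0)$. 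Hence $(u^n,b^n)$ converges strongly in $C([0,T];L^2)$, and by interpolation with the uniform $H^2$ bound it converges in $C([0,T];H^s)$ for every $s<2$, which suffices to pass to the limit in all nonlinear terms, so the limit $(u,b)$ solves \eqref{eq:MHDT}. Weak-$*$ convergence gives $(u,b)\in L^\infty([0,T];H^2)$, and continuity in time with values in $H^2$ is recovered from the equation (the time derivatives lie in suitable spaces) together with the strong $H^s$ continuity. Uniqueness follows from the same $L^2$ difference estimate applied to two solutions sharing the uniform bound.

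The step I expect to be the main obstacle is the uniform $H^2$ estimate, specifically the two ``missing-direction'' derivatives: $\pa_{x_2}^2 u$, for which dissipation is present but the non-slip boundary condition forbids a direct integration by parts, and $\pa_{x_2}^2 b$, for which there is no vertical magnetic dissipation at all. Both are handled exactly as in Proposition \ref{high order} --- the former through the Stokes (elliptic) estimate converting $\pa_{x_2}^2 u$ into $u_t$ and $\na p$, the latter through the time-differentiated velocity equation that injects a usable $\na p$ dissipation --- and verifying that these arguments survive the linearization on a short interval, with the boundary conditions on $b_1$ (propagated from $b_{1,0}=0$) and on $b_2$ respected at every step, is the delicate part. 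Everything else is routine and parallels the a priori analysis already carried out.
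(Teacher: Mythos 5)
Your overall strategy (iteration scheme, uniform $H^2$ bounds on a short interval, contraction in a lower norm, limit passage) is exactly what the paper sketches --- the paper omits all details and simply refers to an iteration scheme as in Theorem 3.1 of \cite{RXZ}. However, the specific scheme you wrote down has a genuine derivative-loss problem. You freeze \emph{both} factors of every quadratic term at the previous iterate, so the $b^{n+1}$-equation reads $\p_t b^{n+1}-\p_1^2 b^{n+1}-\p_1 u^{n+1}=g^n$ with $g^n=-u^n\cdot\na b^n+b^n\cdot\na u^n$. Since $(u^n,b^n)$ is only bounded in $H^2$, the source $g^n$ lies merely in $H^1$, and the $b$-equation has \emph{no dissipation in $x_2$}: the semigroup $e^{t\p_1^2}$ smooths only horizontally, so $\p_2^2 b^{n+1}$ cannot be recovered from $\p_2^2 g^n\in H^{-1}$. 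Concretely, in the $H^2$ energy estimate the top-order term $\lan \p_2^2 b^{n+1}, u^n\cdot\na\p_2^2 b^n\ran$ cannot be bounded (it needs $b^n\in H^3$), and integrating by parts instead produces $\p_2^3 b^{n+1}$, which no dissipation controls. The iterates do not even stay in $H^2$, so the ``uniform bound $M$'' step of your argument fails as stated.

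The point is that every mechanism your uniform bound relies on --- the skew-symmetry $\lan \p_2^2 b, u\cdot\na \p_2^2 b\ran=0$, the cross-cancellation $\lan \p_2^2 u, b\cdot\na\p_2^2 b\ran+\lan\p_2^2 b, b\cdot\na \p_2^2 u\ran=0$ between the two equations, and the Step-10 trick of substituting the velocity equation into $\lan\p_2^2 u,\p_2^2(b\cdot\na b)\ran$ --- requires the \emph{unknown} to sit in the transported (differentiated) slot, and these cancellations are destroyed by full freezing. The standard and necessary fix is to semi-linearize: take $-u^n\cdot\na u^{n+1}+b^n\cdot\na b^{n+1}$ and $-u^n\cdot\na b^{n+1}+b^n\cdot\na u^{n+1}$ as right-hand sides (coefficients frozen, unknowns transported), with $\div u^n=\div b^n=0$ and $u^n\cdot n=b^n\cdot n=0$ on $\p\Om$ guaranteeing that the integrations by parts produce no boundary terms. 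With that scheme the energy steps of Proposition \ref{high order} carry over verbatim to the iterates, and the remainder of your argument (the $L^2$ contraction using the uniform bound and interpolation, recovery of time-continuity, uniqueness, and the role of the compatibility condition for $u_t(0)$) is sound and in line with the paper's intended proof.
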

In fact, we can first construct an interation scheme for the system \eqref{eq:MHDT} in half space to obtain the approximate solution and then derive uniform bounds to pass the limit (Similar  Theorem 3.1 in \cite{RXZ}). This procedure is more or less standard and thus we omit their details.

\section{useful lemma}
\begin{lemma}\label{lemma1}
For the complex number $z \in C$, we have
\beno
(Re z)^2=\f{\sqrt{\Big(Re (z^2)\Big)^2+\Big(Im (z^2)\Big)^2}+Re (z^2)}{2}.  \eeno
\end{lemma}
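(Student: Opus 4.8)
The plan is to prove this purely algebraic identity by writing the complex number in Cartesian form and computing both sides directly. I would set $z=a+bi$ with $a=\mathrm{Re}\,z$ and $b=\mathrm{Im}\,z$ real, so that $z^2=(a^2-b^2)+2ab\,i$, giving $\mathrm{Re}(z^2)=a^2-b^2$ and $\mathrm{Im}(z^2)=2ab$. The entire content of the lemma is the recognition that the square-root term on the right-hand side is just the modulus of $z^2$, namely $|z^2|=|z|^2=a^2+b^2$.

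Concretely, the first step is to expand
\[
\big(\mathrm{Re}(z^2)\big)^2+\big(\mathrm{Im}(z^2)\big)^2=(a^2-b^2)^2+(2ab)^2=a^4+2a^2b^2+b^4=(a^2+b^2)^2 .
\]
Since $a^2+b^2\geq 0$, taking the nonnegative square root yields $\sqrt{(\mathrm{Re}(z^2))^2+(\mathrm{Im}(z^2))^2}=a^2+b^2$. The second step is then to substitute this into the right-hand side:
\[
\f{\sqrt{(\mathrm{Re}(z^2))^2+(\mathrm{Im}(z^2))^2}+\mathrm{Re}(z^2)}{2}=\f{(a^2+b^2)+(a^2-b^2)}{2}=a^2=(\mathrm{Re}\,z)^2 ,
\]
which is exactly the claimed identity.

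There is no genuine obstacle here: the statement is an elementary consequence of the half-angle–type relation $2(\mathrm{Re}\,z)^2=|z|^2+\mathrm{Re}(z^2)$, and the only thing to be careful about is the sign of the square root, which is settled by the nonnegativity of $a^2+b^2$. I would present the two displayed computations above as the complete proof; no estimates, limiting arguments, or earlier results from the paper are needed. The lemma is stated precisely in this form because it is the exact tool used in \eqref{reom} to extract $(\mathrm{Re}\,\om)^2$ from $\om^2=\lam+\tfrac{|\xi_1|^2}{\lam+|\xi_1|^2}$ along the contour $\Ga_5$, so it suffices to record the identity in full generality for $z\in\C$.
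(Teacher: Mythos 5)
Your proof is correct and follows essentially the same route as the paper: write $z$ in Cartesian form, observe that $\big(\mathrm{Re}(z^2)\big)^2+\big(\mathrm{Im}(z^2)\big)^2=\big((\mathrm{Re}\,z)^2+(\mathrm{Im}\,z)^2\big)^2$ so the square root is $|z|^2$, and then average with $\mathrm{Re}(z^2)$ to recover $(\mathrm{Re}\,z)^2$. No differences worth noting.
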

\begin{proof} We make a simple calculation
\begin{align*}
z&=Re z+iIm z\\
z^2&=(Re z)^2-(Im z)^2+2iRe z Im z\\
(Re z)^2+(Im z)^2&=\sqrt{\big((Re z)^2-(Im z)^2\big)^2+\big(2Re z Im z\big)^2}\\
&=\sqrt{\Big(Re (z^2)\Big)^2+\Big(Im (z^2)\Big)^2}\\
(Re z)^2&=\f{(Re z)^2+(Im z)^2+(Re z)^2-(Im z)^2}{2}\\
&=\f{\sqrt{\Big(Re (z^2)\Big)^2+\Big(Im (z^2)\Big)^2}+Re (z^2)}{2}.
\end{align*}
Thus we complete the proof.\end{proof}

\section{Higher order energy estimate}
We will prove the additional  energy estimate which is needed  in obtaining the  asymptotic behavior.

We define  the  energy
\begin{align}\label{cE2}
\cE^2_1(t):=\|\pa_1u(t)\|^2_{H^2}+\|\pa_1b(t)\|^2_{H^2}+\|\pa_1\na p\|^2_{L^2}+\|(\pa_1u_t,\pa_1b_t)\|^2_{L^2},
\end{align}
and the dissipated energy
\begin{align}\label{cF2}
\cF^2_1(t):&=\|\pa_1\na u\|_{H^1}^2+\|\p_1^2 b\|_{H^2}^2 +\|\pa_1 u_t\|_{L^2}^2+\|\pa_1b_t\|_{L^2}^2+\|\pa_1^3\pa_2 u\|_{L^2}^2\non\\
&+\|\pa_1\pa_2 u_t\|_{L^2}^2+\|\pa_1^2 b_t\|_{L^2}^2+ \|\pa_1\na p\|_{L^2}^2+\|\pa_1\cP\pa_2^2u\|_{H^1}^2.
\end{align}
\begin{proposition}\label{higher order}
Assume that the solution $(u,b)$ of the system \eqref{eq:MHDT} satisfies
\beno
\sup\limits_{0\le t\le T}\big(\|\pa_1u(t)\|_{H^2}^2+\|\pa_1b(t)\|_{H^2}^2\big)\leq  c_0^2.  \eeno
If $ c_0  $ is suitable small, then there hold that
\beno
\cE^2_1(t)+ \int_0^t\cF^2_1(s)ds\lesssim \|\pa_1u_0\|^2_{H^2}+\|\pa_1b_0\|^2_{H^2}
\eeno
for any  $t\in [0,T]$.
\end{proposition}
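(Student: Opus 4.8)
The plan is to reduce Proposition \ref{higher order} to Proposition \ref{high order} by differentiating the system once in the tangential direction. Applying $\pa_1$ to \eqref{eq:MHDT} and using that $\pa_1$ has constant coefficients, the pair $(\pa_1 u,\pa_1 b)$ satisfies the very same system with $u\mapsto \pa_1 u$, $b\mapsto \pa_1 b$, $p\mapsto \pa_1 p$, and with nonlinearities replaced by $\pa_1(-u\cdot\na u+b\cdot\na b)$ and $\pa_1(-u\cdot\na b+b\cdot\na u)$. The crucial structural point --- already flagged in the remark following Theorem \ref{thm:main1} --- is that $\pa_1$ is tangential to the flat boundary $\{x_2=0\}$, so it destroys neither the solenoidal condition nor the boundary conditions: from $u=0$ on $\p\Om$ we get $\pa_1 u=0$ on $\p\Om$, from $b_2=0$ we get $\pa_1 b_2=0$, and the propagated condition $b_1=0$ gives $\pa_1 b_1=0$ on $\p\Om$. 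Moreover $\pa_1$ commutes with the Helmholtz projection $\cP$ on the half-space (tangential derivatives preserve $L^2_\sigma(\Om)$ and its orthogonal complement, the domain being translation invariant in $x_1$), so $\cP\pa_1=\pa_1\cP$ throughout. Consequently each step in the proof of Proposition \ref{high order} can be reproduced after replacing $(u,b,p)$ by $(\pa_1 u,\pa_1 b,\pa_1 p)$, and the quantities $\cE_1^2$, $\cF_1^2$ in \eqref{cE2}, \eqref{cF2} are precisely $\pa_1$ applied to $\cE^2$, $\cF^2$.

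Concretely, I would run the same twelve-step scheme: the $L^2$ and $\dot H^1$ estimates of $(\pa_1 u,\pa_1 b)$, the dissipation estimate of $\pa_1^2 u$, the estimates of $(\pa_1 u_t,\pa_1 b_t)$ together with the higher tangential and mixed derivatives, the energy estimate of $\pa_1\pa_2^2 b$ paired with the dissipation of $\pa_1\pa_2 u_t$, and finally the Stokes estimate. For this last step I would apply the stationary Stokes bounds \eqref{stokes1}--\eqref{stokes2} to the elliptic problem
\[
\left\{\begin{array}{l}
-\Del(\pa_1 u)+\na(\pa_1 p)=-\pa_1^3 u+\pa_1^2 b-\pa_1 u_t-\pa_1(u\cdot\na u)+\pa_1(b\cdot\na b),\\
\div(\pa_1 u)=0,\\
\pa_1 u=0\ \ \mathrm{on}\ \p\Om,
\end{array}\right.
\]
which is legitimate precisely because $\pa_1 u$ inherits the homogeneous Dirichlet condition. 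Combining the eleven resulting inequalities with the same weights $\delta,\delta^2,\delta^3,\delta^4$ used in Step 12 and choosing $c_0$ small then yields the stated estimate for $\cE_1^2(t)+\int_0^t\cF_1^2(s)\,ds$.

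The only genuine work lies in the nonlinear terms, which now carry an extra $\pa_1$ and therefore produce commutator contributions such as $\pa_1^k u\cdot\na u$ alongside the transport parts $u\cdot\na\pa_1^k u$. The transport parts still cancel by $\div u=\div b=0$ --- for instance $\lan \pa_1^2 u,\,u\cdot\na\pa_1^2 u\ran=0$ and the matching magnetic pairings --- exactly as in Proposition \ref{high order}. The remaining commutators are handled by the same anisotropic Ladyzhenskaya-type splitting into $L^\infty_{x_1}L^2_{x_2}$ and $L^2_{x_1}L^\infty_{x_2}$ norms, each being bounded by $\big(\|(u,b)\|_{H^2}+\|(\pa_1 u,\pa_1 b)\|_{H^2}\big)$ times a quantity controlled by $\cF_1^2$; here I use that $\|(u,b)\|_{H^2}$ is already globally bounded and small by Proposition \ref{high order} and Theorem \ref{thm:main}, so both prefactors are $\lesssim c_0$. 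I expect the main obstacle to be the analog of the worst term of Step 10, namely $\lan \pa_1\pa_2^2 u,\,\pa_1\pa_2^2(b\cdot\na b)\ran$, where vertical magnetic dissipation is unavailable; following Step 10 I would rewrite $\pa_1\pa_2^2 u$ via the $\pa_1$-differentiated velocity equation so as to trade it for the pressure, thereby generating the $\|\pa_1\na p\|_{L^2}^2$ dissipation present in $\cF_1^2$, after which the integration by parts and the anisotropic bounds close the estimate as before.
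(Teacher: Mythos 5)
Your proposal is correct and follows essentially the same route as the paper's own (very brief) appendix proof: apply $\pa_1$ to \eqref{eq:MHDT}, observe that the tangential derivative preserves the divergence-free condition and all boundary conditions (hence the Stokes and projection arguments go through verbatim), and rerun the twelve-step scheme of Proposition \ref{high order} for $(\pa_1 u,\pa_1 b,\pa_1 p)$, splitting the differentiated nonlinearity into the same-form transport part (which cancels) plus commutator terms like $\pa_1 u\cdot\na u$, estimated by the identical anisotropic $L^\infty_{x_1}L^2_{x_2}$/$L^2_{x_1}L^\infty_{x_2}$ bounds with small prefactors $\|(u,b)\|_{H^2}+\|(\pa_1u,\pa_1b)\|_{H^2}\lesssim c_0$. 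This is precisely the paper's add-and-subtract decomposition in \eqref{MHDTp1u}--\eqref{MHDTp1b} and its Step 5$'$ sketch, so no further comparison is needed.
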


\begin{proof} 
In fact, the prove is very similar as Proposition \ref{high order} (just replace  $(u, b)$ to $(\pa_1u, \p_1 b)$), since $\pa_1$ does not change the boundary conditions.

But, when we apply $\pa_1$ to equation  $(\ref{eq:MHDT})_1$, the nonlinear item is $-\pa_1(u\cdot \na u)+\pa_1(b\cdot \na b)$ instead of $-\pa_1u\cdot\na \pa_1u+\pa_1b\cdot\na \pa_1b$. We can add $-\pa_1u\cdot\na \pa_1u+\pa_1b\cdot\na \pa_1b$ and subtract it to get the following equations
\begin{align}
\p_t \pa_1u-\pa_{2}^2 \pa_1u-\p_1 \pa_1b+\na \pa_1p=&-\pa_1u\cdot\na \pa_1u+\pa_1b\cdot \na \pa_1b+\Big(\pa_1u\cdot\na \pa_1u-\pa_1b\cdot \na \pa_1b\non\\
\qquad \qquad \qquad \qquad \qquad \qquad \qquad \qquad&-\pa_1u\cdot\na u-u\cdot\na \pa_1u+\pa_1b\cdot \na b+b\cdot \na \pa_1b\Big),\label{MHDTp1u}
\end{align}
similarly
\begin{align}
\p_t \pa_1b-\pa_{1}^2 \pa_1b-\p_1\pa_1u=&-\pa_1u\cdot\na \pa_1b+\pa_1b\cdot \na \pa_1u+\Big(\pa_1u\cdot\na \pa_1b-\pa_1b\cdot \na \pa_1u\non\\
\qquad \qquad \qquad \qquad \qquad \qquad \qquad \qquad&-\pa_1u\cdot\na b-u\cdot\na \pa_1b+\pa_1b\cdot \na u+b\cdot \na \pa_1u\Big).\label{MHDTp1b}
\end{align}

We just give a sketch of  Step 5', and other Steps'  can be obtained using the same way. Recall 
\begin{align*}
&\lan \p_1^2 u_1 ,\pa_1^2(u\cdot\na u_1)\ran
\lesssim& \|\pa_1^2 u_1\|_{L^2_{x_1}L^\infty_{x_2}}\|\pa_1\p_2u\|_{L^\infty_{x_1}L^2_{x_2}}\|\pa_1u_1\|_{L^2}+\|\pa_1^2 u_1\|_{L^2}\|\pa_1^2u_2\|_{L^2_{x_1}L^\infty_{x_2}}\|\pa_2u_1\|_{L^\infty_{x_1}L^2_{x_2}},
\end{align*}
in \eqref{p12u,p12b} in Step 5,  we similar have
\begin{align*}
&\lan \p_1^3 u_1 ,\pa_1^2(\pa_1u\cdot\na \pa_1u_1-\pa_1u\cdot\na u_1-u\cdot\na \pa_1u_1)\ran
\non\\
\lesssim& \|\pa_1^3 u_1\|_{L^2_{x_1}L^\infty_{x_2}}\Big(\|\pa_1^2\p_2u\|_{L^\infty_{x_1}L^2_{x_2}}+\|\pa_1\p_2u\|_{L^\infty_{x_1}L^2_{x_2}}\Big)\Big(\|\pa_1^2u_1\|_{L^2}+\|\pa_1u_1\|_{L^2}\Big)\\
&+\|\pa_1^3 u_1\|_{L^2}\Big(\|\pa_1^3u_2\|_{L^2_{x_1}L^\infty_{x_2}}+\|\pa_1^2u_2\|_{L^2_{x_1}L^\infty_{x_2}}\Big)\Big(\|\pa_1\pa_2u_1\|_{L^\infty_{x_1}L^2_{x_2}}+\|\pa_2u_1\|_{L^\infty_{x_1}L^2_{x_2}}\Big),
\end{align*}
where the highest order item $\|\pa_1^2\p_2u\|_{L^\infty_{x_1}L^2_{x_2}}\|\pa_1^2u_1\|_{L^2}$ comes from the highest order nonlinear item $\pa_1^2(\pa_1u\cdot\na\pa_1u)$,   the lowest order item $\|\pa_1\p_2u\|_{L^\infty_{x_1}L^2_{x_2}}\|\pa_1u_1\|_{L^2}$ comes from lowest order nonlinear item $\pa_1^2(u\cdot\na u)$, and the cross terms comes from $\pa_1^2(\pa_1u\cdot\na u)$ and $\pa_1^2(u\cdot\na \pa_1u)$.

Then we can get the result in step 5'
\begin{align*}
&\f12\f d {dt}(\|\pa_1^3 u\|_{L^2}^2+\|\pa_1^3 b\|_{L^2}^2)+(\|\p_1^3 \pa_2 u\|_{L^2}^2+\|\p_1^4  b\|_{L^2}^2)\non\\
\lesssim&\Big(\|(\pa_1u,\pa_1b)\|_{H^2}+\|(\pa_1u,\pa_1b)\|_{H^2}^2+\|(u,b)\|_{H^2}+\|(u,b)\|_{H^2}^2\Big)\\
 &\Big(\|\p_1^2b_t\|_{L^2}+ \|\p_1^2 b\|_{H^2}^2+\|\p_1b_t\|_{L^2}+ \|\p_1 b\|_{H^2}^2+\|\pa_1^3\pa_2 u\|_{L^2}^2+\|\pa_1^2\pa_2 u\|_{L^2}^2+\|\pa_1\pa_2 u\|_{L^2}^2+\|\pa_2 u\|_{L^2}^2\Big).
\end{align*}
 For simplicity, we omit the proof of other Steps'.

\end{proof}

\end{appendix}


\begin{thebibliography}{99}



\bibitem{A} H. Alfven,  {\it Existence of electromagnetic-hydrodynamic waves},  Nature, 150(1942), 405-406.

\bibitem{B} C. Bardos, C. Sulem and P.L. Sulem, {\it Longtime dynamics of a conductive fluid in the presence of a strong magnetic field}, Trans. Am. Math. Soc. 305 (1988), 175-191.




\bibitem{Cab} H. Cabannes,  {\it Theoretical Magneto-Fluid Dynamics}, Academic Press, New York, London, 1970.




\bibitem{CC} F. Califano and C. Chiuderi,  {\it Resistivity-independent dissipation of magnetrodydrodynamic waves in an inhomogeneous plasma},
Phy. Rev. E,  60(1999), part B, 4701-4707.




\bibitem{CL} Y. Cai and Z. Lei, {\it Global well-posedness of the incompressible magnetohydrodynamics}, Arch. Ration. Mech. Anal. 2018 (228), No.3, 969-993.

\bibitem{CW} C. Cao and J. Wu, {\it Global regularity for the 2D MHD equations with mixed partial dissipation and magnetic diffusion}, Adv. Math. 226 (2011), 1803-1822.


\bibitem{D} G. Duvaut and J. Lions, {\it Inequations en thermoelasticite et magnetohydrodynamique}, Arch. Rational Mech. Anal. 46 (1972), 241-279.


\bibitem{DR} L. Dong and X. Ren, {\it Asymptotic stability of the 2D MHD equations without magnetic diffusion}, J. Math. Phys. DOI: 10.1063/5.0112577.


\bibitem{F} C. Fefferman, D. McCormick, J. Robinson and J. Rodrigo, {\it Local existence for the non-resistive MHD equations in nearly optimal Sobolev spaces}, Arch. Ration. Mech. Anal. 223 (2017), 677-691.

\bibitem{G} B. Gallet, M. Berhanu, N. Mordant, {\it Influence of an external magnetic field on forced turbulence in a swirling flow of liquid metal}, Phys. Fluids 21 (2009) 085107.

\bibitem{GT} Y. Guo and I. Tice, {\it Almost exponential decay of periodic viscous surface waves without surface tension},
Arch. Rat. Mech. Anal., 207(2013), 459-531.




\bibitem{JJ} F. Jiang and S. Jiang, {\it On magnetic inhibition theory in non-resistive magnetohydrodynamic fluids}, Arch. Ration. Mech. Anal. 233 (2019), 749-798.

\bibitem{J} J. Jin, Y. Kagei, X. Ren, L. Wang and C. Zhai, {\it Asymptotic behavior of solution of the non-resistive 2D MHD equations on the half space}, arXiv:2401.104456v1.


\bibitem{KK} Y. Kagei and T. Kobayashi, {\it Asymptotic behavior of solutions of the compressible Navier-Stokes equations on the half space}. Arch. Rat. Mech. Anal. 177 (2005), 231-330.


\bibitem{LXZ} F. Lin, L. Xu and P. Zhang,
{\it Global small solutions of 2-D incompressible MHD system},
J.  Differential Equations, (259) 2015, 5440-5485.



\bibitem{L} H. Lin, R. Ji, J. Wu and L. Yan, {\it Stability of perturbations near a background
magnetic field of the 2D incompressible MHD
equations with mixed partial dissipation}, J. Funct. Anal. 279 (2020): 108519.


\bibitem{PZZ} R. Pan, Y. Zhou and Y. Zhu, {\it Global classical solutions of three dimensional viscous MHD system without magnetic diffusion on periodic boxes}, Arch. Rational Mech. Anal. 227 (2018), 637-662.

\bibitem{RWXZ} X. Ren, J. Wu, Z. Xiang and Z. Zhang,
 {\it Global existence and decay of smooth solution for the 2-D MHD equations without magnetic diffusion}, J. Funct. Anal. 267(2), 503-541,

\bibitem{RXZ} X. Ren,  Z. Xiang and Z. Zhang,
 {\it Global well-posedness for the 2D MHD equations without magnetic diffusion in a strip  domain},
Nonlinearity, 29(2016) 1257-1291.

\bibitem{S} M. Sermange and R. Temam, {\it Some mathematical questions related to the MHD equations}, Comm. Pure Appl. Math. 36 (1983), 635-664.

\bibitem{TW} Z. Tan and Y. Wang,
{\it Global well-posedness of an initial-boundary value problem for viscous non-resistive {MHD} systems}, SIAM J. Math. Anal., 50(2018),1432-1470.



\bibitem{W} R. Wan,
{\it On the temporal decay for the 2D non-resistive incompressible MHD equations}, Annales Henri Poincare, DOI: 10.1007/s00023-023-01268-3.

\bibitem{WZ1} D. Wei and Z. Zhang, {\it Global well-posedness of the MHD equations in a homogeneous magnetic field}, Anal. PDE 10 (2017), 1361-1406.


\bibitem{WZ2} D. Wei and Z. Zhang, {\it Global well-posedness for the 2-D MHD equations with magnetic diffusion}, Commun. Math. Res. 36 (2020), 377-389.



\bibitem{Z} T. Zhang, {\it An elementary proof of the global existence and uniqueness theorem to 2-D incompressible non-resistive MHD system}, arXiv:1404.3081.








\end{thebibliography}
\end{document}